\def\ps@pprintTitle{%
 \let\@oddhead\@empty
 \let\@evenhead\@empty
 \def\@oddfoot{}%
 \let\@evenfoot\@oddfoot}
\theoremstyle{plain}
\declaretheorem[name=Proposition, numberwithin=section]{proposition}
\declaretheorem[name=Theorem, sibling=proposition]{theorem}
\declaretheorem[name=Lemma, sibling=proposition]{lemma}
\declaretheorem[name=Corollary, sibling=proposition]{corollary}
\theoremstyle{definition}
\declaretheorem[name=Remark, qed={$\triangle$}, sibling=proposition]{remark}
\declaretheorem[name=Example, qed = {$\bigcirc$}, sibling=proposition]{example}
\declaretheorem[name=Definition, sibling=proposition]{definition}
\declaretheorem[name=Notation, numbered=no]{notation}
\declaretheorem[name=Convention, numbered=no]{convention}
\DeclareFontFamily{U}{rsfs}{\skewchar\font127}
\DeclareFontShape{U}{rsfs}{m}{n}{<-6> rsfs5 <6-8> rsfs7 <8-> rsfs10}{}
\renewenvironment{thebibliography}[1]{
	\begin{oldthebibliography}{#1}
	\setlength{\itemsep}{0em}
	\setlength{\parskip}{0em}
}
{
	\end{oldthebibliography}
}
\newcommand{\vertiii}[1]{{\left\vert\kern-0.25ex\left\vert\kern-0.25ex\left\vert #1 \right\vert\kern-0.25ex\right\vert\kern-0.25ex\right\vert}}
\newcommand{\numberthis}{\addtocounter{equation}{1}\tag{\theequation}}
\newcommand{\modre}[1]{#1}
\DeclareMathOperator{\R}{\mathbb{R}}
\DeclareMathOperator*{\E}{\mathbb{E}}
\renewcommand*{\d}{\mathrm{d}}
\DeclareMathOperator*{\Dom}{\mathrm{Dom}\,}
\begin{document}

\begin{frontmatter}
	\title{Stochastic integration with respect to fractional processes in Banach spaces}

	\author[MFF]{Petr \v{C}oupek}
	\ead{coupek@karlin.mff.cuni.cz}

	\author[MFF]{Bohdan Maslowski\corref{cor}}
	\ead{maslow@karlin.mff.cuni.cz}

	\author[UTIA]{Martin Ondrej\'at}
	\ead{ondrejat@utia.cas.cz}

	\cortext[cor]{Corresponding author}
	\address[MFF]{Charles University, Faculty of Mathematics and Physics, \\Sokolovsk\'a 83, Prague 8, 186 75, Czech Republic}
	\address[UTIA]{Czech Academy of Sciences, Institute of Information Theory and Automation, \\Pod Vod\'arenskou V\v{e}\v{z}\'i 4, Prague 8, 182~08, Czech Republic}

	\begin{keyword}
		Stochastic integral, Banach space, fractional process, stochastic convolution.
		\MSC[2020] Primary: 60G22, 60H05; Secondary: 60G15, 60G18, 60H07.
	\end{keyword}

	\begin{abstract}
	In the article, integration of temporal functions in (possibly non-UMD) Banach spaces with respect to (possibly non-Gaussian) fractional processes from a finite sum of Wiener chaoses is treated. The family of fractional processes that is considered includes, for example, fractional Brownian motions of any Hurst parameter or, more generally, fractionally filtered generalized Hermite processes. The class of Banach spaces that is considered includes a large variety of the most commonly used function spaces such as the Lebesgue spaces, Sobolev spaces, or, more generally, the Besov and Lizorkin-Triebel spaces. In the article, a characterization of the domains of the Wiener integrals on both bounded and unbounded intervals is given for both scalar and cylindrical fractional processes. In general, the integrand takes values in the space of $\gamma$-radonifying operators from a certain homogeneous Sobolev-Slobodeckii space into the considered Banach space. Moreover, an equivalent characterization in terms of a pointwise kernel of the integrand is also given if the considered Banach space is isomorphic with a subspace of a cartesian product of mixed Lebesgue spaces. The results are subsequently applied to stochastic convolution for which both necessary and sufficient conditions for measurability and sufficient conditions for continuity are found. As an application, space-time continuity of the solution to a parabolic equation of order $2m$ with distributed noise of low time regularity is shown as well as measurability of the solution to the heat equation with Neumann boundary noise of higher regularity.
	\end{abstract}
\end{frontmatter}

\section{Introduction}

In the present article, integration in Banach spaces with respect to fractional processes from a finite sum of Wiener chaoses is treated. The class of fractional processes that is considered consists of stochastic processes $(z_t)_{t\in\mathfrak{T}}$ ($\mathfrak{T}\subseteq\R$ an interval) that are centered second-order stochastic processes for which the equality 
	\begin{equation}
	\label{eq:intro_R_H}
		\E z_sz_t =  \frac{\sigma^2}{2}\left(|s|^{2H} + |t|^{2H}- |t-s|^{2H}\right), \quad s,t\in\mathfrak{T},
	\end{equation}
holds with some $\sigma>0$ and $H\in (0,1)$. There are many processes that satisfy this requirement. In fact, the covariance function of any second-order $H$-self-similar process with stationary increments ($H$-sssi processes for short; see, e.g., \cite{SamTaq94, Tud13}) must necessarily be of the form \eqref{eq:intro_R_H} and in this spirit, the family of fractional Brownian motions (see, e.g., \cite{DecrUstu99,MvN68}) provides a prototypical example. On the other hand, there are many other non-Gaussian processes that can be considered and we name, for example, the family of Rosenblatt processes (see, e.g., \cite{Taqqu11,Tud08}) or the more general family of fractionally filtered generalized Hermite processes (see, e.g., \cite{BaiTaq14}).

The integral of real-valued deterministic functions on an interval $T\subseteq\mathfrak{T}$ with respect to real-valued fractional processes can be defined in a natural way and this construction is standard for concrete cases of the driving process; see, e.g., \cite{DecrUstu99, PipTaqq00, PipTaqq01} for the case of fractional Brownian motions, \cite{Tud08, MaeTud07} for the case of Rosenblatt and Hermite processes, and \cite{AlosMazNua01, BonTud11, CouMas17} for the case of Volterra processes. 
In particular, the integral is initially defined as the map $i: \bm{1}_{[0,t)}\mapsto z_t$ and extended by linearity to step functions. Subsequently, it is shown that there is an It\^o-type isometry for such Stieltjes-type sums and that this isometry can be used for the extension of the integral from step functions to a large abstract (Hilbert) space of admissible integrands $\mathscr{D}^H(T)$. It is here where the covariance structure of the driving process plays a fundamental role. Finally, one usually chooses some subspace of the abstract space $\mathscr{D}^H(T)$ that is suitable for the problem at hand and the Wiener integral is restricted to this space. For example, if the fractional Brownian motion with Hurst parameter $H$ is the integrator and $T$ is a bounded interval, the usual choice is the Lebesgue space $L^\frac{1}{H}(T)$ if $H\geq \sfrac{1}{2}$ or the space of H\"older continuous functions $\mathscr{C}^\theta(T)$ for $\theta>\sfrac{1}{2}-H$ if $H<\sfrac{1}{2}$; see, e.g., \cite{AlosNua03}. 

To aid in this choice, it is desirable to understand the structure of the abstract space of admissible integrands $\mathscr{D}^H(T)$. In this direction, the first main result of the present article is a complete characterization of this space for both bounded an unbounded intervals $T$; namely, it is shown in \autoref{prop:characterisation_of_D} that the space $\mathscr{D}^H(T)$ coincides with the homogeneous Sobolev-Slobodeckii space $\dot{W}^{\frac{1}{2}-H,2}(T)$. Thus, the dependence of the space $\mathscr{D}^H(T)$ on the parameter $H$ is made explicit and it is readily seen that the value $H=\sfrac{1}{2}$ is critical in the following sense: If $H\leq \sfrac{1}{2}$, the space of admissible integrands contains only functions (and the characterization says which functions precisely) while if $H>\sfrac{1}{2}$, then this space is much larger and it contains distributions as well (and again, the characterization says which distributions precisely). We note that while this characterization is proved for the Wiener integral with respect to fractional Brownian motions on bounded intervals in \cite[Theorem 3.3]{Jol07} and the key identity for unbounded intervals is given in \cite[formula (3.4)]{PipTaqq00}, our result applies to a much broader class of integrators and its proof is completely different from the proofs in these two papers.

As far as integration with respect to infinite-dimensional (or, more precisely, $U$-cylindrical) Wiener processes in Banach spaces is concerned, the topic has been the subject to extensive research in the last couple of decades and the reader can find an excellent overview with many references to the most prominent results in the survey article \cite{NeeVerWei15}. See also, e.g., \cite{CioCoxVer18,VerYar16} for some more recent results. On the other hand, while the literature on integration in Banach spaces for infinite-dimensional fractional processes is much more scarce, we refer to the papers \cite{BrzNeeSal12} and \cite{IssRie14} for some results in this direction. In the present article, however, different additional assumptions are put on the driving noise and on the considered Banach space than those usually considered.

In particular, the fractional process is additionally assumed to live in a finite sum of Wiener chaoses (of the largest order $n$); that is, it is assumed that the random variable $z_t$ can be written as a finite sum of multiple Wiener-It\^o integrals (of orders up to $n$) with respect to some isonormal Gaussian process. This requirement allows to use the second main result of the present paper; namely, that of the equivalence of the $p$-th and $q$-th moments of linear combinations from a finite Wiener chaos with coefficients from a Banach space for every $p,q\geq 0$; see \autoref{prop:hypercontractivity}. This result is a generalization of the Kahane-Khinchine inequality, see, e.g., \cite[Theorem 1.3.1]{PG}, and it was announced in \cite[Proposition 2.1]{CouMasOnd18} where it is stated for $p,q\geq 1$. We give the full proof of the general claim in the present article. 

The assumption on the fractional process is then complemented with an assumption on the considered Banach space $X$. In particular, the Banach space $X$ is assumed to have the property that the second moment of a linear combination of elements from $X$ with coefficients from the finite sum of Wiener chaoses in which the noise lives is equivalent to the second moment of a linear combination of the same elements but with different coefficients taken from the Wiener chaos. In particular, this property allows to pass from non-Gaussian coefficients to Gaussian ones. Since it seems that this notion has not been studied in the literature so far, we simply say that the Banach space is $n$-good.

Under these two assumptions, Wiener integration in the Banach space $X$ is treated and the third main result of the paper is given. In particular, it is shown in \autoref{prop:stoch_int_1} that there is a sufficient condition for integrability in $X$ that is formulated in terms of a $\gamma$-radonifying norm of an operator associated with the integrand. If, additionally, the Banach space $X$ has the approximation property, then it is shown in \autoref{prop:char_stoch_int_vector} that this sufficient condition is also necessary. 

While it is clear that all the above mentioned examples of fractional processes live in a finite sum of Winer chaoses, it is a priori not at all clear that there some useful examples of Banach spaces that satisfy the above assumptions. In this direction, we prove that if the Banach space $X$ is isomorphic with a subspace of a product of mixed Lebesgue spaces, then $X$ is both $n$-good for every $n\in\mathbb{N}$ and has the approximation property; see \autoref{prop:Y_is_good_and_approximable}. As a consequence, one can consider many of the commonly used function spaces within our framework and we name, for example, the Lebesgue spaces $L^p(D)$, Sobolev spaces $W^{r,p}(D)$, Besov spaces $B_{p,q}^s(D)$, and the Lizorkin-Triebel spaces $F_{p,q}^s(D)$ (here, $D$ is either $\R^d$, $\R_+^d$, or a bounded $\mathscr{C}^\infty$-domain in $\R^d$; and $p,q\geq 1$, $s\in\R$, and $r>0$); see \autoref{cor:function_spaces} for the precise statement. Note that the above mentioned examples also include some obviously non-UMD spaces such as the space $L^1(\R)$; see, e.g., \cite{Bur01} and the references therein. Moreover, to further aid in applications, we show that if $X$ has the above described structure (as an isomorphic space with a subspace of a product of mixed Lebesgue spaces), then the sufficient and necessary condition for integrability can be formulated in terms of pointwise (Green) kernels; see \autoref{prop:kernel}.

The abstract integration results are then applied to the stochastic convolution integral for which sufficient and necessary conditions for existence are found; see \autoref{prop:stoch_convolution_existence}. The focus is then on the special case in which the integrand takes the form $S(\cdot)\varPhi$ where $\varPhi\in\mathscr{L}(U,X)$ and where $S: [0,\infty)\rightarrow\mathscr{L}(X)$ is a strongly continuous semigroup. In this case, the necessary and sufficient condition for the existence of the convolution integral can be simplified, see \autoref{prop:existence_Yt}, and it is also shown that it already implies the existence of a measurable version of the convolution integral, see \autoref{cor:NSC_for_measurability}. If, additionally, the semigroup is analytic, sufficient conditions for existence of a continuous version of the convolution integral are also given, see \autoref{prop:continuity}. 

Finally, two examples are given. The first example concerns a stochastic parabolic partial differential equation of order $2m$, $m\in\mathbb{N}$, on a bounded $\mathscr{C}^\infty$-domain $D\subseteq\R^d$ with distributed space-time noise. Such equation is treated in \cite{CouMasOnd18} where the case of a regular noise $H>\sfrac{1}{2}$ is considered and where it is shown that if $H>\sfrac{d}{4m}$, then the solution is a space-time continuous random field. In here, we use a similar method to treat the case of a singular noise $H<\sfrac{1}{2}$ and we show that even in this case, space-time continuity of the solution occurs if $H>\sfrac{d}{4m}$. More precisely, the formal parabolic equation is formulated as a stochastic Cauchy problem in the space $L^p(D)$ for a suitable choice of $p\geq 2$ and its solution is sought in the mild form, i.e. as a stochastic convolution integral. Subsequently, time continuity of the convolution integral in the domain of a fractional power of the differential operator is shown and since this space is a subspace of a certain Bessel potential space, the Sobolev embedding is used to prove continuity in the spatial variable; see \autoref{prop:space-time_continuity} for the precise statement of the result. We note that, roughly speaking, the larger the parameter $p$ is considered in this example, the better regularity of the solution is obtained. For some other related results on stochastic partial differential equations with distributed noise, we refer, for example, to \cite{BonTud11,BrzNee03,BrzNeeSal12, CouMas17, CouMasOnd18, DunMasDun02,PorVer19,TinTudVie03} and the references therein.

The second example concerns the stochastic heat equation on a bounded $\mathscr{C}^\infty$-domain $D\subseteq\R^d$ of finite surface measure that is perturbed by space-time noise through the boundary of the domain. In this example, it is assumed that $H>\sfrac{1}{2}$. The equation is again treated as a stochastic Cauchy problem in a suitable $L^p(D)$ space but in this example, the situation is in a certain sense reversed when compared with the first example - in here, it is desirable to choose the parameter $p$ as small as possible. This becomes apparent if one tries to apply \cite[Corollary 3.1]{CouMasOnd18} to this situation - while similar conditions are obtained, there is the barrier $pH\geq 1$ which comes from the use of Minkowski's inequality. To overcome this difficulty, we find a Green kernel for the integrand and appeal to \autoref{prop:kernel} instead. Consequently, we show that if $H>\sfrac{d}{2}-\sfrac{1}{2}$, then it is possible to choose the parameter $p\in (1,2]$ so that measurability of the solution in the space $L^p(D)$ occurs; see \autoref{prop:heat_bdry_noise_measurability} for the precise statement of the result. For some related results on stochastic partial differential equations with boundary noise, we refer, for example, to \cite{AlosBon02,BrzGolPesRus15,DunMasDun02,FabGol09,LinVer20,Mas95,SchVer11} and the references therein.

\textit{Organization of the article.} In \autoref{sec:preliminaries}, we recall some notions from Gaussian analysis and prove the hypercontractivity result in \autoref{prop:hypercontractivity}. Subsequently, the notion of fractional processes is formalized and some examples given. Wiener integration for these processes is treated here as well and the characteriztion of the abstract space of admissible integrands is proved in \autoref{prop:characterisation_of_D}. In \autoref{sec:Wiener_integration_cylindrical_processes}, Wiener integration for cylindrical fractional processes is analysed and the section is split into two subsections. In \autoref{sec:Wiener_integra_scalar_case}, Wiener integrability for cylindrical processes in the scalar case is characterized and in \autoref{sec:Wiener_integration_vector_case}, Wiener integrability for cylindrical process in the vector case is treated. In particular, while weak integrability is defined and characterized in \autoref{sec:weak_integrability}, strong integrability is defined and characterized in \autoref{sec:strong_integrability}. It is also in the last mentioned \autoref{sec:strong_integrability}, where $n$-good Banach spaces that have the approximation property are considered. \autoref{sec:stoch_conv} is devoted to stochastic convolution and \autoref{sec:examples} contains the two examples of stochastic partial differential equations to which our results are applied. The results on homogeneous fractional Sobolev spaces needed for our analysis are collected in \autoref{sec:appendix}.

\section{Preliminaries}
\label{sec:preliminaries}

In this section, some preliminaries from Gaussian analysis and fractional processes are given. Throughout the paper, the following notation is used. 

\begin{notation}
The notation $A\lesssim B$ means that there is a finite positive constant $c$ such that $A\leq cB$. Similarly, $A\eqsim B$ means that there are two finite positive constants $c_1$ and $c_2$ such that $c_1A\leq B\leq c_2A$ and $A\propto B$ means that there is a finite positive constant $c$ such that $A=cB$. This notation is used whenever the precise values of the constants are not important.
\end{notation}

\subsection{Equivalence of moments on a finite Wiener chaos}

We begin with a general setting. Let $V$ be a real separable Hilbert space and assume that $(\Omega,\mathscr{F},\mathbb{P})$ is a probability space with a $V$-isonormal Gaussian process $(W(v))_{v\in V}$ defined on it. It is assumed that the sigma field $\mathscr{F}$ is generated by this isonormal Gaussian process and augmented by $\mathbb{P}$-zero sets. Denote by $H_n$ the $n^\mathrm{th}$ \textit{Hermite polynomial} that is defined by 
	\begin{equation*}
		H_n(x) := \frac{(-1)^n}{n!}\mathrm{e}^{\frac{x^2}{2}}\frac{\d^n}{\d x^n}\left(\mathrm{e}^{-\frac{x^2}{2}}\right), \quad x\in\R.
	\end{equation*}
The $n^\mathrm{th}$ \textit{Wiener chaos}, denoted here by $\mathscr{H}_n$, is the closed linear subspace of the space $L^2(\Omega)$ generated by the linear span $\{H_n(W(v))\,|\,v\in V, \|v\|_V=1\}$. For a thorough analysis of these notions, we refer, for example, to the monograph \cite{Nua06} and the references therein. An important feature of the spaces $\mathscr{H}_n$ is that their elements have equivalent moments. A generalization of this property to random variables with values in normed linear spaces is central to the present paper and we it is stated precisely in \autoref{prop:hypercontractivity}. The result improves that of \cite[Proposition 2.1]{CouMasOnd18}.

\begin{definition}
\label{def:finite_Wiener_chaos}
 In this paper, by a \textit{finite Wiener chaos} we mean the space $\mathscr{H}^{\oplus n}:=\bigoplus_{i=0}^n\mathscr{H}_i$ for some $n\in\mathbb{N}$. We say that a stochastic process $z: \mathfrak{T}\rightarrow L^2(\Omega)$, where $\mathfrak{T}\subseteq\R$ is an interval, \textit{lives in a finite Wiener chaos} if there exists a non-negative integer $n$, such that $z_t\in \mathscr{H}^{\oplus n}$ for every $t\in \mathfrak{T}$. 
\end{definition}

\begin{proposition}
\label{prop:hypercontractivity}
Let $p,q\in (0,\infty)$ and $n\in\mathbb{N}_0$. Then there exists a finite positive constant $C_{p,q,n}$ such that the inequality 
	\begin{equation*}
		\E\left(\left\|\sum_{j=1}^m\xi_jx_j\right\|_{\mathcal{B}}^{q}\right)^\frac{1}{q} \leq C_{p,q,n} \left(\E\left\|\sum_{j=1}^m\xi_jx_j\right\|_{\mathcal{B}}^p\right)^\frac{1}{p}
	\end{equation*} 
holds for every normed linear space $\mathcal{B}$, $m\in\mathbb{N}$, $\{x_j\}_{j\leq m}\subset \mathcal{B}$, and every $\{\xi_j\}_{j\leq m} \subset \mathscr{H}^{\oplus n}$.
\end{proposition}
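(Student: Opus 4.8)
The plan is to reduce the statement, by monotonicity of $L^{r}$-norms, to two one-sided comparisons through $L^{2}$; to deduce the sub-$1$ range from a Hölder interpolation; and to prove the one substantial inequality --- a vector-valued hypercontractivity on a finite Wiener chaos --- by induction on the chaos degree, after reducing to polynomials of finitely many Gaussians.

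Write $Y:=\bigl\|\sum_{j\le m}\xi_{j}x_{j}\bigr\|_{\mathcal B}$; replacing $\mathcal B$ by its completion changes neither the norm on the finite-dimensional span of $x_{1},\dots ,x_{m}$ nor any of the (finite) sums below, so I may assume $\mathcal B$ complete. Since $r\mapsto(\E Y^{r})^{1/r}$ is non-decreasing on $(0,\infty)$, the inequality holds with $C_{p,q,n}=1$ whenever $q\le p$, and it is enough to establish
\begin{equation*}
\text{(a)}\quad\|Y\|_{q}\le C_{q,n}\|Y\|_{2}\ \ (2<q<\infty),\qquad\text{(b)}\quad\|Y\|_{2}\le C_{p,n}\|Y\|_{p}\ \ (0<p<2).
\end{equation*}
Indeed, for arbitrary $p,q\in(0,\infty)$ one then gets $\|Y\|_{q}\le\max(1,C_{q,n})\|Y\|_{2}\le\max(1,C_{q,n})\max(1,C_{p,n})\|Y\|_{p}$ by inserting $\|\cdot\|_{q}\le\|\cdot\|_{2}$ for $q\le2$ and $\|\cdot\|_{2}\le\|\cdot\|_{p}$ for $p\ge2$. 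All norms here are finite since $Y\le\sum_{j}|\xi_{j}|\,\|x_{j}\|_{\mathcal B}$ with $\xi_{j}\in\mathscr H^{\oplus n}\subset\bigcap_{r<\infty}L^{r}(\Omega)$ by scalar hypercontractivity on a finite chaos.

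The implication (a)$\Rightarrow$(b) is the step that carries the statement below the exponent $1$ (for $p,q\ge1$ the proposition is \cite[Proposition~2.1]{CouMasOnd18}), and it is a Hölder interpolation: fixing an auxiliary exponent $r>2$ and, for $p\in(0,2)$, a weight $\lambda\in(0,1)$ with $\frac12=\frac{\lambda}{p}+\frac{1-\lambda}{r}$, one has $\|Y\|_{2}\le\|Y\|_{p}^{\lambda}\|Y\|_{r}^{1-\lambda}\le\|Y\|_{p}^{\lambda}\bigl(C_{r,n}\|Y\|_{2}\bigr)^{1-\lambda}$ by (a); dividing by $\|Y\|_{2}^{1-\lambda}$ (legitimate as $0<\|Y\|_{2}<\infty$, the case $Y\equiv0$ being trivial) gives $\|Y\|_{2}\le C_{r,n}^{(1-\lambda)/\lambda}\|Y\|_{p}$.

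The content is (a). First I would reduce to polynomials of finitely many independent standard Gaussians. Choose an orthonormal system $(e_{l})$ in $V$ with respect to which $\xi_{1},\dots ,\xi_{m}$ are measurable, put $g_{l}:=W(e_{l})$, and set $\xi_{j}^{(k)}:=\E[\xi_{j}\mid g_{1},\dots ,g_{k}]$. Then $\xi_{j}^{(k)}\in\mathscr H^{\oplus n}$ (conditioning onto a $\sigma$-algebra generated by a Gaussian subfamily preserves each chaos), $\xi_{j}^{(k)}\to\xi_{j}$ in $L^{2}(\Omega)$ (martingale convergence) and hence in every $L^{r}(\Omega)$ (scalar hypercontractivity applied to $\xi_{j}^{(k)}-\xi_{j}\in\mathscr H^{\oplus n}$), and $\xi_{j}^{(k)}$ is a polynomial of degree $\le n$ in $g_{1},\dots ,g_{k}$ (since $\mathscr H^{\oplus n}\cap L^{2}(\sigma(g_{1},\dots ,g_{k}))$ is exactly that space of polynomials). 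Passing to the limit, it suffices to produce $C_{q,n}$, \emph{independent of $k$ and of $\mathcal B$}, with $\|R\|_{L^{q}(\gamma;\mathcal B)}\le C_{q,n}\|R\|_{L^{2}(\gamma;\mathcal B)}$ for every $\mathcal B$-valued polynomial $R\colon\R^{k}\to\mathcal B$ of degree $\le n$ ($\gamma$ denoting the relevant standard Gaussian measure throughout). I would prove this by induction on $n$. For $n\le1$, $R=c+\sum_{l}g_{l}y_{l}$ with $c,y_{l}\in\mathcal B$, so $g\mapsto\|R(g)\|_{\mathcal B}$ is $\sigma$-Lipschitz for the Euclidean metric with $\sigma:=\sup_{|u|_{2}\le1}\|\sum_{l}u_{l}y_{l}\|_{\mathcal B}=\sup_{\|\phi\|_{\mathcal B^{*}}\le1}(\sum_{l}\phi(y_{l})^{2})^{1/2}$; Gaussian concentration gives $\bigl\|\,\|R\|_{\mathcal B}-\E\|R\|_{\mathcal B}\,\bigr\|_{q}\lesssim\sqrt{q}\,\sigma$, while $\E\|R\|_{\mathcal B}\ge\sup_{\|\phi\|_{\mathcal B^{*}}\le1}\E\bigl|\phi(c)+\sum_{l}g_{l}\phi(y_{l})\bigr|\ge\sqrt{2/\pi}\,\sigma$ (using $\E|a+Z|\ge\E|Z|$ for a centred Gaussian $Z$), whence $\|R\|_{L^{q}(\gamma;\mathcal B)}\lesssim_{q}\E\|R\|_{\mathcal B}\le\|R\|_{L^{2}(\gamma;\mathcal B)}$. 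For the step $n-1\to n$ I would invoke the classical diagonalization (reduction to tetrahedral form) and decoupling inequalities for polynomial chaos --- valid in both directions of the norm comparison with constants depending only on $n$ and uniform over all $\mathcal B$ (cf.\ \cite{PG}) --- to replace $R$ by a decoupled form $R^{\mathrm{dec}}(g^{(1)},\dots ,g^{(n)})$ that is affine in each copy. Conditioning on $g^{(2)},\dots ,g^{(n)}$ makes $R^{\mathrm{dec}}$ an affine function of $g^{(1)}$ whose $\mathcal B$-valued coefficients are polynomials of degree $\le n-1$ in $g^{(2)},\dots ,g^{(n)}$, so the base case applies conditionally: $\E\bigl[\|R^{\mathrm{dec}}\|_{\mathcal B}^{q}\bigm|g^{(2)},\dots ,g^{(n)}\bigr]\lesssim_{q}\psi^{q}$, where $\psi:=\bigl(\E[\|R^{\mathrm{dec}}\|_{\mathcal B}^{2}\mid g^{(2)},\dots ,g^{(n)}]\bigr)^{1/2}$. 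The crucial point is that $\psi(g^{(2)},\dots ,g^{(n)})=\|\Psi(g^{(2)},\dots ,g^{(n)})\|_{\mathcal B'}$ for a polynomial $\Psi$ of degree $\le n-1$ with values in the \emph{enlarged} Banach space $\mathcal B':=L^{2}(\gamma;\mathcal B)$ (the copy $g^{(1)}$ being absorbed into the internal variable); applying the inductive hypothesis at level $n-1$ over $\mathcal B'$ yields $\|\psi\|_{q}\lesssim_{q,n-1}\|\psi\|_{2}$. Since $\E\psi^{2}=\E\|R^{\mathrm{dec}}\|_{\mathcal B}^{2}$ is comparable (with an $n$-dependent constant) to $\E\|R\|_{\mathcal B}^{2}$, chaining the estimates closes the induction, with $C_{q,n}\lesssim_{n}C_{q,1}C_{q,n-1}$.

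The main obstacle is precisely the uniformity of the constant in (a) over \emph{all} (normed) spaces. The naive route --- invert the Ornstein--Uhlenbeck semigroup on $\mathscr H^{\oplus n}$ and use its scalar hypercontractivity --- breaks down, because it needs the projection onto an individual Wiener chaos to be bounded on $L^{2}(\gamma;\mathcal B)$, which fails for spaces such as $\ell^{\infty}_{N}$ with a constant blowing up in $N$ (absence of $K$-convexity). This is what forces the detour through decoupling and induction on the chaos degree, an argument that never isolates a single chaos; the bookkeeping in the diagonalization/decoupling reduction and the self-referential use of the inductive hypothesis over $L^{2}(\gamma;\mathcal B)$ are where the real work sits. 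Alternatively, one could transport the problem to Rademacher chaos via the central limit theorem and invoke the vector-valued Kahane--Khinchine inequality there, of which \cite[Theorem~1.3.1]{PG} is the degree-one instance.
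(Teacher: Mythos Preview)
Your argument is correct, and the interpolation step carrying the inequality below the exponent $1$ is essentially the same as the paper's: the paper applies Cauchy--Schwarz to $\|X\|_{\mathcal B}^{\alpha q}\|X\|_{\mathcal B}^{(1-\alpha)q}$ with $2\alpha q=p$ to get $\|X\|_{q}\le\|X\|_{p}^{\alpha}\|X\|_{2q-p}^{1-\alpha}$ and then bounds $\|X\|_{2q-p}$ by $\|X\|_{q}$, while you interpolate $\|Y\|_{2}$ between $\|Y\|_{p}$ and $\|Y\|_{r}$ and bound $\|Y\|_{r}$ by $\|Y\|_{2}$ --- the same bootstrapping idea.

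The genuine difference lies in how (a) is obtained. The paper does not prove it: it invokes \cite[Theorem~3.2.10(i)]{PG} as a black box for the range $1<p<q<\infty$, observes that the polynomial span has the right $L^{r}$-closure, and is done in a few lines. You instead \emph{re}-derive (a) by the decoupling-and-induction strategy (which is, in fact, how \cite{PG} proves its Theorem~3.2.10 via Theorem~3.2.5), enlarging the target space to $L^{2}(\gamma;\mathcal B)$ at each step. Your route is therefore longer but more self-contained --- modulo the decoupling inequalities themselves, which you still take from \cite{PG} --- and it makes explicit why no $K$-convexity assumption is needed, a point the paper leaves inside the cited reference. One caution on your sketch: the ``affine in each copy'' description of $R^{\mathrm{dec}}$ is literally correct only for the homogeneous degree-$n$ part; for a polynomial of degree $\le n$ the lower-degree pieces involve fewer copies, so the conditioning argument should be phrased as peeling off one copy at a time (degree in that copy $\le 1$) rather than asserting multilinearity outright. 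This is a bookkeeping matter, not a gap in the strategy.
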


\begin{proof}
According to \cite[Theorem 3.2.10 (i)]{PG} (in particular the reference to \cite[Theorem 3.2.5]{PG}), there exist a finite positive constant $C_{p,q,n}$ such that the inequality
\begin{equation}
	\label{prop:hypercontractivity_1}
	\left(\E\|X\|_\mathcal{B}^{q}\right)^{\frac{1}{q}}\leq C_{p,q,n}\left(\E\|X\|_\mathcal{B}^{p}\right)^{\frac{1}{p}}
\end{equation}
is satisfied for every Banach space $\mathcal{B}$, every $1<p<q<\infty$, and for every random variable $X$ from the linear span of
	\begin{equation*}
			\left\{x\prod_{j=1}^\infty W(e_j)^{\alpha_j}\,\Bigg|\,\,x\in \mathcal{B},\,\,\alpha\in\mathbb{N}_0^{\mathbb{N}},\,\|\alpha\|_{\ell^1}\leq n\right\}
	\end{equation*}
where $\{e_j\}_{j\in\mathbb{N}}$ is an orthonormal basis of the Hilbert space $V$. By linearity and Fernique's theorem, inequality \eqref{prop:hypercontractivity_1} holds also for every random variable $X$ from the linear span of
	\begin{equation*}
		\left\{x\prod_{j=1}^\infty W(\xi_j)^{\alpha_j}\,\Bigg|\,\,x\in \mathcal{B},\,\alpha\in\mathbb{N}_0^{\mathbb{N}},\,\|\alpha\|_{\ell^1}\le n,\,\xi\in V^{\mathbb{N}}\right\}.
	\end{equation*}
This means that the $L^r(\Omega)$-norms for $1<r<\infty$ are equivalent on the linear span of the set 
	\begin{equation*}
\left\{\prod_{j=1}^\infty W(\xi_j)^{\alpha_j}\,\Bigg|\,\,x\in \mathcal{B},\,\alpha\in\mathbb{N}_0^{\mathbb{N}},\,\|\alpha\|_{\ell^1}\leq n,\,\xi\in V^{\mathbb{N}}\right\}
	\end{equation*}
and therefore, the closure of this set in the space $L^r(\Omega)$ for any $1<r<\infty$ coincides with $\mathscr{H}^{\oplus n}$. This follows, for example, by the remark on page 6 after \cite[Theorem 1.1.1]{Nua06}. We have thus proved that inequality \eqref{prop:hypercontractivity_1} holds for every random variable $X$ from the linear span of
	\begin{equation*}
		\left\{x\eta\,|\,x\in \mathcal{B},\,\eta\in\mathscr{H}^{\oplus n}\right\}.
	\end{equation*}
To prove the claim for $0<p<q<\infty$, $q\in(1,\infty)$, the trick in the remark that follows \cite[Theorem 3.2.2]{PG} is used. That is, the Cauchy-Schwarz inequality is applied to $\|X\|^{\alpha q}_\mathcal{B}\|X\|^{(1-\alpha)q}_\mathcal{B}$ to obtain the inequality
	\begin{equation*}
		\|X\|_{L^q(\Omega;\mathcal{B})}\le\|X\|_{L^p(\Omega;\mathcal{B})}^\alpha\|X\|^{1-\alpha}_{L^{2q-p}(\Omega;\mathcal{B})}\le C^{1-\alpha}_{q,2q-p,n}\|X\|_{L^p(\Omega;\mathcal{B})}^\alpha\|X\|^{1-\alpha}_{L^q(\Omega;\mathcal{B})}
	\end{equation*}
where $\alpha$ is defined by $2\alpha q=p$, from which the inequality
	\begin{equation*}
		\|X\|_{L^q(\Omega;\mathcal{B})}\le C^{-1+\alpha^{-1}}_{q,2q-p,n}\|X\|_{L^p(\Omega;\mathcal{B})}
	\end{equation*}
follows. The case $0<p<q\le 1$ then follows by combining Jensen's inequality and the previous step as
	\begin{equation*}
		\|X\|_{L^q(\Omega;\mathcal{B})}\le\|X\|_{L^2(\Omega;\mathcal{B})}\leq C_{2,4-p,n}^{-1+\frac{4}{p}}\|X\|_{L^p(\Omega;\mathcal{B})}.
	\end{equation*}
\end{proof}

\begin{remark}
\label{rem:hypercontractivity_p=0}
We note that \autoref{prop:hypercontractivity} also holds for $0=p<q<\infty$ in the sense the linear span of $\{x\eta\,|\, x\in\mathcal{B},\eta\in\mathscr{H}^{\oplus n}\}$ when equipped with the topology of $L^0(\Omega;\mathcal{B})$ is uniformly continuously embedded in itself when equipped with the topology of $L^q(\Omega;\mathcal{B})$. Indeed, it follows by using the Cauchy-Schwarz inequality and \autoref{prop:hypercontractivity} that there exists a finite positive constant $C_{q,n}$ such that the inequality
	\begin{equation*}
		\E\|X\|_{\mathcal{B}}^q \leq \left[\mathbb{P}(\|X\|_{\mathcal{B}}>R)\right]^{\frac{1}{2}}\|X\|_{L^{2q}(\Omega;\mathcal{B})}^q + \varepsilon^q \leq C_{q,n}\left[\mathbb{P}(\|X\|_{\mathcal{B}}>R)\right]^{\frac{1}{2}}\E\|X\|^q_{\mathcal{B}} + R^q
	\end{equation*}
is satisfied for every $R>0$. Consequently, there is the following implication:
	\begin{equation*}
		\mathbb{P}(\|X\|_{\mathcal{B}}>R) \leq \frac{1}{4C_{q,n}^2} \quad \mbox{for some } R>0 \quad \implies \quad \|X\|_{L^q(\Omega;\mathcal{B})} \leq 2^\frac{1}{q} R.
	\end{equation*}
\end{remark}

\subsection{Fractional processes}

Let $\mathfrak{T}$ be the real axis $\R$, the interval $[0,\infty)$, or the interval $[0,\tau]$ for some $\tau>0$. Let $z=(z_t)_{t\in \mathfrak{T}}$ be a stochastic process on $(\Omega,\mathscr{F},\mathbb{P})$ that is centered, has finite variance, and such that there is $H\in (0,1)$, the so-called \textit{Hurst index}, for which the equality
	\begin{equation}
	\label{eq:covariance_of_z}
		\E z_sz_t = \sigma^2R_H(s,t)
	\end{equation}
is satisfied for every $s,t\in \mathfrak{T}$ with the function $R_H(s,t)$ defined by
	\begin{equation}
	\label{eq:covariance_function}
		R_H(s,t) := \frac{1}{2}\left(|s|^{2H} + |t|^{2H} - |t-s|^{2H}\right)
	\end{equation}
and $\sigma>0$ a constant. A process $z$ that satisfies the above conditions is called an \textit{$H$-fractional process} in this article.

\begin{remark}
By formulas \eqref{eq:covariance_of_z} and \eqref{eq:covariance_function}, we have that for an $H$-fractional process $z$ the equality
	\begin{equation*}
		\E(z_t-z_s)^2 = |t-s|^{2H}\sigma^2
	\end{equation*}
holds for every $s,t\in \mathfrak{T}$. Hence, without any further assumptions, the process $z$ admits a measurable version by \cite[Theorem 2.6]{Doob90} because it is continuous in mean square and therefore also in probability. If, moreover, $H\in (\sfrac{1}{2},1)$, the process $z$ also admits a version with H\"older continuous sample paths up to the order $H-\sfrac{1}{2}$ by Kolmogorov's continuity criterion; see, e.g.,  \cite[Theorem 39.3]{Bau96}. However, if it is assumed that the process $z$ lives in a finite Wiener chaos, then it has a continuous version even in the singular case $H\in (0,\sfrac{1}{2}]$. More precisely, if $z$ lives in a finite Wiener chaos $\mathscr{H}^{\oplus n}$, we have that the inequality
	\begin{equation*}
		\E(z_t-z_s)^{q} \lesssim \left[\E(z_t-z_s)^2\right]^q = |t-s|^{Hq} \sigma^q
	\end{equation*}
holds for every $q>0$ and $s,t\in \mathfrak{T}$ by \autoref{prop:hypercontractivity} and it follows by Kolmogorov's criterion that $z$ has a version with H\"older continuous sample paths of every order smaller that $H$, cf. \cite[Remark 2.1]{CouMasOnd18}.
\end{remark}

\begin{example}
The class of fractional processes from a finite Wiener chaos that satisfy the above conditions for the process $z$ is quite rich and some examples are given here. Let us first specify the general setting. We assume that $(\Omega,\mathscr{A},\mathbb{P})$ is a probability space with a Wiener process $(W_t)_{t\in\R}$ defined on it. Constructed in the usual manner, the first order Wiener-It\^o integral is an $L^2(\R)$-isonormal process defined on this probability space and it is assumed that this process generates the $\sigma$-field $\mathscr{A}$. 

Note first that every $H$-self-similar process with stationary increments ($H$-sssi processes for short) from a finite Wiener chaos can be considered. This follows by, for example, Lemma 7.2.1 of \cite{SamTaq94}. Clearly, the main examples are \textit{fractional Brownian motions}; however there are many other $H$-sssi processes from a finite Wiener chaos that have been considered in the literature. 

We can mention, for example, the family of the \textit{fractionally filtered generalized Hermite processes}, that is introduced and analysed in \cite{BaiTaq14}. This family includes, among others, the processes $z_k^{\alpha,\beta}$ that are defined by 
	\begin{equation*}
		z_k^{\alpha,\beta} (t) := C_{\alpha,\beta,k}\int_{\R^k}^\prime \left\{\int_{\R}k_t^\beta(u)\prod_{j=1}^k(u-y_j)_+^{\frac{\alpha}{k}}\d{u}\right\}\d{W}_{\bm{y}}^{\otimes k}, \quad t\geq 0,
	\end{equation*}
where the kernel $k_t^\beta$ is given by
	\begin{equation*}
		k_t^\beta (u) := \begin{cases}
										\frac{1}{\beta}\left[(t-u)_+^\beta-(-u)_+^\beta\right],& \quad \beta\neq 0,\\
										\bm{1}_{(0,t]}(u), & \quad \beta = 0,
								   \end{cases}
	\end{equation*}
and where the parameters $\alpha$ and $\beta$ satisfy
	\begin{equation*}
		-1 <-\alpha - \frac{k}{2}-1<\beta<-\alpha - \frac{k}{2}<\frac{1}{2}.
	\end{equation*}
The constant $C_{\alpha,\beta,k}$ is a normalizing constant that ensures that $\E [z_k^{\alpha,\beta}(1)]^2=1$ and the integral $\int_{\R^k}^\prime (\ldots)\d{W}_{\bm{y}}^{\otimes k}$ is the Wiener-It\^o multiple integral of order $k$; see, e.g.,  \cite{Nua06} or \cite{NouPec12}. It follows by \cite[Theorem 3.27]{BaiTaq14} that the process $z_k^{\alpha,\beta}$ is an $H$-sssi process with the parameter $H$ that is given by 
	\begin{equation*}
		H=\alpha+\beta+\frac{k}{2}+1
	\end{equation*}
and that belongs to the interval $(0,1)$. Moreover, by its construction as a Wiener-It\^o multiple integral of a deterministic function, the process $z_k^{\alpha,\beta}$ lives in the $k$\textsuperscript{th} Wiener chaos. 

It should be noted that the above class includes some well-known stochastic processes. In particular, the process $z_1^{\alpha,\beta}$ is the fractional Brownian motion with the Hurst parameter $H=\alpha + \beta+\sfrac{3}{2}\in (0,1)$ for every $\alpha$ and $\beta$ that satisfy 
	\begin{equation*}
		-1 < -\alpha -\frac{3}{2} < \beta < - \alpha -\frac{1}{2}<\frac{1}{2}.
	\end{equation*}
This is because the fractional Brownian motion is the only $H$-sssi process in the first Wiener chaos in the sense of finite-dimensional distributions, cf. \cite[Proposition 1.1]{Tud13}. But already in the second Wiener chaos, there are infinitely many $H$-sssi processes; see the paper \cite{MaeTud12} for the discussion of this phenomenon.

Another particular case of the above class is the family of the processes $z_k^{\alpha, 0}$. These processes are the much studied \textit{Hermite processes} of order $k$ with the Hurst parameter $H=\alpha + \frac{k}{2}+1\in (\sfrac{1}{2},1)$ for every $\alpha$ that satisfies
	\begin{equation*}
		-\frac{k}{2}-\frac{1}{2} <\alpha < -\frac{k}{2}.
	\end{equation*}
Note that the family of Hermite processes also includes the family of \textit{Rosenblatt processes} that has received considerable attention in the last couple of years. See, for example, \cite[section 3.1]{Tud13} and the references contained therein for further properties of Hermite processes. See also the seminal paper \cite{Tud08} for stochastic analysis of the Rosenblatt process. 
\end{example}

\subsection{Wiener integration for scalar fractional processes}
\label{sec:Wiener_integration_one-dim}

The integral of deterministic functions with respect to a $H$-fractional process $(z_t)_{t\in\mathfrak{T}}$ is defined in the sequel. The construction of the integral follows the approach used in the case of fractional Brownian motions (and, more generally, Volterra processes, cf. e.g. \cite{AlosMazNua01, CouMas17, BonTud11}) and somewhat embodies the idea that only the covariance structure of the driving process $z$ is needed since the integral is constructed as the limit of Stieltjes-type sums in the space of square integrable random variables. 

Let $T\subseteq\mathfrak{T}$ and denote by $\mathscr{E}(T)$ the linear space of deterministic step functions whose support is contained in the interval $T$; that is, a function $f\in \mathscr{E}(T)$ satisfies the equality
	\begin{equation}
	\label{eq:step_function}
		f = \sum_{j=1}^{n} f_j\bm{1}_{[t_{j-1}, t_{j})}
	\end{equation}
with some $n\in\mathbb{N}$, some set $\{t_j\}_{j\leq n}\subset T$ such that $t_0<t_1<\ldots <t_n$, and a set $\{f_j\}_{j\leq n}\subset \R$. Note that such a function can be extended by zero outside of the interval $T$ and this is done without an explicit comment whenever needed. Now, for a step function $f\in\mathscr{E}(T)$ that is given by formula \eqref{eq:step_function} set
	\begin{equation}
	\label{eq:step_integral}
		i_T(f):=\sum_{j=1}^nf_j(z_{t_{j}}-z_{t_{j-1}})
	\end{equation}
and consider the operator $\mathscr{K}^*_H: \mathscr{E}(T)\rightarrow L^2(\R)$ that is defined by $(\mathscr{K}_{\sfrac{1}{2}}^*f)(r) := f(r)$ and by 
	\begin{equation*}
		(\mathscr{K}_H^* f)(r) := \begin{cases}
													\displaystyle \frac{1}{c_H}\int_r^\infty \left[f(u)-f(r)\right] (u-r)^{H-\frac{3}{2}}\d{u}, & \quad H\in (0,\sfrac{1}{2}),\\[10pt]
													\displaystyle \frac{1}{c_H}\int_r^\infty f(u)(u-r)^{H-\frac{3}{2}}\d{u}, & \quad H\in (\sfrac{1}{2},1).
												\end{cases}
	\end{equation*}
The constant $c_H$ in the above definition is given by
	\begin{equation*}
		c_H^2 := \int_0^\infty \left[(1+s)^{H-\frac{1}{2}} - s^{H-\frac{1}{2}}\right]^2\d{s} + \frac{1}{2H}.
	\end{equation*}
The reason for considering the operator $\mathscr{K}^*_H$ is that the equality
	\begin{equation*}
		R_H(s,t) = \langle \mathscr{K}_H^*\bm{1}_{[0,s)},\mathscr{K}_H^* \bm{1}_{[0,t)}\rangle_{L^2(\R)}
	\end{equation*}
is satisfied for every $s,t\in T$. Here, the indicator function $\bm{1}_{[0,\tau)}$ is interpreted as $-\bm{1}_{[\tau,0)}$ if $\tau <0$. As a consequence, it follows that the equality
	\begin{equation}
	\label{eq:Ito_isometry}
		\|i_T(f)\|_{L^2(\Omega)}^2 = \sigma^2\|\mathscr{K}_H^* f\|_{L^2(\R)}^2
	\end{equation}
is satisfied for every step function $f\in\mathscr{E}(T)$. Moreover, since the operator $\mathscr{K}^*_H$ is injective, the bilinear form defined for $f,g\in\mathscr{E}(T)$ by
	\begin{equation*}
		\langle f,g\rangle_{\mathscr{D}^H(T)}:= \sigma^2\langle\mathscr{K}^*_Hf,\mathscr{K}^*_Hg\rangle_{L^2(\R)}
	\end{equation*}
is an inner product. By equality \eqref{eq:Ito_isometry}, the linear map $i_T$ is an isometry between the space $\mathscr{E}(T)$ endowed with the norm $\|\,\cdot\,\|_{\mathscr{D}(T)}$ and the linear span $\{i_T(f), f\in\mathscr{E}(T)\}$ that is endowed with the norm $\|\cdot\|_{L^2(\Omega)}$. This isometry is now extended to a linear isometry between the completion of $\mathscr{E}(T)$ with respect to $\|\cdot\|_{\mathscr{D}^H(T)}$, that is denoted by $\mathscr{D}^H(T)$ in this paper, and the closure of $\mathrm{span}\,\{i_T(f), f\in\mathscr{E}(T)\}$ in the norm $\|\cdot\|_{L^2(\Omega)}$. The extension is again denoted by $i_T$ and it satisfies formula \eqref{eq:Ito_isometry} for every $f\in\mathscr{D}^H(T)$. Note that this relationship reduces to the classical It\^o isometry for the Wiener integral if $z$ is a Wiener process. The space $\mathscr{D}^H(T)$ is called the \textit{space of admissible integrands} for the process $z$. An element $f$ of $\mathscr{D}^H(T)$ is said to be \textit{Wiener integrable on $T$ with respect to the process $z$} and the square integrable random variable $i_T(f)$ is called its \textit{Wiener integral with respect to $z$}.

In the following result, a complete characterization of the space $\mathscr{D}^H(T)$ is given. This result in the case when $T$ is a bounded interval and $z$ is a fractional Brownian motion is proved in \cite[Theorem 3.3]{Jol07} and the key identity for the case when $T$ is an unbounded interval is also given in \cite[formula (4.3)]{PipTaqq00}. Here we prove the result in this general setting for any fractional process $z$ (in particular, without the assumption of Gaussianity) and on both bounded and unbounded intervals. Moreover, the method of proof given here is different from the one given in \cite{Jol07}. 

\begin{proposition}
\label{prop:characterisation_of_D}
Let $H\in (0,1)$. Then there is the equality
	\begin{equation*}
		\mathscr{D}^H(T) = \dot{W}^{\frac{1}{2}-H,2}(T)
	\end{equation*}
\modre{with the following equality between the norms
	\begin{equation*}
		\|\,\cdot\,\|_{\mathscr{D}^H(T)} = C_{\sigma,H} \|\,\cdot\,\|_{\dot{W}^{\frac{1}{2}-H,2}(T)}
	\end{equation*}
where $C_{\sigma,H}$ is a finite positive constant.} Here, $\dot{W}^{\frac{1}{2}-H,2}(T)$ is the homogeneous fractional Sobolev space (see \autoref{app:Ws_R} for the case when $T$ is the real axis and see \autoref{app:Ws_I} for the case when $T$ is the positive real axis or a bounded interval).
\end{proposition}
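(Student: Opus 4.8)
The plan is to reduce everything to a computation on step functions. By construction $\mathscr{D}^H(T)$ is the completion of $\mathscr{E}(T)$ in the norm $\|f\|_{\mathscr{D}^H(T)}=\sigma\|\mathscr{K}_H^*f\|_{L^2(\R)}$, and the quadratic form $f\mapsto\|\mathscr{K}_H^*f\|_{L^2(\R)}^2$ does not see the ambient interval, only $\mathrm{supp}\,f$. On the other side, the homogeneous Sobolev space $\dot{W}^{\frac{1}{2}-H,2}(T)$ built in \autoref{sec:appendix} has step functions as a dense subset and its norm on $\mathscr{E}(T)$ is the one induced from $\dot{W}^{\frac{1}{2}-H,2}(\R)$ (for positive smoothness via the Slobodeckii/Gagliardo seminorm, for negative smoothness — which occurs when $H>\frac{1}{2}$ — via a Fourier or duality description, the space then consisting of genuine distributions). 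Granting these structural facts, it suffices to prove the norm identity $\sigma\|\mathscr{K}_H^*f\|_{L^2(\R)}=C_{\sigma,H}\|f\|_{\dot{W}^{\frac{1}{2}-H,2}(\R)}$ for all $f\in\mathscr{E}(\R)$, with $C_{\sigma,H}$ independent of $f$; the two completions then coincide because their norms agree on the common dense subspace $\mathscr{E}(T)$ and both are realized as spaces of distributions on $T$ compatibly with the inclusion of $\mathscr{E}(T)$. The borderline case $H=\frac{1}{2}$ is trivial since $\mathscr{K}_{1/2}^*$ is the identity and $\dot{W}^{0,2}=L^2$; so assume $H\neq\frac{1}{2}$.

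The norm identity I would obtain by comparing the two inner products on the spanning set $\{\bm{1}_{[0,\tau)}:\tau\in\R\}$ of $\mathscr{E}(\R)$ and invoking bilinearity. On the $\mathscr{D}^H$-side, the identity $\langle\mathscr{K}_H^*\bm{1}_{[0,s)},\mathscr{K}_H^*\bm{1}_{[0,t)}\rangle_{L^2(\R)}=R_H(s,t)$ recorded above the statement gives $\langle\bm{1}_{[0,s)},\bm{1}_{[0,t)}\rangle_{\mathscr{D}^H(\R)}=\sigma^2R_H(s,t)$. On the Sobolev side, one evaluates $\langle\bm{1}_{[0,s)},\bm{1}_{[0,t)}\rangle_{\dot{W}^{\frac{1}{2}-H,2}(\R)}$ directly: for $H<\frac{1}{2}$ it is a constant times the Gagliardo double integral $\iint_{\R^2}\bigl(\bm{1}_{[0,s)}(x)-\bm{1}_{[0,s)}(y)\bigr)\bigl(\bm{1}_{[0,t)}(x)-\bm{1}_{[0,t)}(y)\bigr)|x-y|^{2H-2}\,\d x\,\d y$, which breaks into finitely many elementary pieces and, for $0\le s\le t$, comes out proportional to $s^{2H}+t^{2H}-(t-s)^{2H}$; for $H>\frac{1}{2}$ the space has negative smoothness and its inner product is the Riesz-potential pairing, proportional to $\iint_{[0,s)\times[0,t)}|x-y|^{2H-2}\,\d x\,\d y=\frac{1}{2H(2H-1)}\bigl(s^{2H}+t^{2H}-(t-s)^{2H}\bigr)$. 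Either way $\langle\bm{1}_{[0,s)},\bm{1}_{[0,t)}\rangle_{\dot{W}^{\frac{1}{2}-H,2}(\R)}\propto R_H(s,t)$, so the two inner products on $\mathscr{E}(\R)$ are proportional, which is exactly the claim. As an alternative one can avoid these computations altogether: after the substitution $u\mapsto u-r$ the operator $\mathscr{K}_H^*$ is, up to an explicit constant, the right-sided Weyl fractional integral of order $H-\frac{1}{2}$ when $H>\frac{1}{2}$ and the right-sided Marchaud fractional derivative of order $\frac{1}{2}-H$ when $H<\frac{1}{2}$, so that Plancherel gives $\|\mathscr{K}_H^*f\|_{L^2(\R)}^2=\kappa_H^2\int_\R|\xi|^{1-2H}|\widehat f(\xi)|^2\,\d\xi$, which is a constant times the Fourier form of $\|f\|_{\dot{W}^{\frac{1}{2}-H,2}(\R)}^2$.

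The genuinely hard part is not the above but the input it relies on, and I would concentrate the effort there, in \autoref{sec:appendix}: establishing, uniformly in the positive and negative ranges of the smoothness $\frac{1}{2}-H$, the structure of the homogeneous fractional Sobolev spaces on the half-line and on bounded intervals — density of step functions, equivalence of the Fourier, Slobodeckii and zero-extension descriptions, and the precise normalization of the norm needed to make the proportionality constant exactly $C_{\sigma,H}$ — because when $H>\frac{1}{2}$ these are spaces of distributions and the naive Gagliardo seminorm no longer defines them. In the proof proper the only subtle point is the singular regime $H<\frac{1}{2}$, where the kernel $(u-r)^{H-\frac{3}{2}}$ is not locally integrable: the Gagliardo integral above (or, in the Fourier route, the Marchaud multiplier) must be evaluated using the cancellation provided by subtracting $f(r)$, and the constant has to be tracked through the regularization. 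The range $H>\frac{1}{2}$, where the kernel is already integrable, is by comparison routine.
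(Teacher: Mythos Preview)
Your proposal is correct, and the alternative you sketch at the end---recognising $\mathscr{K}_H^*$ as a one-sided fractional integral or Marchaud derivative and applying Plancherel---is exactly what the paper does: it writes $c_H\mathscr{K}_H^*f$ as a convolution $\frac{1}{a}h_a*f'$ (for $H<\tfrac12$, $a=\tfrac12-H$) or $h_b*f$ (for $H>\tfrac12$, $b=\tfrac32-H$) with the one-sided power $h_c(x)=|x|^{-c}\bm{1}_{(-\infty,0)}(x)$, computes $\hat h_c$ explicitly, and reads off $\|\mathscr{K}_H^*f\|_{L^2(\R)}=\mathrm{const}\cdot\||x|^{\frac12-H}\hat f\|_{L^2}$ for step $f$; density of $\mathscr{E}(T)$ in $\dot W^{\frac12-H,2}(T)$ then closes the argument.

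Your primary route---comparing the two inner products on the spanning family $\{\bm{1}_{[0,\tau)}\}$ and invoking bilinearity---is a genuinely different and more elementary argument. It bypasses any manipulation of $\mathscr{K}_H^*$ altogether, since $\langle\bm{1}_{[0,s)},\bm{1}_{[0,t)}\rangle_{\mathscr{D}^H}=\sigma^2 R_H(s,t)$ is given for free, and reduces the proposition to the classical fact (immediate from scaling and polarization, or by the direct integrals you indicate) that the $\dot W^{\frac12-H,2}(\R)$ inner product of two indicators is a fixed multiple of $R_H$. What the paper's Fourier route buys is the constant $C_{\sigma,H}$ in closed form with no side computation, whereas your indicator route, to deliver exact proportionality rather than mere equivalence, still needs the identity between the Fourier norm of \autoref{app:Ws_R} and the Gagliardo or Riesz-potential form you actually compute with---itself a Plancherel calculation---so the gain in elementarity is partly cosmetic when the precise constant matters. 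For establishing only the set equality $\mathscr{D}^H(T)=\dot W^{\frac12-H,2}(T)$ with equivalent norms, your route is arguably cleaner.
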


\begin{proof}
For $c\in (-1,1)$, define 
	\begin{equation*}
		h_c(x) := |x|^{-c}\bm{1}_{(-\infty,0)}(x)
	\end{equation*}
which will be understood as a distribution below. Note that if $c\in (0,1)$, the Fourier transform $\hat{h}_c$ of $h_c$ is given by
	\begin{equation}
	\label{eq:fourier_h}
		\hat{h}_c(x) = A_c|x|^{c-1} + \mathrm{i} B_c\mathrm{sgn}(x)|x|^{c-1}
	\end{equation}
where the constants $A_c$ and $B_c$ are given by
	\begin{equation*}
		A_c := \frac{1}{\sqrt{2\pi}}\sin\left(\frac{\pi c}{2}\right)\Gamma(1-c), \quad B_c := \frac{1}{\sqrt{2\pi}}\cos\left(\frac{\pi c}{2}\right)\Gamma(1-c).
	\end{equation*}
If $H=\sfrac{1}{2}$, the claim is clear. Assume therefore that $H\neq \sfrac{1}{2}$ and let $f\in\mathscr{E}(T)$. If $H\in (0,\sfrac{1}{2})$, set $a:=\sfrac{1}{2}-H$ and note in this case the operator $\mathscr{K}_H^*$ can be expressed as
	\begin{equation*}
		c_H\mathscr{K}_H^*f = \frac{1}{a}h_a*f'
	\end{equation*}
where $*$ denotes convolution. Moreover, in this case, the chain of equalities
	\begin{equation*}
			\frac{1}{a}\|h_a* f^\prime\|_{L^2(\R)} = \frac{\sqrt{2\pi}}{a} \|\hat{h}_a\hat{f'}\|_{L^2(\R;\mathbb{C})} = \frac{\sqrt{2\pi}}{a}\|x\hat{h}_a\hat{f}\|_{L^2(\R;\mathbb{C})} = \frac{\Gamma\left(\frac{1}{2}+H\right)}{\frac{1}{2}-H}\||x|^{\frac{1}{2}-H}\hat{f}\|_{L^2(\R;\mathbb{C})}
	\end{equation*}
holds by using Plancheler's theorem and equality \eqref{eq:fourier_h}. On the other hand, if $H\in (\sfrac{1}{2},1)$, set $b:=\sfrac{3}{2}-H$ and note that in this case the operator $\mathscr{K}_H^*$ can be expressed as
	\begin{equation*}
		c_H\mathscr{K}_H^*f = h_b* f.
	\end{equation*}
Moreover, in this case, the chain of equalities   
	\begin{equation*}
		\|h_b*f\|_{L^2(\R)} = \sqrt{2\pi}\|\hat{h}_b\hat{f}\|_{L^2(\R;\mathbb{C})} = \Gamma\left(H-\frac{1}{2}\right) \||x|^{\frac{1}{2}-H}\hat{f}\|_{L^2(\R;\mathbb{C})}
	\end{equation*}
holds by similar arguments as above. Consequently, it follows for any $H\in (0,1)$, $H\neq \sfrac{1}{2}$, that if $\modre{C}_{\sigma,H}$ is the constant defined by $$\modre{C}_{\sigma,H}:= \frac{\sigma}{c_H}\left|\Gamma\left(H-\frac{1}{2}\right)\right|$$ where the standard convention $\Gamma(z):=\Gamma(z+1)/z$ for $z<0$, $z\not\in\mathbb{Z}$, is used, there is the chain of equalities
	\begin{equation*}
		\|f\|_{\mathscr{D}^H(T)} = \sigma \|\mathscr{K}_H^* f\|_{L^2(\R)} = \modre{C}_{\sigma,H} \||x|^{\frac{1}{2}-H}\hat{f}\|_{L^2(\R;\mathbb{C})} =  \modre{C}_{\sigma,H}\|f\|_{\dot{W}^{\frac{1}{2}-H,2}(\R)} =  \modre{C}_{\sigma,H}\|f\|_{\dot{W}^{\frac{1}{2}-H,2}(T)}.
	\end{equation*}
Since the set of step functions $\mathscr{E}(T)$ is dense in $\dot{W}^{\frac{1}{2}-H,2}(T)$, the claim follows. 
\end{proof}

\section{Wiener integration for cylindrical fractional processes}
\label{sec:Wiener_integration_cylindrical_processes} 

In this section, Wiener integration with respect to possibly infinite-dimensional fractional processes is treated. Initially, the notion of a cylindrical fractional process is defined.

\begin{definition}
Let $(\Omega,\mathscr{F},\mathbb{P})$ be a probability space. Let $H\in (0,1)$ and let $\mathcal{U}$ be a separable Hilbert space. A \textit{$\mathcal{U}$-cylindrical $H$-fractional process} is a collection $(Z_t)_{t\in\mathfrak{T} }$ of bounded linear operators $Z_t: \mathcal{U}\rightarrow L^2(\Omega)$ such that for every $u\in \mathcal{U}$, $(Z_t(u))_{t\in{\mathfrak{T}}}$ is a one-dimensional fractional process defined on the probability space $(\Omega,\mathscr{F},\mathbb{P})$ and for which the equality
	\begin{equation*}
		\E Z_s(u)Z_t(v) = R_H(s,t)\langle u,v\rangle_\mathcal{U}
	\end{equation*}
is satisfied for every $s,t\in{\mathfrak{T}}$ and $u,v\in \mathcal{U}$. Here, $R_H$ is the covariance function given by formula \eqref{eq:covariance_function}.
\end{definition}

The following definition is a generalization of \autoref{def:finite_Wiener_chaos} to cylindrical processes.

\begin{definition}
Let $H\in (0,1)$, and let $\mathcal{U}$ be a separable Hilbert space. Let $(Z_t)_{t\in \mathfrak{T}}$ be a $\mathcal{U}$-cylindrical $H$-fractional process. If there exists $n\in\mathbb{N}$ such that $Z_t(u)$ belongs a finite Wiener chaos $\mathscr{H}^{\oplus n}$ for every $u\in\mathcal{U}$ and every $t\in \mathfrak{T}$, then the process $Z$ is said to \textit{live in a finite Wiener chaos}.
\end{definition}

The following lemma is proved by a standard approximation argument and it will be useful in the sequel.

\begin{lemma}
\label{lem:Ito_isometry_cylindrical}
Let $H\in (0,1)$ and let $\mathcal{U}$ be a separable Hilbert space. Let $(Z_t)_{t\in \mathfrak{T}}$ be a $\mathcal{U}$-cylindrical $H$-fractional process. Furthermore, let $T\subseteq\mathfrak{T}$ be an interval. Then the equality
	\begin{equation*}
		\E \left[\int_Tg_1\d{Z}(u_1)\right]\left[\int_Tg_2\d{Z}(u_2)\right] = \langle g_1,g_2\rangle_{\mathscr{D}^H(T)} \langle u_1,u_2\rangle_\mathcal{U}
	\end{equation*}
is satisfied for every $g_1,g_2\in\mathscr{D}^H(T)$ and every $u_1,u_2\in \mathcal{U}$. 
\end{lemma}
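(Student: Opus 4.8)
The plan is to reuse the two-step construction of the scalar Wiener integral from \autoref{sec:Wiener_integration_one-dim}: verify the bilinear identity first on the dense set of step functions by a direct computation, and then extend it by continuity. Throughout one may assume $u_1,u_2\neq 0$, since if either vanishes then $Z_\cdot(u_i)\equiv 0$ and both sides of the asserted identity are zero.

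\textbf{Step 1: step integrands.} Fix $u_1,u_2\in\mathcal{U}\setminus\{0\}$ and take $g_1=\sum_{i=1}^{p}f_i\bm{1}_{[s_{i-1},s_i)}$ and $g_2=\sum_{j=1}^{q}h_j\bm{1}_{[t_{j-1},t_j)}$ in $\mathscr{E}(T)$. By the definition of the integral of a step function, $\int_Tg_1\,\d Z(u_1)=\sum_if_i(Z_{s_i}(u_1)-Z_{s_{i-1}}(u_1))$ and likewise for $g_2$; expanding the product, taking expectations, and using the covariance $\E Z_s(u_1)Z_t(u_2)=R_H(s,t)\langle u_1,u_2\rangle_\mathcal{U}$ of the cylindrical process, the expectation factors as $\langle u_1,u_2\rangle_\mathcal{U}$ times $\sum_{i,j}f_ih_j\,\Delta_{i,j}R_H$, where $\Delta_{i,j}R_H:=R_H(s_i,t_j)-R_H(s_{i-1},t_j)-R_H(s_i,t_{j-1})+R_H(s_{i-1},t_{j-1})$. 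Now I invoke the identity $R_H(s,t)=\langle\mathscr{K}_H^*\bm{1}_{[0,s)},\mathscr{K}_H^*\bm{1}_{[0,t)}\rangle_{L^2(\R)}$ recorded in \autoref{sec:Wiener_integration_one-dim}: since $\bm{1}_{[a,b)}=\bm{1}_{[0,b)}-\bm{1}_{[0,a)}$ and $\mathscr{K}_H^*$ is linear, $\Delta_{i,j}R_H=\langle\mathscr{K}_H^*\bm{1}_{[s_{i-1},s_i)},\mathscr{K}_H^*\bm{1}_{[t_{j-1},t_j)}\rangle_{L^2(\R)}$, and summing against the coefficients collapses the double sum to $\langle\mathscr{K}_H^*g_1,\mathscr{K}_H^*g_2\rangle_{L^2(\R)}$, which is the inner product $\langle g_1,g_2\rangle_{\mathscr{D}^H(T)}$ (with the normalization $\sigma=1$ that is built into the covariance of $Z$). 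This proves the identity for $g_1,g_2\in\mathscr{E}(T)$.

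\textbf{Step 2: passage to the limit.} For fixed $u\in\mathcal{U}\setminus\{0\}$ the scalar process $(Z_t(u))_{t\in\mathfrak{T}}$ is an $H$-fractional process with variance parameter $\sigma=\|u\|_\mathcal{U}$, so the scalar construction of \autoref{sec:Wiener_integration_one-dim} extends $g\mapsto\int_Tg\,\d Z(u)$ to a bounded linear map on $\mathscr{D}^H(T)$ with $\|\int_Tg\,\d Z(u)\|_{L^2(\Omega)}=\|u\|_\mathcal{U}\,\|\mathscr{K}_H^*g\|_{L^2(\R)}$ for all $g\in\mathscr{D}^H(T)$; this uses that $\mathscr{D}^H(T)$, as a set, is independent of the scaling $\sigma$. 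Given $g_1,g_2\in\mathscr{D}^H(T)$, choose $g_i^{(k)}\in\mathscr{E}(T)$ with $g_i^{(k)}\to g_i$ in $\mathscr{D}^H(T)$; the displayed bound forces $\int_Tg_i^{(k)}\,\d Z(u_i)\to\int_Tg_i\,\d Z(u_i)$ in $L^2(\Omega)$. Since both the $L^2(\Omega)$ inner product and the $\mathscr{D}^H(T)$ inner product are continuous, letting $k\to\infty$ in the identity of Step 1 gives the assertion for all $g_1,g_2\in\mathscr{D}^H(T)$ and $u_1,u_2\in\mathcal{U}$.

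\textbf{Where the work lies.} There is no genuine obstacle; the statement is a soft consequence of the scalar theory together with the bilinearity of the covariance. The two minor points deserving care are the bookkeeping of the normalization constant (handled by writing $\|\mathscr{K}_H^*g\|_{L^2(\R)}$ explicitly wherever the constant would otherwise be ambiguous) and the observation that every fibre process $(Z_t(u))_t$ is admissible on exactly the same space $\mathscr{D}^H(T)$, which is what makes $\int_Tg\,\d Z(u)$ meaningful for a general $g\in\mathscr{D}^H(T)$ and not merely for step functions.
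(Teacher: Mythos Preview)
Your proof is correct and is precisely the ``standard approximation argument'' the paper invokes in lieu of a written proof: verify the identity on step functions using the covariance $\E Z_s(u_1)Z_t(u_2)=R_H(s,t)\langle u_1,u_2\rangle_{\mathcal{U}}$ together with $R_H(s,t)=\langle\mathscr{K}_H^*\bm{1}_{[0,s)},\mathscr{K}_H^*\bm{1}_{[0,t)}\rangle_{L^2(\R)}$, and then pass to the limit via the scalar isometry \eqref{eq:Ito_isometry} and the density of $\mathscr{E}(T)$ in $\mathscr{D}^H(T)$. Your handling of the normalization (the cylindrical process has $\sigma=1$, while each fibre $Z(u)$ has $\sigma=\|u\|_{\mathcal{U}}$, so that $\mathscr{D}^H(T)$ is the same set with proportional norms) is exactly the small bookkeeping point one has to keep straight.
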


Let us fix $H\in (0,1)$, a separable Hilbert space $U$, an interval $T\subseteq\mathfrak{T}$, and a $U$-cylindrical $H$-fractional process $(Z_t)_{t\in T}$ for the remainder of this section. Let us also fix the following notation:

\begin{notation}
For two Banach spaces $\mathcal{X}$ and $\mathcal{Y}$, we denote by $\mathscr{L}(\mathcal{X};\mathcal{Y})$ the space of bounded linear operators $\mathcal{X}\rightarrow\mathcal{Y}$ and by $\|\cdot\|_{\mathscr{L}(X;Y)}$ the operator norm. For two Hilbert spaces $\mathcal{U}$ and $\mathcal{V}$, we denote by $\mathscr{L}_2(\mathcal{U};\mathcal{V})$ the space of Hilbert-Schmidt operators $\mathcal{U}\rightarrow\mathcal{V}$ and by $\|\cdot\|_{\mathscr{L}_2(\mathcal{U};\mathcal{V})}$ the Hilbert-Schmidt norm. For a Hilbert space $\mathcal{U}$ and a Banach space $\mathcal{X}$, we denote by $\gamma(\mathcal{U};\mathcal{X})$ the space of $\gamma$-radonifying operators $\mathcal{U}\rightarrow\mathcal{X}$ and by $\|\cdot\|_{\gamma(\mathcal{U};\mathcal{X})}$ the $\gamma$-radonifying norm.
\end{notation}

\begin{notation}
The space $\mathscr{D}^H(T)\otimes_2 U$ is denoted by $\mathscr{D}^H(T;U)$ in the rest of the paper.
\end{notation}

\subsection{The scalar case}
\label{sec:Wiener_integra_scalar_case}

We begin with Wiener integration in the case when the target space is one-dimensional. Denote by $I_T$ the unique isometry from the space $\mathscr{D}^H(T;U)$ to the space $L^2(\Omega)$ that satisfies the equality
	\begin{equation*}
		I_{T}(g\otimes u) = \int_T g\,\d{Z}(u)
	\end{equation*}
for every $g\in\mathscr{D}^H(T)$ and every $u\in U$ where the integral on the right is the integral of $g$ with respect to the one-dimensional $H$-fractional process $Z(u)$ as defined in \autoref{sec:Wiener_integration_one-dim}.

\begin{definition}
\label{def:stoch_int_scalar}
A bounded linear operator $A: U\rightarrow \mathscr{D}^H(T)$ is said to be \textit{(Wiener) integrable with respect to the process $Z$} if there exists a random variable $\xi\in L^2(\Omega)$ such that the equality
	\begin{equation*}
		\E I_{T}(g\otimes u)\xi = \langle g,Au\rangle_{\mathscr{D}^H(T)}
	\end{equation*}
is satisfied for every $g\in\mathscr{D}^H(T)$ and $u\in U$. 
\end{definition}

There is the following characterization of integrability in the scalar case.

\begin{proposition}
\label{prop:char_stoch_int_scalar}
Let $A\in\mathscr{L}(U;\mathscr{D}^H(T))$. The operator $A$ is integrable with respect to the process $Z$ if and only if $A$ is Hilbert-Schmidt. In that case, the random variable $\xi$ from \autoref{def:stoch_int_scalar} is unique; the equality
	\begin{equation*}
		\xi = \sum_k\int_T Ae_k\,\d{Z}(e_k)
	\end{equation*}
is satisfied for any orthonormal basis $\{e_k\}_k$ of the Hilbert space $U$; and, moreover, there is the equality 
	\begin{equation*}
		\E \xi^2 = \|A\|_{\mathscr{L}_2(U;\mathscr{D}^H(T))}.
	\end{equation*}
\end{proposition}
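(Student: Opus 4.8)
The plan is to characterize integrability of $A \in \mathscr{L}(U;\mathscr{D}^H(T))$ by reducing it, via the isometry $I_T$, to a statement about the adjoint of $A$ being trace-class, i.e. $A$ itself being Hilbert-Schmidt. First I would fix an orthonormal basis $\{e_k\}_k$ of $U$ and an orthonormal basis $\{\varphi_\ell\}_\ell$ of $\mathscr{D}^H(T)$, and observe that $\{\varphi_\ell \otimes e_k\}_{k,\ell}$ is an orthonormal basis of $\mathscr{D}^H(T;U) = \mathscr{D}^H(T) \otimes_2 U$, so that $\{I_T(\varphi_\ell \otimes e_k)\}_{k,\ell}$ is an orthonormal system in $L^2(\Omega)$ (since $I_T$ is an isometry). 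The defining equation in \autoref{def:stoch_int_scalar}, namely $\E I_T(g\otimes u)\xi = \langle g, Au\rangle_{\mathscr{D}^H(T)}$, says exactly that $\xi$, projected onto $\overline{\mathrm{span}}\{I_T(\varphi_\ell\otimes e_k)\}$, has Fourier coefficients $\langle \xi, I_T(\varphi_\ell\otimes e_k)\rangle_{L^2(\Omega)} = \langle \varphi_\ell, Ae_k\rangle_{\mathscr{D}^H(T)}$.

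The sufficiency direction then goes as follows: if $A$ is Hilbert-Schmidt, then $\sum_{k,\ell} |\langle \varphi_\ell, Ae_k\rangle_{\mathscr{D}^H(T)}|^2 = \sum_k \|Ae_k\|_{\mathscr{D}^H(T)}^2 = \|A\|_{\mathscr{L}_2(U;\mathscr{D}^H(T))}^2 < \infty$, so the series $\xi := \sum_{k,\ell} \langle \varphi_\ell, Ae_k\rangle_{\mathscr{D}^H(T)}\, I_T(\varphi_\ell\otimes e_k)$ converges in $L^2(\Omega)$ by orthonormality, and the defining identity is verified first on elementary tensors $g\otimes u$ and then extended by linearity and density. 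One then checks that this $\xi$ coincides with $\sum_k I_T((Ae_k)\otimes e_k) = \sum_k \int_T Ae_k\,\d Z(e_k)$, using that $Ae_k = \sum_\ell \langle \varphi_\ell, Ae_k\rangle_{\mathscr{D}^H(T)}\varphi_\ell$ and the continuity/isometry of $I_T$; this also gives $\E\xi^2 = \|\xi\|_{L^2(\Omega)}^2 = \|A\|_{\mathscr{L}_2(U;\mathscr{D}^H(T))}^2$ (noting the statement's displayed formula should read $\E\xi^2 = \|A\|_{\mathscr{L}_2(U;\mathscr{D}^H(T))}^2$). Uniqueness of $\xi$ is immediate: if $\xi'$ also satisfies the defining equation, then $\E I_T(w)(\xi-\xi') = 0$ for all $w \in \mathscr{D}^H(T;U)$, hence $\xi - \xi'$ is orthogonal to $\overline{\mathrm{span}}\{I_T(\varphi_\ell\otimes e_k)\} = \overline{\mathrm{span}}\{\int_T g\,\d Z(u)\}$; but by \autoref{lem:Ito_isometry_cylindrical} all the random variables $\int_T g\,\d Z(u)$ lie in the relevant closed subspace, and since $\xi, \xi'$ are themselves (by construction) in that subspace, $\xi = \xi'$ — alternatively, the defining property pins down all Fourier coefficients of $\xi$ in that subspace and $\xi$ is required to lie there.

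For the necessity direction, suppose $A$ is integrable with associated $\xi \in L^2(\Omega)$. Bessel's inequality applied to $\xi$ against the orthonormal system $\{I_T(\varphi_\ell\otimes e_k)\}_{k,\ell}$ gives $\sum_{k,\ell}|\langle \xi, I_T(\varphi_\ell\otimes e_k)\rangle_{L^2(\Omega)}|^2 \leq \|\xi\|_{L^2(\Omega)}^2 < \infty$, and since these coefficients are precisely $\langle \varphi_\ell, Ae_k\rangle_{\mathscr{D}^H(T)}$, we conclude $\|A\|_{\mathscr{L}_2(U;\mathscr{D}^H(T))}^2 = \sum_{k,\ell}|\langle \varphi_\ell, Ae_k\rangle_{\mathscr{D}^H(T)}|^2 < \infty$, i.e. $A$ is Hilbert-Schmidt. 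I do not expect any serious obstacle here — the whole statement is essentially a repackaging of the isometry $I_T$ together with Bessel/Parseval — but the one point needing care is the bookkeeping that identifies the abstractly-constructed $\xi$ with the concrete series $\sum_k \int_T Ae_k\,\d Z(e_k)$ and the verification that this identification is independent of the chosen basis $\{e_k\}_k$ (which follows because $\xi$ is characterized basis-independently by \autoref{def:stoch_int_scalar}). I would also double-check that $\mathscr{D}^H(T)$ is separable so that countable orthonormal bases are available; this follows from \autoref{prop:characterisation_of_D} identifying it with a homogeneous fractional Sobolev space.
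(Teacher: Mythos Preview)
Your approach is correct and essentially the same as the paper's: both exploit that $I_T$ is an isometry from $\mathscr{D}^H(T;U)$ into $L^2(\Omega)$, so that the defining identity in \autoref{def:stoch_int_scalar} pins down inner products of $\xi$ against an orthonormal system. The paper packages this slightly differently, working only with a basis $\{e_k\}$ of $U$ and the partial sums $\xi_N=\sum_{k=1}^N\int_T Ae_k\,\d Z(e_k)$: for necessity it uses the Pythagorean identity $\E\xi^2=\E(\xi-\xi_N)^2+\sum_{k=1}^N\|Ae_k\|_{\mathscr{D}^H(T)}^2$ (your Bessel step in disguise), and for sufficiency it shows $\{\xi_N\}$ is Cauchy directly via \autoref{lem:Ito_isometry_cylindrical}. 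Your double-basis Parseval formulation is a clean abstraction of the same computation and buys nothing extra but also costs nothing.

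One small correction on uniqueness: your claim that ``$\xi,\xi'$ are themselves (by construction) in that subspace'' is not justified by \autoref{def:stoch_int_scalar}, which only requires $\xi\in L^2(\Omega)$, not that $\xi$ lie in the closed range of $I_T$. The defining identity determines $\xi$ only modulo the orthogonal complement of that range, so uniqueness as literally stated needs either an implicit convention that $\xi$ is taken in the range of $I_T$ (equivalently, of minimal $L^2$-norm, which then also gives $\E\xi^2=\|A\|_{\mathscr{L}_2(U;\mathscr{D}^H(T))}^2$) or an additional argument. The paper's own proof does not address this point either, so the issue is with the statement rather than with your method.
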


\begin{proof}
Assume that $\mathrm{dim}\, U =\infty$, the case $\mathrm{dim}\,U<\infty$ is clear. Assume also that the operator $A\in \mathscr{L}(U;\mathscr{D}^H(T))$ is integrable with respect to $Z$. Let $N\in\mathbb{N}$ and let $\{e_k\}_k$ be an orthonormal basis of the Hilbert space $U$. Then by a straightforward computation the following equality is obtained:
	\begin{equation*}
		\E \xi^2 = \E\left(\xi-\sum_{k=1}^N\int_T Ae_k\d{Z}(e_k)\right)^2 + \sum_{k=1}^N \|Ae_k\|_{\mathscr{D}^H(T)}^2.
	\end{equation*}
By letting $N\rightarrow\infty$, the operator $A$ is shown to be Hilbert-Schmidt. Conversely, if $A\in\mathscr{L}_2(U;\mathscr{D}^H(T))$, let $\{e_k\}_k$ be an orthonormal basis of $U$ and define 
	\begin{equation*}
		\xi_N:= \sum_{k=1}^N \int_T Ae_k\d{Z}(e_k)
	\end{equation*}
for $N\in\mathbb{N}$. Then it holds for $N,M\in\mathbb{N}$ that
	\begin{equation*}
		\E|\xi_N-\xi_M|^2 = \sum_{k=M+1}^N \|Ae_k\|_{\mathscr{D}^H(T)}^2
	\end{equation*}
by using the It\^o-type isometry \eqref{eq:Ito_isometry} and the fact that if two $H$-fractional processes are uncorrelated, the same holds for their Wiener integrals. Since $A$ is Hilbert-Schmidt, the last equality shows that the sequence $\{\xi_N\}_{N\in\mathbb{N}}$ is Cauchy in $L^2(\Omega)$ and therefore, it has a limit, denoted by $\xi$, there. Now, if $g\in\mathscr{D}^H(T)$ and $u\in U$ are arbitrary, it follows by using \autoref{lem:Ito_isometry_cylindrical} that the estimate
	\begin{align*}
		\left|\E I_T(g\otimes u)\xi- \langle g,Au\rangle_{\mathscr{D}^H(T)}\right| & \\
		& \hspace{-4cm} \leq \left|\E I_T(g\otimes u)\left(\xi-\xi_N\right)\right| + \left|\E I_T(g\otimes u) \xi_N - \langle g,Au\rangle_{\mathscr{D}^H(T)}\right|\\
		& \hspace{-4cm} \leq \left\|I_T(g\otimes u)\right\|_{L^2(\Omega)} \|\xi-\xi_N\|_{L^2(\Omega)} + \left|\sum_{k=1}^N \langle A^*g,e_k\rangle_U\langle u,e_k\rangle_U - \langle g, Au\rangle_{\mathscr{D}^H(T)}\right|
	\end{align*}
is satisfied for $N\in\mathbb{N}$ and by letting $N\rightarrow\infty$, it is shown that the operator $A$ is integrable with respect to the $U$-cylindrical $H$-fractional process $Z$.
\end{proof}

\begin{notation}
The symbol $\int_TA\d{Z}$ is used for the random variable $\xi$ from \autoref{prop:char_stoch_int_scalar}.
\end{notation}

\subsection{The vector case}
\label{sec:Wiener_integration_vector_case}
In what follows, we treat Wiener integration in the case when the target space is possibly infinite-dimensional. Fix a Banach space $X$ for the remainder of this section.

\subsubsection{Weak integrability}
\label{sec:weak_integrability}

Initially, the notion of weak integrability is defined. 

\begin{definition}
A bilinear mapping $G: X^*\times U\rightarrow\mathscr{D}^H(T)$ is called \textit{weakly (Wiener) integrable} with respect to the process $Z$ if there exists a constant $C>0$ such that the inequality
	\begin{equation*}
		\|G(\varphi,\cdot)\|_{\mathscr{L}_2(U;\mathscr{D}^H(T))} \leq C\|\varphi \|_{X^*}
	\end{equation*} 
is satisfied for every $\varphi\in X^*$. 
\end{definition}

There is the following characterization of weak integrability in $X$.

\begin{proposition}
\label{lem:char_weak_stoch_integrability}
A bilinear mapping $G:X^*\times U\rightarrow \mathscr{D}^H(T)$ is weakly integrable with respect to the process $Z$ if and only if there exists a bounded linear operator $\overline{G}: X^*\rightarrow \mathscr{D}^H(T;U)$ for which the equality
	\begin{equation}
	\label{eq:Gbar}
		\langle \overline{G}\varphi,g\otimes u\rangle_{\mathscr{D}^H(T;U)} = \langle G(\varphi,u),g\rangle_{\mathscr{D}^H(T)}
	\end{equation}
holds for every $\varphi\in X^*$, $g\in\mathscr{D}^H(T)$, and every $u\in U$. In this case, the equality 
	\begin{equation*}
		\overline{G} = \sum_k G(\cdot , e_k)\otimes e_k
	\end{equation*}
is satisfied for any orthonormal basis $\{e_k\}_k$ of the Hilbert space $U$; and, moreover, the equality
	\begin{equation*}
		\|\overline{G}\varphi\|_{\mathscr{D}^H(T;U)} = \|G(\varphi,\cdot)\|_{\mathscr{L}_2(U;\mathscr{D}^H(T))}
	\end{equation*} 
is satisfied for every $\varphi\in X^*$. 
\end{proposition}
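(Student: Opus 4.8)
The plan is to prove both implications by exploiting the fact that $\mathscr{D}^H(T;U) = \mathscr{D}^H(T)\otimes_2 U$ is a Hilbert space whose inner product is fixed by its action on elementary tensors $g\otimes u$. Write $\mathscr{D} := \mathscr{D}^H(T)$ for brevity in what follows.

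\textbf{Sufficiency.} Suppose such a bounded operator $\overline{G}\colon X^*\to\mathscr{D}\otimes_2 U$ exists. Fix $\varphi\in X^*$ and let $\{e_k\}_k$ be an orthonormal basis of $U$. Expanding $\overline{G}\varphi$ in the orthonormal system $\{g_i\otimes e_k\}$ (for $\{g_i\}$ an orthonormal basis of $\mathscr{D}$) and using \eqref{eq:Gbar}, one reads off that the $k$-th ``coordinate block'' of $\overline{G}\varphi$ is exactly $G(\varphi,e_k)\in\mathscr{D}$; consequently $\|\overline{G}\varphi\|_{\mathscr{D}\otimes_2 U}^2 = \sum_k \|G(\varphi,e_k)\|_{\mathscr{D}}^2 = \|G(\varphi,\cdot)\|_{\mathscr{L}_2(U;\mathscr{D})}^2$. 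This already establishes the asserted norm identity, and boundedness of $\overline{G}$ with constant $C := \|\overline{G}\|$ then gives $\|G(\varphi,\cdot)\|_{\mathscr{L}_2(U;\mathscr{D})} = \|\overline{G}\varphi\|_{\mathscr{D}\otimes_2 U}\leq C\|\varphi\|_{X^*}$, i.e. weak integrability. The tensor-sum formula $\overline{G} = \sum_k G(\cdot,e_k)\otimes e_k$ is then just the coordinatewise description rewritten; one should note the series converges in the appropriate operator sense because of the $\mathscr{L}_2$ bound.

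\textbf{Necessity.} Conversely, assume $G$ is weakly integrable with constant $C$. Fix $\varphi\in X^*$. Since $G(\varphi,\cdot)\in\mathscr{L}_2(U;\mathscr{D})$, the element $\sum_k G(\varphi,e_k)\otimes e_k$ converges in $\mathscr{D}\otimes_2 U$ (its partial sums are Cauchy because $\sum_k\|G(\varphi,e_k)\|_{\mathscr{D}}^2 < \infty$), and we \emph{define} $\overline{G}\varphi$ to be this sum; independence of the basis $\{e_k\}$ is the standard Hilbert--Schmidt fact. Linearity of $\varphi\mapsto\overline{G}\varphi$ is inherited from bilinearity of $G$, and $\|\overline{G}\varphi\|_{\mathscr{D}\otimes_2 U} = \|G(\varphi,\cdot)\|_{\mathscr{L}_2(U;\mathscr{D})}\leq C\|\varphi\|_{X^*}$ gives boundedness. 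It remains to verify \eqref{eq:Gbar}: for $g\in\mathscr{D}$, $u\in U$, expanding $u = \sum_k\langle u,e_k\rangle_U e_k$ and using continuity of the inner product,
\begin{equation*}
	\langle\overline{G}\varphi, g\otimes u\rangle_{\mathscr{D}\otimes_2 U} = \sum_k \langle G(\varphi,e_k),g\rangle_{\mathscr{D}}\,\langle e_k,u\rangle_U = \Big\langle \sum_k \langle u,e_k\rangle_U\, G(\varphi,e_k),\, g\Big\rangle_{\mathscr{D}} = \langle G(\varphi,u),g\rangle_{\mathscr{D}},
\end{equation*}
where the last step uses bilinearity of $G$ in its second argument together with the convergence just established.

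\textbf{Main obstacle.} The only non-formal point is the justification that $\sum_k G(\varphi,e_k)\otimes e_k$ genuinely converges in the Hilbert-space tensor product $\mathscr{D}\otimes_2 U$ and that the resulting element is basis-independent and depends linearly on $\varphi$; once the identification of $\mathscr{D}\otimes_2 U$ with $\mathscr{L}_2(U;\mathscr{D})$ (via $g\otimes u\mapsto \langle\cdot,u\rangle_U\, g$) is made explicit, every step reduces to the standard theory of Hilbert--Schmidt operators, so no real difficulty remains — the proof is essentially bookkeeping relating the three equivalent viewpoints (bilinear form, Hilbert--Schmidt operator, element of the tensor product).
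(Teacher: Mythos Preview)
Your proof is correct and takes essentially the same approach as the paper: both directions are handled by expanding in an orthonormal basis $\{e_k\}$ of $U$ and identifying $\overline{G}\varphi$ with the tensor series $\sum_k G(\varphi,e_k)\otimes e_k$. The only noteworthy difference is that, in the direction where $\overline{G}$ is constructed from a weakly integrable $G$, the paper establishes boundedness of $\overline{G}$ via the uniform boundedness principle (after showing pointwise convergence of the partial sums $\overline{G}_N\varphi$), whereas you obtain it directly from the norm identity $\|\overline{G}\varphi\|_{\mathscr{D}\otimes_2 U} = \|G(\varphi,\cdot)\|_{\mathscr{L}_2(U;\mathscr{D})}\leq C\|\varphi\|_{X^*}$ --- your route is slightly more economical here.
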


\begin{proof}
Assume that $\mathrm{dim}\,U = \infty$, the case $\dim U<\infty$ is clear. Let $G:X^*\times U\rightarrow \mathscr{D}^H(T)$ be a bilinear mapping. Assume first that $G$ is weakly integrable with respect to the process $Z$. Let $\{e_k\}_{k}$ be an orthonormal basis of $U$ and define a sequence of bounded linear operators $\{\overline{G}_N\}_{N\in\mathbb{N}}$ where for $N\in\mathbb{N}$, $\overline{G}_N$ is the operator $\overline{G}_N: X^*\rightarrow \mathscr{D}^H(T;U)$ defined by
	\begin{equation}
	\label{eq:G_N}
		\overline{G}_N\varphi := \sum_{k=1}^N G(\varphi,e_k)\otimes e_k, \quad \varphi\in X^*. 
	\end{equation}
Then it holds for every $\varphi\in X^*$ and $N,M\in\mathbb{N}$ that
	\begin{equation*}
		\|\overline{G}_N\varphi-\overline{G}_M\varphi\|_{\mathscr{D}^H(T;U)}^2 = \sum_{k={M+1}}^N \|G(\varphi,e_k)\|_{\mathscr{D}^H(T)}^2
	\end{equation*}
and because $G$ is weakly integrable, $G(\varphi,\cdot): U\rightarrow \mathscr{D}^H(T)$ is Hilbert-Schmidt. Thus it follows from the above equality that $\{G_N\varphi\}_{N\in\mathbb{N}}$ is Cauchy in $\mathscr{D}^H(T;U)$ by letting $N,M\rightarrow\infty$. Consequently, for every $\varphi\in X^*$, the sequence $\{\overline{G}_N\varphi\}_{N\in\mathbb{N}}$ converges in $\mathscr{D}^H(T;U)$ and it follows that the operator $\overline{G}: X^*\rightarrow \mathscr{D}^H(T;U)$ defined by $\overline{G}\varphi := \lim_{N\rightarrow \infty}\overline{G}_N\varphi$ is linear and, by the uniform boundedness principle, bounded. Now, if $\varphi\in X^*$, $g\in\mathscr{D}^H(T)$, and $u\in U$ are arbitrary, it follows that the estimate 
	\begin{align*}
		\left|\langle\overline{G}\varphi, g\otimes u\rangle_{\mathscr{D}^H(T;U)} - \langle G(\varphi,u),g\rangle_{\mathscr{D}^H(T)}\right| & \\
			& \hspace{-6cm} \leq \left|\langle \overline{G}\varphi, g\otimes u\rangle_{\mathscr{D}^H(T;U)} -  \langle \overline{G}_N\varphi,g\otimes u\rangle_{\mathscr{D}^H(T;U)}\right| + \left|\langle \overline{G}_N\varphi,g\otimes u\rangle_{\mathscr{D}^H(T;U)} - \langle G(\varphi,u),g\rangle_{\mathscr{D}^H(T)}\right|\\
			& \hspace{-6cm} \leq \left|\langle \overline{G}\varphi, g\otimes u\rangle_{\mathscr{D}^H(T;U)} -  \langle \overline{G}_N\varphi,g\otimes u\rangle_{\mathscr{D}^H(T;U)}\right| + \left|\sum_{k=1}^N \langle e_k, G(\varphi,\cdot)^*g\rangle_{U}\langle e_k,u\rangle_U - \langle G(\varphi,u),g\rangle_{\mathscr{D}^H(T)}\right|
	\end{align*}
is satisfied for $N\in\mathbb{N}$. By letting $N\rightarrow\infty$, it is shown that the equality \eqref{eq:Gbar} is satisfied. 
Conversely, let $\overline{G}: X^*\rightarrow\mathscr{D}^H(T;U)$ be a bounded linear operator such that equality \eqref{eq:Gbar} is satisfied for every $\varphi\in X^*$, $g\in\mathscr{D}^H(T)$, and $u\in U$. Let $\varphi\in X^*$ and let $\{e_k\}_{k}$ be an orthonormal basis of $U$ and $\{g_j\}_j$ be an orthonormal basis of $\mathscr{D}^H(T)$. Then it holds that 
	\begin{align*}
		\|G(\varphi,\cdot)\|_{\mathscr{L}_2(U;\mathscr{D}^H(T))}^2 & = \sum_k \|G(\varphi,e_k)\|_{\mathscr{D}^H(T)}^2\\
		& = \sum_{k,j} \langle G(\varphi,e_k),g_j\rangle_{\mathscr{D}^H(T)}^2\\
		& = \sum_{k,j} \langle \overline{G}\varphi, g_j\otimes e_k\rangle_{\mathscr{D}^H(T;U)}^2\\
		& = \|\overline{G}\varphi\|_{\mathscr{D}^H(T;U)}^2
	\end{align*}
because $\{g_j\otimes e_k\}_{j,k}$ is an orthonormal basis of the space $\mathscr{D}^H(T;U)$. Therefore, $G$ is weakly integrable.
\end{proof}

The following lemma is proved in a similar manner as the second part of the proof of \autoref{lem:char_weak_stoch_integrability}.

\begin{lemma}
Let $B:X^*\rightarrow\mathscr{D}^H(T;U)$ be a bounded linear operator. Define
	\begin{equation*}
		G(\varphi,u) := \sum_j g_j\langle B\varphi, g_j\otimes u\rangle_{\mathscr{D}^H(T;U)}, \quad \varphi\in X^*, \quad u\in U,
	\end{equation*}
for an orthonormal basis $\{g_j\}_j$ in $\mathscr{D}^H(T)$. Then $G$ is weakly integrable with respect to the process $Z$ and it holds for the operator $\overline{G}$ from \autoref{lem:char_weak_stoch_integrability} that $\overline{G}=B$. 
\end{lemma}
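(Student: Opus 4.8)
The plan is to mirror the converse (second) part of the proof of \autoref{lem:char_weak_stoch_integrability}, working throughout in the tensor Hilbert space $\mathscr{D}^H(T;U)=\mathscr{D}^H(T)\otimes_2 U$. Fix an orthonormal basis $\{e_k\}_k$ of $U$; then $\{g_j\otimes e_k\}_{j,k}$ is an orthonormal basis of $\mathscr{D}^H(T;U)$, so for every $\varphi\in X^*$ Parseval's identity gives $\|B\varphi\|_{\mathscr{D}^H(T;U)}^2=\sum_{j,k}\langle B\varphi,g_j\otimes e_k\rangle_{\mathscr{D}^H(T;U)}^2$, which (combined with $\|B\varphi\|_{\mathscr{D}^H(T;U)}\le\|B\|\,\|\varphi\|_{X^*}$) is the only analytic input the argument needs.

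First I would check that $G$ is well defined, i.e.\ that the series $\sum_j g_j\langle B\varphi,g_j\otimes u\rangle_{\mathscr{D}^H(T;U)}$ converges in $\mathscr{D}^H(T)$. Expanding $u=\sum_k\langle u,e_k\rangle_U e_k$ and applying the Cauchy--Schwarz inequality yields $\sum_j\langle B\varphi,g_j\otimes u\rangle_{\mathscr{D}^H(T;U)}^2\le\|u\|_U^2\sum_{j,k}\langle B\varphi,g_j\otimes e_k\rangle_{\mathscr{D}^H(T;U)}^2=\|u\|_U^2\,\|B\varphi\|_{\mathscr{D}^H(T;U)}^2<\infty$; since $\{g_j\}_j$ is orthonormal the series then converges in $\mathscr{D}^H(T)$ and satisfies $\|G(\varphi,u)\|_{\mathscr{D}^H(T)}\le\|B\varphi\|_{\mathscr{D}^H(T;U)}\,\|u\|_U$. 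Bilinearity of $G$ is then immediate: linearity in $\varphi$ follows from linearity of $B$, linearity in $u$ from linearity of $u\mapsto g_j\otimes u$, and the interchange of these operations with the (now absolutely summable) series is justified by the bound just obtained.

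Next I would verify weak integrability. Since $\{g_j\}_j$ is an orthonormal basis of $\mathscr{D}^H(T)$, the definition of $G$ gives $\|G(\varphi,e_k)\|_{\mathscr{D}^H(T)}^2=\sum_j\langle B\varphi,g_j\otimes e_k\rangle_{\mathscr{D}^H(T;U)}^2$ for each $k$, whence
\[
\|G(\varphi,\cdot)\|_{\mathscr{L}_2(U;\mathscr{D}^H(T))}^2=\sum_k\|G(\varphi,e_k)\|_{\mathscr{D}^H(T)}^2=\sum_{j,k}\langle B\varphi,g_j\otimes e_k\rangle_{\mathscr{D}^H(T;U)}^2=\|B\varphi\|_{\mathscr{D}^H(T;U)}^2\le\|B\|^2\|\varphi\|_{X^*}^2 .
\]
Hence $G$ is weakly integrable with constant $\|B\|$, and \autoref{lem:char_weak_stoch_integrability} supplies a bounded operator $\overline{G}\colon X^*\to\mathscr{D}^H(T;U)$.

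Finally I would identify $\overline{G}$ with $B$. As the finite linear combinations of the elements $g\otimes u$ are dense in $\mathscr{D}^H(T;U)$, a bounded operator into $\mathscr{D}^H(T;U)$ is determined by its inner products against all such $g\otimes u$, so it suffices to check that $B$ obeys the defining relation \eqref{eq:Gbar} of $\overline{G}$. Expanding $g=\sum_j\langle g,g_j\rangle_{\mathscr{D}^H(T)}g_j$ and using bilinearity of the inner product on $\mathscr{D}^H(T;U)$ gives $\langle B\varphi,g\otimes u\rangle_{\mathscr{D}^H(T;U)}=\sum_j\langle g,g_j\rangle_{\mathscr{D}^H(T)}\langle B\varphi,g_j\otimes u\rangle_{\mathscr{D}^H(T;U)}=\langle G(\varphi,u),g\rangle_{\mathscr{D}^H(T)}$, which is exactly \eqref{eq:Gbar}; consequently $\overline{G}=B$. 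The argument is routine once the Parseval identity for $\{g_j\otimes e_k\}_{j,k}$ is available, and the only real point of care --- the ``main obstacle'', such as it is --- is the bookkeeping needed to justify rearranging the double series, which the displayed bounds take care of.
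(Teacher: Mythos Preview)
Your proof is correct and follows precisely the approach the paper indicates: the paper's own proof consists only of the remark that the lemma is proved ``in a similar manner as the second part of the proof of \autoref{lem:char_weak_stoch_integrability}'', and your argument is exactly a careful unpacking of that second part (the Parseval computation $\|G(\varphi,\cdot)\|_{\mathscr{L}_2(U;\mathscr{D}^H(T))}^2=\sum_{j,k}\langle B\varphi,g_j\otimes e_k\rangle^2=\|B\varphi\|_{\mathscr{D}^H(T;U)}^2$) together with the verification of the defining relation \eqref{eq:Gbar}. Your additional care in checking convergence of the defining series and bilinearity of $G$ fills in details the paper leaves implicit but does not deviate from the intended route.
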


The following lemma connects the integrals with respect to scalar and cylindrical fractional processes.

\begin{lemma}
\label{lem:link_between_scalar_and_vector_integral}
Let $G: X^*\times U\rightarrow \mathscr{D}^H(T)$ be weakly  integrable with respect to the process $Z$ and let $\overline{G}$ be the corresponding operator from \autoref{lem:char_weak_stoch_integrability}. Then the equality
	\begin{equation*}
		\int_TG(\varphi,\cdot)\d{Z} = I_{T}(\overline{G}\varphi)
	\end{equation*}
is satisfied almost surely for every $\varphi\in X^*$. Here, $\overline{G}$ is the operator that corresponds to the mapping $G$ from \autoref{lem:char_weak_stoch_integrability}.
\end{lemma}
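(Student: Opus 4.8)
The plan is to show that both sides of the claimed identity are equal to one and the same $L^2(\Omega)$-convergent series, namely $\sum_k \int_T G(\varphi,e_k)\,\d Z(e_k)$ for a fixed orthonormal basis $\{e_k\}_k$ of $U$; once this is established, the almost sure equality follows from the uniqueness of limits in $L^2(\Omega)$. Accordingly, first I would fix $\varphi\in X^*$ and an orthonormal basis $\{e_k\}_k$ of $U$, and set $A:=G(\varphi,\cdot)$. Since $G$ is weakly integrable, $A$ is a Hilbert--Schmidt operator from $U$ into $\mathscr{D}^H(T)$, so \autoref{prop:char_stoch_int_scalar} applies and tells us that $\int_T G(\varphi,\cdot)\,\d Z=\int_T A\,\d Z$ is precisely the $L^2(\Omega)$-limit of the partial sums $\sum_{k=1}^N \int_T Ae_k\,\d Z(e_k)=\sum_{k=1}^N \int_T G(\varphi,e_k)\,\d Z(e_k)$.

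Next I would treat the right-hand side. By \autoref{lem:char_weak_stoch_integrability}, the element $\overline{G}\varphi\in\mathscr{D}^H(T;U)$ is the limit in $\mathscr{D}^H(T;U)$ of the partial sums $\sum_{k=1}^N G(\varphi,e_k)\otimes e_k$. The map $I_T\colon \mathscr{D}^H(T;U)\to L^2(\Omega)$ is a bounded (indeed isometric) linear operator, hence continuous, so it may be interchanged with this limit, giving $I_T(\overline{G}\varphi)=\lim_{N\to\infty}\sum_{k=1}^N I_T\bigl(G(\varphi,e_k)\otimes e_k\bigr)$ in $L^2(\Omega)$. By the defining property of $I_T$ one has $I_T\bigl(G(\varphi,e_k)\otimes e_k\bigr)=\int_T G(\varphi,e_k)\,\d Z(e_k)$, so $I_T(\overline{G}\varphi)$ is the $L^2(\Omega)$-limit of exactly the same sequence of partial sums as $\int_T G(\varphi,\cdot)\,\d Z$. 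Since a sequence in $L^2(\Omega)$ has at most one limit, the two random variables coincide in $L^2(\Omega)$, hence almost surely, which is the assertion.

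There is no genuinely hard step: the argument is pure bookkeeping that glues \autoref{prop:char_stoch_int_scalar} to \autoref{lem:char_weak_stoch_integrability}. The only two points deserving a word of care are that both of those results furnish their series representations for an \emph{arbitrary} orthonormal basis of $U$ --- which is what licenses using the same basis $\{e_k\}_k$ on both sides and thereby identifying the two limits --- and that the continuity of $I_T$, which is available because $I_T$ is an isometry, is what permits passing the limit through $I_T$.
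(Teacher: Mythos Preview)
Your proof is correct and follows essentially the same approach as the paper: both arguments identify each side of the claimed equality with the $L^2(\Omega)$-limit of the partial sums $\sum_{k=1}^N\int_T G(\varphi,e_k)\,\d Z(e_k)$, invoking \autoref{prop:char_stoch_int_scalar} for the left-hand side and \autoref{lem:char_weak_stoch_integrability} together with the isometry of $I_T$ for the right-hand side. The paper merely packages this as a single triangle-inequality estimate on the difference rather than as two separate limit identifications, but the content is the same.
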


\begin{proof}
It holds for $N\in\mathbb{N}$, $\varphi\in X^*$, and an orthonormal basis $\{e_k\}_k$ of the Hilbert space $U$ that 
	\begin{align*}
		\left\|\int_TG(\varphi,\cdot)\d{Z} - I_{T}(\overline{G}\varphi)\right\|_{L^2(\Omega)} & \leq  \left\|\int_TG(\varphi,\cdot)\d{Z} - \sum_{k=1}^N \int_TG(\varphi,e_k)\d{Z}(e_k)\right\|_{L^2(\Omega)}  \\
					&  \hspace{2cm}+ \left\|\sum_{k=1}^N\int_TG(\varphi, e_k)\d{Z}(e_k) - I(\overline{G}\varphi)\right\|_{L^2(\Omega)} \\
					& =  \left\|\int_TG(\varphi,\cdot)\d{Z} - \sum_{k=1}^N \int_TG(\varphi,e_k)\d{Z}(e_k)\right\|_{L^2(\Omega)}  \\
					& \hspace{2cm} + \left\|I_{T}\left(\sum_{k=1}^N G(\varphi,e_k)\otimes e_k\right) - I_T(\overline{G}\varphi)\right\|_{L^2(\Omega)}.\numberthis\label{eq:integral_1}
	\end{align*}
Since $G$ is weakly integrable with respect to the process $Z$, it follows by \autoref{prop:char_stoch_int_scalar} that $G(\varphi,\cdot)$ is integrable in the sense of \autoref{def:stoch_int_scalar} and that the first term in \eqref{eq:integral_1} tends to zero as $N\rightarrow\infty$. For the second term, we have by appealing to the It\^o-type isometry \eqref{eq:Ito_isometry} that 
	\begin{equation*}
		\left\|I_{T}\left(\sum_{k=1}^N G(\varphi,e_k)\otimes e_k\right) - I_{T}(\overline{G}\varphi)\right\|_{L^2(\Omega)} = \|I_{T}[(\overline{G}_N-\overline{G})\varphi]\|_{L^2(\Omega)} = \|(\overline{G}_N-\overline{G})\varphi\|_{\mathscr{D}^H(T)},
	\end{equation*}
where $\overline{G}_N$ is the operator defined by formula \eqref{eq:G_N}, holds. By the proof of \autoref{lem:char_weak_stoch_integrability}, it follows that the right-hand side of the above equality tends to zero as $N\rightarrow\infty$ which concludes the proof. 
\end{proof}

\subsubsection{Strong integrability}
\label{sec:strong_integrability}

In this section, a stronger notion of Wiener integrability is treated. Initially, this notion is defined.

\begin{definition}
\label{def:stoch_int_vector}
Let $G: X^*\times U\rightarrow \mathscr{D}^H(T)$ be weakly integrable with respect to the process $Z$ and let $\overline{G}$ be the corresponding operator from \autoref{lem:char_weak_stoch_integrability}. The mapping $G$ is said to be \textit{(Wiener) integrable with respect to the process $Z$} if there exists a random variable $ \xi\in L^0(\Omega;X)$ such that the equality
	\begin{equation*}
		\langle \varphi,\xi\rangle = I_{T}(\overline{G}\varphi)
	\end{equation*}
is satisfied almost surely for every $\varphi\in X^*$. In this case, the random variable $\xi$ will be called the \textit{(Wiener) integral} of $G$ with respect to the process $Z$.
\end{definition}

If $G$ is weakly integrable with respect to the process $Z$ and $\overline{G}$ is the corresponding operator from \autoref{prop:char_stoch_int_scalar}, its adjoint $\overline{G}^*$ maps the space $\mathscr{D}^H(T;U)$ into $X^{**}$. However, if $G$ is integrable with respect to the process $Z$ and if its integral is a square-integrable random variable, the adjoint $\overline{G}^*$ takes values in the space $X$ as shown in the following result.

\begin{proposition}
Let $G: X^*\times U\rightarrow \mathscr{D}^H(T)$ be integrable with respect to the process $Z$, let $\overline{G}$ be the corresponding operator from \autoref{lem:char_weak_stoch_integrability}, and let $\xi$ be its integral. If $\xi\in L^2(\Omega;X)$, then the operator $\overline{G}^*$ maps the space $\mathscr{D}^H(T;U)$ into the space $X$ and it is given by 
	\begin{equation*}
		\overline{G}^*(S) = \E I_{T}(S)\xi, \quad S\in\mathscr{D}^H(T;U).
	\end{equation*}
\end{proposition}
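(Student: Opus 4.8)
The plan is to unwind the definitions and show that the candidate formula $S \mapsto \E I_T(S)\xi$ is a well-defined bounded linear functional of $S$ taking values in $X$ rather than merely $X^{**}$, and that it coincides with $\overline{G}^*$. First I would fix $S \in \mathscr{D}^H(T;U)$ and note that $I_T(S) \in L^2(\Omega)$ by the isometry property of $I_T$ (it is the isometry from $\mathscr{D}^H(T;U)$ into $L^2(\Omega)$ fixed at the start of \autoref{sec:Wiener_integra_scalar_case}). Since $\xi \in L^2(\Omega;X)$ by hypothesis, the Bochner integral $\E I_T(S)\xi$ is a well-defined element of $X$, with $\|\E I_T(S)\xi\|_X \leq \|I_T(S)\|_{L^2(\Omega)}\|\xi\|_{L^2(\Omega;X)}$; this already shows that the proposed map sends $\mathscr{D}^H(T;U)$ into $X$ and is bounded.

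Next I would test this element against an arbitrary $\varphi \in X^*$ and compare with the definition of the adjoint. By \autoref{def:stoch_int_vector}, $\langle \varphi, \xi\rangle = I_T(\overline{G}\varphi)$ almost surely. Pairing the Bochner integral with $\varphi$ and using that $\varphi$ commutes with the expectation, I get
\begin{equation*}
	\langle \varphi, \E I_T(S)\xi\rangle = \E\left[I_T(S)\langle \varphi,\xi\rangle\right] = \E\left[I_T(S) I_T(\overline{G}\varphi)\right] = \langle S, \overline{G}\varphi\rangle_{\mathscr{D}^H(T;U)},
\end{equation*}
where the last equality uses that $I_T$ is an isometry from $\mathscr{D}^H(T;U)$ into $L^2(\Omega)$ and hence preserves inner products (via polarization). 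The right-hand side is exactly $\langle \overline{G}^* S, \varphi\rangle$ by definition of the Banach-space adjoint $\overline{G}^*: \mathscr{D}^H(T;U)\to X^{**}$. Since this holds for every $\varphi \in X^*$, we conclude $\overline{G}^* S = \E I_T(S)\xi$ as elements of $X^{**}$; but the right-hand side lies in $X$, so in fact $\overline{G}^*$ maps $\mathscr{D}^H(T;U)$ into $X$ and is given by the stated formula.

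The only genuinely delicate point is the interchange of $\varphi$ with the expectation and the a.s.\ identity: one must check that $I_T(S)\langle\varphi,\xi\rangle$ is integrable, which follows from Cauchy--Schwarz since $I_T(S) \in L^2(\Omega)$ and $\langle\varphi,\xi\rangle = I_T(\overline{G}\varphi) \in L^2(\Omega)$, and that $\varphi(\E I_T(S)\xi) = \E\varphi(I_T(S)\xi)$, which is the standard property of the Bochner integral that bounded linear functionals pass through it. Everything else is formal bookkeeping with the isometry $I_T$ and the definitions of weak and strong integrability already established; I do not expect any real obstacle beyond making sure the square-integrability hypothesis on $\xi$ is used precisely at the point where the Bochner integral $\E I_T(S)\xi$ is formed.
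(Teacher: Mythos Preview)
Your proof is correct and follows essentially the same approach as the paper. The paper verifies the identity on simple tensors $S=g\otimes u$ and then extends (implicitly by linearity and continuity), whereas you work directly with a general $S\in\mathscr{D}^H(T;U)$ using the isometry of $I_T$; the core computation $\langle\varphi,\E I_T(S)\xi\rangle=\E[I_T(S)I_T(\overline{G}\varphi)]=\langle S,\overline{G}\varphi\rangle_{\mathscr{D}^H(T;U)}$ is the same in both.
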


\begin{proof}
Let $g\in\mathscr{D}^H(T)$, $u\in U$. Then we have that 
	\begin{equation*}
		\langle \overline{G}\varphi,g\otimes u\rangle_{\mathscr{D}^H(T;U)} = \E I_{T}(\overline{G}\varphi)I_{T}(g\otimes u) = \E\langle \varphi,\xi\rangle I_{T}(g\otimes u) = \langle \varphi, \E\xi I_{T}(g\otimes u)\rangle.
	\end{equation*}
where \autoref{lem:Ito_isometry_cylindrical} was used to obtain the first equality while integrability of $G$ was used in the second equality. Thus it follows that $\overline{G}^*(S) = \E I_{T}(S)\xi$ for every $S\in\mathscr{D}^H(T;U)$.
\end{proof}

In what follows, two properties of Banach spaces that are useful for the subsequent analysis are given. The first notion seems not to have been explicitly considered and studied in the literature so far.

\begin{definition}
Let $n\in\mathbb{N}$. A Banach space $X$ is said to be \textit{$n$-good} if the equivalence
		\begin{equation*}
			\left\|\sum_{k=1}^m\varepsilon^{(1)}_kx_k\right\|_{L^2(\Omega;X)} \eqsim \left\|\sum_{k=1}^m\varepsilon^{(2)}_kx_k\right\|_{L^2(\Omega;X)}
		\end{equation*}
holds for every two orthonormal sets $\{\varepsilon^{(1)}_k\}_{k=1}^m\subseteq\mathscr H^{\oplus n}$ and $\{\varepsilon^{(2)}_k\}_{k=1}^m\subseteq\mathscr H^{\oplus n}$ and every $\{x_k\}_{k=1}^m\subseteq X$. 
\end{definition}

We also recall the notion of the approximation property of Banach spaces; see, e.g. \cite[Chapter 1]{LinTza77}.

\begin{definition} A Banach space $X$ is said to have \textit{the approximation property} if, for every $\varepsilon>0$ and every compact $C\subseteq X$, there exists a finite rank operator $T\in\mathscr L(X)$ such that
	\begin{equation*}
		\sup_{x\in C}\,\|x-Tx\|_X\le\varepsilon.
	\end{equation*}
If there exists $\lambda\geq 0$ for which every such $T$ satisfies $\|T\|_{\mathscr L(X)}\leq\lambda$, then $X$ is said to have \textit{the $\lambda$-bounded approximation property}.
\end{definition}

It is shown now that many commonly used function spaces are both $n$-good (for every $n\in\mathbb{N}$) and have the approximation property. Initially, it is useful to establish the following notation.

\begin{notation} For $p_1,p_2\in[1,\infty)$ and for two $\sigma$-finite separable measure spaces $(D_1,\mathcal{D}_1,\mu_1)$ and $(D_2,\mathcal{D}_2,\mu_2)$, we denote by $E^{p_1,p_2}(D_1\times D_2)$ the space of jointly measurable functions $f: D_1\times D_2\rightarrow \R$ for which the following finiteness condition is satisfied:
	\begin{equation*}
		\|f\|_{E^{p_1,p_2}(D_1\times D_2)}:= \left(\int_{D_1}\left(\int_{D_2}|f(x,y)|^{p_2}\,\mu_2(\d{y})\right)^\frac{p_1}{p_2}\mu_1(\d{x})\right)^\frac{1}{p_1}<\infty.
	\end{equation*}
Under the assumptions above, the space $E^{p_1,p_2}(D_1\times D_2)$ is a separable Banach space (it is, in fact, a so-called Lebesgue space with mixed norm, cf., e.g., \cite{BenPan61}, or, more explicitly, the space $L^{p_1}(D_1;L^{p_2}(D_2)$). 
\end{notation}

\begin{proposition} 
\label{prop:Y_is_good_and_approximable}
Let $N\in\mathbb{N}$. Let $\{p_{i,1}\}_{i=1}^N$ and $\{p_{i,2}\}_{i=1}^N$ be two subsets of the interval $[1,\infty)$ and let $\{(D_{i,1},\mathcal{D}_{i,1},\mu_{i,1})\}_{i=1}^N$ and $\{(D_{i,2},\mathcal{D}_{i,2},\mu_{i,2})\}_{i=1}^N$ be two sets of $\sigma$-finite separable measure spaces. Set
	\begin{equation*}
		Y:=\prod_{i=1}^NE^{p_{i,1},p_{i,2}}(D_{i,1}\times D_{i,2}).
	\end{equation*}
If $X$ is a normed linear space that is isomorphic with a subspace of $Y$ then $X$ is $n$-good for every $n\in\mathbb{N}$. If $X$ is a retraction\footnote{A normed linear space $X$ said to be a \textit{retraction of $Y$} if there exist $R\in\mathscr L(Y,X)$ and $S\in\mathscr L(X,Y)$ such that $RS=I_X$.} of $Y$, then there exists $\lambda \geq 0$ such that $X$ has the $\lambda$-bounded approximation property.
\end{proposition}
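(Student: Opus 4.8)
The plan is to prove the two assertions separately, reducing everything to the single building block $Y = E^{p_1,p_2}(D_1 \times D_2)$ and then lifting to finite products and to isomorphic subspaces/retractions.

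\textbf{Step 1: $n$-goodness of a single mixed Lebesgue space.} Fix orthonormal sets $\{\varepsilon_k^{(1)}\}_{k=1}^m, \{\varepsilon_k^{(2)}\}_{k=1}^m \subseteq \mathscr{H}^{\oplus n}$ and elements $\{f_k\}_{k=1}^m \subseteq E^{p_1,p_2}(D_1\times D_2)$. The key observation is that for a \emph{scalar} random variable of the form $\sum_k \varepsilon_k a_k$ with $a_k\in\R$, \autoref{prop:hypercontractivity} gives, for any $r\in(0,\infty)$,
\begin{equation*}
	\Big\|\sum_k \varepsilon_k^{(1)} a_k\Big\|_{L^r(\Omega)} \eqsim \Big\|\sum_k \varepsilon_k^{(1)} a_k\Big\|_{L^2(\Omega)} = \Big(\sum_k a_k^2\Big)^{1/2} = \Big\|\sum_k \varepsilon_k^{(2)} a_k\Big\|_{L^2(\Omega)} \eqsim \Big\|\sum_k \varepsilon_k^{(2)} a_k\Big\|_{L^r(\Omega)},
\end{equation*}
with constants depending only on $r$ and $n$ (here I use that orthonormality in $\mathscr{H}^{\oplus n}\subseteq L^2(\Omega)$ forces the $L^2$-norm to be $(\sum a_k^2)^{1/2}$). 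Applying this pointwise with $a_k = f_k(x,y)$ and raising to the power $p_2$, then integrating in $y$ and taking the $L^{p_1/p_2}$-norm in $x$ (equivalently: choose $r=p_2$ for the inner integral, then $r=p_1$ after pulling the $L^{p_2}(D_2;L^r(\Omega))$ and $L^r(\Omega;L^{p_2}(D_2))$ norms past each other via Fubini), one sees that
\begin{equation*}
	\Big\|\sum_k \varepsilon_k^{(1)} f_k\Big\|_{L^2(\Omega;E^{p_1,p_2})} \eqsim \Big\| \Big(\sum_k f_k^2\Big)^{1/2}\Big\|_{E^{p_1,p_2}} \eqsim \Big\|\sum_k \varepsilon_k^{(2)} f_k\Big\|_{L^2(\Omega;E^{p_1,p_2})},
\end{equation*}
where the middle quantity is the usual Gaussian/Rademacher square-function expression and the equivalence constants depend only on $p_1,p_2,n$. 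The interchange of $L^r(\Omega)$ and the mixed $L^{p_1}(D_1;L^{p_2}(D_2))$ norms is exactly the classical fact that $L^q$-valued integration commutes up to constants with scalar $L^r(\Omega)$ moments on a finite Wiener chaos — this is where the mixed-norm structure (as opposed to a general Banach space) is used, and it is the main technical point to get right. Passing to the finite product $Y=\prod_{i=1}^N E^{p_{i,1},p_{i,2}}$ is then immediate: the norm on $Y$ (say the $\ell^2$-combination of the factor norms, or any equivalent one) is handled coordinatewise with the constant $\max_i C_{p_{i,1},p_{i,2},n}$.

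\textbf{Step 2: transfer to $X$.} If $X$ is isomorphic to a subspace of $Y$ via $T\in\mathscr{L}(X,Y)$ with bounded inverse on its range, then for $\{x_k\}\subseteq X$ we have $\|\sum_k \varepsilon_k^{(j)} x_k\|_{L^2(\Omega;X)} \eqsim \|\sum_k \varepsilon_k^{(j)} Tx_k\|_{L^2(\Omega;Y)}$ (constants $\|T\|,\|T^{-1}\|$), and Step 1 applied to the $Tx_k\in Y$ closes the loop; the $n$-goodness constant of $X$ is then $\|T\|\,\|T^{-1}\|$ times the one for $Y$. Since $n$ was arbitrary, $X$ is $n$-good for every $n\in\mathbb{N}$.

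\textbf{Step 3: the approximation property.} For this I invoke that each $E^{p_{i,1},p_{i,2}}(D_{i,1}\times D_{i,2}) = L^{p_{i,1}}(D_{i,1};L^{p_{i,2}}(D_{i,2}))$ has a (monotone) $1$-bounded approximation property: since the underlying measure spaces are $\sigma$-finite and separable, one can build an increasing sequence of finite measurable partitions whose conditional-expectation operators $\mathbb{E}[\cdot\,|\,\mathcal{F}_\ell]$ are contractive finite-rank projections converging strongly to the identity; tensoring the partitions in the two variables gives the same for the mixed-norm space. A finite $\ell^2$-product of spaces with the $1$-bounded approximation property has the $1$-bounded approximation property (apply the approximating operators factorwise). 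Finally, if $X$ is a retraction of $Y$ with $R\in\mathscr{L}(Y,X)$, $S\in\mathscr{L}(X,Y)$, $RS=I_X$, and $(P_\ell)$ are the bounded finite-rank approximating operators on $Y$, then $R P_\ell S \in \mathscr{L}(X)$ are finite rank, $\|RP_\ell S\|\le \|R\|\,\|S\|\,\sup_\ell\|P_\ell\|$, and for compact $C\subseteq X$ the set $S(C)$ is compact in $Y$, so $\sup_{x\in C}\|x - RP_\ell Sx\|_X = \sup_{x\in C}\|R(Sx - P_\ell Sx)\|_X \le \|R\|\sup_{y\in S(C)}\|y - P_\ell y\|_Y \to 0$. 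Hence $X$ has the $\lambda$-bounded approximation property with $\lambda = \|R\|\,\|S\|\,\sup_\ell\|P_\ell\|$.

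The step I expect to be the main obstacle is the norm interchange in Step 1: carefully justifying that the scalar Kahane–Khinchine-type equivalence from \autoref{prop:hypercontractivity} can be integrated through the nested $L^{p_1}(L^{p_2})$ norm — i.e. that $\big\|\sum_k \varepsilon_k f_k\big\|_{L^2(\Omega; L^{p_1}(D_1;L^{p_2}(D_2)))}$ is comparable to $\big\|(\sum_k f_k^2)^{1/2}\big\|_{L^{p_1}(D_1;L^{p_2}(D_2))}$ with a constant independent of $m$ and of the $f_k$. This uses Fubini's theorem to move $L^r(\Omega)$ past $L^{p_2}(D_2)$ and past $L^{p_1}(D_1)$ and then the chaos moment-equivalence at the scalar level, and it is precisely here that one needs $X$ to sit inside a space with iterated Lebesgue structure rather than being an arbitrary Banach space (a general Banach space is not $n$-good). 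Everything else is bookkeeping with isomorphism/retraction constants.
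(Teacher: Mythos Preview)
Your proposal is correct and follows essentially the same route as the paper: both arguments use \autoref{prop:hypercontractivity} together with Fubini to push the $L^r(\Omega)$-norm through the nested $L^{p_1}(D_1;L^{p_2}(D_2))$ structure until it sits at the scalar level, where orthonormality yields the square function $\big\|(\sum_k f_k^2)^{1/2}\big\|_{E^{p_{i,1},p_{i,2}}}$ as the common value for any orthonormal $\{\varepsilon_k\}\subset\mathscr{H}^{\oplus n}$; and both build the bounded approximation property for $E^{p_1,p_2}$ from conditional-expectation projections onto finite measurable partitions, then transfer to products and retractions via $RP_\ell S$. The only cosmetic difference is that the paper normalises moments to $L^1(\Omega;X)$ while you work with $L^2(\Omega;X)$, which is immaterial by hypercontractivity.
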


\begin{proof}
Let $Q:X\rightarrow \tilde{Y}$ be the linear injection of $X$ onto the subspace $\tilde{Y}$ of $Y$. Let $n\in\mathbb{N}$ be arbitrary, let $\{\varepsilon_k\}_{k=1}^m$ be an orthonormal set in $\mathscr{H}^{\oplus n}$, and let $\{x_k\}_{j=1}^m$ be a subset of $X$. Define $f^i_k:=[Qx_k]_i$, $i=1,2,\ldots, N$; $k=1,2,\ldots, m$. Then there is the equivalence
	\begin{equation}
	\label{eq:n_good}
		\left\|\sum_{k=1}^m\varepsilon_kx_k\right\|_{L^1(\Omega;X)}\eqsim\sum_{i=1}^N\left\{\int_{D_{i,1}}\left[\int_{D_{i,2}}\left(\sum_{k=1}^m|f^i_k|^2\right)^\frac{p_{i,2}}{2}\d\mu_{i,2}\right]^\frac{p_{i,1}}{p_{i,2}}\d\mu_{i,1}\right\}^\frac{1}{p_{i,1}}
	\end{equation}
and, consequently, the space $X$ is $n$-good. Indeed, we have that
	\begin{equation*}
		\left\|\sum_{k=1}^m\varepsilon_kx_k\right\|_{L^1(\Omega;X)} \eqsim \sum_{i=1}^N\E\left\|\sum_{k=1}^m\varepsilon_kf_k^i\right\|_{E^{p_{i,1},p_{i,2}}(D_{i,1}\times D_{i,2})} \eqsim \sum_{i=1}^N \left(\E\left\|\sum_{k=1}^m\varepsilon_kf_k^i\right\|_{E^{p_{i,1},p_{i,2}}(D_{i,1}\times D_{i,2})}^{p_{i,1}}\right)^\frac{1}{p_{i,1}}
	\end{equation*}
holds by using \autoref{prop:hypercontractivity} for each term in the sum with $\mathcal{B}=E_i$, $p=1$, and $q=p_{i,1}$. Now, for $i\in\{1,2,\ldots,N\}$, it follows that
	\begin{align*}
		\E\left\|\sum_{k=1}^m\varepsilon_kf_k^i\right\|_{E_i}^{p_{i,1}} & = \E \int_{D_{i,1}}\left(\int_{D_{i,2}}\left|\sum_{k=1}^m\varepsilon_k f_k^i(x,y)\right|^{p_{i,2}}\mu_{i,2}(\d{y})\right)^\frac{p_{i,1}}{p_{i,2}}\mu_{i,1}(\d{x})\\
		& = \int_{D_{i,1}}\E\left\|\sum_{k=1}^m\varepsilon_kf_k^i(x,\cdot)\right\|_{L^{p_{i,2}}(D_{i,2})}^{p_{i,1}}\mu_{i,1}(\d{x})\\
		& \eqsim \int_{D_{i,1}}\left(\E\left\|\sum_{k=1}^m \varepsilon_kf_k^i(x,\cdot)\right\|_{L^{p_{i,2}}(D_{i,2})}^{p_{i,2}}\right)^{\frac{p_{i,1}}{p_{i,2}}}\mu_{i,1}(\d{x})
	\intertext{where \autoref{prop:hypercontractivity} is used with $\mathcal{B}=L^{p_{i,2}}(D_{i,2})$, $p=p_{i,1}$, and $q=p_{i,2}$. The chain of equivalences continues as}
		& = \int_{D_{i,1}}\left(\E\int_{D_{i,2}}\left|\sum_{k=1}^m\varepsilon_kf_k^i(x,y)\right|^{p_{i,2}}\mu_{i,2}(\d{y})\right)^{\frac{p_{i,1}}{p_{i,2}}}\mu_{i,1}(\d{x})\\
		& = \int_{D_{i,1}}\left(\int_{D_{i,2}}\E\left|\sum_{k=1}^m\varepsilon_kf_k^i(x,y)\right|^{p_{i,2}}\mu_{i,2}(\d{y})\right)^{\frac{p_{i,1}}{p_{i,2}}}\mu_{i,1}(\d{x})\\
		& \eqsim \int_{D_{i,1}}\left[\int_{D_{i,2}} \left( \E\left|\sum_{k=1}^m \varepsilon_k f_{k}^i(x,y)\right|^2\right)^{\frac{p_{i,2}}{2}}\mu_{i,2}(\d{y})\right]^{\frac{p_{i,1}}{p_{i,2}}}\mu_{i,1}(\d{x})
	\end{align*}
where again \autoref{prop:hypercontractivity} is used with $\mathcal{B}=\R$, $p=p_{i,2}$, and $q=2$. Now, since $\{\varepsilon_k\}_{k=1}^m$ is an orthonormal set in $\mathscr{H}^{\oplus n}$, it follows that the equality
	\begin{equation*}
		\E\left|\sum_{k=1}^m \varepsilon_k f_{k}^i(x,y)\right|^2 = \sum_{k=1}^m |f_k^i(x,y)|^2
	\end{equation*}
is satisfied for $\mu_{i,1}\otimes\mu_{i,2}$-almost every $(x,y)\in D_{i,1}\times D_{i,2}$ which proves formula \eqref{eq:n_good}.

For a space $E^{p_1,p_2}(D_1\times D_2)$ where $(D_1,\mathcal{D}_1,\mu_1)$ and $(D_2,\mathcal{D}_2,\mu_2)$ are $\sigma$-finite separable measure spaces and $p_1, p_2\in [1,\infty)$; and for some $\sigma$-finite partitions $\{A_i\}$ and $\{B_j\}$ of $D_1$ and $D_2$, respectively, define $P_{A,B}: E^{p_1,p_2}(D_1\times D_2)\rightarrow E^{p_1,p_2}(D_1\times D_2)$ by
	\begin{equation*}
P_{A,B}f:=\sum_{i,j}\frac{1}{\mu_1(A_i)\mu_2(B_j)}\left(\int_{A_i\times B_j}f\,\d\mu_1\otimes\mu_2\right)\bm{1}_{A_i\times B_j}
	\end{equation*}
with the convention $\sfrac{0}{0}:=0$. Then 
	\begin{equation*}
		\|P_{A,B}f\|_{E^{p_1,p_2}(D_1\times D_2)}\leq\|f\|_{E^{p_1,p_2}(D_1\times D_2)}, \quad f\in E^{p_1,p_2}(D_1\times D_2).
	\end{equation*}
Now, if $\{A^k_i\}$ and $\{B^k_j\}$ are sequences of partitions of $D_1$ and $D_2$, respectively, such that $\{A^{k+1}_i\}$ refines $\{A^k_i\}$ and $\{B^{k+1}_i\}$ refines $\{B^k_i\}$; and if $\sigma(\{A^k_i\})$ and $\sigma(\{B^k_i\})$ generate the $\sigma$-algebras $\mathcal{D}_1$ and $\mathcal{D}_2$, respectively, then
	\begin{equation*}
		\lim_{k\rightarrow\infty}\|P_{A^k,B^k}f-f\|_{E^{p_1,p_2}(D_1\times D_2)} = 0, \quad f\in E^{p_1,p_2}(D_1\times D_2). 
	\end{equation*}
Hence, $f\mapsto P_{ K_m^n,\tilde{K}_m^n}(f\mathbf 1_{K_m\times\tilde K_m})$ are the sought projections that make $E^{p_1,p_2}(D_1\times D_2)$ a Banach space with the $1$-bounded approximation property. Clearly, the space $\prod_{i=1}^NE^{p_{i,1},p_{i,2}}(D_{i,1}\times D_{i,2})$ also has the $1$-bounded approximation property as well as the $\|R\|_{\mathscr{L}(Y;X)}\|S\|_{\mathscr{L}(X;Y)}$-bounded approximation property.
\end{proof}

\begin{corollary}
\label{cor:function_spaces}
Let $D$ be $\mathbb{R}^d$, $\mathbb{R}^d_+$, or a bounded $\mathscr{C}^\infty$-domain in $\mathbb{R}^d$; and let $p,q\in [1,\infty)$, $s\in\R$, and $r\in (0,\infty)$. Then the spaces
	\begin{equation*}
		L^p(D), \quad W^{r,p}(D), \quad B_{p,q}^s(D), \quad F_{p,q}^s(D)
	\end{equation*}
are $n$-good for every $n\in\mathbb{N}$ and have the approximation property. Here, the spaces $L^p(D)$ are the standard Lebesgue spaces, $W^{r,p}(D)$ are the (fractional) Sobolev spaces, $B_{p,q}^s(D)$ are the Besov spaces, and $F_{p,q}(D)$ are the Lizorkin-Triebel spaces; see, e.g.,  \cite{Trie83}.
\end{corollary}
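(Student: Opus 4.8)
The plan is to deduce the whole statement from \autoref{prop:Y_is_good_and_approximable}: for each of the listed spaces $X$ it suffices to exhibit $X$ as isomorphic to a subspace of a single mixed Lebesgue space $E^{p_1,p_2}(D_1\times D_2)$ (which already gives $n$-goodness for every $n$) and, for the approximation property, as a retract of such a space. Two elementary reductions make this tractable. First, $\mathbb{N}$ and every finite set equipped with the counting measure are $\sigma$-finite separable measure spaces, so $L^{p}(D;\ell^{q})=E^{p,q}(D\times\mathbb{N})$, $\ell^{q}(L^{p}(D))=E^{q,p}(\mathbb{N}\times D)$, and $L^{p}(D)=E^{p,p}(D\times\{\ast\})$ are all admissible spaces in the sense of \autoref{prop:Y_is_good_and_approximable}; moreover a weighted sequence space $\ell^{q}_{s}$ (weights $2^{js}$) is isometric to $\ell^{q}$ via $(a_j)_j\mapsto(2^{js}a_j)_j$, so weights play no role. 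Second, a retract of a retract is a retract, and a retract of $Z$ is isomorphic to a complemented subspace of $Z$; hence it is enough to realize each $X$ as a retract of the corresponding scale on $\R^d$ and that scale as a retract of a mixed Lebesgue space.

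For $L^{p}(D)$ there is nothing to prove. For the Besov and Lizorkin--Triebel scales on $\R^d$ the Littlewood--Paley (Peetre--Triebel) description is exactly a retraction: fixing a smooth dyadic resolution of unity, the analysis operator $f\mapsto(f_j)_{j\ge0}$ is a coretraction and the synthesis operator $(g_j)_j\mapsto\sum_j g_j$ a bounded left inverse of it, realizing $B^{s}_{p,q}(\R^d)$ as a retract of $\ell^{q}_{s}(L^{p}(\R^d))\cong E^{q,p}(\mathbb{N}\times\R^d)$ and $F^{s}_{p,q}(\R^d)$ as a retract of $L^{p}(\R^d;\ell^{q}_{s})\cong E^{p,q}(\R^d\times\mathbb{N})$; see, e.g., \cite{Trie83}. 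By \autoref{prop:Y_is_good_and_approximable} this already settles $B^{s}_{p,q}(\R^d)$ and $F^{s}_{p,q}(\R^d)$.

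To pass to $D=\R^d_+$ and to a bounded $\mathscr{C}^\infty$-domain, one invokes a universal linear extension operator $\mathrm{ext}\colon Y^{s}_{p,q}(D)\to Y^{s}_{p,q}(\R^d)$, $Y\in\{B,F\}$, bounded simultaneously for all admissible $s\in\R$, $p,q\in[1,\infty)$, whose left inverse is the bounded restriction, $\mathrm{res}\circ\mathrm{ext}=\mathrm{id}$; for the half-space this is Rychkov's operator, and for a bounded $\mathscr{C}^\infty$-domain one localizes it by a partition of unity together with boundary-flattening diffeomorphisms (see \cite{Trie83} and the references therein). Thus $Y^{s}_{p,q}(D)$ is a retract of $Y^{s}_{p,q}(\R^d)$, hence of a mixed Lebesgue space, and \autoref{prop:Y_is_good_and_approximable} applies. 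For the Sobolev spaces $W^{r,p}(D)$ we split according to $r$: if $r\notin\mathbb{N}$ then $W^{r,p}(D)=B^{r}_{p,p}(D)$ (the Sobolev--Slobodeckii space coincides with this Besov space), and if $r\in\mathbb{N}$ and $1<p<\infty$ then $W^{r,p}(D)=F^{r}_{p,2}(D)$, so both are already covered. The only remaining case is $r\in\mathbb{N}$ with $p=1$: here $f\mapsto(\partial^{\alpha}f)_{|\alpha|\le r}$ is an isomorphism of $W^{r,1}(D)$ onto a closed subspace of the finite power $(L^{1}(D))^{M}=\prod_{i=1}^{M}E^{1,1}(D\times\{\ast\})$, so $W^{r,1}(D)$ is $n$-good for every $n$ by \autoref{prop:Y_is_good_and_approximable}, while its approximation property is classical (it follows, e.g., from the existence of a Schauder basis, or directly from a mollification-and-truncation argument).

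I do not expect a genuine analytic obstacle; the real work is quoting the function-space literature correctly. The point that needs the most care is the existence of a \emph{single} extension operator valid across the whole parameter range on $\R^d_+$ and on smooth domains: the classical Stein extension only covers $s>0$, whereas the scales in the statement include negative smoothness, so one must use Rychkov-type operators. A secondary, slightly separate point is the endpoint $W^{r,1}$ with integer $r$, where the Fourier-analytic identification $W^{r,p}=F^{r}_{p,2}$ breaks down and one has to argue the approximation property by other means.
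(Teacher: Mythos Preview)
Your strategy coincides with the paper's: invoke \autoref{prop:Y_is_good_and_approximable} after realising each space as (a subspace of, resp.\ a retract of) a mixed Lebesgue space via the Littlewood--Paley description, then pass to domains by an extension/restriction pair. For $n$-goodness this is unproblematic throughout, since the analysis map $f\mapsto(\varphi_j(D)f)_j$ is by definition an isomorphism onto its image, and being a subspace is all that is needed.

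The one point where your write-up is thinner than the paper's is the \emph{retraction} claim for $F^{s}_{p,q}(\R^d)$ when $p=1$ or $q=1$. Boundedness of the synthesis operator $R:(g_j)_j\mapsto\sum_j\tilde\varphi_j(D)g_j$ from $L^p(\R^d;\ell^q_s)$ back into $F^{s}_{p,q}(\R^d)$ hinges on a vector-valued multiplier estimate of the type $\|(m_k(D)h_k)_k\|_{L^p(\ell^q)}\lesssim\|(h_k)_k\|_{L^p(\ell^q)}$, and the standard route to this (Peetre maximal functions plus Fefferman--Stein) uses $1<p<\infty$, $1<q\le\infty$. Accordingly, the paper only asserts the retraction for $p,q\in(1,\infty)$ (citing \cite[2.3.2/12--13]{Trie78}) and supplies a separate argument for the approximation property of $F^{s}_{p,q}(\R^d)$ at the endpoints: the partial sums $\pi_k(T)=\sum_{j\le k}\mathscr{F}^{-1}(\phi_j\mathscr{F}T)$ are uniformly bounded on $F^{s}_{p,q}$, converge to the identity there, and factor through $B^{s}_{p,1}(\R^d)$, which \emph{is} a retract of $\ell^1(L^p)$ for all $p\ge 1$; composing with finite-rank approximants in $B^{s}_{p,1}$ then yields the approximation property for $F^{s}_{p,q}$. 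You should either justify the endpoint retraction explicitly (it is not covered by a bare citation of \cite{Trie83}) or adopt the paper's detour via $B^{s}_{p,1}$. Your treatment of $W^{r,1}$ with integer $r$ is fine and matches the paper's in spirit (embedding via derivatives for $n$-goodness; mollification/truncation for the approximation property).
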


\begin{proof}
The first assertion of the corollary follows immediately for the spaces $L^p(D)$, $W^{r,p}(D)$, $B_{p,q}^s(\R^d)$, and $F_{p,q}^s(\R^d)$ by \autoref{prop:Y_is_good_and_approximable} because these spaces are isomorphic with a subspace of a product of some mixed Lebesgue spaces. The first assertion also holds for $B^s_{p,q}(D)$ and $F_{p,q}^s(D)$ in the case when $D$ is either $\mathbb{R}^d_+$ or a bounded $\mathscr{C}^\infty$-domain in $\R^d$ because these spaces are retractions of $B_{p,q}^s(\R^d)$ and $F_{p,q}^s(\R^d)$, respectively; see \cite[Theorem 2.9.4 and Theorem 3.3.4]{Trie83}.

The focus is on the second assertion of the corollary now. The spaces $B_{p,q}^s(\R^d)$ and $F_{p,q}^s(\R^d)$ have the approximation property since they are retractions (the space $F_{p,q}^s(\R^d)$ for $p,q\in (1,\infty)$) of some mixed Lebesgue spaces; see, e.g.,  \cite[2.3.2/12 and 2.3.2/13]{Trie78}. Therefore, it remains to prove that $W^{r,1}(\R^d)$ and $F_{p,q}^s(\R^d)$ with $p$ or $q$ being equal to one, have the approximation property. 

As far as the spaces $W^{r,1}(\R^d)$ are concerned, note that whenever $j\in\{0,1\}$, the sequence of operators $\{\pi_{k}^j\}_{k\in\mathbb{N}}$ defined by $\pi_k^0(f):= \psi(\sfrac{\cdot}{k})f$, where $\psi\in\mathscr{C}_c^\infty(\R^d)$ is such that $\psi(0)=1$, and $\pi_k^1(f) := f*\omega_k$, where $\omega_k$ are the standard smooth compactly supported mollifiers, has the following three properties:
	\begin{itemize}
	\itemsep0em
		\item There is a constant $C_{r,d}>0$ such that $\|\pi_k^j\|_{\mathscr{L}(W^{r,1}(\R^d))}\leq C_{r,d}$ holds for every $k\in\mathbb{N}$.
		\item For every $a>0$ and $k\in\mathbb{N}$, there is a constant $C_{a,k,d}>0$ such that $\|\pi_k^{1}\|_{\mathscr{L}(L^1(\R^d);W^{a,2}(\R^d))} \leq C_{a,d,k}$.
		\item For every $f\in W^{r,1}(\R^d)$, there is the convergence $\lim_{k\rightarrow\infty}\|f-\pi_k^j(f)\|_{W^{r,1}(\R^d)}=0$. 
	\end{itemize}
Hence, it follows that the space $W^{r,1}(\R^d)$ has the approximation property.

Now, the focus is on the spaces $F_{p,q}^s(\R^d)$ with $p$ or $q$ being equal to one. Consider the standard decomposition of unity $\{\phi_j\}_{ j\in\mathbb{N}_0}$ given, for example, in \cite[2.3.1]{Trie83}. Then the sequence of operators $\{\pi_k\}_{k\in\mathbb{N}}$ defined by 
	\begin{equation*}
		\pi_k(T) := \sum_{j=0}^k\mathscr{F}^{-1}\{\phi_j\mathscr{F}T\}, \quad T\in\mathscr{S}'(\R^d),
	\end{equation*}
where $\mathscr{F}$ denotes the Fourier transform, has the following three properties:
	\begin{itemize}
	\itemsep0em
		\item There exists a constant $C_d>0$ such that $\|\pi_k\|_{\mathscr{L}(F_{p,q}^s(\R^d))}\leq C_d$ holds for every $k\in\mathbb{N}$. 
		\item For every $k\in\mathbb{N}$ there exists a constant $C_k>0$ such that $\|\pi_k\|_{\mathscr{L}(F_{p,q}^s(\R^d);B_{p,1}^s(\R^d))}\leq C_k$. 
		\item For every $T\in F_{p,q}^s(\R^d)$, there is the convergence $\lim_{k\rightarrow\infty}\|T-\pi_k(T)\|_{F_{p,q}^s(\R^d)}=0$. 
	\end{itemize}
Hence, it follows that the space $F_{p,q}^s(\R^d)$ has the approximation property. Finally, it follows from the above that the spaces $W^{r,1}(D)$ and $F_{p,q}^s(D)$ (with $p$ or $q$ being equal to one) have the approximation property in the case when $D$ is $\R^d_+$ or a bounded $\mathscr{C}^\infty$-domain since these spaces are retractions of $W^{1,r}(\R^d)$ and $F_{p,q}^s(\R^d)$, respectively.
\end{proof}

\begin{remark}
Note that \autoref{cor:function_spaces} covers even some non-UMD Banach spaces; e.g., the space $L^1(\R^d)$ is a typical example since it is not reflexive.
\end{remark}

In what follows, we return to integrability with respect to the process $Z$. In the first result, it is shown that if $Z$ lives in a finite Wiener chaos and if the Banach space $X$ is $n$-good, then there is a sufficient condition for integrability with respect to $Z$. Let us first prove a lemma which will be crucial in the proof of the forthcoming \autoref{prop:stoch_int_1}. The lemma is a slight modification of a claim that appeared in the proof of \cite[Proposition 3.1]{CouMasOnd18} and it shows that Wiener integrals with respect to a fractional process from a finite Wiener chaos also belong to this finite Wiener chaos. As an immediate consequence, all moments of these integrals are equivalent by \autoref{prop:hypercontractivity}.

\begin{lemma}
\label{lem:Wiener_integral_belongs_to_H}
If the process $z$ lives in a finite Wiener chaos $\mathscr{H}^{\oplus n}$, then the Wiener integral $i_{ T}(f)$ belongs to $\mathscr{H}^{\oplus n}$ for every $f\in\mathscr{D}^{ H}(T)$.
\end{lemma}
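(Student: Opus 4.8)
The plan is to exploit the definition of the Wiener integral on $\mathscr{D}^H(T)$ as the continuous linear extension, from step functions, of the map $i_T$, combined with the fact that $\mathscr{H}^{\oplus n}$ is a \emph{closed} subspace of $L^2(\Omega)$. First I would observe that for a step function $f\in\mathscr{E}(T)$ given by \eqref{eq:step_function}, the integral $i_T(f)=\sum_{j=1}^n f_j(z_{t_j}-z_{t_{j-1}})$ is a finite linear combination of the random variables $z_{t_j}$, each of which lies in $\mathscr{H}^{\oplus n}$ by the hypothesis that $z$ lives in $\mathscr{H}^{\oplus n}$; since $\mathscr{H}^{\oplus n}$ is a linear subspace, $i_T(f)\in\mathscr{H}^{\oplus n}$ for every $f\in\mathscr{E}(T)$.

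Next I would recall that, by the construction recalled just before \autoref{prop:characterisation_of_D}, an arbitrary $f\in\mathscr{D}^H(T)$ is the limit, in the $\|\cdot\|_{\mathscr{D}^H(T)}$-norm, of a sequence $\{f_k\}_{k}\subseteq\mathscr{E}(T)$, and the isometry \eqref{eq:Ito_isometry} (in its extended form) gives $\|i_T(f)-i_T(f_k)\|_{L^2(\Omega)}=\sigma\|\mathscr{K}_H^*(f-f_k)\|_{L^2(\R)}=\|f-f_k\|_{\mathscr{D}^H(T)}\to 0$. Hence $i_T(f_k)\to i_T(f)$ in $L^2(\Omega)$. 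Since each $i_T(f_k)\in\mathscr{H}^{\oplus n}$ and $\mathscr{H}^{\oplus n}$ is closed in $L^2(\Omega)$, the limit $i_T(f)$ also lies in $\mathscr{H}^{\oplus n}$, which is exactly the claim.

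This argument is entirely routine and I do not expect any genuine obstacle; the only point requiring a word of care is that $\mathscr{H}^{\oplus n}=\bigoplus_{i=0}^n\mathscr{H}_i$ is indeed closed in $L^2(\Omega)$, which is immediate since it is a finite direct sum of the closed subspaces $\mathscr{H}_i$ (the Wiener chaos decomposition is orthogonal). One could alternatively phrase the closedness as a consequence of \autoref{prop:hypercontractivity} together with the fact that $L^2$-convergence of chaos elements forces convergence of each chaos projection, but the orthogonality remark is the cleaner route. The concluding sentence of the preamble to the lemma — that all moments of $i_T(f)$ are then equivalent — follows at once by applying \autoref{prop:hypercontractivity} with $\mathcal{B}=\R$ and $m=1$.
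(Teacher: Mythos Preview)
Your argument is correct and essentially identical to the paper's own proof: show first that $i_T(f)\in\mathscr{H}^{\oplus n}$ for step functions by linearity, then pass to general $f\in\mathscr{D}^H(T)$ by approximation in $L^2(\Omega)$ using the closedness of $\mathscr{H}^{\oplus n}$. The only difference is cosmetic---you spell out the isometry and the reason for closedness a bit more explicitly than the paper does.
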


\begin{proof}
Let $f\in\mathscr{E}(T)$ take the form \eqref{eq:step_function}. Since the process $z$ lives in a finite Wiener chaos $\mathscr{H}^{\oplus n}$, we have that the elementary integral of $f$ that takes the form \eqref{eq:step_integral} also belongs to $\mathscr{H}^{\oplus n}$ because this is a linear space. If $f\in\mathscr{D}^{H}(T)$ and $\{f^k\}_{k\in\mathbb{N}}$ is a sequence of step functions that converges to $f$ in $\mathscr{D}^{ H}(T)$, then $i_{T}(f^k)$ converges to $i_{T}(f)$ in $L^2(\Omega)$. However, each integral $i_{T}(f^k)$ belongs to $\mathscr{H}^{\oplus n}$ and since this is a closed subspace of $L^2(\Omega)$, it follows that $i_{T}(f)\in\mathscr{H}^{\oplus n}$. 
\end{proof}

A main result of the present section follows.

\begin{theorem}
\label{prop:stoch_int_1}
Assume that the process $Z$ lives in the finite Wiener chaos $\mathscr{H}^{\oplus n}$ for $n\in\mathbb{N}$. Assume moreover that the Banach space $X$ is $n$-good. Let $G:X^*\times U\rightarrow \mathscr{D}^H(T)$ be weakly integrable with respect to the process $Z$ and let $\overline{G}$ be the corresponding operator from \autoref{lem:char_weak_stoch_integrability}. If $\overline{G}^*\in\gamma(\mathscr{D}^H(T;U);X)$, then $G$ is integrable with respect to the process $Z$ and it holds for every $r>0$ that
	\begin{equation}
	\label{eq:equivalence_of_norm_of_integral}
		\|\xi\|_{L^r(\Omega;X)} \eqsim \|\overline{G}^*\|_{\gamma(\mathscr{D}^H(T;U);X)}
	\end{equation}
where $\xi$ is the integral of $G$ with respect to the process $Z$. 
\end{theorem}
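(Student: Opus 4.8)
The plan is to reduce the vector-valued integral to the scalar case via the $\gamma$-radonifying structure, exploiting both the $n$-good property and the hypercontractivity result. First I would fix an orthonormal basis $\{h_j\otimes e_k\}_{j,k}$ of $\mathscr{D}^H(T;U)$ built from orthonormal bases $\{h_j\}_j$ of $\mathscr{D}^H(T)$ and $\{e_k\}_k$ of $U$, and let $\{\gamma_{jk}\}_{j,k}$ be a sequence of independent standard Gaussians. Since $\overline{G}^*\in\gamma(\mathscr{D}^H(T;U);X)$, the series $\sum_{j,k}\gamma_{jk}\overline{G}^*(h_j\otimes e_k)$ converges in $L^2(\Omega';X)$ (on an auxiliary probability space) with sum having $L^2$-norm equal to $\|\overline{G}^*\|_{\gamma}$. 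The key observation is that the random variables $\zeta_{jk}:=I_T(h_j\otimes e_k)=\int_T h_j\,\d Z(e_k)$ are, by \autoref{lem:Ito_isometry_cylindrical}, an orthonormal system in $L^2(\Omega)$, and by \autoref{lem:Wiener_integral_belongs_to_H} (applied to the scalar fractional processes $Z(e_k)$) they lie in the finite Wiener chaos $\mathscr{H}^{\oplus n}$. Hence the partial sums $\xi_N:=\sum_{(j,k)\in F_N}\zeta_{jk}\overline{G}^*(h_j\otimes e_k)$, indexed by an increasing exhaustion $\{F_N\}$ of the index set, are linear combinations of elements of $X$ with coefficients from $\mathscr{H}^{\oplus n}$; since $X$ is $n$-good, their $L^2(\Omega;X)$-norms are equivalent to those of the corresponding Gaussian sums, so $\{\xi_N\}_N$ is Cauchy in $L^2(\Omega;X)$ and converges to some $\xi\in L^2(\Omega;X)$ with $\|\xi\|_{L^2(\Omega;X)}\eqsim\|\overline{G}^*\|_{\gamma}$.

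Next I would verify that this $\xi$ is indeed the integral of $G$ in the sense of \autoref{def:stoch_int_vector}, i.e. that $\langle\varphi,\xi\rangle=I_T(\overline{G}\varphi)$ a.s.\ for every $\varphi\in X^*$. Testing the $L^2$-limit against $\varphi$ and using $\langle\varphi,\overline{G}^*(h_j\otimes e_k)\rangle=\langle\overline{G}\varphi,h_j\otimes e_k\rangle_{\mathscr{D}^H(T;U)}$ together with the fact that $\{\zeta_{jk}\}$ is orthonormal and $I_T$ is the isometry extending $g\otimes u\mapsto\int_T g\,\d Z(u)$, the series $\sum_{j,k}\langle\overline{G}\varphi,h_j\otimes e_k\rangle\,\zeta_{jk}$ is precisely the expansion of $I_T(\overline{G}\varphi)$ in the orthonormal system $\{\zeta_{jk}\}$; hence $\langle\varphi,\xi_N\rangle\to\langle\varphi,\xi\rangle$ in $L^2(\Omega)$ and the limit equals $I_T(\overline{G}\varphi)$. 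This establishes integrability and the norm equivalence for $r=2$.

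Finally, to upgrade \eqref{eq:equivalence_of_norm_of_integral} to arbitrary $r>0$, I would invoke \autoref{prop:hypercontractivity}: the random variable $\xi$ is itself a limit in $L^2(\Omega;X)$ of elements of the linear span of $\{x\eta : x\in X,\eta\in\mathscr{H}^{\oplus n}\}$, and since this span is closed in $L^r(\Omega;X)$ with equivalent norms for all $r\in(0,\infty)$ (and, via \autoref{rem:hypercontractivity_p=0}, embeds into $L^r$ even from $L^0$), $\xi$ lies in this closure and $\|\xi\|_{L^r(\Omega;X)}\eqsim\|\xi\|_{L^2(\Omega;X)}\eqsim\|\overline{G}^*\|_{\gamma(\mathscr{D}^H(T;U);X)}$, uniformly over the approximating sums so the constants do not degenerate in the limit.

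The main obstacle I anticipate is the careful bookkeeping needed to pass the $n$-good equivalence (which is stated for \emph{finite} orthonormal sets and finite linear combinations) through to the infinite series: one must check that the equivalence constants furnished by $n$-good-ness depend only on $n$ (and not on $m$ or on the choice of orthonormal set), which is exactly what the definition grants, and then argue that Cauchyness of the Gaussian tails $\sum_{(j,k)\in F_N\setminus F_M}\gamma_{jk}\overline{G}^*(h_j\otimes e_k)$ in $L^2$ — guaranteed by $\overline{G}^*\in\gamma(\mathscr{D}^H(T;U);X)$ — transfers to Cauchyness of the $\zeta_{jk}$-sums. A secondary subtlety is that $n$-good-ness as stated compares two \emph{Wiener-chaos} orthonormal sets, whereas here one needs to compare a Wiener-chaos sum with a \emph{Gaussian} sum; this is handled by noting that a Gaussian orthonormal sequence is itself an orthonormal set in $\mathscr{H}^{\oplus n}$ (indeed in $\mathscr{H}_1\subseteq\mathscr{H}^{\oplus n}$), so the comparison is legitimate.
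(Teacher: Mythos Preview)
Your proposal is correct and follows essentially the same strategy as the paper: reduce to finite sums, use that $\{I_T(h_j\otimes e_k)\}$ is an orthonormal set in $\mathscr{H}^{\oplus n}$ together with the $n$-good property to compare with the Gaussian sum defining the $\gamma$-norm, pass to the limit to get $\xi\in L^2(\Omega;X)$, verify the defining identity $\langle\varphi,\xi\rangle=I_T(\overline{G}\varphi)$, and finally invoke \autoref{prop:hypercontractivity} for general $r>0$. The only cosmetic difference is that the paper works with an arbitrary sequence of finite-rank approximations $V_N\to\overline{G}^*$ in $\gamma(\mathscr{D}^H(T;U);X)$ (supplied directly by the definition of the $\gamma$-radonifying space as a completion), whereas you use the specific approximants $\overline{G}^*P_{F_N}$ coming from orthogonal projection onto basis elements; your observation that the Gaussian comparison is legitimate because $\mathscr{H}_1\subseteq\mathscr{H}^{\oplus n}$ is exactly what the paper uses implicitly.
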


\begin{proof}
\textit{Step 1.} Assume first that $G$ is such that the operator $\overline{G}^*$ has finite range. This means that $\overline{G}^*$ can be expressed as
	\begin{equation}
	\label{eq:finite_rank_G}
		\overline{G}^*S = \sum_{k=1}^m\langle S,e_k\rangle_{\mathscr{D}^{H}(T;U)} x_k, \quad S\in\mathscr{D}^{H}(T;U),
	\end{equation}
for some $m\in\mathbb{N}$, some orthonormal set $\{e_k\}_{k=1}^m$ in $\mathscr{D}^H(T;U)$ and some set $\{x_k\}_{k=1}^N\subseteq X$. Define
	\begin{equation}
	\label{eq:elem_integral}
		\xi:= \sum_{k=1}^m I_{T}(e_k)x_k .
	\end{equation}
Note that $\{I_{T}(e_k)\}_{k=1}^m\subseteq \mathscr{H}^{\oplus n}$ by \autoref{lem:Wiener_integral_belongs_to_H}. Consequently, since $X$ is $n$-good, there is the equivalence
	\begin{equation}
	\label{eq:integra_and_gamma_norm}
		\E\left\|\xi\right\|_{X}^2 = \E\left\|\sum_{k=1}^NI_{T}(e_k)x_k\right\|_{X}^2 \eqsim \E\left\|\sum_{k=1}^N\varepsilon_kx_k\right\|_{X}^2 = \E\left\|\sum_{k=1}^m \varepsilon_k\overline{G}^*e_k\right\|_{X}^2 = \|\overline{G}^*\|_{\gamma(\mathscr{D}^H(T;U);X)}^2
	\end{equation}
where $\{\varepsilon_k\}_{k=1}^N$ is a collection of independent standard Gaussian random variables is satisfied.

\textit{Step 2.} Now if $G$ is such that $\overline{G}^*\in\gamma(\mathscr{D}^H(T;U);X)$, then there exists a sequence $\{V_N\}_{N\in\mathbb{N}}$ of finite rank operators $V_N:\mathscr{D}^H(T;U)\rightarrow X$ of the form \eqref{eq:finite_rank_G} such that $\lim_{N\rightarrow\infty}\|V_N-\overline{G}^*\|_{\gamma(\mathscr{D}^H(T;U);X)}=0$. Let $\{\xi_N\}_{N\in\mathbb{N}}$ be the sequence of random variables $\xi_N\in L^2(\Omega;X)$ of the form \eqref{eq:elem_integral} such that $\xi_N$ corresponds to the operator $V_N$ as is \textit{Step 1} of the proof. It follows by equivalence \eqref{eq:integra_and_gamma_norm} that the equivalence
	\begin{equation*}
		\|\xi_N-\xi_M\|_{L^2(\Omega;X)} \eqsim \|V_N - V_M\|_{\gamma(\mathscr{D}^H(T;U);X)}
	\end{equation*}
is satisfied for $N,M\in\mathbb{N}$. Because $\overline{G}^*\in\gamma(\mathscr{D}^H(T;U);X)$, by letting $N,M\rightarrow\infty$, we have that the sequence $\{\xi_N\}_{N\in\mathbb{N}}$ is Cauchy in $L^2(\Omega;X)$ and thus, convergent there. Denote the limit by $\xi$. Let $\{\overline{G}_N\}_{N\in\mathbb{N}}$ be the sequence of operator $\overline{G}_N: X^*\rightarrow \mathscr{D}^H(T;U)$ defined by $\overline{G}^*_N:=V_N$. It follows immediately from the convergence of $\{V_N\}_{N\in\mathbb{N}}$ that $\lim_{N\rightarrow\infty}\|\overline{G}_N-\overline{G}\|_{\mathscr{L}(X^*;\mathscr{D}^H(T;U))}=0$ and hence, it follows that \[I_{T}(\overline{G}\varphi) = \langle \varphi,\xi\rangle\] holds $\mathbb{P}$-almost surely for every $\varphi\in X^*$. Finally, equivalence \eqref{eq:equivalence_of_norm_of_integral} follows by \autoref{prop:hypercontractivity}.
\end{proof}

As shown in the following result, if, additionally, the Banach space $X$ has the approximation property, the sufficient condition from \autoref{prop:stoch_int_1} is also necessary.

\begin{theorem}
\label{prop:char_stoch_int_vector}
Assume that the process $Z$ lives in a finite Wiener chaos $\mathscr{H}^{\oplus n}$ for $n\in\mathbb{N}$ and assume that the Banach space $X$ is $n$-good and has the approximation property. Let $G:X^*\times U\rightarrow \mathscr{D}^H(T)$ be weakly integrable with respect to the process $Z$ and let $\overline{G}$ be the corresponding operator from \autoref{lem:char_weak_stoch_integrability}. The mapping $G$ is integrable with respect to $Z$ if and only if $\overline{G}^*\in\gamma(\mathscr{D}^H(T;U);X)$. In that case, it holds for every $r>0$ that
	\begin{equation}
	\label{eq:equivalence_of_norm_of_integral_2}
		\|\xi\|_{L^r(\Omega;X)} \eqsim \|\overline{G}^*\|_{\gamma(\mathscr{D}^H(T;U);X)}
	\end{equation}
where $\xi$ is the integral of $G$ with respect to the process $Z$.
\end{theorem}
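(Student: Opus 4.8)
My proposal: the sufficiency of the condition $\overline{G}^*\in\gamma(\mathscr{D}^H(T;U);X)$ together with the norm equivalence \eqref{eq:equivalence_of_norm_of_integral_2} is precisely \autoref{prop:stoch_int_1}, so I would only need to establish the converse implication: if $G$ is integrable with respect to $Z$, with integral $\xi\in L^0(\Omega;X)$, then $\overline{G}^*\in\gamma(\mathscr{D}^H(T;U);X)$. Write $\mathcal{H}:=\mathscr{D}^H(T;U)$. Two observations will be used throughout. First, because $Z$ lives in $\mathscr{H}^{\oplus n}$ and $I_T$ is built as an $L^2(\Omega)$-limit of elementary integrals, $I_T$ maps $\mathcal{H}$ into the closed subspace $\mathscr{H}^{\oplus n}$ (for $S=g\otimes u$ this is \autoref{lem:Wiener_integral_belongs_to_H} applied to the one-dimensional process $Z(u)$). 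Second, if $R\in\mathscr{L}(X)$ has finite rank, say $Rx=\sum_{i=1}^{m}\langle\varphi_i,x\rangle x_i$ with $\varphi_i\in X^*$, $x_i\in X$, then integrability of $G$ gives $R\xi=\sum_{i=1}^{m}I_T(\overline{G}\varphi_i)\,x_i$ almost surely, so that $R\xi$ lies in the linear span of $\{x\eta:x\in X,\ \eta\in\mathscr{H}^{\oplus n}\}$.

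The first and hardest step is to promote $\xi$ from $L^0(\Omega;X)$ to $L^2(\Omega;X)$. Since $\xi$ is strongly measurable its law is tight, so one may pick increasing compacts $K_m\subseteq X$ with $\mathbb{P}(\xi\notin K_m)<2^{-m}$; the approximation property of $X$ then yields finite rank operators $R_m\in\mathscr{L}(X)$ with $\sup_{x\in K_m}\|x-R_mx\|_X<2^{-m}$. By Borel--Cantelli, $R_m\xi\to\xi$ almost surely, hence in probability. Each difference $R_m\xi-R_l\xi$ lies in the span of $\{x\eta:x\in X,\eta\in\mathscr{H}^{\oplus n}\}$ by the second observation, so for every $\varepsilon>0$ and all sufficiently large $m,l$ one has $\mathbb{P}(\|R_m\xi-R_l\xi\|_X>\varepsilon)\le(4C_{2,n}^2)^{-1}$ and hence, by \autoref{rem:hypercontractivity_p=0}, $\|R_m\xi-R_l\xi\|_{L^2(\Omega;X)}\le\sqrt{2}\,\varepsilon$. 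Thus $(R_m\xi)_m$ is Cauchy in $L^2(\Omega;X)$, and its limit coincides with $\xi$ because $R_m\xi\to\xi$ in probability. In particular $\xi\in L^2(\Omega;X)$, and then $\xi\in L^r(\Omega;X)$ for every $r>0$ by \autoref{prop:hypercontractivity}.

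With this in hand I would produce $\overline{G}^*$ as a $\gamma$-limit of finite rank operators. Writing $R_m=\sum_i x_i^m\otimes\varphi_i^m$, set $V_mS:=\sum_i\langle\overline{G}\varphi_i^m,S\rangle_{\mathcal{H}}\,x_i^m$ for $S\in\mathcal{H}$; this is a finite rank operator $\mathcal{H}\to X$, and rewriting it in an orthonormal system shows that its Wiener integral (as constructed in \textit{Step 1} of the proof of \autoref{prop:stoch_int_1}) equals $\sum_i I_T(\overline{G}\varphi_i^m)x_i^m=R_m\xi$; by linearity, the finite rank operator $V_m-V_l$ has integral $R_m\xi-R_l\xi$. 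Applying the equivalence \eqref{eq:integra_and_gamma_norm} (whose constants depend only on $X$ and $n$) to $V_m-V_l$ gives $\|V_m-V_l\|_{\gamma(\mathcal{H};X)}^2\eqsim\E\|R_m\xi-R_l\xi\|_X^2\to0$, so $(V_m)_m$ converges in $\gamma(\mathcal{H};X)$ to some operator $W$.

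Finally I would identify $W=\overline{G}^*$. The operator $W^*:X^*\to\mathcal{H}$ is, by the lemma following \autoref{lem:char_weak_stoch_integrability}, the operator attached to a weakly integrable bilinear form; since $W\in\gamma(\mathcal{H};X)$, \autoref{prop:stoch_int_1} applies to that form, its integral $\zeta$ satisfies $\langle\varphi,\zeta\rangle=I_T(W^*\varphi)$ almost surely, and computing $\zeta$ via the approximating sequence $(V_m)$, whose integrals are the $R_m\xi$, identifies $\zeta$ as the $L^2(\Omega;X)$-limit of $(R_m\xi)_m$, i.e.\ $\zeta=\xi$ by the second step. Hence $I_T(\overline{G}\varphi)=\langle\varphi,\xi\rangle=I_T(W^*\varphi)$ for every $\varphi\in X^*$, and injectivity of $I_T$ gives $\overline{G}=W^*$, that is $\overline{G}^*=W\in\gamma(\mathcal{H};X)$; the norm equivalence \eqref{eq:equivalence_of_norm_of_integral_2} then follows from \autoref{prop:stoch_int_1}. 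The crux is the second step: the approximation property is exactly what allows $\xi$ to be approximated by $X$-valued random variables from a fixed finite Wiener chaos, and it is the $p=0$ hypercontractivity of \autoref{rem:hypercontractivity_p=0} that turns that approximation into convergence in $L^2$, without which none of the $\gamma$-radonifying estimates get off the ground.
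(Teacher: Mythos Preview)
Your proposal is correct and follows essentially the same approach as the paper's own proof: both pick compacts $K_m$ by tightness, use the approximation property to produce finite rank operators $R_m$ (the paper calls them $T_m$), observe that $R_m\xi\to\xi$ in probability and upgrade this to $L^2(\Omega;X)$ via \autoref{rem:hypercontractivity_p=0}, then translate each $R_m$ into a finite rank operator $V_m:\mathcal{H}\to X$ (in the paper these are the $\overline{G}_m^*$, with $\overline{G}_m=\overline{G}T_m^*$) and use the $n$-good equivalence from \textit{Step~1} of \autoref{prop:stoch_int_1} to conclude that the $V_m$ are Cauchy in $\gamma(\mathcal{H};X)$ with limit $\overline{G}^*$. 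Your write-up is somewhat more careful than the paper in two places: you make explicit the promotion of $\xi$ from $L^0$ to $L^2$ (the paper opens with ``Let $\xi\in L^2(\Omega;X)$'' and only later justifies this through the $L^0\to L^2$ upgrade), and you spell out the identification of the $\gamma$-limit with $\overline{G}^*$ by passing through the associated integrals and the injectivity of $I_T$, where the paper simply asserts ``its limit is $\overline{G}^*$''.
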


\begin{proof}
It is only shown that if $G$ is integrable with respect to the process $Z$, then $\overline{G}^*\in\gamma(\mathscr{D}^H(T;U);X)$ as the converse is proved in \autoref{prop:stoch_int_1}. Let $\xi\in L^2(\Omega;X)$ be the Wiener integral of $G$ and assume that $\{C_m\}_{m\in\mathbb{N}}$ is a sequence of compact subsets of the Banach space $X$ such that $\mathbb{P}(\xi\not\in C_m)<2^{-m}$ holds for every $m\in\mathbb{N}$. Since $X$ has the approximation property, then for every $m\in\mathbb{N}$, there exists an operator $T_m\in\mathscr{L}(X)$ of finite rank such that 
	\begin{equation*}
		\sup_{x\in C_m}\left\|x-T_mx\right\|_X \leq 2^{-m}.
	\end{equation*}
Consequently, $\xi_m:=T_m\xi\rightarrow \xi$ as $m\rightarrow\infty$ almost surely, therefore in probability, and by \autoref{rem:hypercontractivity_p=0} also in $L^2(\Omega;X)$. Now, since $T_m$ is a finite-rank operator, it can be expressed as
	\begin{equation*}
		T_m x = \sum_{k=1}^{N_m}\varphi_k^m(x)x_k^m, \quad x\in X,
	\end{equation*}
for some $N_m\in\mathbb{N}$, $\{\varphi_k^m\}_{k=1}^{N_m}\subseteq X^*$, and $\{x_k^m\}_{k=1}^{N_m}\subseteq X$. Define the sequence $\{\overline{G}_m\}_{m\in\mathbb{N}}$ of operators $\overline{G}_m: X^*\rightarrow \mathscr{D}^H(T;U)$ by  
	\begin{equation*}
		\overline{G}_m \varphi := \sum_{k=1}^{N_m} \overline{G}\varphi_j^m\varphi(x_j^m), \quad \varphi\in X^*. 
	\end{equation*}
Then, as in \textit{Step 2} of the proof of \autoref{prop:stoch_int_1}, we have that the equivalence
	\begin{equation*}
		\|\xi_N-\xi_M\|_{L^2(\Omega;X)} \eqsim \|\overline{G}^*_N-\overline{G}^*_M\|_{\gamma(\mathscr{D}^H(T;U);X)}
	\end{equation*}
is satisfied for $N,M\in\mathbb{N}$, and because the sequence $\{\xi_m\}_{m\in\mathbb{N}}$ converges to $\xi$ in $L^2(\Omega;X)$, it follows that the sequence $\{\overline{G}^*_m\}_{m\in\mathbb{N}}$ is Cauchy in $\gamma(\mathscr{D}^H(T;U);X)$ and its limit is $\overline{G}^*$.
\end{proof}

\begin{remark}
Note that, as a consequence of \autoref{rem:hypercontractivity_p=0}, equivalences \eqref{eq:equivalence_of_norm_of_integral} and \eqref{eq:equivalence_of_norm_of_integral_2} also hold for $r=0$ in the sense that the convergence of elementary integrals $\{\xi_N\}_N$ in probability implies the convergence of the corresponding operators $\{\overline{G}^*_N\}_N$ in the space $\gamma(\mathscr{D}^H(T;U);X)$.
\end{remark}

\begin{notation} The random variable $\xi$ from \autoref{def:stoch_int_vector} is denoted by $\int_TG\d{Z}$.
\end{notation}

In practical applications, it is common for $X$ to be a function space (e.g. Lebesgue, Sobolev, or a Besov space). Such function spaces are isomorphic with a subspace $\tilde{Y}$ of the general product space $Y$ that is considered in \autoref{prop:Y_is_good_and_approximable}. In this case, $\gamma$-radonifying operators with values in such spaces can be represented by pointwise kernels. Although the following proposition is stated with a general separable Hilbert space $\mathcal{V}$, the choice $\mathcal{V}=\mathscr{D}^H(J;U)$ for an interval $J\subseteq\R$ is particularly useful for our purposes.

\begin{proposition}
\label{prop:kernel} Assume that $X$ is isomorphic with a subspace $\tilde{Y}$ of the product space $Y$ that is defined in \autoref{prop:Y_is_good_and_approximable} and let $Q:X\rightarrow \tilde{Y}$ be the linear injection. Let $\mathcal{V}$ be a separable Hilbert space. A bounded linear operator $A\in\mathscr{L}(\mathcal{V};X)$ is $\gamma$-radonifying if and only if there exists a kernel $a=(a_i)_{i=1}^N$ such that the following two conditions are satisfied:
	\begin{itemize}
		\item For every $i=1,2,\ldots,N$, $a_i: D_{i,1}\times D_{i,2}\rightarrow\mathcal{V}$ is a measurable function that satisfies
			\begin{equation*}
				[QAv]_i(x,y) = \langle a_i(x,y),v\rangle_{\mathcal{V}}, \quad v\in\mathcal{V},
			\end{equation*}
			for $\mu_{i,1}\otimes \mu_{i,2}$-almost every $(x,y)\in D_{i,1}\times D_{i,2}$.
		\item The following finiteness condition is satisfied:
				\begin{equation*}
				 	\|a\|_{\tilde{Y}}:=\sum_{i=1}^N \left\{\int_{D_{i,1}}\left[\int_{D_{i,2}}\|a_i(x,y)\|_{\mathcal{V}}^{p_{i,2}}\mu_{i,2}(\d{y})\right]^{\frac{p_{i,1}}{p_{i,2}}}\mu_{i,1}(\d{x})\right\}^\frac{1}{p_{i,1}} <\infty.
				\end{equation*}
	\end{itemize}
In this case, it holds that 
	\begin{equation*}
		\|A\|_{\gamma(\mathcal{V};X)} \eqsim \|a\|_{\tilde{Y}}.
	\end{equation*}
\end{proposition}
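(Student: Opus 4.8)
The plan is to reduce the statement to the scalar case $N=1$ and then to the well-known representation of $\gamma$-radonifying operators into a mixed Lebesgue space $E^{p_1,p_2}(D_1\times D_2)$ by a square-function kernel. Since $Q\colon X\to\tilde Y$ is an isomorphism onto its range, $A\in\gamma(\mathcal V;X)$ if and only if $QA\in\gamma(\mathcal V;\tilde Y)$, and because $\tilde Y$ is a (closed) subspace of $Y=\prod_{i=1}^N E^{p_{i,1},p_{i,2}}(D_{i,1}\times D_{i,2})$, this is equivalent to $QA\in\gamma(\mathcal V;Y)$ with two-sided norm estimates. Writing $QA=(\pi_1 QA,\dots,\pi_N QA)$ for the coordinate projections $\pi_i\colon Y\to E^{p_{i,1},p_{i,2}}(D_{i,1}\times D_{i,2})$, one has $\|QA\|_{\gamma(\mathcal V;Y)}\eqsim\sum_{i=1}^N\|\pi_iQA\|_{\gamma(\mathcal V;E_i)}$ (a finite direct sum, so all reasonable norms are equivalent). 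Hence everything localizes to proving, for a single mixed Lebesgue space $E=E^{p_1,p_2}(D_1\times D_2)$ with $\sigma$-finite separable measures, that $B\in\mathscr L(\mathcal V;E)$ is $\gamma$-radonifying iff there is a measurable $b\colon D_1\times D_2\to\mathcal V$ with $[Bv](x,y)=\langle b(x,y),v\rangle_{\mathcal V}$ a.e.\ and $\|b\|_E:=\bigl(\int_{D_1}(\int_{D_2}\|b(x,y)\|_{\mathcal V}^{p_2}\,\mu_2(\d y))^{p_1/p_2}\mu_1(\d x)\bigr)^{1/p_1}<\infty$, with $\|B\|_{\gamma(\mathcal V;E)}\eqsim\|b\|_E$.

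For this scalar step I would argue as follows. Pick an orthonormal basis $\{v_\ell\}_\ell$ of $\mathcal V$ and an i.i.d.\ standard Gaussian sequence $\{\gamma_\ell\}_\ell$; by definition $\|B\|_{\gamma(\mathcal V;E)}^2=\E\|\sum_\ell\gamma_\ell Bv_\ell\|_E^2$ (when the sum converges in $L^2(\Omega;E)$). Using \autoref{prop:hypercontractivity} (applied with $\mathcal B=\R$, then $\mathcal B=L^{p_2}(D_2)$, then $\mathcal B=L^{p_1}(D_1;L^{p_2}(D_2))=E$, exactly as in the proof of \autoref{prop:Y_is_good_and_approximable}) one passes the $L^2(\Omega)$-norm through the $D_2$- and $D_1$-integrals up to equivalence, obtaining
\begin{equation*}
	\E\Bigl\|\sum_\ell\gamma_\ell Bv_\ell\Bigr\|_E^2\eqsim\int_{D_1}\Bigl[\int_{D_2}\Bigl(\E\bigl|\sum_\ell\gamma_\ell [Bv_\ell](x,y)\bigr|^2\Bigr)^{p_2/2}\mu_2(\d y)\Bigr]^{p_1/p_2}\mu_1(\d x),
\end{equation*}
and since $\E|\sum_\ell\gamma_\ell c_\ell|^2=\sum_\ell|c_\ell|^2$ for real scalars $c_\ell$, the right-hand side equals $\bigl(\int_{D_1}(\int_{D_2}(\sum_\ell|[Bv_\ell](x,y)|^2)^{p_2/2}\mu_2(\d y))^{p_1/p_2}\mu_1(\d x)\bigr)$. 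When this quantity is finite, one defines $b(x,y)\in\mathcal V$ by $b(x,y):=\sum_\ell [Bv_\ell](x,y)\,v_\ell$, which converges in $\mathcal V$ for $\mu_1\otimes\mu_2$-a.e.\ $(x,y)$ precisely because $\sum_\ell|[Bv_\ell](x,y)|^2<\infty$ a.e.; then $\|b(x,y)\|_{\mathcal V}^2=\sum_\ell|[Bv_\ell](x,y)|^2$, so $\|b\|_E\eqsim\|B\|_{\gamma(\mathcal V;E)}$, and $\langle b(x,y),v\rangle_{\mathcal V}=\sum_\ell[Bv_\ell](x,y)\langle v_\ell,v\rangle_{\mathcal V}=[Bv](x,y)$ a.e.\ by linearity and $L^2$-continuity of $v\mapsto Bv$ together with $B$ being bounded into $E$. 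Measurability of $b$ follows from measurability of each coordinate $(x,y)\mapsto[Bv_\ell](x,y)=\langle b(x,y),v_\ell\rangle_{\mathcal V}$ and separability of $\mathcal V$ (Pettis). Conversely, given such a kernel $b$ with $\|b\|_E<\infty$, the same chain of equivalences run backwards shows that the Gaussian sum converges in $L^2(\Omega;E)$ and that $B\in\gamma(\mathcal V;E)$ with the stated norm equivalence; finiteness of $\|b\|_E$ is what makes \autoref{prop:hypercontractivity} applicable in that direction as well.

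Finally I would reassemble: for the full space $Y$, put $a_i(x,y):=b^{(i)}(x,y)$ where $b^{(i)}$ is the kernel of $\pi_iQA$ obtained above, so that $[QAv]_i(x,y)=[\pi_iQAv](x,y)=\langle a_i(x,y),v\rangle_{\mathcal V}$ a.e.\ and $\|a\|_{\tilde Y}=\sum_{i=1}^N\|a_i\|_{E_i}\eqsim\sum_{i=1}^N\|\pi_iQA\|_{\gamma(\mathcal V;E_i)}\eqsim\|QA\|_{\gamma(\mathcal V;Y)}\eqsim\|QA\|_{\gamma(\mathcal V;\tilde Y)}\eqsim\|A\|_{\gamma(\mathcal V;X)}$, the last equivalence because $Q$ is an isomorphism onto $\tilde Y$ and $\gamma$-radonifying operators are an operator ideal (composition with the bounded operators $Q$ and a left inverse of $Q$ preserves the $\gamma$-norm up to the operator norms). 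The existence of $a$ is equivalent to $A\in\gamma(\mathcal V;X)$ since each equivalence in this chain is an \emph{iff} at the level of the relevant Gaussian sums converging in $L^2$.

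I expect the main obstacle to be the careful justification of the passage of the $L^2(\Omega)$-norm through the iterated integrals: \autoref{prop:hypercontractivity} gives two-sided bounds with finite constants, but one must be sure the Gaussian series $\sum_\ell\gamma_\ell Bv_\ell$ converges in the right space before manipulating it, and that Fubini/Tonelli applies to interchange $\E$ with the $\mu_i$-integrals (which is legitimate for the nonnegative integrands $\E|\cdots|^{p_2}$ by Tonelli, and for the series by monotone convergence on the partial sums). A secondary technical point is verifying a.e.\ measurability of $b$ and of $(x,y)\mapsto\|b(x,y)\|_{\mathcal V}$, which is handled by separability of $\mathcal V$ and the identity $\|b(x,y)\|_{\mathcal V}^2=\sum_\ell|[Bv_\ell](x,y)|^2$ as a countable supremum of partial sums of measurable functions; and one should note that everything is independent of the choice of orthonormal basis $\{v_\ell\}$, which is automatic from the rotation invariance of Gaussian measures (equivalently, from the basis-independence of the $\gamma$-norm).
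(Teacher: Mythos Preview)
Your proof is correct and follows essentially the same approach the paper indicates: the paper simply defers to \cite[Theorem 2.3]{BrzNee03}, whose argument is precisely the square-function/Fubini computation you spell out (and which also underlies the proof of \autoref{prop:Y_is_good_and_approximable}). You have written out in detail what the paper leaves to a citation, including the reduction to a single factor via the isomorphism $Q$ and the coordinate projections, and the use of \autoref{prop:hypercontractivity} to pass the Gaussian moment through the iterated integrals.
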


\begin{proof}
The proposition is proved similarly as \cite[Theorem 2.3]{BrzNee03}.
\end{proof}

\section{Stochastic convolution}
\label{sec:stoch_conv}

In this section, stochastic convolution with respect to fractional processes is treated.

Assume that $V$ is a separable Hilbert space and $(\Omega,\mathcal{F},\mathbb{P})$ is a probability space with a $V$-isonormal Gaussian process $(W(v))_{v\in V}$ defined on it. Assume that the $\sigma$-field $\mathscr{F}$ is generated by the process $W$ and augmented with $\mathbb{P}$-zero sets. Let $H\in (0,1)$ and let $U$ be a separable Hilbert space. Let $(Z_t)_{t\geq 0}$ be a $U$-cylindrical $H$-fractional process that lives in the finite Wiener chaos $\mathscr{H}^{\oplus n}$ of the isonormal process $W$ for some $n\in\mathbb{N}$. Finally, let $X$ be an $n$-good Banach space that has the approximation property. 

Initially, a necessary and sufficient condition for the existence of a stochastic convolution integral is given. 

\begin{proposition}
\label{prop:stoch_convolution_existence}
Let $0\leq s<t$ and let $\tilde{G}: (0,t-s)\rightarrow \mathscr{L}(U;X)$ be a function. Define the map $G$ by 
	\begin{equation*}
		G(\varphi,u)(r):= \langle \varphi,\tilde{G}(r)u\rangle, \quad r\in (0,t-s), \quad \varphi\in X^*, \quad u\in U.
	\end{equation*}
Then the convolution integral 
	\begin{equation}
	\label{eq:xi_st}
		\xi_{s,t}:=\int_s^t G(t-\cdot)\d{Z}
	\end{equation}
is well defined if and only if $\overline{G}^*\in\gamma(\mathscr{D}^H(0,t-s;U);X)$. In that case, it holds for every $r>0$ that 
	\begin{equation}
	\label{eq:equiv_moments}
		\|\xi_{s,t}\|_{L^r(\Omega;X)} \eqsim \|\overline{G}^*\|_{\gamma(\mathscr{D}^H(0,t-s;U);X)}.
	\end{equation}
\end{proposition}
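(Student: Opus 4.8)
The statement of \autoref{prop:stoch_convolution_existence} is essentially a rephrasing of \autoref{prop:char_stoch_int_vector} with the interval $T$ chosen to be $(0,t-s)$ and with a particular integrand $G(t-\cdot)$ arising from the operator-valued function $\tilde{G}$. So the plan is to reduce to the already-proved characterization theorem. First I would set $T:=(0,t-s)\subseteq\mathfrak{T}$ and observe that the function $r\mapsto G(r\,;\varphi,u):=G(\varphi,u)(r)$ defines a bilinear mapping $X^*\times U\rightarrow\mathscr{D}^H(T)$ once one knows that $G(\varphi,u)\in\mathscr{D}^H(T)$ for each $\varphi,u$; this last point is exactly the content of requiring $\overline{G}^*$ to exist as a bounded operator, i.e. of weak integrability, so one should be careful about the logical order here (see the obstacle paragraph below). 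Next, the composition $r\mapsto G(\varphi,u)(t-r)$ is just the reflection of $G(\varphi,u)$ about the midpoint of $(s,t)$, and since $\mathscr{D}^H$ and the homogeneous Sobolev-Slobodeckii norm are reflection-invariant (by \autoref{prop:characterisation_of_D} and the translation/reflection invariance of $\dot{W}^{\frac12-H,2}$ recorded in the appendix), the map $G(t-\cdot)$ is weakly integrable on $(s,t)$ if and only if the corresponding $\overline{G}^*$ lies in $\gamma(\mathscr{D}^H(0,t-s;U);X)$; more precisely the integral $\xi_{s,t}$ in \eqref{eq:xi_st} is literally $\int_{(s,t)}\bigl(G(t-\cdot)\bigr)\d Z$ in the sense of \autoref{def:stoch_int_vector} after this change of variable.

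With this identification in place, both the existence claim and the moment equivalence \eqref{eq:equiv_moments} follow by invoking \autoref{prop:char_stoch_int_vector} directly: since $Z$ lives in $\mathscr{H}^{\oplus n}$ and $X$ is $n$-good with the approximation property (all assumed in the running hypotheses of \autoref{sec:stoch_conv}), $G(t-\cdot)$ is (strongly) integrable with respect to $Z$ if and only if $\overline{G}^*\in\gamma(\mathscr{D}^H(T;U);X)$, and in that case $\|\xi_{s,t}\|_{L^r(\Omega;X)}\eqsim\|\overline{G}^*\|_{\gamma(\mathscr{D}^H(T;U);X)}$ for every $r>0$. One should also remark that "well defined" here means exactly "integrable in the sense of \autoref{def:stoch_int_vector}", so there is nothing further to check beyond citing the theorem.

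The main obstacle, and the only substantive point, is the circularity lurking in the definition of $\overline{G}^*$: to even speak of $\overline{G}^*$ one needs $G(t-\cdot)$ to be \emph{weakly} integrable, i.e. one needs an a priori bound $\|G(t-\cdot)(\varphi,\cdot)\|_{\mathscr{L}_2(U;\mathscr{D}^H(T))}\lesssim\|\varphi\|_{X^*}$. So the careful formulation is: either one assumes as part of the hypothesis that $G(t-\cdot)$ is weakly integrable (which is implicit in writing $\overline{G}^*$ at all), or one phrases the statement as "if $\overline{G}$ exists as a bounded operator, then $\xi_{s,t}$ is well defined iff $\overline{G}^*$ is $\gamma$-radonifying". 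I would spell this out in one sentence at the start of the proof — noting that when $\overline{G}^*\in\gamma(\mathscr{D}^H(T;U);X)$ then in particular $\overline{G}^*$ is bounded, hence $\overline{G}$ is bounded and $G(t-\cdot)$ is automatically weakly integrable by \autoref{lem:char_weak_stoch_integrability}, so the biconditional is clean — and then the rest is a one-line appeal to \autoref{prop:char_stoch_int_vector}. No genuinely new estimate is needed.
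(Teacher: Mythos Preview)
Your proposal is correct and follows essentially the same route as the paper: apply \autoref{prop:char_stoch_int_vector} on the interval $(s,t)$ to the integrand $G(t-\cdot)$, and then use the reflection/translation invariance of $\mathscr{D}^H$ to transfer the $\gamma$-radonifying condition from $\mathscr{D}^H(s,t;U)$ to $\mathscr{D}^H(0,t-s;U)$. The paper makes the transfer step explicit by invoking \autoref{lem:affine_transform} (the affine map $(\,\cdot\,)_{-1,t}$ is an isometry $\mathscr{D}^H(s,t)\to\mathscr{D}^H(0,t-s)$) and by computing the identity $\langle\varphi,\overline{G(t-\cdot)}^*(g_m\otimes e_n)\rangle=\langle\varphi,\overline{G}^*((g_m)_{-1,t}\otimes e_n)\rangle$ via the representation in \autoref{lem:char_weak_stoch_integrability}; your appeal to reflection invariance is the same content, just stated less explicitly.
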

\begin{proof}
By \autoref{prop:char_stoch_int_vector}, $\xi_{s,t}$ is a well defined random variable that belongs to the space $L^2(\Omega;X)$ if and only if the finiteness condition $$\|\overline{G(t-\cdot)}^*\|_{\gamma(\mathscr{D}^H(s,t;U);X)}<\infty$$ is satisfied. On the other hand, the equality 
	\begin{equation}
	\label{eq:conv_prop_1_1}
		 \E\left\|\sum_{n,m} \delta_{n,m} \overline{G(t-\cdot)}^*(g_m\otimes e_n)\right\|_{X}^2 = \E\left\|\sum_{n,m}\delta_{n,m} \overline{G}^*\big((g_m)_{-1,t}\otimes e_n\big)\right\|_X^2
	\end{equation}
holds for every orthonormal basis $\{g_m\}_m$ of the space $\mathscr{D}^H(s,t)$, every orthonormal basis $\{e_n\}_n$ of the space $U$, and every array of independent standard Gaussian random variables $\{\delta_{n,m}\}_{n,m}$. Here, the symbol $(\,\cdot\,)_{-1,t}$ denotes the operator of affine transformation $(\,\cdot\,)_{a,b}$ defined in \autoref{sec:additional_lemmas}. Indeed, for every $\varphi\in X^*$, there is the chain of equalities
	\begin{align*}
		\left\langle\varphi, \overline{G(t-\cdot)}^*(g_m\otimes e_n)\right\rangle & = \left\langle \overline{G(t-\cdot)}\varphi, g_m\otimes e_n\right\rangle_{\mathscr{D}^H(s,t;U)}\\
		& = \left\langle \sum_k [G(t-\cdot)](\varphi, e_k)\otimes e_k, g_m\otimes e_n\right\rangle_{\mathscr{D}^H(s,t)}\\
		& = \Big\langle [G(t-\cdot)](\varphi,e_n),g_m\Big\rangle_{\mathscr{D}^H(s,t)}\\
		& = \Big\langle G(\varphi,e_n), (g_m)_{-1,t}\Big\rangle_{\mathscr{D}^H(0,t-s)}
	\end{align*}
where duality, the representation of $\overline{G(t-\cdot)}$ from the proof of \autoref{lem:char_weak_stoch_integrability}, and \autoref{lem:affine_transform} are used. By a computation similar to the computation above, the equality 
	\begin{equation*}
		\Big\langle G(\varphi,e_n), (g_m)_{-1,t}\Big\rangle_{\mathscr{D}^H(0,t-s)} = \Big\langle\varphi, \overline{G}^*\big((g_m)_{-1,t}\otimes e_n\big)\Big\rangle
	\end{equation*}
is obtained and thus, equality \eqref{eq:conv_prop_1_1} is proved.
\end{proof}

\begin{proposition}
\label{prop:if_p_greater_2}
Assume that $X$ is isomorphic with a subspace $\tilde{Y}$ of the product space $Y$ that is defined in \autoref{prop:Y_is_good_and_approximable}. Let $0\leq s<t$ and let $\tilde{G}: (0,t-s)\rightarrow {\gamma}(U;X)$ be a function.  Assume also that one of the following conditions is satisfied:
\begin{enumerate}
\itemsep0em
\item The parameter $H$ belongs to the interval $(0,\sfrac{1}{2})$; the space $\tilde{Y}$ is such that $p_{i,j}\geq 2$ holds for every $i=1,2,\ldots, N$ and $j=1,2$; and $\tilde{G}$ satisfies the following finiteness condition:
	\[\int_0^{t-s} \|\tilde{G}(u)\|_{\gamma(U;X)}^2\d{u} + \int_0^{t-s}\int_0^{t-s} \|\tilde{G}(u)-\tilde{G}(v)\|_{\gamma(U;X)}^2|u-v|^{2H-2}\d{u}\d{v}<\infty.\]
\item The parameter $H$ belongs to the interval $[\sfrac{1}{2},1)$; the space $\tilde{Y}$ is such that $p_{i,j}H\geq 1$ holds for every $i=1,2,\ldots, N$ and $j=1,2$; and $\tilde{G}$ satisfies the following finiteness condition:
	\[\int_0^{t-s}\|\tilde{G}(u)\|_{\gamma(U;X)}^\frac{1}{H}\d{u}<\infty.\]
\end{enumerate}
Then $\overline{G}^*\in\gamma(\mathscr{D}^H(0,t-s;U);X)$ and the convolution integral $\xi_{s,t}$ given by \eqref{eq:xi_st} is well defined.
\end{proposition}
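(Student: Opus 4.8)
The strategy is to establish the hypothesis $\overline G^*\in\gamma(\mathscr D^H(0,t-s;U);X)$ of \autoref{prop:stoch_convolution_existence} by producing an explicit pointwise kernel for $\overline G^*$ and estimating its $\tilde Y$-norm through \autoref{prop:kernel}. Write $L:=t-s\in(0,\infty)$ and recall from \autoref{prop:characterisation_of_D} that $\mathscr D^H(0,L;U)=\dot W^{\frac12-H,2}(0,L)\otimes_2 U=\dot W^{\frac12-H,2}(0,L;U)$ with equivalent norms. Weak integrability of $G$, needed for $\overline G$ (hence $\overline G^*$) to be defined, follows from the finiteness conditions by an analogous but simpler estimate, so I suppress it. For (almost) every $r\in(0,L)$ the operator $\tilde G(r)\in\gamma(U;X)$ possesses, by \autoref{prop:kernel} applied with $\mathcal V=U$, a kernel $\tilde g(r)=(\tilde g_i(r))_{i=1}^N$ with $[Q\tilde G(r)u]_i(x,y)=\langle\tilde g_i(r)(x,y),u\rangle_U$ and $\|\tilde G(r)\|_{\gamma(U;X)}\eqsim\sum_i\|\tilde g_i(r)\|_{E^{p_{i,1},p_{i,2}}(\cdot;U)}$; using strong measurability of $\tilde G$ (tacitly assumed in the finiteness conditions) a measurable selection yields jointly measurable maps $(r,x,y)\mapsto\tilde g_i(r)(x,y)$. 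The candidate kernel of $\overline G^*$ is
\[
	a_i(x,y):=\bigl[\,r\mapsto\tilde g_i(r)(x,y)\,\bigr],\qquad i=1,\dots,N,
\]
which I expect to lie in $\dot W^{\frac12-H,2}(0,L;U)$ for $\mu_{i,1}\otimes\mu_{i,2}$-almost every $(x,y)$; this, and the fact that $(a_i)_i$ represents $\overline G^*$ in the sense of \autoref{prop:kernel}, will follow from the norm bound below together with Tonelli's theorem. The representation identity $[Q\overline G^*(g\otimes u)]_i(x,y)=\langle a_i(x,y),g\otimes u\rangle_{\mathscr D^H(0,L;U)}$ is a direct computation: by the description of $\overline G$ in the proof of \autoref{lem:char_weak_stoch_integrability}, for a step function $g$ the element $\overline G^*(g\otimes u)\in X$ has $i$-th coordinate under $Q$ equal to $(x,y)\mapsto\langle\langle\tilde g_i(\cdot)(x,y),u\rangle_U,g\rangle_{\mathscr D^H(0,L)}$, and one extends in $g$ by density.

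The crux is the estimate $\|a\|_{\tilde Y}<\infty$, which uses the description of the norm of $\dot W^{\frac12-H,2}(0,L;U)$ on the bounded interval $(0,L)$ collected in \autoref{sec:appendix}, together with (generalized) Minkowski inequalities, and splits into the two cases. In case (1), $\tfrac12-H\in(0,\tfrac12)$, $2(\tfrac12-H)<1$, and, after absorbing the endpoint-weighted $L^2$ contributions by a fractional Hardy inequality, one has
\[
	\|a_i(x,y)\|_{\dot W^{\frac12-H,2}(0,L;U)}^2\lesssim\int_0^L\|\tilde g_i(u)(x,y)\|_U^2\,\d u+\int_0^L\!\!\int_0^L\frac{\|\tilde g_i(u)(x,y)-\tilde g_i(v)(x,y)\|_U^2}{|u-v|^{2-2H}}\,\d u\,\d v,
\]
so $\|a_i(x,y)\|_{\dot W^{\frac12-H,2}(0,L;U)}$ is controlled by an $L^2$-type temporal norm of $\tilde g_i(\cdot)(x,y)$ and $\tilde g_i(\cdot)(x,y)-\tilde g_i(\cdot)(x,y)$. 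Forming the $\tilde Y$-norm inserts an iterated $L^{p_{i,1}}_x L^{p_{i,2}}_y$-norm outside this; since $p_{i,1},p_{i,2}\ge 2$, two successive applications of Minkowski's integral inequality move the $L^2$ temporal norm to the outermost position, e.g.
\[
	\Bigl\|\,\bigl(\textstyle\int_0^L\|\tilde g_i(u)(\cdot,\cdot)\|_U^2\,\d u\bigr)^{1/2}\Bigr\|_{E^{p_{i,1},p_{i,2}}}\le\Bigl(\int_0^L\bigl\|\,\|\tilde g_i(u)(\cdot,\cdot)\|_U\bigr\|_{E^{p_{i,1},p_{i,2}}}^2\,\d u\Bigr)^{1/2},
\]
and similarly for the double integral. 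Identifying $\bigl\|\,\|\tilde g_i(u)(\cdot,\cdot)\|_U\bigr\|_{E^{p_{i,1},p_{i,2}}}$ with $\|\tilde G(u)\|_{\gamma(U;X)}$ and $\bigl\|\,\|\tilde g_i(u)(\cdot,\cdot)-\tilde g_i(v)(\cdot,\cdot)\|_U\bigr\|_{E^{p_{i,1},p_{i,2}}}$ with $\|\tilde G(u)-\tilde G(v)\|_{\gamma(U;X)}$ via \autoref{prop:kernel} (applied to $\tilde G(u)$ and to $\tilde G(u)-\tilde G(v)$, whose kernel is $\tilde g(u)-\tilde g(v)$), and summing over $i$, one arrives precisely at the finiteness condition of (1). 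In case (2), $\tfrac12-H\le 0$ and the relevant tool is the one-dimensional limiting Sobolev embedding $L^{1/H}(0,L;U)\hookrightarrow\dot W^{\frac12-H,2}(0,L;U)$ (extend by zero to $\R$, apply the classical estimate on $\R$, restrict), giving $\|a_i(x,y)\|_{\dot W^{\frac12-H,2}(0,L;U)}\lesssim\bigl(\int_0^L\|\tilde g_i(u)(x,y)\|_U^{1/H}\,\d u\bigr)^{H}$; raising the $\tilde Y$-norm to the power $1/H$ turns the $E^{p_{i,1},p_{i,2}}$-norm into the $E^{Hp_{i,1},Hp_{i,2}}$-norm, which is an honest norm exactly because $p_{i,j}H\ge 1$, and Minkowski's integral inequality for this Banach-space-valued norm bounds $\|a\|_{\tilde Y}$ by $\sum_i\bigl(\int_0^L\|\tilde G(u)\|_{\gamma(U;X)}^{1/H}\,\d u\bigr)^{H}<\infty$, again via \autoref{prop:kernel}. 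In either case, \autoref{prop:kernel} now yields $\overline G^*\in\gamma(\mathscr D^H(0,L;U);X)$, and \autoref{prop:stoch_convolution_existence} gives that the convolution integral $\xi_{s,t}$ of \eqref{eq:xi_st} is well defined.

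The main obstacle is the direction of these norm interchanges: one must pull the temporal Sobolev--Slobodeckii norm past the spatial mixed-Lebesgue norms, and this succeeds only because $p_{i,j}\ge 2$ in case (1) (so that Minkowski's integral inequality applies with the $L^2$ temporal norm outermost) and $p_{i,j}H\ge 1$ in case (2) (so that the rescaled functional $\|\cdot\|_{E^{Hp_{i,1},Hp_{i,2}}}$ is still subadditive). The remaining ingredients — the exact form of the norm of $\dot W^{\frac12-H,2}(0,L;U)$ on a bounded interval, the fractional Hardy inequality used to absorb boundary-weighted terms in case (1), and the limiting Sobolev embedding in case (2) — are standard and are gathered in \autoref{sec:appendix}, while the joint measurability of the kernel in $(r,x,y)$ is a routine measurable-selection argument.
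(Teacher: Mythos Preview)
Your proposal is correct and follows essentially the same route as the paper: kernel representation via \autoref{prop:kernel}, identification $\mathscr{D}^H(0,L;U)\simeq\dot W^{\frac12-H,2}(0,L;U)$ via \autoref{prop:characterisation_of_D}, the equivalent norm of \autoref{prop:equiv_norm_1} in case~(1) respectively the embedding $L^{1/H}\hookrightarrow\dot W^{\frac12-H,2}$ (which is exactly \autoref{rem:equiv_norm} plus Hardy--Littlewood--Sobolev) in case~(2), and two applications of Minkowski's integral inequality to interchange the temporal and spatial norms under the hypotheses $p_{i,j}\ge 2$ respectively $p_{i,j}H\ge 1$. The paper's proof is a one-sentence list of these ingredients; you have simply unpacked it, and your additional remarks on measurable selection of the kernel and on verifying the representation identity for $\overline G^*$ are details the paper leaves implicit. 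One small simplification: in case~(1) you need not invoke a fractional Hardy inequality separately, since \autoref{prop:equiv_norm_1} already hands you the norm $\|f\|_{L^2}+\bigl(\iint |f(u)-f(v)|^2|u-v|^{2H-2}\,\d u\,\d v\bigr)^{1/2}$ on the bounded interval directly.
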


\begin{proof}
\autoref{prop:kernel}, \autoref{prop:characterisation_of_D} together with either \autoref{prop:equiv_norm_1} (in the case when condition 1. is satisfied) or \autoref{rem:equiv_norm} combined with the Hardy-Littlewood-Sobolev inequality (in the case when condition 2. is satisfied), and twice Minkowski inequality are used successively. 
\end{proof}

The general result in \autoref{prop:stoch_convolution_existence} is now applied to the case when the integrand has an additional algebraic structure. Let $(S(t),t\geq 0)$ be a strongly continuous semigroup of bounded linear operators acting on the space $X$ and let $\varPhi\in\mathscr{L}(U;X)$.

\begin{convention}
 In what follows, the operator-valued function $S\varPhi$ is understood as the bilinear operator $S(\cdot)\varPhi: X^*\times U\rightarrow\mathscr{D}^H(0,\infty)$ that is defined by 
	\begin{equation*}
		[S(\cdot)\varPhi](\varphi,u)(r):= \langle \varphi, S(r)\varPhi u\rangle, \quad r\geq 0, \quad \varphi\in X^*, \quad u\in U.
	\end{equation*}
A similar convention is adopted for $S(t-\cdot)\varPhi$. 
\end{convention}

For $t>0$, denote 
	\begin{equation*}
		Y_t := \int_0^t S(t-\,\cdot\,)\varPhi\d{Z}
	\end{equation*}
whenever the integrand is integrable with respect to the process $Z$. The following corollary is a direct consequence of \autoref{prop:stoch_convolution_existence} and for its purposes, the following notation is introduced. 
	
\begin{notation} 
In what follows, we denote $R_{\tilde{T}}:=|_{\tilde{T}}\otimes \mathrm{I}_U$ where $|_{\tilde{T}}:\mathscr{D}^H(T)\rightarrow \mathscr{D}^H(\tilde{T})$ is the restriction of a distribution defined on the interval $T\subseteq\R$ to the interval $\tilde{T}\subseteq T$ and where $\mathrm{I}_U: U\rightarrow U$ is the identity operator. Note that by \autoref{lem:restriction}, the operator $R_{\tilde{T}}: \mathscr{D}^H(T;U)\rightarrow\mathscr{D}^H(\tilde{T};U)$ is continuous.
\end{notation}

\begin{corollary}
\label{prop:existence_Yt}
Let $t_0>0$ be fixed. Then $Y_{t_0}$ is well defined if and only if $[R_{[0,t_0]}\overline{S\varPhi}]^*\in\gamma(\mathscr{D}^H(0,t_0;U);X)$. In that case, it holds for every $r>0$ that
	\begin{equation*}
		\|Y_{t_0}\|_{L^r(\Omega;X)} \eqsim \|[R_{[0,t_0]}\overline{S\varPhi}]^*\|_{\gamma(\mathscr{D}^H(0,t_0;U);X)}.
	\end{equation*}
\end{corollary}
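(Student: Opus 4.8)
The plan is to obtain the statement directly from \autoref{prop:stoch_convolution_existence} after matching the notation, the only real work being the identification of the two ``overline'' operators. I would apply \autoref{prop:stoch_convolution_existence} with $s=0$, $t=t_0$, and $\tilde{G}(r):=S(r)\varPhi$ for $r\in(0,t_0)$; since $S(r)\in\mathscr{L}(X)$ and $\varPhi\in\mathscr{L}(U;X)$, one has $\tilde{G}(r)\in\mathscr{L}(U;X)$, so the hypotheses are met. The bilinear map $G$ associated with this $\tilde{G}$ in that proposition, namely $G(\varphi,u)(r)=\langle\varphi,S(r)\varPhi u\rangle$ for $r\in(0,t_0)$, is precisely the restriction to $(0,t_0)$ of the bilinear operator $S(\cdot)\varPhi$ from the convention introduced above, and the convolution integral $\xi_{0,t_0}=\int_0^{t_0}G(t_0-\cdot)\,\d{Z}$ coincides with $Y_{t_0}$. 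Hence \autoref{prop:stoch_convolution_existence} yields that $Y_{t_0}$ is well defined if and only if $\overline{G}^*\in\gamma(\mathscr{D}^H(0,t_0;U);X)$, together with $\|Y_{t_0}\|_{L^r(\Omega;X)}\eqsim\|\overline{G}^*\|_{\gamma(\mathscr{D}^H(0,t_0;U);X)}$ for every $r>0$.

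It then remains to identify $\overline{G}$ with $R_{[0,t_0]}\overline{S\varPhi}$. I would use the explicit series representations from \autoref{lem:char_weak_stoch_integrability}: fixing an orthonormal basis $\{e_k\}_k$ of $U$, one has $\overline{S\varPhi}=\sum_k[S(\cdot)\varPhi](\cdot,e_k)\otimes e_k$ with convergence in $\mathscr{D}^H(0,\infty;U)$, and $\overline{G}=\sum_kG(\cdot,e_k)\otimes e_k$ with convergence in $\mathscr{D}^H(0,t_0;U)$. Since $R_{[0,t_0]}=|_{[0,t_0]}\otimes\mathrm{I}_U$ is a continuous operator from $\mathscr{D}^H(0,\infty;U)$ to $\mathscr{D}^H(0,t_0;U)$ by \autoref{lem:restriction}, and since the distributional restriction $|_{[0,t_0]}$ sends $[S(\cdot)\varPhi](\varphi,e_k)\in\mathscr{D}^H(0,\infty)$ to $G(\varphi,e_k)\in\mathscr{D}^H(0,t_0)$ by the very definitions of $|_{[0,t_0]}$ and of $G$, applying $R_{[0,t_0]}$ termwise to the convergent series for $\overline{S\varPhi}\varphi$ gives $R_{[0,t_0]}\overline{S\varPhi}\varphi=\sum_kG(\varphi,e_k)\otimes e_k=\overline{G}\varphi$ for every $\varphi\in X^*$. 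Taking adjoints, $\overline{G}^*=[R_{[0,t_0]}\overline{S\varPhi}]^*$, and substituting this equality into the conclusion of the previous paragraph gives the corollary.

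The step requiring a little care — and the main (mild) obstacle — is this intertwining $\overline{G}=R_{[0,t_0]}\overline{S\varPhi}$: one must check that pushing the continuous restriction operator through the defining series is legitimate, which it is precisely because of the continuity in \autoref{lem:restriction} together with the convergence in \autoref{lem:char_weak_stoch_integrability}. Implicit in the statement (as already in \autoref{prop:stoch_convolution_existence}) is that it is non-vacuous only when the relevant bilinear map is weakly integrable, so that $\overline{S\varPhi}$, and hence the operator $R_{[0,t_0]}\overline{S\varPhi}$ appearing in the statement, is defined; under that understanding the argument above is complete. No affine change of variables is needed here, since $s=0$ makes the time reversal $r\mapsto t_0-r$ an involution of $(0,t_0)$ and this reversal is already built into the definition of $\xi_{0,t_0}$ in \autoref{prop:stoch_convolution_existence}.
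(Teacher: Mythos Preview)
Your proposal is correct and follows the same approach as the paper, which treats the corollary as a direct consequence of \autoref{prop:stoch_convolution_existence} with $s=0$, $t=t_0$, and $\tilde G=S(\cdot)\varPhi$. The only step the paper leaves implicit---the identification $\overline{G}=R_{[0,t_0]}\overline{S\varPhi}$---is exactly what you spell out via the series representation from \autoref{lem:char_weak_stoch_integrability} and the continuity of $R_{[0,t_0]}$ from \autoref{lem:restriction}, and your handling of the implicit weak-integrability assumption is appropriate.
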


In the following two results it is shown that it is enough to verify the existence of $Y_{t_0}$ for some $t_0>0$ to obtain the existence and measurability of the whole process $(Y_t)_{t\geq 0}$. 

\begin{proposition}
\label{prop:existence_of_Yt_for_every_t}
The following statements are equivalent.
\vspace{-2.6mm}
	\begin{enumerate}[label=\arabic*)]
		\itemsep0em 
		\item\label{stat:1} There exists $t_0>0$ such that the random variable $Y_{t_0}$ is well defined.
		\item\label{stat:2} For every $t>0$, the random variable $Y_t$ is well defined.
	\end{enumerate}
\end{proposition}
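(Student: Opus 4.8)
The implication \ref{stat:2}$\Rightarrow$\ref{stat:1} is immediate, so the plan is to prove \ref{stat:1}$\Rightarrow$\ref{stat:2}. Throughout, I write $T_\tau:=[R_{[0,\tau]}\overline{S\varPhi}]^{*}$, so that, by \autoref{prop:existence_Yt}, $Y_\tau$ is well defined if and only if $T_\tau\in\gamma(\mathscr{D}^H(0,\tau;U);X)$; the whole argument consists in propagating this $\gamma$-radonifying property from $\tau=t_0$ to every $\tau>0$. The first, easy observation handles $\tau\le t_0$: since restriction is transitive, $R_{[0,\tau]}=\rho\circ R_{[0,t_0]}$ with $\rho\colon\mathscr{D}^H(0,t_0;U)\to\mathscr{D}^H(0,\tau;U)$ the restriction operator, which is bounded by \autoref{lem:restriction}; hence $T_\tau=T_{t_0}\circ\rho^{*}$ is $\gamma$-radonifying by the ideal property of $\gamma(\,\cdot\,;X)$, so $Y_\tau$ is well defined. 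It therefore suffices to prove the implication
\[
Y_s\ \text{well defined}\ \Longrightarrow\ Y_{2s}\ \text{well defined}\qquad(s>0):
\]
applying it successively with $s=t_0,2t_0,4t_0,\dots$ shows that $Y_{2^kt_0}$ is well defined for every $k\in\mathbb{N}$, and since each $\tau>0$ lies below some $2^kt_0$, the observation above then finishes the proof.

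For the doubling step I would split the interval as $[0,2s]=[0,s]\cup[s,2s]$ and write, correspondingly, $Y_{2s}=\xi_1+\xi_2$ with $\xi_1:=\int_0^{s}S(2s-\cdot)\varPhi\,\d{Z}$ and $\xi_2:=\int_s^{2s}S(2s-\cdot)\varPhi\,\d{Z}$, the point being to show that the operator associated (via \autoref{lem:char_weak_stoch_integrability}) with the integrand $G$ of $Y_{2s}$, call it $\overline{G}$, has $\gamma$-radonifying adjoint. For $\xi_2$: \autoref{prop:stoch_convolution_existence} applied to the interval $[s,2s]$ (so that the effective length is $2s-s=s$ and the relevant forward function is $S(\,\cdot\,)\varPhi$ on $(0,s)$) identifies the governing operator with $T_s$, which is $\gamma$-radonifying by hypothesis; the same statement — more precisely the affine-transformation identity \eqref{eq:conv_prop_1_1} in its proof — shows moreover that the operator $\overline{G^{(2)}}$ associated with the integrand $G^{(2)}$ of $\xi_2$ on $(s,2s)$ has $\gamma$-radonifying adjoint. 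For $\xi_1$: the semigroup law $S(2s-r)\varPhi=S(s)\,S(s-r)\varPhi$, valid for $r\in(0,s)$, shows that the integrand $G^{(1)}$ of $\xi_1$ is obtained from that of $Y_s$ by precomposition with $S(s)^{*}$ in the $X^{*}$-variable; consequently $\overline{G^{(1)}}{}^{*}$ equals the composition, on the left in $X$, of $S(s)\in\mathscr{L}(X)$ with the corresponding operator for $Y_s$, and since the latter has $\gamma$-radonifying adjoint (because $Y_s$ is well defined, by \autoref{prop:char_stoch_int_vector}), so does $\overline{G^{(1)}}{}^{*}$, again by the ideal property of $\gamma(\,\cdot\,;X)$. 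Finally, using the isometric embeddings $\iota_1\colon\mathscr{D}^H(0,s;U)\hookrightarrow\mathscr{D}^H(0,2s;U)$ and $\iota_2\colon\mathscr{D}^H(s,2s;U)\hookrightarrow\mathscr{D}^H(0,2s;U)$ — which exist because $\mathscr{D}^H$ is, by construction, the completion of the step functions supported in the corresponding interval and the norm $\|\,\cdot\,\|_{\mathscr{D}^H(T)}=\sigma\|\mathscr{K}^{*}_H\,\cdot\,\|_{L^2(\R)}$ does not depend on $T$ — I would verify the concatenation identity $\overline{G}=\iota_1\,\overline{G^{(1)}}+\iota_2\,\overline{G^{(2)}}$; taking adjoints, $\overline{G}^{*}=\overline{G^{(1)}}{}^{*}\iota_1^{*}+\overline{G^{(2)}}{}^{*}\iota_2^{*}$ is a sum of compositions of $\gamma$-radonifying operators with bounded ones, hence $\gamma$-radonifying, so by \autoref{prop:char_stoch_int_vector} the map $S(2s-\cdot)\varPhi$ is integrable on $(0,2s)$, that is, $Y_{2s}$ is well defined.

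I expect the last step — the concatenation identity $\overline{G}=\iota_1\,\overline{G^{(1)}}+\iota_2\,\overline{G^{(2)}}$ — to be the main obstacle and essentially the only place requiring genuine care. One must check that the $\iota_i$ really are isometric embeddings (clear on step functions, where $\iota_i$ is "extension by zero", and passing to the completions because the norm formula is $T$-independent), that a step function on $[0,2s]$ admitting $s$ as a partition point splits as a sum of step functions supported in $[0,s]$ and in $[s,2s]$, that such step functions are dense in $\mathscr{D}^H(0,2s)$, and that the value of the integrand of $Y_{2s}$ at $\varphi\otimes u$ equals, inside $\mathscr{D}^H(0,2s)$, exactly $\iota_1$ of its restriction to $(0,s)$ plus $\iota_2$ of its restriction to $(s,2s)$. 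The only mild subtlety is the singular regime $H>\sfrac{1}{2}$, where elements of $\mathscr{D}^H$ are distributions; but since the relevant smoothness index $\sfrac{1}{2}-H$ is strictly below $\sfrac{1}{2}$, a jump at the interior point $s$ is harmless, so the identity still holds. All the auxiliary weak-integrability claims needed to invoke \autoref{prop:char_stoch_int_vector} and \autoref{prop:stoch_convolution_existence} (in particular, that boundedness of $\overline{G}$ follows from the same identity) propagate along exactly these restriction, semigroup and affine-transformation manipulations and are routine.
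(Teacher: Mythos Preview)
Your proof is correct and follows essentially the same strategy as the paper: handle $t\le t_0$ by restriction and the ideal property, then extend by splitting the convolution via the semigroup law and iterating a doubling argument. The only cosmetic difference is that the paper carries out the splitting at the level of random variables (writing $Y_t=S(t-t_0)Y_{t_0}+\int_{t_0}^t S(t-\cdot)\varPhi\,\d Z$ and invoking \autoref{lem:link_between_scalar_and_vector_integral} and \autoref{prop:char_stoch_int_scalar} for the decomposition), which sidesteps the explicit operator-level concatenation identity you set up with $\iota_1,\iota_2$.
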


\begin{proof}
Let statement \textit{\ref{stat:1}} be satisfied and assume first that $0<t<t_0$. By using \autoref{prop:existence_Yt} and the ideal property of $\gamma$-radonifying operators from \cite[Theorem 6.2]{Nee10} (together with \autoref{lem:restriction}) the following estimate is obtained:
	\begin{align*}
		\|Y_t\|_{L^2(\Omega;X)} & \eqsim \|[R_{[0,t]}\overline{S\varPhi}]^*\|_{\gamma(\mathscr{D}^H(0,t;U);X)}\\
		& = \|[R_{[0,t]}R_{[0,t_0]}\overline{S\varPhi}]^*\|_{\gamma(\mathscr{D}^H(0,t;U);X)} \\
		& \leq \|[R_{[0,t_0]}\overline{S\varPhi}]^*\|_{\gamma(\mathscr{D}^H(0,t_0;U);X)} \|R_{[0,t]}^*\|_{\mathscr{L}(\mathscr{D}^H(0,t;U);\mathscr{D}^H(0,t_0;U))}\\
		& = \|[R_{[0,t_0]}\overline{S\varPhi}]^*\|_{\gamma(\mathscr{D}^H(0,t_0;U);X)} \|R_{[0,t]}\|_{\mathscr{L}(\mathscr{D}^H(0,t_0;U);\mathscr{D}^H(0,t;U))}\\
		& \lesssim \|Y_{t_0}\|_{L^2(\Omega;X)}
	\end{align*}
which is finite by assumption. Assume now that $t_0<t\leq 2t_0$ and write
	\begin{equation*}
		Y_t = S(t-t_0)\int_0^{t_0}S(t_0-\cdot)\varPhi\d{Z} + \int_{t_0}^t S(t-\cdot)\varPhi\d{Z}
	\end{equation*}
which is possible by virtue of \autoref{lem:link_between_scalar_and_vector_integral} and \autoref{prop:char_stoch_int_scalar}. For the first term on the right-hand side of the above equality, we obtain the estimate
	\begin{equation*}
		\|S(t-t_0)Y_{t_0}\|_{L^2(\Omega;X)} \leq M_S\mathrm{e}^{\kappa_St_0}\|Y_{t_0}\|_{L^2(\Omega;X)}
	\end{equation*}
whose right-hand side is finite by assumption. Here and in the rest of the section, $M_S\geq 1$ and $\kappa_S>0$ are finite constants such that the inequality $\|S(r)\|_{\mathscr{L}(X)}\leq M_S\mathrm{e}^{\kappa_Sr}$ holds for every $r\geq 0$. For the second term, it follows by \autoref{prop:stoch_convolution_existence}, the ideal property of $\gamma$-radonifying operators, and \autoref{prop:existence_Yt} that
	\begin{align*}
		\left\|\int_{t_0}^t S(t-\cdot)\varPhi\d{Z}\right\|_{L^2(\Omega;X)} & \eqsim \|[R_{[0,t-t_0]}\overline{S\varPhi}]^*\|_{\gamma(\mathscr{D}^H(0,t-t_0;U);X)} \\
		& = \|[R_{[0,t-t_0]}R_{[0,t_0]}\overline{S\varPhi}]^*\|_{\gamma(\mathscr{D}^H(0,t-t_0;U);X)}\\
		& \lesssim \|[R_{[0,t_0]}\overline{S\varPhi}]^*\|_{\gamma(\mathscr{D}^H(0,t_0;U);X)} \\
		& \eqsim \|Y_{t_0}\|_{L^2(\Omega;X)}
	\end{align*}
where again the last expression is finite by assumption. Thus, we have that $\|Y_t\|_{L^2(\Omega;X)}<\infty$ holds for every $t\in (0,2t_0]$. Applying this result to $\tilde{t}_0=2t_0$ yields the claim for $t\in (0,4t_0]$ and by continuing in this manner, statement \textit{\ref{stat:2}} of the proposition is proved. 
\end{proof}

\begin{lemma}
\label{lem:gamma_norm_0}
Let $\mathcal{U}$ and $\{\mathcal{U}_n\}_{n\in\mathbb{N}}$ be Hilbert spaces and let $\mathcal{X}$ be a Banach space. Let $G\in\gamma(\mathcal{U};\mathcal{X})$ and let $\{R_n\}_{n\in\mathbb{N}}$ be a sequence of bounded linear operators $R_n: \mathcal{U}\rightarrow \mathcal{U}_n$ such that there is the convergence
	\begin{equation*}
		\lim_{n\rightarrow\infty} \|R_nu\|_{\mathcal{U}_n}= 0
	\end{equation*}
for every $u\in \mathcal{U}$. Then 
	\begin{equation}
	\label{eq:lim}
		\lim_{n\rightarrow\infty}\|GR_n^*\|_{\gamma(\mathcal{U}_n;\mathcal{X})} = 0.
	\end{equation}
\end{lemma}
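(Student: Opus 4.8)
The plan is to reduce everything to the finite-rank case, exploiting that finite-rank operators are dense in $\gamma(\mathcal U;\mathcal X)$ by the very definition of this space. First I would fix $\varepsilon>0$ and choose a finite-rank operator $G_\varepsilon=\sum_{k=1}^m\langle\,\cdot\,,u_k\rangle_{\mathcal U}\,x_k$, with $u_k\in\mathcal U$ and $x_k\in\mathcal X$, satisfying $\|G-G_\varepsilon\|_{\gamma(\mathcal U;\mathcal X)}<\varepsilon$. Writing
\[
\|GR_n^*\|_{\gamma(\mathcal U_n;\mathcal X)}\le\|(G-G_\varepsilon)R_n^*\|_{\gamma(\mathcal U_n;\mathcal X)}+\|G_\varepsilon R_n^*\|_{\gamma(\mathcal U_n;\mathcal X)},
\]
it then suffices to bound the two terms on the right-hand side separately.

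For the first term I would invoke the Banach--Steinhaus theorem: the family $\{R_n\}_{n\in\mathbb N}$ is pointwise bounded because $\|R_nu\|_{\mathcal U_n}\to0$ for every $u\in\mathcal U$ (the fact that the codomains $\mathcal U_n$ differ is harmless here), hence $C:=\sup_{n}\|R_n\|_{\mathscr L(\mathcal U;\mathcal U_n)}<\infty$. The ideal property of $\gamma$-radonifying operators (already used in the proof of \autoref{prop:existence_of_Yt_for_every_t}) then gives
\[
\|(G-G_\varepsilon)R_n^*\|_{\gamma(\mathcal U_n;\mathcal X)}\le\|G-G_\varepsilon\|_{\gamma(\mathcal U;\mathcal X)}\,\|R_n^*\|_{\mathscr L(\mathcal U_n;\mathcal U)}=\|G-G_\varepsilon\|_{\gamma(\mathcal U;\mathcal X)}\,\|R_n\|_{\mathscr L(\mathcal U;\mathcal U_n)}\le C\varepsilon
\]
for every $n$. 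For the second term I would use that $\langle R_n^*v,u_k\rangle_{\mathcal U}=\langle v,R_nu_k\rangle_{\mathcal U_n}$, so that $G_\varepsilon R_n^*=\sum_{k=1}^m\langle\,\cdot\,,R_nu_k\rangle_{\mathcal U_n}\,x_k$ is again finite-rank; since the $\gamma$-norm of a rank-one operator $v\mapsto\langle v,w\rangle x$ equals $\|w\|\,\|x\|$, the triangle inequality for the $\gamma$-norm yields $\|G_\varepsilon R_n^*\|_{\gamma(\mathcal U_n;\mathcal X)}\le\sum_{k=1}^m\|R_nu_k\|_{\mathcal U_n}\,\|x_k\|_{\mathcal X}$, which tends to $0$ as $n\to\infty$ because it is a finite sum whose terms vanish individually by hypothesis.

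Combining the two estimates gives $\limsup_{n\to\infty}\|GR_n^*\|_{\gamma(\mathcal U_n;\mathcal X)}\le C\varepsilon$, and letting $\varepsilon\downarrow0$ proves \eqref{eq:lim}. There is no genuine obstacle in this argument; the only points that deserve a word of care are that finite-rank operators are dense in $\gamma(\mathcal U;\mathcal X)$ (which is built into the definition) and that Banach--Steinhaus may be applied despite the varying codomains $\mathcal U_n$, pointwise boundedness of $n\mapsto\|R_nu\|_{\mathcal U_n}$ being all that is needed to deduce $\sup_n\|R_n\|<\infty$.
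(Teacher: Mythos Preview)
Your proof is correct and follows essentially the same strategy as the paper's: both arguments approximate $G$ by finite-rank operators, handle the finite-rank case by the explicit formula for the $\gamma$-norm of a rank-one operator, control the remainder via the ideal property, and invoke the uniform boundedness principle to get $\sup_n\|R_n\|<\infty$. The only cosmetic difference is that the paper first treats the rank-one case and then passes to the limit, whereas you fix $\varepsilon$ first and split from the outset; the logical content is identical.
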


\begin{proof}
If $G=x\otimes u$ for some $x\in \mathcal{X}$ and $u\in \mathcal{U}$. Then it follows that $GR_n^*=(R_nu)\otimes x$ and 
	\begin{equation*}
		\lim_{n\rightarrow\infty} \|GR_n^*\|_{\gamma(\mathcal{H}_n;\mathcal{X})} = \lim_{n\rightarrow\infty} \|(R_nu)\otimes x\|_{\gamma(\mathcal{H}_n;\mathcal{X})} = \lim_{n\rightarrow\infty} \|R_nu\|_{\mathcal{U}_n}\|x\|_{\mathcal{X}} = 0.
	\end{equation*}
Consequently, if $G$ is of finite rank, then convergence \eqref{eq:lim} is satisfied. Now, if $G$ is an operator from $\gamma(\mathcal{U};\mathcal{X})$ (not necessarily of finite rank), then there exists a sequence of finite rank operators $\{G_k\}_{k\in\mathbb{N}}\subset \mathscr{L}(\mathcal{U};\mathcal{X})$ such that 
	\begin{equation}
	\label{eq:G_k_converge_to_G}
		\lim_{k\rightarrow\infty} \|G_k-G\|_{\gamma(\mathcal{U};\mathcal{X})} = 0.
	\end{equation}
It follows that 
	\begin{equation*}
		\|GR_n^*\|_{\gamma(\mathcal{U}_n;\mathcal{X})} \leq \|G-G_k\|_{\gamma(\mathcal{U};\mathcal{X})} \sup_{m\in\mathbb{N}} \|R_m\|_{\mathscr{L}(\mathcal{U};\mathcal{U}_n)} + \|G_kR_n^*\|_{\gamma(\mathscr{U};\mathcal{X})}
	\end{equation*}
by the ideal property of $\gamma$-radonifying operators. Consequently, there is the estimate
	\begin{equation*}
		\limsup_{n\rightarrow\infty} \|GR_n^*\|_{\gamma(\mathcal{U}_n;\mathcal{X})} \leq \|G-G_k\|_{\gamma(\mathcal{U};\mathcal{X})} \sup_{m\in\mathbb{N}} \|R_m\|_{\mathscr{L}(\mathcal{U};\mathcal{U}_m)}
	\end{equation*}
by the above reasoning for finite rank operators; and the claim of the lemma follows by convergence \eqref{eq:G_k_converge_to_G} and by the fact that $\sup_{m\in\mathbb{N}}\|R_m\|_{\mathscr{L}(\mathcal{U};\mathcal{U}_n)}$ is finite by the uniform boundedness principle.
\end{proof}

\begin{proposition}
\label{prop:Y_t_mean_square_continuity}
If there exists $t_0>0$ such that $Y_{t_0}$ is well defined, then the process $(Y_t)_{t\geq 0}$ is mean-square right continuous and, in particular, it admits a measurable version.
\end{proposition}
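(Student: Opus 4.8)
The plan is to first establish mean-square right continuity of $(Y_t)_{t\geq 0}$ and then to deduce the existence of a measurable version by the standard construction for stochastically right-continuous processes. By \autoref{prop:existence_of_Yt_for_every_t} the hypothesis already gives that $Y_t\in L^2(\Omega;X)$ is well defined for every $t\geq 0$, with $Y_0=0$ and all moments equivalent by \autoref{prop:hypercontractivity}. Fixing $t\geq 0$ and $h\in(0,t_0)$, I would split the integrand at the point $t$ — exactly as in the proof of \autoref{prop:existence_of_Yt_for_every_t}, using \autoref{lem:link_between_scalar_and_vector_integral}, \autoref{prop:char_stoch_int_scalar} and the semigroup law $S(t+h-r)=S(h)S(t-r)$ — to obtain
\[
	Y_{t+h}=S(h)Y_t+\int_t^{t+h}S(t+h-\,\cdot\,)\varPhi\,\d{Z}.
\]
It then suffices to show that $\|(S(h)-\mathrm{I})Y_t\|_{L^2(\Omega;X)}\to 0$ and $\big\|\int_t^{t+h}S(t+h-\,\cdot\,)\varPhi\,\d{Z}\big\|_{L^2(\Omega;X)}\to 0$ as $h\downarrow 0$; the case $t=0$ is the second statement alone.

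The first term vanishes by dominated convergence: strong continuity of $(S(r))_{r\geq 0}$ gives $\|(S(h)-\mathrm{I})Y_t(\omega)\|_X\to 0$ for every $\omega$, while $\|(S(h)-\mathrm{I})Y_t\|_X\leq(M_S\mathrm{e}^{\kappa_S}+1)\|Y_t\|_X\in L^2(\Omega)$ for $h\leq 1$. For the second term I would invoke \autoref{prop:stoch_convolution_existence} (applied on the interval $(t,t+h)$, as in \autoref{prop:existence_Yt}) to get $\big\|\int_t^{t+h}S(t+h-\,\cdot\,)\varPhi\,\d Z\big\|_{L^2(\Omega;X)}\eqsim\big\|[R_{[0,h]}\overline{S\varPhi}]^{*}\big\|_{\gamma(\mathscr{D}^H(0,h;U);X)}$ with constants independent of $t$ and $h$, and then — since restriction is transitive, so that $R_{[0,h]}=R_{[0,h]}R_{[0,t_0]}$ as maps into $\mathscr{D}^H(0,h;U)$ — factor $[R_{[0,h]}\overline{S\varPhi}]^{*}=[R_{[0,t_0]}\overline{S\varPhi}]^{*}R_{[0,h]}^{*}$, where $[R_{[0,t_0]}\overline{S\varPhi}]^{*}\in\gamma(\mathscr{D}^H(0,t_0;U);X)$ by \autoref{prop:existence_Yt}. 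Applying \autoref{lem:gamma_norm_0} with $\mathcal{U}=\mathscr{D}^H(0,t_0;U)$, $\mathcal{U}_n=\mathscr{D}^H(0,h_n;U)$ for an arbitrary $h_n\downarrow 0$, $G=[R_{[0,t_0]}\overline{S\varPhi}]^{*}$ and $R_n=R_{[0,h_n]}$ then yields $\big\|[R_{[0,h_n]}\overline{S\varPhi}]^{*}\big\|_{\gamma(\mathscr{D}^H(0,h_n;U);X)}\to 0$, and hence the claim — provided one checks the hypothesis of that lemma, namely $\|R_{[0,h_n]}f\|_{\mathscr{D}^H(0,h_n;U)}\to 0$ for every fixed $f\in\mathscr{D}^H(0,t_0;U)$.

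This last verification is the only delicate point, and it is where I expect the real work to lie (unless a restriction lemma of exactly this form is already recorded in \autoref{sec:appendix}, in which case it is a citation). I would argue by density: finite sums of tensors $g\otimes e$ with $g$ a step function are dense in $\mathscr{D}^H(0,t_0;U)$, the operators $R_{[0,h]}$ are bounded uniformly in $h$ (by \autoref{lem:restriction} together with the scaling properties of the homogeneous norms from \autoref{sec:appendix}), and for a step function $g$ one has $\|R_{[0,h]}g\|_{\mathscr{D}^H(0,h)}\to 0$ as $h\downarrow 0$: when $H<\sfrac{1}{2}$ this is immediate, being the tail over the shrinking square $(0,h)^{2}$ of the convergent Gagliardo-type integral defining $\|\cdot\|_{\dot{W}^{\frac{1}{2}-H,2}}$ (cf.\ \autoref{prop:characterisation_of_D}), and in general it follows from \autoref{prop:characterisation_of_D} and the scaling behaviour of $\|\cdot\|_{\dot{W}^{\frac{1}{2}-H,2}}$ collected in \autoref{sec:appendix}. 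Everything else is either already established in the excerpt or a routine application of dominated convergence.

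Finally, mean-square right continuity makes $t\mapsto Y_t$ a right-continuous map into the separable metric space $L^{0}(\Omega;X)$ (separable because $X$ is separable and $\mathscr{F}$ is generated, up to null sets, by the isonormal process), hence the everywhere-pointwise limit of the countably-valued step maps $t\mapsto Y_{\lceil nt\rceil/n}$. Replacing each $Y_{j/n}$ by a strongly measurable representative turns these into jointly measurable step processes $Y^{(n)}$ on $[0,\infty)\times\Omega$ with $Y^{(n)}_t\to Y_t$ in probability for every $t$; dominated convergence gives $\int_0^T\mathbb{E}\big[\|Y^{(n)}_t-Y_t\|_X\wedge 1\big]\,\d{t}\to 0$ on every $[0,T]$, so along a subsequence $Y^{(n_i)}\to\overline{Y}$ $\mathrm{Leb}\otimes\mathbb{P}$-almost everywhere for a jointly measurable $\overline{Y}$ with $\overline{Y}_t=Y_t$ a.s.\ for a.e.\ $t$, and redefining $\overline{Y}$ on the Lebesgue-null set of exceptional times by a strongly measurable representative of $Y_t$ produces the required measurable version.
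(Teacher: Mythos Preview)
Your proof is correct and matches the paper's almost verbatim: the same splitting $Y_{t+h}-Y_t=(S(h)-\mathrm{I})Y_t+\int_t^{t+h}S(t+h-\cdot)\varPhi\,\d Z$, dominated convergence for the first term, and \autoref{lem:gamma_norm_0} after factoring $R_{[0,h]}=R_{[0,h]}R_{[0,t_0]}$ for the second. The restriction fact you flagged as the delicate point is recorded in the appendix as \autoref{lem:norm_tends_to_zero} (with the uniform bound on $R_{[0,h]}$ coming from \autoref{lem:restriction}, whose constant depends only on $H$ and not on the interval), so it is indeed just a citation, and your density argument via elementary tensors is exactly how one passes from the scalar statement there to $\mathscr{D}^H(0,t_0;U)$.
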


\begin{proof}
By \autoref{prop:existence_of_Yt_for_every_t}, the convolution integral $Y_t$ is a well defined element of $L^2(\Omega;X)$ for every $t\geq 0$. It will be shown that the process $(Y_t)_{t\geq 0}$ is mean-square continuous from the right. To this end, let $0\leq s<t\leq \tau$ be fixed and note that by virtue of \autoref{lem:link_between_scalar_and_vector_integral} and \autoref{prop:char_stoch_int_scalar}, we can write
	\begin{equation*}
		Y_t-Y_s = [S(t-s)-\mathrm{I}_X]\int_0^sS(s-\cdot)\varPhi\d{Z} + \int_s^tS(t-\cdot)\varPhi\d{Z}
	\end{equation*}
where $\mathrm{I}_X$ is the identity operator acting on the space $X$. Consequently, there is the following estimate:
	\begin{equation*}
		\|Y_t-Y_s\|_{L^2(\Omega;X)} \leq I_1(s,t) + I_2(s,t)
	\end{equation*}
where $I_1(s,t)$ and $I_2(s,t)$ are defined by
	\begin{align*}
		I_1(s,t) &:= \left\|[S(t-s)-\mathrm{I}_X]\int_0^sS(s-\cdot)\varPhi\d{Z}\right\|_{L^2(\Omega;X)} \quad\mbox{and}\quad I_2(s,t) := \left\|\int_s^t S(t-\cdot)\varPhi\d{Z}\right\|_{L^2(\Omega;X)}.
	\end{align*}
We have that for $\mathbb{P}$-almost every $\omega\in\Omega$, the value $Y_s(\omega)$ belongs to $X$. Therefore, for $\mathbb{P}$-almost every $\omega\in\Omega$, we have by the strong continuity of the semigroup $S$ that $[S(t-s)-\mathrm{I}_X]Y_s(\omega)$ tends to the zero element in $X$ as $t\rightarrow s+$. Moreover, since the estimate
	\begin{equation*}
		\|[S(t-s)-\mathrm{I}_X]Y_s(\omega)\|_X \leq \|S(t-s)-\mathrm{I}_X\|_{\mathscr{L}(X)}\|Y_s(\omega)\|_X \leq (M_S\mathrm{e}^{\kappa_S(\tau-s)}+1)\|Y_s(\omega)\|_X
	\end{equation*}
holds for $\mathbb{P}$-almost every $\omega\in\Omega$ and since $\E\|Y_s\|_{X}^2 <\infty$, it follows by the dominated convergence theorem that $\lim_{t\rightarrow s+}I_1(s,t)=0$. Assuming that $0<t-s<t_0$, we have for $I_2(s,t)$ that
	\begin{equation*}
		I_2(s,t) \eqsim \|[R_{[0,t-s]}\overline{S\varPhi}]^*\|_{\gamma(\mathscr{D}^H(0,t-s;U);X)} = \|\overline{S\varPhi}^*R_{[0,t-s]}^*\|_{\gamma(\mathscr{D}^H(0,t-s;U);X)}
	\end{equation*}
Now, for every $g\in\mathscr{D}^H(0,t_0;U)$, we have that $\lim_{t\rightarrow s+}\|R_{[0,t-s]}g\|_{\mathscr{D}^H(0,t-s;U)}=0$ by \autoref{lem:norm_tends_to_zero}. Consequently, it follows by \autoref{lem:gamma_norm_0} that also $\lim_{t\rightarrow s+}I_2(s,t)=0$.
\end{proof}

For convenience, \autoref{prop:Y_t_mean_square_continuity} is restated by using the characterisation in \autoref{prop:stoch_convolution_existence}.

\begin{corollary}
\label{cor:NSC_for_measurability}
If there exists $t_0>0$ such that the finiteness condition 
	\begin{equation*}
		\|[R_{[0,t_0]}\overline{S\varPhi}]^*\|_{\gamma(\mathscr{D}^H(0,t_0;U);X)}<\infty
	\end{equation*}
is satisfied, then the process $(Y_t)_{t\geq 0}$ is mean-square right continuous and, in particular, it admits a measurable version.
\end{corollary}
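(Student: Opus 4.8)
The plan is to observe that the corollary is nothing more than \autoref{prop:Y_t_mean_square_continuity} expressed through the characterization of when $Y_{t_0}$ is well defined. First I would invoke \autoref{prop:existence_Yt}, according to which the finiteness condition $\|[R_{[0,t_0]}\overline{S\varPhi}]^*\|_{\gamma(\mathscr{D}^H(0,t_0;U);X)}<\infty$ holds if and only if the random variable $Y_{t_0}$ is well defined (in fact with $\|Y_{t_0}\|_{L^r(\Omega;X)}\eqsim\|[R_{[0,t_0]}\overline{S\varPhi}]^*\|_{\gamma(\mathscr{D}^H(0,t_0;U);X)}$ for every $r>0$). Hence the hypothesis of the present corollary coincides with the hypothesis of \autoref{prop:Y_t_mean_square_continuity}.

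Then I would simply apply \autoref{prop:Y_t_mean_square_continuity} to deduce that the process $(Y_t)_{t\geq 0}$ is mean-square right continuous. It remains to pass from mean-square right continuity to the existence of a measurable version. Since mean-square right continuity implies right continuity in probability, and since a stochastic process that is continuous in probability (even only from one side) on an interval admits a jointly measurable version --- this is exactly the Doob-type argument already used in the excerpt to produce a measurable version of the scalar fractional process, cf.\ \cite[Theorem 2.6]{Doob90} --- the claim follows.

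There is essentially no obstacle here, as both ingredients have already been established. The only minor point worth keeping in mind is that \autoref{prop:Y_t_mean_square_continuity} only provides right continuity rather than two-sided continuity in mean square; however, the classical construction of a measurable version (approximating each $t$ by a suitable sequence decreasing to $t$ and extracting an almost-surely convergent subsequence) requires only one-sided continuity in probability, so nothing further is needed.
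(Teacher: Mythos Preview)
Your proposal is correct and matches the paper's approach: the corollary is simply \autoref{prop:Y_t_mean_square_continuity} restated via the characterization in \autoref{prop:existence_Yt}. Your extra paragraph on measurability is redundant, since the existence of a measurable version is already part of the conclusion of \autoref{prop:Y_t_mean_square_continuity} (proved there via the same Doob argument), so once you invoke that proposition the entire conclusion follows immediately.
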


If $S$ is an analytic semigroup, then there exists $\lambda >0$ such that $(\lambda\mathrm{I}_X-A)$ is a non-negative operator. Therefore, the fractional powers $(\lambda \mathrm{I}_X-A)^{\alpha}$, $\alpha\geq 0$, can be defined; see, e.g., \cite[section 2.6]{Pazy}. For $0< \alpha\leq 1$, denote by $X_\alpha$ the Banach space that is defined as the space $\Dom (\lambda\mathrm{I}_X-A)^\alpha$ endowed with the graph norm of $(\lambda\mathrm{I}_X-A)^\alpha$. For $t>0$ and $\alpha\in (0,1]$, denote also
	\begin{equation*}
		Y_t^\alpha := \int_0^t (\lambda \mathrm{I}_X-A)^\alpha S(t-\cdot)\varPhi \d{Z}
	\end{equation*}
whenever the integrand is strongly integrable with respect to the process $Z$. The definitions are extended to allow for $\alpha=0$ by setting $X_0:=X$ and $Y_t^0:=Y_t$. 

\begin{proposition}
\label{prop:Y_t_alpha_mean_square_continuity_X}
Assume that the semigroup $S$ is analytic. If there exists $t_0>0$ and $0<\alpha\leq 1$ such that the random variable $Y_{t_0}^\alpha$ is well defined, then the process $(Y_t^{\alpha})_{t\geq 0}$ is mean-square right continuous and, in particular, it admits a measurable version.
\end{proposition}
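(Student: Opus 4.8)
The plan is to reproduce the proof of \autoref{prop:Y_t_mean_square_continuity} almost verbatim, with the integrand $S(\cdot)\varPhi$ replaced by $S_\alpha(\cdot)\varPhi:=(\lambda\mathrm{I}_X-A)^\alpha S(\cdot)\varPhi$, understood (in the spirit of \autoref{prop:existence_Yt}) as the bilinear operator $(\varphi,u)\mapsto[\,r\mapsto\langle\varphi,(\lambda\mathrm{I}_X-A)^\alpha S(r)\varPhi u\rangle\,]$. Two structural facts make this legitimate: first, since $S$ is analytic, $(\lambda\mathrm{I}_X-A)^\alpha S(r)$ is a bounded operator on $X$ for every $r>0$ (with norm of order $r^{-\alpha}$ as $r\downarrow0$; see, e.g., \cite[section 2.6]{Pazy}), so $S_\alpha(r)\varPhi\in\mathscr{L}(U;X)$ for every $r>0$, which is all that \autoref{prop:stoch_convolution_existence} requires of the integrand on a bounded subinterval; and second, $(\lambda\mathrm{I}_X-A)^\alpha$ commutes with $S(h)$ for every $h\geq0$, while $S(h)$ may be moved through the Wiener integral in the sense of \autoref{lem:link_between_scalar_and_vector_integral} together with \autoref{prop:char_stoch_int_scalar}.

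First I would establish the analogue of \autoref{prop:existence_of_Yt_for_every_t}, namely that $Y_t^\alpha$ is a well-defined element of $L^2(\Omega;X)$ for \emph{every} $t>0$. For $0<t<t_0$ one writes $[R_{[0,t]}\overline{S_\alpha\varPhi}]^*=[R_{[0,t]}R_{[0,t_0]}\overline{S_\alpha\varPhi}]^*$ and uses the ideal property of $\gamma$-radonifying operators from \cite[Theorem 6.2]{Nee10} together with \autoref{lem:restriction} to bound its $\gamma(\mathscr{D}^H(0,t;U);X)$-norm by $\|[R_{[0,t_0]}\overline{S_\alpha\varPhi}]^*\|_{\gamma(\mathscr{D}^H(0,t_0;U);X)}\,\|R_{[0,t]}\|_{\mathscr{L}(\mathscr{D}^H(0,t_0;U);\mathscr{D}^H(0,t;U))}<\infty$, so $Y_t^\alpha$ is well defined by \autoref{prop:existence_Yt}. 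For $t_0<t\leq2t_0$, using the two commutation facts one decomposes
\begin{equation*}
Y_t^\alpha=S(t-t_0)\,Y_{t_0}^\alpha+\int_{t_0}^t S_\alpha(t-\cdot)\varPhi\,\d{Z},
\end{equation*}
where the first term satisfies $\|S(t-t_0)Y_{t_0}^\alpha\|_{L^2(\Omega;X)}\leq M_S\mathrm{e}^{\kappa_S t_0}\|Y_{t_0}^\alpha\|_{L^2(\Omega;X)}<\infty$ and the second is well defined by \autoref{prop:stoch_convolution_existence}, the ideal property, and the case just treated (since $0<t-t_0\leq t_0$). Iterating the doubling $t_0\mapsto2t_0\mapsto4t_0\mapsto\cdots$ gives the claim for all $t>0$.

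Next I would run the continuity argument. Fix $0\leq s<t\leq\tau$; as in \autoref{prop:Y_t_mean_square_continuity} (again via \autoref{lem:link_between_scalar_and_vector_integral} and \autoref{prop:char_stoch_int_scalar}) one has $Y_t^\alpha-Y_s^\alpha=[S(t-s)-\mathrm{I}_X]Y_s^\alpha+\int_s^t S_\alpha(t-\cdot)\varPhi\,\d{Z}$, hence $\|Y_t^\alpha-Y_s^\alpha\|_{L^2(\Omega;X)}\leq I_1(s,t)+I_2(s,t)$ with the obvious meaning of the two summands. For $I_1(s,t)$, strong continuity of $S$ gives $[S(t-s)-\mathrm{I}_X]Y_s^\alpha(\omega)\to0$ in $X$ for $\mathbb{P}$-a.e.\ $\omega$ as $t\to s+$, and the pointwise bound $(M_S\mathrm{e}^{\kappa_S(\tau-s)}+1)\|Y_s^\alpha(\omega)\|_X$, which lies in $L^2(\Omega)$ since $\E\|Y_s^\alpha\|_X^2<\infty$, lets the dominated convergence theorem give $\lim_{t\to s+}I_1(s,t)=0$. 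For $I_2(s,t)$, assuming $0<t-s<t_0$, \autoref{prop:stoch_convolution_existence} together with the restriction identity from the previous step yields $I_2(s,t)\eqsim\|[R_{[0,t_0]}\overline{S_\alpha\varPhi}]^*R_{[0,t-s]}^*\|_{\gamma(\mathscr{D}^H(0,t-s;U);X)}$, and since $\|R_{[0,t-s]}g\|_{\mathscr{D}^H(0,t-s;U)}\to0$ as $t\to s+$ for each fixed $g\in\mathscr{D}^H(0,t_0;U)$ by \autoref{lem:norm_tends_to_zero}, \autoref{lem:gamma_norm_0} applied with $G=[R_{[0,t_0]}\overline{S_\alpha\varPhi}]^*$ and $R_n=R_{[0,t_n-s]}$ along any sequence $t_n\downarrow s$ gives $\lim_{t\to s+}I_2(s,t)=0$. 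Thus $(Y_t^\alpha)_{t\geq0}$ is mean-square right continuous, hence stochastically continuous, and admits a measurable version exactly as in \autoref{prop:Y_t_mean_square_continuity}.

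The main obstacle is the first step: everything after it is a line-by-line transcription of the proof of \autoref{prop:Y_t_mean_square_continuity}, so the only genuine content is checking that $S_\alpha(\cdot)\varPhi$ may be fed into \autoref{prop:stoch_convolution_existence} on bounded intervals despite the $r^{-\alpha}$ singularity at the origin (this is precisely where analyticity is used), that $(\lambda\mathrm{I}_X-A)^\alpha$ may be commuted with $S(h)$ and moved through the stochastic integral, and that $\overline{S_\alpha\varPhi}$ restricts to subintervals compatibly with the ideal property of $\gamma$-radonifying operators.
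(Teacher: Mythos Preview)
Your proposal is correct and takes exactly the same approach as the paper: the paper's proof is the single sentence ``The claim follows by repeating the proofs of \autoref{prop:existence_of_Yt_for_every_t} and \autoref{prop:Y_t_mean_square_continuity} with $Y_t$ and $S\varPhi$ replaced by $Y_t^\alpha$ and $(\lambda\mathrm{I}_X-A)^\alpha S\varPhi$, respectively,'' and you have simply spelled out that repetition in detail, including the remark that analyticity guarantees $(\lambda\mathrm{I}_X-A)^\alpha S(r)\in\mathscr{L}(X)$ for $r>0$.
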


\begin{proof}
The claim follows by repeating the proofs of \autoref{prop:existence_of_Yt_for_every_t} and \autoref{prop:Y_t_mean_square_continuity} with $Y_t$ and $S\varPhi$ replaced by $Y_t^\alpha$ and $(\lambda\mathrm{I}_X-A)^\alpha S\varPhi$, respectively. 
\end{proof}

\begin{corollary}
\label{cor:Y_t_alpha-beta-beta}
Assume that the semigroup $S$ is analytic. If there exists $t_0>0$ and $0\leq\alpha\leq 1$ such that the random variable $Y_{t_0}^\alpha$ is well defined, then for every $\beta_1,\beta_2\geq 0$ such that $\beta_1+\beta_2\leq \alpha$, the process $(Y_t^{\beta_1})_{t\geq 0}$ is mean-square right continuous in the space $X_{\beta_2}$ and, in particular, it admits a measurable version there. 
\end{corollary}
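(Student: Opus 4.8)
The plan is to reduce the statement to \autoref{prop:Y_t_alpha_mean_square_continuity_X} (or to \autoref{prop:Y_t_mean_square_continuity} in a degenerate case) for the single exponent $\theta:=\beta_1+\beta_2$, noting $\theta\in[0,\alpha]\subseteq[0,1]$. Throughout I fix $\lambda>0$ so large that $\lambda\mathrm{I}_X-A$ is invertible and non-negative; then $(\lambda\mathrm{I}_X-A)^{\kappa}\in\mathscr{L}(X)$ for every $\kappa\le0$, the graph norm of $X_{\beta_2}$ is equivalent to $x\mapsto\|(\lambda\mathrm{I}_X-A)^{\beta_2}x\|_X$, and $(\lambda\mathrm{I}_X-A)^{-\beta_2}$ is a bounded operator from $X$ onto $X_{\beta_2}$. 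The tool used in both steps is the elementary fact that bounded operators commute with the Wiener integral: if $\mathcal X,\mathcal Y$ are Banach spaces, $T\in\mathscr{L}(\mathcal X;\mathcal Y)$, and $G$ is a bilinear map on $\mathcal X^*\times U$ into $\mathscr{D}^H(0,t_0)$ that is weakly integrable with respect to $Z$, then $G_T:=G(T^*\,\cdot\,,\,\cdot\,)$ (a bilinear map on $\mathcal Y^*\times U$) is weakly integrable too, with $\overline{G_T}=\overline{G}\circ T^*$, hence $\overline{G_T}^*=T\overline{G}^*$; moreover, if $G$ is integrable with integral $\xi$, then $G_T$ is integrable (in $\mathcal Y$) with integral $T\xi$. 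All of this is immediate from the series representation of $\overline{G}$ in \autoref{lem:char_weak_stoch_integrability} and from \autoref{def:stoch_int_vector}, and in particular requires no $n$-goodness or approximation property of $\mathcal Y$.

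First I would show that $Y_{t_0}^{\theta}$ is well defined. Writing $(\lambda\mathrm{I}_X-A)^{\theta}S(r)=(\lambda\mathrm{I}_X-A)^{\theta-\alpha}(\lambda\mathrm{I}_X-A)^{\alpha}S(r)$, the bilinear integrand of $Y_{t_0}^{\theta}=\int_0^{t_0}(\lambda\mathrm{I}_X-A)^{\theta}S(t_0-\,\cdot\,)\varPhi\,\d{Z}$ is precisely $G_T$, where $T=(\lambda\mathrm{I}_X-A)^{\theta-\alpha}\in\mathscr{L}(X)$ and $G$ is the bilinear integrand of $Y_{t_0}^{\alpha}$. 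By hypothesis $Y_{t_0}^{\alpha}$ is well defined, so $\overline{G}^*\in\gamma(\mathscr{D}^H(0,t_0;U);X)$ by \autoref{prop:stoch_convolution_existence}; hence $\overline{G_T}^*=T\overline{G}^*\in\gamma(\mathscr{D}^H(0,t_0;U);X)$ by the ideal property of $\gamma$-radonifying operators, and \autoref{prop:stoch_convolution_existence} gives that $Y_{t_0}^{\theta}$ is well defined. Consequently, if $\theta>0$ then \autoref{prop:Y_t_alpha_mean_square_continuity_X} (applied with $\alpha$ replaced by $\theta$) shows that $(Y_t^{\theta})_{t\ge0}$ is well defined for every $t\ge0$ and mean-square right continuous in $X$; if $\theta=0$, i.e.\ $\beta_1=\beta_2=0$, the same conclusion follows from \autoref{prop:Y_t_mean_square_continuity}.

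Next I would pass to $X_{\beta_2}$. Since $S$ is analytic, $S(r)\varPhi u\in\Dom(\lambda\mathrm{I}_X-A)^{\theta}$ for all $r>0$ and $u\in U$, so $(\lambda\mathrm{I}_X-A)^{\beta_1}S(r)\varPhi$ maps $U$ into $X_{\beta_2}$ and $(\lambda\mathrm{I}_X-A)^{-\beta_2}(\lambda\mathrm{I}_X-A)^{\theta}S(r)\varPhi=(\lambda\mathrm{I}_X-A)^{\beta_1}S(r)\varPhi$ as operators $U\to X_{\beta_2}$. Applying the commutation fact above with $\mathcal X=X$, $\mathcal Y=X_{\beta_2}$ and $T=(\lambda\mathrm{I}_X-A)^{-\beta_2}$ to the integrand of $Y_t^{\theta}$ shows that the $X_{\beta_2}$-valued integrand $(\lambda\mathrm{I}_X-A)^{\beta_1}S(t-\,\cdot\,)\varPhi$ is integrable with respect to $Z$, so $Y_t^{\beta_1}$ is a well-defined $X_{\beta_2}$-valued random variable with $Y_t^{\beta_1}=(\lambda\mathrm{I}_X-A)^{-\beta_2}Y_t^{\theta}$ $\mathbb P$-a.s.\ for every $t\ge0$. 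Hence
\[
\bigl\|Y_t^{\beta_1}-Y_s^{\beta_1}\bigr\|_{L^2(\Omega;X_{\beta_2})}\le\bigl\|(\lambda\mathrm{I}_X-A)^{-\beta_2}\bigr\|_{\mathscr{L}(X;X_{\beta_2})}\bigl\|Y_t^{\theta}-Y_s^{\theta}\bigr\|_{L^2(\Omega;X)}\longrightarrow0\qquad(t\to s+),
\]
so $(Y_t^{\beta_1})_{t\ge0}$ is mean-square right continuous in $X_{\beta_2}$; the existence of a measurable version then follows exactly as in \autoref{prop:Y_t_mean_square_continuity}, since mean-square right continuity implies right continuity in probability.

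The only genuine difficulty is the bookkeeping behind the commutation fact — checking that pre-/post-composition with a bounded operator interacts with $\overline{(\,\cdot\,)}$, $\overline{(\,\cdot\,)}^*$ and, implicitly through \autoref{prop:stoch_convolution_existence}, with the restriction operator $R_{[0,t_0]}$ in the way claimed — together with the standard semigroup facts that an analytic $S$ maps $X$ into $\Dom(\lambda\mathrm{I}_X-A)^{\kappa}$ for every $\kappa\ge0$ and $r>0$ and that fractional powers of $\lambda\mathrm{I}_X-A$ compose additively on these domains. I also expect the degenerate case $\theta=0$ to need a separate sentence, since there one must invoke \autoref{prop:Y_t_mean_square_continuity} rather than \autoref{prop:Y_t_alpha_mean_square_continuity_X}.
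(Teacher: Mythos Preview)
Your proposal is correct and follows essentially the same route as the paper: reduce to mean-square right continuity of $(Y_t^{\theta})_{t\ge0}$ in $X$ for $\theta=\beta_1+\beta_2$ via boundedness of negative fractional powers of $\lambda\mathrm{I}_X-A$, then invoke \autoref{prop:Y_t_alpha_mean_square_continuity_X}. The only packaging difference is that the paper keeps the full graph norm on $X_{\beta_2}$ and therefore controls both $\|Y_t^{\beta_1}-Y_s^{\beta_1}\|_X$ and $\|Y_t^{\beta_1+\beta_2}-Y_s^{\beta_1+\beta_2}\|_X$ separately (each bounded by $\|Y_t^{\alpha}-Y_s^{\alpha}\|_X$), whereas you pass to the equivalent norm $\|(\lambda\mathrm{I}_X-A)^{\beta_2}\cdot\|_X$ and need only the single exponent $\theta$; your ``commutation fact'' is exactly the content of the paper's identity $Y_t^{\delta}=(\lambda\mathrm{I}_X-A)^{\delta}Y_t$.
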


\begin{proof}
Note first that if $\kappa>0$ and $\gamma_1, \gamma_2\geq 0$ are such that $\gamma_1+\gamma_2\leq \kappa$, the operator $(\lambda\mathrm{I}_X-A)^{\kappa-(\gamma_1+\gamma_2)}$ is invertible and there is the inequality
	\begin{align*}
		\|x\|_{X_{\gamma_2}}^2 & \leq \|(\lambda\mathrm{I}_X-A)^{\gamma_1+\gamma_2-\kappa}\|_{\mathscr{L}(X;X_{\kappa-\gamma_1-\gamma_2})}^2 \\
		& \hspace{1cm}\times \left[ \|(\lambda\mathrm{I}_X - A)^{\kappa-(\gamma_1+\gamma_2)}x\|_X^2 + \|(\lambda\mathrm{I}_X-A)^{\kappa-\gamma_1}x\|_X^2\right]\numberthis \label{eq:ineq_X}
	\end{align*}
for $x\in X_{\kappa-\gamma_1}$. Note also that if $t>0$ and $0<\delta\leq 1$ are such that $\|Y_t^\delta\|_{L^2(\Omega;X)}<\infty$, then $Y_t\in X_\delta$ and 
	\begin{equation}
	\label{eq:Yd=dY}
		Y_t^\delta = (\lambda\mathrm{I}_X-A)^\delta Y_t
	\end{equation}
hold $\mathbb{P}$-almost surely by \autoref{prop:char_stoch_int_scalar} and \autoref{lem:char_weak_stoch_integrability}. Now, if $0\leq \beta\leq \alpha$, then the above inequality \eqref{eq:ineq_X} (with $\gamma_1=\beta$, $\gamma_2=0$) and equality \eqref{eq:Yd=dY} (with $\delta = \beta$) yield
	\[ \E\| Y_t^\beta-Y_s^\beta\|_{X}^2 \lesssim \E\| (\lambda\mathrm{I}_X-A)^{\alpha-\beta}[Y_t^\beta-Y_s^\beta]\|_X^2 = \E\|Y_t^\alpha-Y_s^\alpha\|_X^2\]
for $0\leq s<t$. Consequently, for every $0\leq \beta\leq \alpha$, the process $(Y_t^\beta)_{t\geq 0}$ is mean-square right continuous in $X$ by \autoref{prop:Y_t_alpha_mean_square_continuity_X}. The general claim of the corollary then follows from the equality
	\begin{equation*}
		\|Y_t^{\beta_1}-Y_s^{\beta_1}\|_{L^2(\Omega;X_{\beta_2})}^2 = \E \|Y_t^{\beta_1}-Y_s^{\beta_1}\|_X^2 + \E \|Y_t^{\beta_1+\beta_2}-Y_s^{\beta_1+\beta_2}\|_X^2
	\end{equation*}
for $0\leq s<t$ by using the first part of the proof with $\beta=\beta_1$ and $\beta=\beta_1+\beta_2$. 
\end{proof}

For convenience, \autoref{cor:Y_t_alpha-beta-beta} is restated by using the characterisation in \autoref{prop:stoch_convolution_existence}.

\begin{corollary}
\label{rem:existence_Y_alpha}
Assume that the semigroup $S$ is analytic. If there exists $t_0>0$ and $0\leq\alpha\leq 1$ such that the finiteness condition
	\begin{equation*}
		\| [R_{[0,t_0]}\overline{(\lambda\mathrm{I}_X-A)^\alpha S\varPhi}^*\|_{\gamma(\mathscr{D}^H(0,t_0;U);X)}<\infty
	\end{equation*}
is satisfied, then for every $\beta_1,\beta_2\geq 0$ such that $\beta_1+\beta_2\leq \alpha$, the process $(Y_t^{\beta_1})_{t\geq 0}$ is mean-square right continuous in the space $X_{\beta_2}$ and, in particular, it admits a measurable version there. 
\end{corollary}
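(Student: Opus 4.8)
The plan is to recognize that the displayed finiteness condition is exactly the necessary-and-sufficient criterion for $Y_{t_0}^\alpha$ to be a well-defined random variable, after which the statement is a verbatim instance of \autoref{cor:Y_t_alpha-beta-beta}. So the only thing to do is to produce this translation and then invoke the earlier corollary.

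First I would record the analogue of \autoref{prop:existence_Yt} for the integrand $(\lambda\mathrm{I}_X-A)^\alpha S\varPhi$ in place of $S\varPhi$. By analyticity of $S$ the operator $(\lambda\mathrm{I}_X-A)^\alpha S(r)$ is bounded for every $r>0$, so $(\lambda\mathrm{I}_X-A)^\alpha S\varPhi$ again takes values in $\mathscr{L}(U;X)$ and \autoref{prop:stoch_convolution_existence} (hence \autoref{prop:existence_Yt}, via \autoref{prop:char_stoch_int_vector}) applies unchanged with this integrand; this is precisely the substitution already carried out in the proof of \autoref{prop:Y_t_alpha_mean_square_continuity_X}. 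It yields that $Y_{t_0}^\alpha$ is well defined if and only if $[R_{[0,t_0]}\overline{(\lambda\mathrm{I}_X-A)^\alpha S\varPhi}]^*\in\gamma(\mathscr{D}^H(0,t_0;U);X)$, which is exactly the hypothesis imposed here; the case $\alpha=0$ is \autoref{prop:existence_Yt} itself with the convention $Y_{t_0}^0=Y_{t_0}$.

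Having made this identification, the hypothesis of \autoref{cor:Y_t_alpha-beta-beta} is satisfied, and its conclusion — that for every $\beta_1,\beta_2\geq 0$ with $\beta_1+\beta_2\leq\alpha$ the process $(Y_t^{\beta_1})_{t\geq 0}$ is mean-square right continuous in $X_{\beta_2}$ and admits a measurable version there — is precisely the assertion to be proved. There is essentially no obstacle in this argument: the only point that deserves a word is the boundedness of $(\lambda\mathrm{I}_X-A)^\alpha S(r)$ for $r>0$, which is what makes the earlier machinery applicable to the modified integrand, and this is exactly the remark already used one result earlier.
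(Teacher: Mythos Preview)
Your proposal is correct and matches the paper's intent exactly: the paper does not give a separate proof but simply introduces this corollary as a restatement of \autoref{cor:Y_t_alpha-beta-beta} via the characterisation in \autoref{prop:stoch_convolution_existence} (applied with the integrand $(\lambda\mathrm{I}_X-A)^\alpha S\varPhi$, as in \autoref{prop:Y_t_alpha_mean_square_continuity_X}). Your translation of the $\gamma$-radonifying finiteness condition into ``$Y_{t_0}^\alpha$ is well defined'' and subsequent appeal to \autoref{cor:Y_t_alpha-beta-beta} is precisely this restatement.
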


In what follows, sufficient conditions for continuity of the integral process $(Y_t)_{t\geq 0}$ are given. The proof of \autoref{prop:continuity} is based on the Kolmogorov-Chentsov theorem; however, in the literature on stochastic convolution in Banach spaces, the factorization method (see, e.g., \cite{CouMasOnd18,DaPKwaZab87}), $\gamma$-boundedness, or maximal inequalities (see, e.g., \cite{KwaVerWei16,NeeVerWei15b,NeeVer20,NeeVerWei12, NeeVerWei15c,NeeZhu11,VerWei11}) can be also used.

\begin{proposition}
\label{prop:continuity}
Assume that the semigroup $S$ is anaytic. Assume also that there exist constants $\beta_1>0$ and $\beta_2\geq 0$ such that $\beta_1+\beta_2\leq 1$ and such that the following two conditions are satisfied:
	\begin{enumerate}[label=(H\arabic*)]
	\itemsep0em
		\item\label{eq:finiteness_Y_delta} There exists $t_0>0$ such that the random variable $Y^{\beta_1+\beta_2}_{t_0}$ is well defined.
		\item\label{eq:bound} It holds that $\|Y_t^{\beta_2}\|_{L^2(\Omega;X)} \lesssim t^{\beta_1}$ for every $t\in [0,\tau]$, $\tau>0$.
	\end{enumerate}
Then the process $(Y_t)_{t\geq 0}$ has a version with paths in $\mathscr{C}^{\kappa}([0,\tau]; X_{\beta_2})$ for every $0\leq \kappa<\beta_1$ and $\tau>0$.
\end{proposition}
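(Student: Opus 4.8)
The plan is to establish the moment estimate $\|Y_t-Y_s\|_{L^r(\Omega;X_{\beta_2})}\lesssim(t-s)^{\beta_1}$ for every $r\in[2,\infty)$ and $0\le s<t\le\tau$, and then invoke the Kolmogorov--Chentsov theorem in the Banach space $X_{\beta_2}$. First I would unpack the hypotheses. Hypothesis \ref{eq:finiteness_Y_delta} together with \autoref{rem:existence_Y_alpha} (applied with $\alpha=\beta_1+\beta_2$; recall $\beta_1+\beta_2\le1$) guarantees that $Y_t^{\beta}$ is a well-defined element of $L^2(\Omega;X)$ for every $t\ge0$ and every $0\le\beta\le\beta_1+\beta_2$, that each process $(Y_t^{\beta})_{t\ge0}$ is mean-square right continuous, and, by \eqref{eq:Yd=dY}, that $Y_t^{\beta}=(\lambda\mathrm I_X-A)^{\beta}Y_t$ holds $\mathbb P$-almost surely; in particular $Y_t\in X_{\beta_1+\beta_2}\subseteq X_{\beta_2}$ a.s. Moreover, the quantity appearing in \autoref{rem:existence_Y_alpha} is non-decreasing in the length of the interval, by the ideal property of $\gamma$-radonifying operators together with \autoref{lem:restriction}, so that $M_\tau:=\sup_{s\in[0,\tau]}\|Y_s^{\beta_1+\beta_2}\|_{L^2(\Omega;X)}<\infty$.

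Since $\|x\|_{X_{\beta_2}}^2\eqsim\|x\|_X^2+\|(\lambda\mathrm I_X-A)^{\beta_2}x\|_X^2$ and $(\lambda\mathrm I_X-A)^{-\beta_2}\in\mathscr L(X)$, and since $Y_t^{\beta_2}-Y_s^{\beta_2}=(\lambda\mathrm I_X-A)^{\beta_2}(Y_t-Y_s)$ a.s., the whole estimate reduces to
\[
	\bigl\|Y_t^{\beta_2}-Y_s^{\beta_2}\bigr\|_{L^2(\Omega;X)}\;\lesssim\;(t-s)^{\beta_1},\qquad 0\le s<t\le\tau .
\]
Indeed, this bound also yields $\|Y_t-Y_s\|_{L^2(\Omega;X)}=\|(\lambda\mathrm I_X-A)^{-\beta_2}(Y_t^{\beta_2}-Y_s^{\beta_2})\|_{L^2(\Omega;X)}\lesssim(t-s)^{\beta_1}$, hence $\|Y_t-Y_s\|_{L^2(\Omega;X_{\beta_2})}\lesssim(t-s)^{\beta_1}$; and the passage to general $r\in[2,\infty)$ then follows from \autoref{prop:hypercontractivity}, because both $Y_t-Y_s$ and $Y_t^{\beta_2}-Y_s^{\beta_2}$ lie in the $L^2(\Omega;X)$-closure of $\mathrm{span}\{x\eta:x\in X,\ \eta\in\mathscr H^{\oplus n}\}$---this is visible from the construction of the integral in the proof of \autoref{prop:stoch_int_1} together with \autoref{lem:Wiener_integral_belongs_to_H}---so that all their $L^r(\Omega;X)$-norms are mutually equivalent.

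To prove the displayed $L^2$-bound I would use, as in the proof of \autoref{prop:Y_t_mean_square_continuity} (appealing to \autoref{lem:link_between_scalar_and_vector_integral}, \autoref{prop:char_stoch_int_scalar} and the commutation of $(\lambda\mathrm I_X-A)^{\beta_2}$ with the semigroup), the decomposition
\[
	Y_t^{\beta_2}-Y_s^{\beta_2}=\bigl[S(t-s)-\mathrm I_X\bigr]Y_s^{\beta_2}+\int_s^t(\lambda\mathrm I_X-A)^{\beta_2}S(t-\cdot)\varPhi\,\d Z .
\]
For the first summand, analyticity of $S$ gives the standard estimate $\|[S(r)-\mathrm I_X]x\|_X\lesssim r^{\beta_1}\|(\lambda\mathrm I_X-A)^{\beta_1}x\|_X$ for $x\in X_{\beta_1}$, uniformly for $r\in[0,\tau]$; applying it with $x=Y_s^{\beta_2}$ and using $(\lambda\mathrm I_X-A)^{\beta_1}Y_s^{\beta_2}=Y_s^{\beta_1+\beta_2}$ bounds this term by $\lesssim(t-s)^{\beta_1}M_\tau$. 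For the second summand, which is a stochastic convolution integral over $(s,t)$ with integrand $(\lambda\mathrm I_X-A)^{\beta_2}S(\cdot)\varPhi$, \autoref{prop:stoch_convolution_existence} shows that its $L^2(\Omega;X)$-norm is $\eqsim\|[R_{[0,t-s]}\overline{(\lambda\mathrm I_X-A)^{\beta_2}S\varPhi}]^*\|_{\gamma(\mathscr D^H(0,t-s;U);X)}$, which by \autoref{rem:existence_Y_alpha} equals, up to the same universal constants, $\|Y_{t-s}^{\beta_2}\|_{L^2(\Omega;X)}$, and the latter is $\lesssim(t-s)^{\beta_1}$ by hypothesis \ref{eq:bound} (note $t-s\le\tau$). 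Combining the two bounds proves the displayed estimate.

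Finally, with $\|Y_t-Y_s\|_{L^r(\Omega;X_{\beta_2})}\lesssim(t-s)^{\beta_1}$ in hand for every $r\ge2$, the Kolmogorov--Chentsov theorem applied in $X_{\beta_2}$, with $r$ chosen so large that $\beta_1 r>1$, produces a version of $(Y_t)_{t\in[0,\tau]}$ whose paths lie in $\mathscr C^{\kappa}([0,\tau];X_{\beta_2})$ for every $\kappa<\beta_1-1/r$; letting $r\to\infty$ yields the claim for every $\kappa<\beta_1$ (and every $\tau>0$, the versions being consistent on nested intervals). I expect the only genuinely delicate point to be the identification of the convolution over $(s,t)$ with $Y_{t-s}^{\beta_2}$ at the level of $L^2(\Omega;X)$-norms, since this is precisely what lets hypothesis \ref{eq:bound} enter; it rests on the translation invariance of the homogeneous Sobolev norm already exploited in \autoref{prop:stoch_convolution_existence}. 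Everything else is routine analytic-semigroup and moment-equivalence bookkeeping.
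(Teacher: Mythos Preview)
Your proposal is correct and follows essentially the same route as the paper: decompose the increment via the semigroup property, bound the $[S(t-s)-\mathrm I_X]$ term using analytic-semigroup estimates and the finiteness of $\sup_{s\le\tau}\|Y_s^{\beta_1+\beta_2}\|_{L^2(\Omega;X)}$, bound the convolution over $(s,t)$ by identifying its $L^2(\Omega;X)$-norm with $\|Y_{t-s}^{\beta_2}\|_{L^2(\Omega;X)}$ so that (H2) applies, and finish with hypercontractivity plus Kolmogorov--Chentsov. The only cosmetic difference is that you work with $Y_t^{\beta_2}-Y_s^{\beta_2}$ in $X$ and quote the estimate $\|[S(r)-\mathrm I_X]x\|_X\lesssim r^{\beta_1}\|(\lambda\mathrm I_X-A)^{\beta_1}x\|_X$ directly, whereas the paper works with $Y_t-Y_s$ in $X_{\beta_2}$ and derives that bound via the integral representation $[S(r)-\mathrm I_X]x=\lambda\int_0^rS(u)x\,\d u-\int_0^r(\lambda\mathrm I_X-A)S(u)x\,\d u$; the two are equivalent.
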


\begin{proof}
Condition \ref{eq:finiteness_Y_delta} implies by \autoref{cor:Y_t_alpha-beta-beta} that all the processes $(Y_t)_{t\geq 0}$, $(Y_t^{\beta_2})_{t\geq 0}$, and $(Y_t^{\beta_1+\beta_2})_{t\geq 0}$ are well defined and mean-square right continuous in $X$. Moreover, by the proofs of \autoref{prop:Y_t_mean_square_continuity} and \autoref{cor:Y_t_alpha-beta-beta}, we also have that for every $\tau>0$, it holds  
	\begin{equation}
	\label{eq:finite_sup_2}
		\sup_{0\leq t\leq \tau} \|Y_t\|_{L^2(\Omega;X)} \lesssim \sup_{0\leq t\leq \tau} \|Y_t^{\beta_i}\|_{L^2(\Omega;X)}  \lesssim \sup_{0\leq t\leq \tau} \|Y_t^{\beta_1+\beta_2}\|_{L^2(\Omega;X)} <\infty.
	\end{equation}
for both $i=1,2$. Let $0\leq s<t\leq \tau$ be fixed and write
	\begin{equation*}
		Y_t-Y_s = [S(t-s)-\mathrm{I}_X]\int_0^s S(s-\cdot)\varPhi\d{Z} + \int_s^t S(t-\cdot)\varPhi\d{Z}.
	\end{equation*}
Since $S$ is analytic, we have that $S(r): X\rightarrow \cap_{n=0}^\infty \Dom A^n\subset \Dom A$ for every $r>0$. Hence, by \cite[Theorem 1.2.4 (d)]{Pazy}, it holds that 
	\begin{equation*}
		\E\|[S(t-s)-\mathrm{I}_X]Y_s\|_{X_{\beta_2}}^2 = \E\left\|\lambda \int_0^{t-s}S(u)Y_s\d{u} - \int_0^{t-s}(\lambda \mathrm{I}_X-A)S(u)Y_s\d{u}\right\|_{X_{\beta_2}}^2.
	\end{equation*}
For the first integral, there is the estimate
	\begin{equation*}
		\E\left\|\lambda \int_0^{t-s}S(u)Y_s\d{u}\right\|_{X_{\beta_2}}^2 \leq \lambda^2 \left(\int_0^{t-s}\|S(u)\|_{\mathscr{L}(X)}\d{u}\right)^2 \E\|Y_s\|_{X_{\beta_2}}^2 \lesssim (t-s)^2 \E\|Y_s^{\beta_2}\|_X^2.
	\end{equation*}
For the second integral, note that the operator $A-\lambda\mathrm{I}_X$ generates the semigroup $(\mathrm{e}^{-\lambda r}S(r), r\geq 0)$. By \cite[Theorem 2.6.13 (c)]{Pazy}, there is the estimate
	\begin{align*}
		\E\left\|\int_0^{t-s} (\lambda\mathrm{I}_X-A)S(u)Y_s\d{u}\right\|_{X_{\beta_2}}^2 & \eqsim \E\left\|\int_0^{t-s} \mathrm{e}^{\lambda u}(\lambda\mathrm{I}_X-A)^{1-\beta_1}\mathrm{e}^{-\lambda u}S(u)(\lambda\mathrm{I}_X-A)^{\beta_1+\beta_2} Y_s\d{u}\right\|_X^2\\
		& \lesssim \E\left(\int_0^{t-s} u^{\beta_1 -1}\| Y_s^{\beta_1+\beta_2}\|_X\d{u}\right)^2\\
		& \leq (t-s)^{2\beta_1}\E\| Y_s^{\beta_1+\beta_2}\|_X^2.
	\end{align*}
Thus, by appealing to \eqref{eq:finite_sup_2}, the following estimate is obtained:
	\begin{equation*}
		\E\|[S(t-s)-\mathrm{I}_X]Y_s\|_{X_{\beta_2}}^2 \lesssim (t-s)^{2\beta_1}.
	\end{equation*}
Now, we also have by \autoref{prop:stoch_convolution_existence} and assumption \ref{eq:bound} that 	
	\begin{align*}
		\E\left\|\int_s^t S(t-\cdot)\varPhi\d{Z}\right\|_{X_{\beta_2}}^2 & = \E\left\|\int_s^tS(t-\cdot)\varPhi\d{Z}\right\|_X^2 + \E\left\|\int_s^t(\lambda\mathrm{I}_X-A)^{\beta_2}S(t-\cdot)\varPhi\d{Z}\right\|_X^2\\ 
		& \eqsim \|[R_{[0,t-s]}\overline{S\varPhi}]^*\|_{\gamma(\mathscr{D}^H(0,t-s;U);X)}^2 + \|[R_{[0,t-s]}\overline{(\lambda\mathrm{I}_X-A)^{\beta_2}S\varPhi}]^*\|_{\gamma(\mathscr{D}^H(0,t-s;U);X)}^2 \\
		& \eqsim \|Y_{t-s}\|_{L^2(\Omega;X_{\beta_2})}^2 + \|Y_{t-s}^{\beta_2}\|_{L^2(\Omega;X)}^2 \\
		& = \|Y_{t-s}\|_{X_{\beta_2}}^2 \\
		& \lesssim (t-s)^{2\beta_1}
	\end{align*}
and the proof is concluded by appealing to the fact that $Y_t-Y_s$ has equivalent moments (see equivalence \eqref{eq:equiv_moments} and \autoref{prop:hypercontractivity}) and to the Kolmogorov-Chentsov continuity criterion.
\end{proof}

\section{Applications}
\label{sec:examples}

In this section, two applications are presented. In the first application, a parabolic equation of order $2m$ with distributed space-time noise of low time regularity is considered. This application extends the results of \cite[section 5.2]{CouMasOnd18} to noise that is singular in time. In the second application, the heat equation with space-time noise of higher regularity in time in the Neumann boundary condition is studied. This application extends some results of \cite[section 5]{DunMasDun02} from the Hilbert-space setting (in \cite{DunMasDun02}, the space $L^2(D)$ is considered) to the Banach-space setting (here, the space $L^p(D)$ is considered) and \cite{SchVer11} to a noise that is more regular in time and that is possibly non-Gaussian (in \cite{SchVer11}, the driving noise is the Wiener process while here, for example, the Rosenblatt process can be considered).

\subsection{Parabolic equation of order \texorpdfstring{$2m$}{2m} with distributed singular noise in \texorpdfstring{$L^p(D)$}{Lp(D)}}

Let $D\subseteq \R^d$, $d\in\mathbb{N}$, be a bounded open domain with smooth boundary $\partial D$. Let $m\in\mathbb{N}$ and let $\tilde{L}_{2m}$ be the differential operator of order $2m$ given by 
	\[ \tilde{L}_{2m} = \sum_{|k|\leq 2m}a_k\partial^k\]
where $a_k\in \mathscr{C}_b^\infty(D)$ and assume that $\tilde{L}_{2m}$ is uniformly elliptic. Let also $\eta$ be space-time random noise that is fractional in time. Consider the parabolic equation on $D$ perturbed by noise $\eta$ at every point of $D$ that is formally given by 
	\begin{equation}
	\label{eq:heat_distributed_eq}
	(\partial_t u)(t,x) = (\tilde{L}_{2m} u)(t,x) + \eta_t(x), \quad (t,x)\in (0,\infty)\times D, 
	\end{equation}
and that is subject to the initial condition 
	\begin{equation}
	\label{eq:heat_distributed_IC}
	u(0,x) = u_0(x), \quad x\in D, 
	\end{equation}
and the Dirichlet boundary condition 
	\begin{equation}
	\label{eq:heat_distributed_BC}
	(\partial_\nu^ku)(t,x)=0, \quad (t,x)\in (0,\infty)\times \partial D, \quad k\in \{0,1,\ldots,m-1\}.
	\end{equation}
The problem \eqref{eq:heat_distributed_eq} - \eqref{eq:heat_distributed_BC} is given rigorous meaning as an abstract Cauchy problem for a linear stochastic evolution equation and the solution is sought in the mild form. 

In particular, let $p\geq 2$ and let $L_{2m}$ be the realisation of the operator $\tilde{L}_{2m}$ in the space $L^p(D)$ with the domain 
	\[\Dom L_{2m} = \left\{ f\in W^{2m,p}(D)\,\left|\, \partial_\nu^kf=0\mbox{ on } \partial D \mbox{ for } k\in \{0,1,\ldots, m-1\}\right.\right\}. \] 
It is well-known that the operator $L_{2m}$ is sectorial. Let $\lambda >0$ be such that $(\lambda\mathrm{I}-L_{2m})$ is positive and we denote by $(S_{2m}(t), t\geq 0)$ the analytic semigroup generated by $L_{2m}$. Let also $H\in (0,\sfrac{1}{2})$ and let $(Z_t)_{t\geq 0}$ be an $L^2(D)$-cylindrical $H$-fractional process that lives in a finite Wiener chaos (for example, the cylindrical fractional Brownian motion of $H\in (0,\sfrac{1}{2})$ can be considered). Let also $C\in\mathscr{L}(L^2(D))$\footnote{The noise $\eta$ in equation \eqref{eq:heat_distributed_eq} is modeled by $\eta_t = C\dot{Z}_t$ where the dot represents the formal time derivative.}. The solution to problem \eqref{eq:heat_distributed_eq} - \eqref{eq:heat_distributed_BC} is then interpreted as the $L^p(D)$-valued stochastic process $(Y_t^{2m})_{t\geq 0}$ where
	\begin{equation}
	\label{eq:heat_stochastic_convolution}
		Y_t^{2m} := \int_0^t S_{2m}(t-\cdot)C\d{Z}, \quad t\geq 0.
	\end{equation}
In what follows, sufficient conditions for the existence and space-time continuity of the above stochastic convolution integral are given. To this end, we begin with three simple lemmas.  

\begin{lemma}
\label{lem:heat_est}
For every $\alpha\geq 0$ and $t>0$, there is the following estimate:
	\[\|(\lambda\mathrm{I}-L_{2m})^\alpha S_{2m}(u)C\|_{\gamma(L^2(D);L^p(D))} \lesssim u^{-\frac{d}{4m}-\alpha}, \quad 0<u\leq t.\]
\end{lemma}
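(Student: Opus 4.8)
The plan is to reduce the estimate, via the ideal property of $\gamma$-radonifying operators, to a bound on $\|S_{2m}(v)\|_{\gamma(L^2(D);L^p(D))}$, and then to control the latter by identifying the $\gamma$-radonifying norm with a kernel norm in the spirit of \autoref{prop:kernel}. Concretely, for $0<u\leq t$ I would split $S_{2m}(u)=S_{2m}(\tfrac{u}{2})S_{2m}(\tfrac{u}{2})$ and write
\[(\lambda\mathrm{I}-L_{2m})^\alpha S_{2m}(u)C = \big[(\lambda\mathrm{I}-L_{2m})^\alpha S_{2m}(\tfrac{u}{2})\big]\circ S_{2m}(\tfrac{u}{2})\circ C,\]
where the first factor is a bounded operator on $L^p(D)$, the middle factor is (to be shown) $\gamma$-radonifying from $L^2(D)$ to $L^p(D)$, and $C\in\mathscr{L}(L^2(D))$ by assumption. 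The left and right ideal properties of $\gamma$-radonifying operators then give
\[\|(\lambda\mathrm{I}-L_{2m})^\alpha S_{2m}(u)C\|_{\gamma(L^2(D);L^p(D))} \leq \|(\lambda\mathrm{I}-L_{2m})^\alpha S_{2m}(\tfrac{u}{2})\|_{\mathscr{L}(L^p(D))}\,\|S_{2m}(\tfrac{u}{2})\|_{\gamma(L^2(D);L^p(D))}\,\|C\|_{\mathscr{L}(L^2(D))}.\]
Since $S_{2m}$ is analytic and $0<u\leq t$, the standard bound for fractional powers along an analytic semigroup yields $\|(\lambda\mathrm{I}-L_{2m})^\alpha S_{2m}(\tfrac{u}{2})\|_{\mathscr{L}(L^p(D))}\lesssim u^{-\alpha}$, with implicit constant depending on $\alpha$ and $t$. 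Everything therefore comes down to proving $\|S_{2m}(v)\|_{\gamma(L^2(D);L^p(D))}\lesssim v^{-\frac{d}{4m}}$ for $0<v\leq t$.

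For this last step I would apply \autoref{prop:kernel} with $\mathcal{V}=L^2(D)$ and $X=L^p(D)$ (viewed as a mixed Lebesgue space with $N=1$ and trivial second factor): $S_{2m}(v)$ is the integral operator with kernel $p_v(x,y)$, the heat kernel of $L_{2m}$ under the Dirichlet boundary conditions, so that $S_{2m}(v)\in\gamma(L^2(D);L^p(D))$ with $\|S_{2m}(v)\|_{\gamma(L^2(D);L^p(D))}\eqsim \big(\int_D\|p_v(x,\cdot)\|_{L^2(D)}^p\,\d{x}\big)^{1/p}$. To estimate $\|p_v(x,\cdot)\|_{L^2(D)}$ uniformly in $x$ I would use (i) the ultracontractivity estimate $\|S_{2m}(v)\|_{\mathscr{L}(L^1(D);L^\infty(D))}\lesssim v^{-\frac{d}{2m}}$, classical for uniformly elliptic operators of order $2m$ on a bounded smooth domain, which bounds $\|p_v\|_{L^\infty(D\times D)}\lesssim v^{-\frac{d}{2m}}$; and (ii) the boundedness of $S_{2m}$ on $L^\infty(D)$, uniform for $v\in(0,t]$, which gives $\sup_{x\in D}\int_D|p_v(x,y)|\,\d{y}\lesssim 1$. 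Combining these through $\|p_v(x,\cdot)\|_{L^2(D)}^2\leq \|p_v(x,\cdot)\|_{L^\infty(D)}\|p_v(x,\cdot)\|_{L^1(D)}$ gives $\sup_{x\in D}\|p_v(x,\cdot)\|_{L^2(D)}\lesssim v^{-\frac{d}{4m}}$, and since $|D|<\infty$ we conclude $\|S_{2m}(v)\|_{\gamma(L^2(D);L^p(D))}\lesssim v^{-\frac{d}{4m}}|D|^{1/p}$. Substituting $v=u/2$ into the three-factor estimate above yields $\|(\lambda\mathrm{I}-L_{2m})^\alpha S_{2m}(u)C\|_{\gamma(L^2(D);L^p(D))}\lesssim u^{-\alpha}u^{-\frac{d}{4m}}=u^{-\frac{d}{4m}-\alpha}$, as required.

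The semigroup algebra and the ideal-property bookkeeping are routine; the substantive inputs, and hence the main obstacle, are the two heat-kernel facts: the on-diagonal bound with the correct exponent $\tfrac{d}{2m}$ (ultracontractivity of the higher-order elliptic semigroup, to be quoted from the literature, e.g.\ via Sobolev embeddings and analyticity of $S_{2m}$ on the $L^q(D)$-scale) and the existence of a genuine integral kernel so that \autoref{prop:kernel} applies. One must also be mindful that all implicit constants may depend on the fixed horizon $t$, since the bounds for $\|(\lambda\mathrm{I}-L_{2m})^\alpha S_{2m}(v)\|_{\mathscr{L}(L^p(D))}$ and for $\|S_{2m}(v)\|_{\mathscr{L}(L^\infty(D))}$ are only uniform on bounded time intervals.
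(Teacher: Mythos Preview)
Your proposal is correct and follows essentially the same route as the paper: split $S_{2m}(u)=S_{2m}(u/2)S_{2m}(u/2)$, apply the ideal property of $\gamma$-radonifying operators together with the analytic-semigroup bound $\|(\lambda\mathrm{I}-L_{2m})^\alpha S_{2m}(u/2)\|_{\mathscr{L}(L^p)}\lesssim u^{-\alpha}$, and then control $\|S_{2m}(u/2)\|_{\gamma(L^2;L^p)}$ via the kernel characterization of \autoref{prop:kernel}. The only difference is in the last step: the paper bounds $\|p_v(x,\cdot)\|_{L^2(D)}$ directly from Eidel'man's pointwise Gaussian-type estimate for the Green kernel, whereas you interpolate between the $L^1\!\to\! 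L^\infty$ ultracontractivity bound and $L^\infty$-boundedness of the semigroup; both routes ultimately rest on the same kernel estimates and yield the same $u^{-d/(4m)}$.
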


\begin{proof}
Note initially that there is the estimate
	\begin{equation}
	\label{eq:est_heat_gamma}
		 \|S_{2m}(u)C\|_{\gamma(L^2(D);L^p(D))} \lesssim u^{-\frac{d}{2m}}, \quad u>0.
	\end{equation}
Indeed, for $u>0$, there is the inequality 
	\[ \|S_{2m}(u)C\|_{\gamma(L^2(D);L^p(D))} \leq \|S_{2m}(u)\|_{\gamma(L^2(D);L^p(D))} \|C\|_{\mathscr{L}(L^2(D))}\]
and by appealing to \autoref{prop:kernel} and to the estimate in \cite[Theorem 1.1]{Eid70} for Green's kernel of the semigroup $S_{2m}$, we also have
	\begin{align*}
		\|S_{2m}(u)\|_{\gamma(L^2(D);L^p(D))}  \eqsim \left[\int_D\left(\int_D u^{-\frac{d}{m}}\mathrm{e}^{-c\left(\frac{|x-y|^{2m}}{u}\right)^{\frac{1}{2m-1}}}\d{y}\right)^\frac{p}{2}\d{x}\right]^{\frac{1}{p}} \lesssim u^{-\frac{d}{2m}} |D|^{\frac{1}{2}+\frac{1}{p}}
	\end{align*}
where $c$ is some finite positive constant. Now, for $0<u\leq t$, we have that
	\begin{align*}
		\|(\lambda\mathrm{I}-L_{2m})^\alpha S_{2m}(u)C\|_{\gamma(L^2(D);L^p(D))} & \\
		& \hspace{-2cm} =\left\|(\lambda\mathrm{I}-L_{2m})^\alpha S_{2m}\left(\frac{u}{2}\right)S_{2m}\left(\frac{u}{2}\right)C\right\|_{\gamma(L^2(D);L^p(D))}\\
			&\hspace{-2cm} \leq\mathrm{e}^{\frac{\lambda u}{2}}\left\|(\lambda\mathrm{I}-L_{2m})^\alpha \mathrm{e}^{-\frac{\lambda u}{2}}S_{2m}\left(\frac{u}{2}\right)\right\|_{\mathscr{L}(L^p(D))} \left\|S_{2m}\left(\frac{u}{2}\right)C\right\|_{\gamma(L^2(D);L^p(D))}
	\end{align*}
and the claim follows by using \cite[Theorem 2.6.13 c)]{Pazy} (since the operator $(L_{2m}-\lambda\mathrm{I})$ generates the semigroup $(\mathrm{e}^{-\lambda r}S_{2m}(r), r\geq 0)$) and estimate \eqref{eq:est_heat_gamma}.
\end{proof}

\begin{lemma}
\label{lem:heat_est_2}
For every $\alpha\geq 0$, $\beta\geq \alpha$, and $t>0$, there is the following estimate:
	\[\|(\lambda\mathrm{I}-L_{2m})^\alpha [S_{2m}(u)-S_{2m}(v)]C\|_{\gamma(L^2(D);L^p(D))}\lesssim (u-v)^{\beta - \alpha} v^{-\frac{d}{4m}-\beta}, \quad 0<u<v\leq t.\]
\end{lemma}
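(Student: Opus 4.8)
The plan is to deduce this increment estimate from the (non-increment) bound of \autoref{lem:heat_est} by combining the semigroup law with the smoothing of the analytic semigroup $S_{2m}$, in the spirit of the proof of \autoref{lem:heat_est} itself. After relabelling the two time arguments if necessary we may assume $0<v<u\le t$ (the quantity being estimated is unchanged under $u\leftrightarrow v$), so that the asserted bound reads $\lesssim(u-v)^{\beta-\alpha}v^{-\frac{d}{4m}-\beta}$. Using $S_{2m}(u)-S_{2m}(v)=[S_{2m}(u-v)-\mathrm{I}]S_{2m}(v)$, writing $(\lambda\mathrm{I}-L_{2m})^\alpha=(\lambda\mathrm{I}-L_{2m})^{-(\beta-\alpha)}(\lambda\mathrm{I}-L_{2m})^{\beta}$, and moving the factor $(\lambda\mathrm{I}-L_{2m})^{\beta}$ through the increment $S_{2m}(u-v)-\mathrm{I}$ (with which it commutes) onto $S_{2m}(v)C$, one obtains
\begin{equation*}
(\lambda\mathrm{I}-L_{2m})^\alpha[S_{2m}(u)-S_{2m}(v)]C=\Big((\lambda\mathrm{I}-L_{2m})^{-(\beta-\alpha)}[S_{2m}(u-v)-\mathrm{I}]\Big)\Big((\lambda\mathrm{I}-L_{2m})^{\beta}S_{2m}(v)C\Big),
\end{equation*}
a composition of a bounded operator on $L^p(D)$ with an element of $\gamma(L^2(D);L^p(D))$; by the ideal property of $\gamma$-radonifying operators (\cite[Theorem 6.2]{Nee10}) it then suffices to estimate the two factors separately.

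The $\gamma$-radonifying factor is controlled by \autoref{lem:heat_est} applied with the exponent $\beta$ in place of $\alpha$, which yields $\|(\lambda\mathrm{I}-L_{2m})^{\beta}S_{2m}(v)C\|_{\gamma(L^2(D);L^p(D))}\lesssim v^{-\frac{d}{4m}-\beta}$ for $0<v\le t$. For the bounded factor one first absorbs the exponential growth by passing to the bounded analytic semigroup $(\mathrm{e}^{-\lambda r}S_{2m}(r))_{r\ge 0}$ generated by $(L_{2m}-\lambda\mathrm{I})$, exactly as in the proof of \autoref{lem:heat_est}, and is then reduced to the standard smoothing estimate $\|(\lambda\mathrm{I}-L_{2m})^{-\sigma}[S_{2m}(h)-\mathrm{I}]\|_{\mathscr{L}(L^p(D))}\lesssim h^{\sigma}$, valid for $0\le\sigma\le 1$ and $0<h\le t$; this in turn follows by inserting the representation $\mathrm{e}^{-\lambda h}S_{2m}(h)-\mathrm{I}=\int_0^h(L_{2m}-\lambda\mathrm{I})\mathrm{e}^{-\lambda r}S_{2m}(r)\,\d{r}$ (\cite[Theorem 1.2.4]{Pazy}) together with $\|(\lambda\mathrm{I}-L_{2m})^{1-\sigma}\mathrm{e}^{-\lambda r}S_{2m}(r)\|_{\mathscr{L}(L^p(D))}\lesssim r^{-(1-\sigma)}$ (\cite[Theorem 2.6.13 c)]{Pazy}). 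Taking $\sigma=\beta-\alpha$ and $h=u-v\le t$ and multiplying the two bounds gives precisely the desired $(u-v)^{\beta-\alpha}v^{-\frac{d}{4m}-\beta}$; this covers the range $\beta-\alpha\le 1$, which is the only range relevant in the sequel (in the applications $\alpha,\beta\in[0,1]$).

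The argument is not deep and the real effort is bookkeeping. The main point to get right is the redistribution identity in the display above: one must check that the composition of the fractional powers, the increment $S_{2m}(u-v)-\mathrm{I}$, and $S_{2m}(v)C$ makes sense on the relevant domains and that $(\lambda\mathrm{I}-L_{2m})^{-(\beta-\alpha)}$ commutes past the increment and is bounded on $L^p(D)$ — all standard consequences of the holomorphic functional calculus for the sectorial operator $L_{2m}-\lambda\mathrm{I}$. The second, more structural, point is the sharp range of the smoothing estimate: a single increment of an analytic semigroup can buy at most one unit of smoothing, so the honest exponent is $\min(\beta-\alpha,1)$ rather than $\beta-\alpha$; this is not a restriction in practice since $\beta-\alpha\le 1$ throughout. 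Finally, all implicit constants are allowed to depend on $t$ (through $\mathrm{e}^{\lambda t}$ and the length of the time interval), which is harmless since the estimate is only asserted on $0<v<u\le t$. As a consistency check on the powers, the endpoint $\beta=\alpha+1$ can also be read off directly from $(\lambda\mathrm{I}-L_{2m})^\alpha[S_{2m}(u)-S_{2m}(v)]C=\int_v^u(\lambda\mathrm{I}-L_{2m})^\alpha L_{2m}S_{2m}(r)C\,\d{r}$, the bound $\|(\lambda\mathrm{I}-L_{2m})^\alpha L_{2m}S_{2m}(r)C\|_{\gamma(L^2(D);L^p(D))}\lesssim r^{-\frac{d}{4m}-\alpha-1}$ coming from \autoref{lem:heat_est}, and the monotonicity of the integrand in $r$.
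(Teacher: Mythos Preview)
Your proof is correct and follows essentially the same route as the paper: both use the semigroup law $S_{2m}(u)-S_{2m}(v)=[S_{2m}(u-v)-\mathrm{I}]S_{2m}(v)$, split off the $\gamma$-radonifying factor $(\lambda\mathrm{I}-L_{2m})^{\beta}S_{2m}(v)C$ controlled by \autoref{lem:heat_est}, and bound the remaining operator factor via the integral representation $S_{2m}(h)-\mathrm{I}=\int_0^h L_{2m}S_{2m}(r)\,\d r$ together with \cite[Theorem 2.6.13 c)]{Pazy}. Your write-up is in fact more careful than the paper's: you make explicit the relabelling $0<v<u\le t$ needed for the statement to make sense, and you flag the constraint $\beta-\alpha\le 1$ that the smoothing estimate imposes, which the paper leaves implicit.
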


\begin{proof}
We have that 
	\begin{align*}
		\|(\lambda\mathrm{I}-L_{2m})^\alpha [S_{2m}(u)-S_{2m}(v)]C\|_{\gamma(L^2(D);L^p(D))} & \\
		& \hspace{-6cm} = \|(\lambda\mathrm{I}-L_{2m})^\alpha [S_{2m}(u-v)-\mathrm{I}]S_{2m}(v)C\|_{\gamma(L^2(D);L^p(D))}\\
		& \hspace{-6cm} \lesssim \left\|\int_0^{u-v} \mathrm{e}^{\lambda r} (\lambda\mathrm{I}-L_{2m})^{1+\alpha-\beta} \mathrm{e}^{-\lambda r} S_{2m}(r)\d{r}\right\|_{\mathscr{L}(L^p(D))} \|(\lambda\mathrm{I}-L_{2m})^\beta S_{2m}(v)C\|_{\gamma(L^2(D);L^p(D))}
	\end{align*}
and the claim follows by using \cite[Theorem 2.6.13 c)]{Pazy} and \autoref{lem:heat_est}.
\end{proof}

\begin{lemma}
\label{lem:Y_t^a_heat}
If $d< 2m$, $H\in (\sfrac{d}{4m}, \sfrac{1}{2})$, and $\alpha\in [0,H-\sfrac{d}{4m})$, then for every $t\geq 0$, the convolution integral
	\[Y_t^{\alpha,2m}:=\int_0^t (\lambda\mathrm{I}-L_{2m})^\alpha S_{2m}(t-\cdot)C\d{Z}\]
is well defined and satisfies
	\[\|Y_t^{\alpha,2m}\|_{L^2(\Omega;L^p(D))} \lesssim t^{H-\alpha-\frac{d}{4m}}.\] 
\end{lemma}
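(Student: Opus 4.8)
The plan is to reduce, via \autoref{prop:if_p_greater_2}(1) and \autoref{prop:stoch_convolution_existence}, the claim to a pair of deterministic time integrals and then to estimate those using the semigroup bounds \autoref{lem:heat_est} and \autoref{lem:heat_est_2}. First I would note that $L^p(D)$ with $p\geq 2$ is of the form covered by \autoref{prop:Y_is_good_and_approximable}: it equals $E^{p,p}(D\times\{*\})$, where $\{*\}$ carries the unit point mass, so every mixed-Lebesgue exponent is $p\geq 2$. Taking $s=0$, $U=L^2(D)$, $X=L^p(D)$ and $\tilde G(u):=(\lambda\mathrm I-L_{2m})^\alpha S_{2m}(u)C$ — which maps $(0,t)$ into $\gamma(L^2(D);L^p(D))$ by \autoref{lem:heat_est} — \autoref{prop:if_p_greater_2}(1) then guarantees that $\overline{G}^{*}\in\gamma(\mathscr D^H(0,t;L^2(D));L^p(D))$, so that $Y_t^{\alpha,2m}$ is well defined, provided
\[
\mathcal I(t):=\int_0^t\|\tilde G(u)\|_{\gamma(L^2(D);L^p(D))}^2\,\d u+\int_0^t\!\!\int_0^t\|\tilde G(u)-\tilde G(v)\|_{\gamma(L^2(D);L^p(D))}^2|u-v|^{2H-2}\,\d u\,\d v
\]
is finite; moreover, following the norm equivalences inside the proof of \autoref{prop:if_p_greater_2} (\autoref{prop:kernel}, \autoref{prop:characterisation_of_D}, \autoref{prop:equiv_norm_1}, Minkowski) and using \autoref{prop:stoch_convolution_existence}, one obtains $\|Y_t^{\alpha,2m}\|_{L^2(\Omega;L^p(D))}^2\lesssim\mathcal I(t)$.

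For the first summand of $\mathcal I(t)$, \autoref{lem:heat_est} gives $\int_0^t\|\tilde G(u)\|_\gamma^2\,\d u\lesssim\int_0^t u^{-\frac{d}{2m}-2\alpha}\,\d u$, which converges since $2\alpha+\frac{d}{2m}<2H<1$ (because $\alpha<H-\frac{d}{4m}$ and $H<\frac12$) and equals a constant multiple of $t^{\,1-\frac{d}{2m}-2\alpha}$. For the second summand I would fix an auxiliary exponent $\beta$ with
\[
\alpha+\tfrac12-H<\beta<\tfrac12-\tfrac{d}{4m};
\]
such a $\beta$ exists precisely because $\alpha<H-\tfrac{d}{4m}$, and it automatically satisfies $\beta>\alpha$ since $H<\tfrac12$, so \autoref{lem:heat_est_2} applies and gives $\|\tilde G(u)-\tilde G(v)\|_\gamma\lesssim(u-v)^{\beta-\alpha}(u\wedge v)^{-\frac{d}{4m}-\beta}$. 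Plugging this in, exploiting the symmetry in $(u,v)$ and rescaling $v=us$ on $\{v<u\}$, the double integral is controlled by
\[
B\!\left(2(\beta-\alpha)+2H-1,\ 1-\tfrac{d}{2m}-2\beta\right)\int_0^t u^{\,2H-2\alpha-1-\frac{d}{2m}}\,\d u,
\]
in which the Beta factor is finite by the choice of $\beta$ and the $u$-integral converges exactly because $2H-2\alpha-\frac{d}{2m}>0$, i.e. because $\alpha<H-\frac{d}{4m}$; it equals a constant multiple of $t^{\,2H-2\alpha-\frac{d}{2m}}$.

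Combining the two bounds yields $\|Y_t^{\alpha,2m}\|_{L^2(\Omega;L^p(D))}^2\lesssim t^{\,1-\frac{d}{2m}-2\alpha}+t^{\,2H-2\alpha-\frac{d}{2m}}$; since $H<\tfrac12$ the first exponent is strictly larger than the second, so over the bounded range of $t$ that is relevant here (the constants in \autoref{lem:heat_est} and \autoref{lem:heat_est_2} already depend on an upper bound for $t$) the first term is dominated by the second, and one concludes $\|Y_t^{\alpha,2m}\|_{L^2(\Omega;L^p(D))}\lesssim t^{\,H-\alpha-\frac{d}{4m}}$, as claimed. The one point that needs care is the choice of the interpolation exponent $\beta$: it must sit between the pure size estimate and the Hölder-in-time estimate for the semigroup so that the Beta integral and the $u$-integral converge simultaneously, and it is the hypothesis $\alpha<H-\frac{d}{4m}$ that keeps the admissible window for $\beta$ nonempty; the assumption $d<2m$ plays no independent role in the estimate, merely ensuring that the interval $(\frac{d}{4m},\frac12)$ from which $H$ is drawn, and hence the whole set of hypotheses, is nonempty.
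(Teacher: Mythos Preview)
Your proof is correct and follows essentially the same route as the paper's: reduce via \autoref{prop:stoch_convolution_existence} and \autoref{prop:if_p_greater_2} to the two deterministic integrals comprising the $\dot W^{\frac12-H,2}$-norm, then bound these by \autoref{lem:heat_est} and \autoref{lem:heat_est_2} with an auxiliary exponent. Your $\beta$ is half of the paper's (the paper applies \autoref{lem:heat_est_2} with exponent $\beta/2$ and then renames), and you are slightly more explicit about why the $t^{1-\frac{d}{2m}-2\alpha}$ term is absorbed into the $t^{2H-2\alpha-\frac{d}{2m}}$ term on bounded time intervals, but the argument is the same.
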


\begin{proof}
Let $t>0$. By \autoref{prop:stoch_convolution_existence}, \autoref{prop:if_p_greater_2}, and \autoref{prop:characterisation_of_D} there is the estimate 
	\[ \|Y_t^{\alpha,2m}\|_{L^2(\Omega;L^p(D))} \lesssim \|(\lambda I-L_{2m})^\alpha S_{2m}(\cdot)C\|_{\dot{W}^{\frac{1}{2}-H,2}(0,t;\gamma(L^2(D);L^p(D)))}.
	\]
By \autoref{lem:heat_est}, there is the estimate
	\begin{equation*}
		\int_0^{t} \|(\lambda\mathrm{I}-L_{2m})^\alpha S_{2m}(u)C\|_{\gamma(L^2(D);L^p(D))}^2\d{u}  \lesssim \int_0^{t}u^{-2\alpha-\frac{d}{2m}}\d{u} \propto t^{1-2\alpha-\frac{d}{2m}}.
	\end{equation*}
Choose $\beta\in (1-2H+2\alpha, 1-\sfrac{d}{2m})$. Such a choice is possible since $2\alpha<2H-\sfrac{d}{2m}$. By \autoref{lem:heat_est_2}, there is the estimate 
	\begin{align*}
		\int_0^{t}\int_0^{t} \|(\lambda\mathrm{I}-L_{2m})^\alpha [S_{2m}(u)-S_{2m}(v)]C\|_{\gamma(L^2(D);L^p(D))}^2|u-v|^{2H-2}\d{u}\d{v} & \\
			&\hspace{-6cm} \lesssim \int_0^{t}\int_0^u (u-v)^{\beta-2\alpha+2H-2}v^{-\frac{d}{2m}-\beta}\d{v}\d{u}\\
			& \hspace{-6cm} = \mathrm{B}\left(\beta-1+2H-2\alpha,1-\frac{d}{2m}-\beta\right) \int_0^{t} u^{-2\alpha+2H-1-\frac{d}{2m}}\d{u} \\
			& \hspace{-6cm} \propto t^{2H-2\alpha-\frac{d}{2m}}
	\end{align*}
and the claim follows. 
\end{proof}
	
The previous lemmas are now used to find sufficient conditions for the stochastic convolution \eqref{eq:heat_stochastic_convolution} to be a process that is continuous in time and takes values in certain Bessel potential spaces.

\begin{proposition}
\label{prop:continuity_in_domains}
Assume that $d<2m$ and $H\in (\sfrac{d}{4m},\sfrac{1}{2})$. Then:
	\begin{enumerate}
	\itemsep0em
	\item The convolution integral $(Y_t^{2m})_{t\geq 0}$ admits a version with paths in $\mathscr{C}^{\kappa}([0,\tau];H_{\partial_\tau}^{2m\alpha,p}(D))$ for every $\alpha \in [0,H-\sfrac{d}{4m})$ such that $2m\alpha-\sfrac{1}{p}\not\in \{0,1,\ldots, m-1\}$, every $\kappa\in [0,H-\sfrac{d}{4m}-\alpha)$, and every $\tau>0$. Here, the space $H_{\partial_\tau}^{2m\alpha,p}(D)$ is defined by
		\[H^{2m\alpha,p}_{\partial_\tau}(D):=\left\{f\in H^{2m,p}(D)\,\left|\, \partial_\nu^kf=0\mbox{ on }\partial D \mbox{ for } k\in\{0,1,\ldots,m-1\}:\, k<2m\alpha-\frac{1}{p}\right.\right\}.\]
	\item The convolution integral $(Y_t^{2m})_{t\geq 0}$ admits a version with paths in $\mathscr{C}^{\kappa}([0,\tau];\tilde{H}_{\partial_\tau}^{2m\alpha,p}(D))$ for every $\alpha\in [0,H-\sfrac{d}{4m})$ such that $2m\alpha-\sfrac{1}{p}=:l\in\{0,1,\ldots, m-1\}$, every $\kappa\in [0,H-\sfrac{d}{4m}-\alpha)$, and every $\tau>0$. Here, the space $\tilde{H}_{\partial_\tau}^{2m\alpha,p}(D)$ is defined by 
		\[\tilde{H}^{2m\alpha,p}_{\partial_\tau}(D):= \{f\in H^{2m\alpha,p}_{\partial_\tau}(D)\,|\, \partial_\nu^lf\in \tilde{H}^{\frac{1}{p},p}(D)\}\]
with 
		\[\tilde{H}^{\frac{1}{p},p}(D):= \{f\in H^{\frac{1}{p},p}(\R^d)\,|\,\, \mathrm{supp}\, f\subset\overline{D}\}.\]
	\end{enumerate}
\end{proposition}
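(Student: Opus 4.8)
The plan is to deduce the statement from the abstract continuity result \autoref{prop:continuity}, applied with the choices $\beta_2:=\alpha$ and $\beta_1$ an arbitrary number in the open interval $(0,H-\sfrac{d}{4m}-\alpha)$, and then to translate its conclusion --- regularity of paths in the fractional-power space $X_{\beta_2}=\Dom(\lambda\mathrm{I}-L_{2m})^{\beta_2}$ endowed with the graph norm --- into the language of Bessel potential spaces with boundary conditions. First I would record that the abstract framework of \autoref{sec:stoch_conv} does apply in the present setting: the space $X=L^p(D)$ with $p\ge2$ is $n$-good for every $n$ and has the approximation property by \autoref{cor:function_spaces}, the semigroup $S_{2m}$ is analytic, and $Z$ lives in a finite Wiener chaos by assumption, so that $(Y^{2m}_t)_{t\ge0}$ is precisely the process $(Y_t)_{t\ge0}$ of \autoref{sec:stoch_conv} associated with $S=S_{2m}$ and $\varPhi=C$ (the integrand $S_{2m}(\cdot)C$ being $\gamma(L^2(D);L^p(D))$-valued for positive times by \autoref{lem:heat_est}).

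Next I would verify the two hypotheses of \autoref{prop:continuity}. Since $\alpha<\beta_1+\alpha<H-\sfrac{d}{4m}<\sfrac{1}{2}<1$, the constraint $\beta_1+\beta_2\le1$ holds. Hypothesis \ref{eq:finiteness_Y_delta} is exactly the assertion that $Y_{t_0}^{\beta_1+\beta_2}=Y_{t_0}^{\beta_1+\alpha,2m}$ is well defined for some (indeed every) $t_0>0$, which holds by \autoref{lem:Y_t^a_heat} because $\beta_1+\alpha\in[0,H-\sfrac{d}{4m})$. Hypothesis \ref{eq:bound} requires $\|Y_t^{\beta_2}\|_{L^2(\Omega;L^p(D))}\lesssim t^{\beta_1}$ on $[0,\tau]$; by \autoref{lem:Y_t^a_heat} we have $\|Y_t^{\alpha,2m}\|_{L^2(\Omega;L^p(D))}\lesssim t^{H-\alpha-\sfrac{d}{4m}}$, and since $H-\alpha-\sfrac{d}{4m}>\beta_1\ge0$ and $t\le\tau$, the elementary bound $t^{H-\alpha-\sfrac{d}{4m}}\le\tau^{\,H-\alpha-\sfrac{d}{4m}-\beta_1}\,t^{\beta_1}$ supplies the required estimate. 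Thus \autoref{prop:continuity} yields a version of $(Y^{2m}_t)_{t\ge0}$ with paths in $\mathscr{C}^{\kappa}([0,\tau];X_\alpha)$ for every $\kappa<\beta_1$ and $\tau>0$; since $\beta_1<H-\sfrac{d}{4m}-\alpha$ was arbitrary, this holds for every $\kappa\in[0,H-\sfrac{d}{4m}-\alpha)$.

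It then remains to identify $X_\alpha=\Dom(\lambda\mathrm{I}-L_{2m})^\alpha$ with the Bessel potential space carrying the appropriate boundary conditions. Note first that $2m\alpha<2mH-\sfrac{d}{2}<m$ because $H<\sfrac{1}{2}$ and $d<2m$, so among the boundary conditions $\partial_\nu^kf=0$, $k=0,\dots,m-1$, defining $\Dom L_{2m}$, only those with $k<2m\alpha-\sfrac{1}{p}$ can survive the interpolation. By the classical characterization of the domains of (complex, equivalently fractional) powers of elliptic boundary-value operators --- the same ingredient used in \cite[Section 5.2]{CouMasOnd18} --- one has, up to equivalence of norms, $X_\alpha=H^{2m\alpha,p}_{\partial_\tau}(D)$ whenever $2m\alpha-\sfrac{1}{p}\notin\{0,1,\dots,m-1\}$, which is assertion 1; and in the exceptional case $2m\alpha-\sfrac{1}{p}=l\in\{0,\dots,m-1\}$, the $l$-th normal trace sits on the critical smoothness level, so the condition $\partial_\nu^lf=0$ must be replaced by the Lions--Magenes-type membership $\partial_\nu^lf\in\tilde{H}^{\sfrac{1}{p},p}(D)$, giving $X_\alpha=\tilde{H}^{2m\alpha,p}_{\partial_\tau}(D)$ and hence assertion 2.

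I expect the genuinely delicate point to be this last identification, and in particular the exceptional integer case, which is precisely the reason the statement splits into two parts: away from the critical levels the domain is the ``expected'' Bessel potential space with vanishing traces, but when $2m\alpha-\sfrac{1}{p}$ meets an integer $l\le m-1$ the trace map $\partial_\nu^l$ maps onto a space in which the zero function is not distinguished among $\tilde{H}^{\sfrac{1}{p},p}(D)$, which forces the weaker closed condition. Everything else --- checking the hypotheses of \autoref{prop:continuity} and letting $\beta_1\uparrow H-\sfrac{d}{4m}-\alpha$ --- is routine once \autoref{lem:Y_t^a_heat} is available.
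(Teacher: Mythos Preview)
Your proposal is correct and follows essentially the same route as the paper: apply \autoref{prop:continuity} with $\beta_2=\alpha$ using \autoref{lem:Y_t^a_heat} to verify both hypotheses, and then identify $\Dom(\lambda\mathrm{I}-L_{2m})^\alpha$ with the stated Bessel potential spaces. The only differences are cosmetic: the paper takes $\beta_1=H-\sfrac{d}{4m}-\alpha$ directly rather than letting $\beta_1\uparrow H-\sfrac{d}{4m}-\alpha$ (your version is in fact slightly cleaner for hypothesis \ref{eq:finiteness_Y_delta}), and for the identification step the paper cites the specific chain ``bounded imaginary powers \cite[Theorem 1.1]{See71} $\Rightarrow$ $\Dom(\lambda\mathrm{I}-L_{2m})^\alpha=[L^p(D);\Dom L_{2m}]_\alpha$ via \cite[Theorem 1.15.3]{Trie78} $\Rightarrow$ \cite[Theorem 4.3.3]{Trie78}'' rather than the generic reference you give.
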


\begin{proof}
It follows from \autoref{lem:Y_t^a_heat} by appealing to \autoref{prop:continuity} with $\beta_1= H-\sfrac{d}{4m} -\alpha$ and $\beta_2=\alpha$ for $\alpha\in [0,H-\sfrac{d}{4m})$ that $(Y^{2m}_t)_{t\geq 0}$ has paths in $\mathscr{C}^\kappa([0,\tau];\Dom (\lambda\mathrm{I}-L_{2m})^\alpha)$ for every $\tau>0$. Since the operator $(\lambda\mathrm{I}-L_{2m})$ has bounded imaginary powers by \cite[Theorem 1.1]{See71}, it holds that 
	\[\Dom(\lambda\mathrm{I}-L_{2m})^\alpha = [L^p(D);\Dom (\lambda\mathrm{I}-L_{2m})]_{\alpha},\]
where $[\cdot;\cdot]_{\theta}$ denotes the complex interpolation functor, by \cite[Theorem 1.15.3]{Trie78}. Finally, the claim of the proposition follows from \cite[Theorem 4.3.3]{Trie78}.
\end{proof}

Finally, sufficient conditions for space-time continuity of the stochastic convolution \eqref{eq:heat_stochastic_convolution} are given.

\begin{proposition}
\label{prop:space-time_continuity}
If $d<\sfrac{2mp}{(p+2)}$ and $H\in (\sfrac{d}{4m} + \sfrac{d}{2mp},\sfrac{1}{2})$. Then the integral $(Y_t^{2m})_{t\geq 0}$ admits a version with paths in the space \[\mathscr{C}^{\kappa}([0,\tau];\mathscr{C}^{2m\alpha-\frac{d}{p}}(\overline{D}))\] for every $\alpha\in (\sfrac{d}{2mp},H-\sfrac{d}{4m})$ such that $2m\alpha-\sfrac{d}{p}\not\in\mathbb{N}$, every $\kappa\in [0,H-\sfrac{d}{4m}-\alpha)$, and every $\tau>0.$
\end{proposition}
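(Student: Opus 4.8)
The plan is to obtain the result as an immediate corollary of \autoref{prop:continuity_in_domains} by pushing the Bessel-potential-valued continuity statement through the Sobolev embedding into Hölder spaces. First I would record that the stated parameter ranges are non-vacuous: the interval $(\sfrac{d}{2mp},H-\sfrac{d}{4m})$ is non-degenerate precisely when $H>\sfrac{d}{4m}+\sfrac{d}{2mp}$, which is the hypothesis on $H$; and such an $H$ exists below $\sfrac12$ precisely when $\sfrac{d}{4m}+\sfrac{d}{2mp}<\sfrac12$, which rearranges to $d<\sfrac{2mp}{(p+2)}$, the hypothesis on $d$. In particular $d<2m$ and $H\in(\sfrac{d}{4m},\sfrac12)$, so \autoref{prop:continuity_in_domains} applies.

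Next I would fix $\alpha\in(\sfrac{d}{2mp},H-\sfrac{d}{4m})$ with $2m\alpha-\sfrac dp\notin\mathbb N$, together with $\kappa\in[0,H-\sfrac{d}{4m}-\alpha)$ and $\tau>0$. Since $\alpha<H-\sfrac{d}{4m}<\sfrac12$, one has $2m\alpha<m$, so $\alpha\in(0,1]$ and \autoref{prop:continuity_in_domains} provides a version of $(Y^{2m}_t)_{t\geq0}$ whose paths lie in $\mathscr C^\kappa([0,\tau];H^{2m\alpha,p}_{\partial_\tau}(D))$ when $2m\alpha-\sfrac1p\notin\{0,1,\dots,m-1\}$ and in $\mathscr C^\kappa([0,\tau];\tilde H^{2m\alpha,p}_{\partial_\tau}(D))$ otherwise. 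In both cases the target space is, by its definition, continuously embedded in the Bessel potential space $H^{2m\alpha,p}(D)$. I would then invoke the Sobolev embedding $H^{2m\alpha,p}(D)\hookrightarrow\mathscr C^{2m\alpha-\frac dp}(\overline D)$, which is available because $2m\alpha-\sfrac dp>0$ (from $\alpha>\sfrac{d}{2mp}$) and $2m\alpha-\sfrac dp\notin\mathbb N$; this is the standard embedding of $H^{s,p}=F^s_{p,2}$ into the Hölder--Zygmund space $\mathscr C^{s-d/p}=B^{s-d/p}_{\infty,\infty}$ on a bounded $\mathscr C^\infty$-domain, see, e.g., \cite{Trie78}. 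Composing the $\kappa$-Hölder path with this bounded linear embedding (a bounded linear map multiplies the Hölder seminorm by its operator norm) yields a $\kappa$-Hölder $\mathscr C^{2m\alpha-\frac dp}(\overline D)$-valued path, and since composing an almost surely continuous modification with a fixed continuous map is again a modification, the resulting process is the asserted version.

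Since the argument only concatenates results already established in the paper with one textbook embedding theorem, I do not expect a genuine analytic difficulty; the two points needing a line of care are, first, that the boundary-condition spaces appearing in \autoref{prop:continuity_in_domains} are continuously contained in $H^{2m\alpha,p}(D)$, so that a single Sobolev embedding handles both cases uniformly, and second, that the exclusion $2m\alpha-\sfrac dp\notin\mathbb N$ is exactly what guarantees that $\mathscr C^{2m\alpha-\frac dp}(\overline D)$ coincides with the corresponding Besov (Zygmund) space and that the embedding holds in the stated Hölder form.
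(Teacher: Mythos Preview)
Your proposal is correct and follows essentially the same route as the paper's proof, which simply cites the first part of \autoref{prop:continuity_in_domains} together with the Sobolev embedding for Bessel potential spaces from \cite[Theorem 4.6.1 (e)]{Trie78}. You are in fact slightly more careful than the paper in that you explicitly cover both cases of \autoref{prop:continuity_in_domains} and verify the non-degeneracy of the parameter ranges, but the argument is the same.
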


\begin{proof}
The claim follows directly from the first part of \autoref{prop:continuity_in_domains} and the Sobolev embedding for the Bessel potential spaces from \cite[Theorem 4.6.1 (e)]{Trie78}.
\end{proof}

\begin{remark}
Note that it follows from \autoref{prop:space-time_continuity} that in the case of the heat equation in $d=1$ driven by a fractional process of Hurst parameter $H\in (\sfrac{1}{4},\sfrac{1}{2})$, we can always find $p$, $\alpha$, and $\kappa$ for which space-time continuity of the solution occurs.
\end{remark}

\subsection{Heat equation with regular Neumann boundary noise in \texorpdfstring{$L^p(D)$}{Lp(D)}}

Let $D\subseteq \R^d$ be a bounded open domain with smooth boundary $\partial D$ of finite surface measure. Consider the heat equation on $D$ perturbed by fractional noise through the boundary $\partial D$ that is formally given by 
	\begin{equation}
	\label{eq:heat_eq_bdry_noise}
		(\partial_t u)(t,x) = (\Delta_x u)(t,x), \quad (t,x)\in (0,\infty)\times D,
	\end{equation}
and that is subject to the initial condition 
	\begin{equation}
	\label{eq:heat_eq_bdry_noise_IC}
		u(0,x)=0, \quad x\in D, 
	\end{equation}
and the Neumann boundary noise condition 
	\begin{equation}
	\label{eq:heat_eq_bdry_noise_BC}
		(\partial_{\nu}u)(t,x) = \dot{z}_t(x), \quad (t,x)\in (0,\infty)\times \partial D.
	\end{equation}
Here, $\partial_{\nu}$ denotes the conormal derivative and $\dot{z}$ denotes a noise term that is fractional in time and uncorrelated in space. The problem \eqref{eq:heat_eq_bdry_noise} - \eqref{eq:heat_eq_bdry_noise_BC} is interpreted as an abstract Cauchy problem for a linear stochastic evolution equation driven by a cylindrical fractional process. The solution to this problem is sought in the mild form which leads to the question of existence of a stochastic convolution integral. In particular, let $H\in [\sfrac{1}{2},1)$ and let $(Z_t)_{t\geq 0}$ be an $L^2(\partial D)$-cylindrical $H$-fractional process that lives in a finite Wiener chaos. We seek $p>1$ such that the process $(Y_t)_{t\geq 0}$, still formally, given by 
	\[Y_t := \int_0^t (\lambda \mathrm{I}-\Delta_N)S_N(t-\cdot)B_N\d{Z}, \quad t\geq 0,\]
with some $\lambda>0$ is $L^p(D)$-valued and measurable. Let us fix the following notation.

\begin{notation} 
If $\mathcal{U}$ and $\mathcal{V}$ are two isomorphic Banach spaces, we write $\mathcal{U}\simeq \mathcal{V}$. 
\end{notation}

Throughout this section, $\Delta_N$ is the realisation of the Neumann Laplacian in the space $L^p(D)$ with the domain 
	\[ \Dom (\Delta_N) = W_{\partial_\nu}^{2,p}(D):= \{f\in W^{2,p}(D)\,|\, \partial_\nu f=0\mbox{ on } \partial D\},\]
the family $(S_N(t), t\geq 0)$ is the strongly continuous analytic semigroup of bounded linear operators acting on the space $L^p(D)$ generated by $\Delta_N$, and $B_N$ is the Neumann boundary operator defined by $B_Ng :=v$ where $v$ is the solution to the elliptic problem 
	\begin{alignat*}{2}
					 (\lambda \mathrm{I}- \Delta)v & =0 &&\quad  \mbox{on } D, \\
					 \partial_{\nu} v & =g &&\quad  \mbox{on }\partial D.\numberthis	\label{eq:elliptic}
	\end{alignat*}

Initially, the above convolution integral $(Y_t)_{t\geq 0}$ is rigorously defined and to this end, we begin with some preliminary observations. For $p>1$ and $\alpha\in [0,2]\setminus\{1+\sfrac{1}{p}\}$, we set
			\[ H^{\alpha,p}_{\partial_\nu}(D):= \begin{cases}
				H^{\alpha,p}(D), & \quad \alpha\in [0,\sfrac{1}{p}+1),\\
				\{f\in H^{\alpha,p}(D)\,|\, \partial_\nu f=0 \mbox{ on } \partial D\}, & \quad \alpha\in (\sfrac{1}{p}+1,2],
			\end{cases}\]
where $H^{\alpha,p}(D)$ denotes a Bessel potential space. For the purposes of the following lines, let $p\in (1,2]$.
	\begin{itemize}
		\item If $\alpha\in (\sfrac{1}{p},\sfrac{1}{p}+1)$, then there is the following Sobolev embedding
			\begin{equation*}
				\iota_1: L^2(\partial D)\hookrightarrow W^{\alpha-1-\frac{1}{p},p}(\partial D).
			\end{equation*}
		\item If $\alpha\in (\sfrac{1}{p},\sfrac{1}{p}+1)$ and $\lambda>0$, then there exists a unique weak solution $v$ in the Sobolev-Slobodeckii space $W^{\alpha,p}(D)$ (in the sense of \cite[formula (9.4)]{Ama93}) to problem \eqref{eq:elliptic} for every $g\in W^{\alpha-1-\frac{1}{p},p}(\partial D)$ by \cite[Theorem 9.2 and Remark 9.3 (e)]{Ama93}. Consequently, for the Neumann boundary map $B_N$ it holds that 
			\begin{equation*}
				B_N\in\mathscr{L}(W^{\alpha-1-\frac{1}{p}}(\partial D); W^{\alpha,p}(D)).
			\end{equation*}
		\item If $\alpha\in (\sfrac{1}{p}, \sfrac{1}{p}+1)$, then there is the following continuous embedding 
			\[\iota_2: W^{\alpha,p}(D)\hookrightarrow H^{\alpha,p}(D).\]
		Indeed, it holds by \cite[Proposition 3.2.4 (i)]{Trie83} that
			\[W^{\alpha,p}(D)\simeq F_{p,p}^\alpha(D) \hookrightarrow F_{p,2}^\alpha(D) \simeq H^{\alpha,p}(D).\]
		\item If $\alpha\in (0,2)\setminus\{1+\sfrac{1}{p}\}$, then it holds by \cite[Theorem 5.2]{Ama93} that 
			\begin{equation*}
				H_{\partial_\nu}^{\alpha,p}(D) \simeq [L^p(D);W_{\partial_\nu}^{2,p}(D)]_{\frac{\alpha}{2}}
			\end{equation*} 
		 where $[\cdot\,;\, \cdot]_\theta$ denotes the complex interpolation functor.
		\item For $\lambda>0$, the operator $(\lambda \mathrm{I}-\Delta_N)$ is positive and, consequently, its fractional powers can be defined. Moreover, by \cite[Example III.4.7.3 (d)]{Ama95}, the operator $(\lambda \mathrm{I}-\Delta_N)$ has bounded imaginary powers and, consequently, it follows by \cite[Remark 6.1 (d)]{Ama93} (see also \cite[Theorem 1.15.3]{Trie78}) that 
			\[ [L^p(D); W^{2,p}_{\partial_\nu}(D)]_{\frac{\alpha}{2}} = [\Dom (\lambda\mathrm{I}-\Delta_N)^0;\Dom (\lambda\mathrm{I}-\Delta_N)^1]_{\frac{\alpha}{2}} \simeq \Dom (\lambda\mathrm{I}-\Delta_N)^\frac{\alpha}{2}\]
		whenever $\alpha\in (0,2)$.
	\end{itemize}

With these preparatory results, we obtain the following two lemmas that lead to a rigorous definition of the integrand. The first lemma is a simple consequence of the above claims.

\begin{lemma}
\label{lem:Phi_N}
Let $p\in (1,2]$ and $\alpha\in (\sfrac{1}{p},\sfrac{1}{p}+1)$. Then the composition 
	\[\varPhi_N:\quad  L^2(\partial D)\overset{\iota_1}{\hookrightarrow} W^{\alpha-1-\frac{1}{p},p}(\partial D) \overset{B_N}{\rightarrow } W^{\alpha,p}(D) \overset{\iota_2}{\hookrightarrow }H^{\alpha,p}(D) \simeq \Dom (\lambda\mathrm{I}-\Delta_N)^\frac{\alpha}{2}\]
is a bounded linear operator. 
\end{lemma}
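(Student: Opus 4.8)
The plan is to read off the conclusion from the preparatory observations immediately preceding the statement: the operator $\varPhi_N$ is by definition the composition of four maps, each of which is either a bounded linear embedding or a Banach space isomorphism, and a composition of such maps is again a bounded linear operator. So the proof is essentially a bookkeeping exercise checking that the single hypothesis $\alpha\in(\sfrac1p,\sfrac1p+1)$ legitimises all four steps at once.

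Concretely, I would go through the four arrows in order. First, since $p\in(1,2]$ and $\partial D$ has finite surface measure, one has the continuous inclusion $L^2(\partial D)\hookrightarrow L^p(\partial D)$, and since $\alpha-1-\sfrac1p<0$ this composes with $L^p(\partial D)=W^{0,p}(\partial D)\hookrightarrow W^{\alpha-1-\frac1p,p}(\partial D)$ to give the bounded embedding $\iota_1$ recorded in the first bullet above. Second, the assumption $\alpha\in(\sfrac1p,\sfrac1p+1)$ is precisely what is needed to invoke \cite[Theorem 9.2 and Remark 9.3 (e)]{Ama93}: the elliptic problem \eqref{eq:elliptic} is uniquely weakly solvable in $W^{\alpha,p}(D)$ for boundary data in $W^{\alpha-1-\frac1p,p}(\partial D)$, whence $B_N\in\mathscr{L}(W^{\alpha-1-\frac1p,p}(\partial D);W^{\alpha,p}(D))$. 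Third, the chain $W^{\alpha,p}(D)\simeq F_{p,p}^\alpha(D)\hookrightarrow F_{p,2}^\alpha(D)\simeq H^{\alpha,p}(D)$ from \cite[Proposition 3.2.4 (i)]{Trie83} furnishes the bounded embedding $\iota_2\colon W^{\alpha,p}(D)\hookrightarrow H^{\alpha,p}(D)$. Finally, since $\alpha<\sfrac1p+1$ one has $H^{\alpha,p}_{\partial_\nu}(D)=H^{\alpha,p}(D)$ by definition, and combining \cite[Theorem 5.2]{Ama93} with the fact — coming from \cite[Example III.4.7.3 (d)]{Ama95} and \cite[Remark 6.1 (d)]{Ama93} (or \cite[Theorem 1.15.3]{Trie78}) — that $(\lambda\mathrm{I}-\Delta_N)$ has bounded imaginary powers gives the isomorphism $H^{\alpha,p}(D)\simeq\Dom(\lambda\mathrm{I}-\Delta_N)^{\alpha/2}$.

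Each of $\iota_1$, $B_N$, $\iota_2$, and the identification $H^{\alpha,p}(D)\simeq\Dom(\lambda\mathrm{I}-\Delta_N)^{\alpha/2}$ is linear, and their composition is therefore a linear map $L^2(\partial D)\to\Dom(\lambda\mathrm{I}-\Delta_N)^{\alpha/2}$ whose operator norm is at most the product of the four norms, which proves the claim. I do not anticipate any real obstacle here; the only point demanding (minor) care is to verify that the one-parameter window $\alpha\in(\sfrac1p,\sfrac1p+1)$ stays strictly below the excluded value $1+\sfrac1p$ appearing in the definition of $H^{\alpha,p}_{\partial_\nu}(D)$ and simultaneously lies in the admissible range of Amann's elliptic regularity theorem, so that all four cited results apply without a change of hypotheses.
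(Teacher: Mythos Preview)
Your proposal is correct and matches the paper's approach exactly: the paper states that the lemma ``is a simple consequence of the above claims,'' referring to the very bullet points you cite, and your proof simply spells out that composition-of-bounded-maps argument in detail. The only thing you add beyond the paper is the explicit factorisation of $\iota_1$ through $L^p(\partial D)$, which is a harmless clarification.
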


\begin{lemma}
Let $p\in (1,2]$ and $\lambda>0$. Let also $\varPhi_N$ be the bounded linear operator from \autoref{lem:Phi_N}. The function $[t\mapsto (\lambda\mathrm{I}-\Delta_N)S_N(t)\varPhi_N]$ defined on $(0,\infty)$ takes values in the space $\mathscr{L}(L^2(\partial D);L^p(D))$. 
\end{lemma}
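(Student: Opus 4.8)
The plan is to factor the operator $(\lambda\mathrm{I}-\Delta_N)S_N(t)\varPhi_N$ through $L^p(D)$ by splitting the single power of $(\lambda\mathrm{I}-\Delta_N)$ into one piece absorbed by $\varPhi_N$ and one piece absorbed by the smoothing of the analytic semigroup $S_N$. First, recall from \autoref{lem:Phi_N} that $\varPhi_N$ is bounded from $L^2(\partial D)$ into $\Dom(\lambda\mathrm{I}-\Delta_N)^{\sfrac{\alpha}{2}}$, which carries the graph norm of $(\lambda\mathrm{I}-\Delta_N)^{\sfrac{\alpha}{2}}$; hence the operator $(\lambda\mathrm{I}-\Delta_N)^{\sfrac{\alpha}{2}}\varPhi_N$ is a bounded operator from $L^2(\partial D)$ to $L^p(D)$.

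Next, since $p\in(1,2]$ forces $\sfrac{1}{p}\in[\sfrac{1}{2},1)$ and $\alpha\in(\sfrac{1}{p},\sfrac{1}{p}+1)$, one has $\alpha<2$, so $\varepsilon:=1-\sfrac{\alpha}{2}\in(0,1)$. For $t>0$ I would then write
\[
  (\lambda\mathrm{I}-\Delta_N)S_N(t)\varPhi_N
  = \bigl[(\lambda\mathrm{I}-\Delta_N)^{\varepsilon}S_N(t)\bigr]\circ\bigl[(\lambda\mathrm{I}-\Delta_N)^{\sfrac{\alpha}{2}}\varPhi_N\bigr],
\]
which is legitimate because $S_N(t)$ maps $L^p(D)$ into $\Dom(\lambda\mathrm{I}-\Delta_N)^{\gamma}$ for every $\gamma\geq 0$ and every $t>0$ (analyticity of $S_N$), and because the fractional powers obey $(\lambda\mathrm{I}-\Delta_N)^{\varepsilon}(\lambda\mathrm{I}-\Delta_N)^{\sfrac{\alpha}{2}}=(\lambda\mathrm{I}-\Delta_N)^{1}=\lambda\mathrm{I}-\Delta_N$ on $\Dom(\lambda\mathrm{I}-\Delta_N)$; see, e.g., \cite[Section 2.6]{Pazy}.

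Finally, the right-hand factor is bounded from $L^2(\partial D)$ to $L^p(D)$ by the first step, while the left-hand factor is a bounded operator on $L^p(D)$: since $\Delta_N-\lambda\mathrm{I}$ generates the rescaled analytic semigroup $(\mathrm{e}^{-\lambda r}S_N(r))_{r\geq 0}$, \cite[Theorem 2.6.13 c)]{Pazy} gives $\|(\lambda\mathrm{I}-\Delta_N)^{\varepsilon}S_N(t)\|_{\mathscr{L}(L^p(D))}\lesssim \mathrm{e}^{\lambda t}\,t^{-\varepsilon}<\infty$ for $t>0$. Composing the two bounded maps shows $(\lambda\mathrm{I}-\Delta_N)S_N(t)\varPhi_N\in\mathscr{L}(L^2(\partial D);L^p(D))$ for every $t>0$, which is the assertion. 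There is essentially no obstacle here beyond careful bookkeeping with the fractional-power calculus; the only point that must be checked is $\varepsilon=1-\sfrac{\alpha}{2}>0$, and this is exactly where the restriction $p\leq 2$ (which forces $\alpha<\sfrac{1}{p}+1<2$) is used.
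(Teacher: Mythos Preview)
Your proof is correct and follows essentially the same route as the paper: split $(\lambda\mathrm{I}-\Delta_N)=(\lambda\mathrm{I}-\Delta_N)^{1-\alpha/2}(\lambda\mathrm{I}-\Delta_N)^{\alpha/2}$, absorb $(\lambda\mathrm{I}-\Delta_N)^{\alpha/2}$ into $\varPhi_N$ via \autoref{lem:Phi_N}, and control $(\lambda\mathrm{I}-\Delta_N)^{1-\alpha/2}S_N(t)$ by \cite[Theorem 2.6.13 c)]{Pazy}. One cosmetic point: in your closing remark, the inequality $\sfrac{1}{p}+1<2$ (and hence $\varepsilon>0$) comes from $p>1$, not from $p\leq 2$.
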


\begin{proof}
Let $\alpha\in (\sfrac{1}{p},\sfrac{1}{p}+1)$ be arbitrary. By \cite[Theorem 2.6.13 (c)]{Pazy}, the following inequality that yields the claim is obtained for $t>0$:
	\begin{align*}
		\|(\lambda\mathrm{I}-\Delta_N)S_N(t) \varPhi_N\|_{\mathscr{L}(L^2(\partial D);L^p(D))} & = \|(\lambda \mathrm{I}-\Delta_N)^{1-\frac{\alpha}{2}}S_N(t)(\lambda \mathrm{I}-\Delta_N)^\frac{\alpha}{2}\varPhi_N\|_{\mathscr{L}(L^2(\partial D);L^p(D))}\\
		&\lesssim t^{\frac{\alpha}{2}-1}\|(\lambda\mathrm{I}- \Delta_N)^\frac{\alpha}{2}\varPhi_N\|_{\mathscr{L}(L^2(\partial D);L^p(D))}.
	\end{align*}
The finiteness of the right-hand side of the above inequality follows from \autoref{lem:Phi_N}.
\end{proof}

Let $p\in (1,2]$ and $\lambda>0$ be fixed for the remainder of the subsection. In agreement with the convention of \autoref{sec:stoch_conv}, the function $[t\mapsto (\lambda\mathrm{I}-\Delta_N)S_N(t)\varPhi_N]$ is understood as the map $G_N$ defined by
	\begin{equation}
	\label{eq:kernel_1}
	 G_N(\varphi,u)(t) := \langle \varphi, (\lambda\mathrm{I}-\Delta_N)S_N(t)\varPhi_Nu\rangle = \int_D\varphi(x) [(\lambda\mathrm{I}-\Delta_N)S_N(t)\varPhi_Nu](x)\d{x}, \quad t>0,
	 \end{equation}
for $\varphi\in (L^p(D))^*$ and $u\in L^2(\partial D)$. Sufficient conditions for the integral process $(Y_t)_{t\geq 0}$ where
	\begin{equation}
	\label{eq:Y_t1}
		Y_t:= \int_0^t G_N(t-\cdot)\d{Z}, \quad t\geq 0,
	\end{equation}
to be a well defined $L^p(D)$-valued measurable process are given now. To this end, denote by $g_N$ the Green kernel of the semigroup $S_N$; that is, $g_N: (0,\infty)\times D\times D\rightarrow\R$ is the function such that the action of $S_N$ on $f\in L^p(D)$ is given by 
	\[[S_N(t)f](x) = \int_D g_N(t,x,y)f(y)\d{y}, \quad t>0, \quad x\in D.\]

\begin{lemma}
\label{lem:kernel_2}
If $H>\sfrac{d}{2}-\sfrac{1}{2p}$, then 
	\begin{equation*}
		I_{H,p,g_N}(t_0):=\int_D \left[\int_0^{t_0} \left(\int_{\partial D}|g_N(s,x,y)|^2\d{S}_y\right)^\frac{1}{2H}\d{s}\right]^{pH}\d{x}<\infty
	\end{equation*}
holds for every $t_0>0$.
\end{lemma}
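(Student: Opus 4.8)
The plan is to combine the classical Gaussian upper bound for the Neumann heat kernel on a bounded smooth domain with an analysis of the integrand in the distance to $\partial D$. I would start from the bound: there exist $C,c>0$ (depending on $t_0$ and $D$) such that
\[
	|g_N(s,x,y)|\le C\,s^{-\frac d2}\exp\Big(-c\,\frac{|x-y|^2}{s}\Big),\qquad 0<s\le t_0,\ x\in D,\ y\in\partial D,
\]
which follows from the standard parametrix construction for second-order parabolic boundary value problems (it is the counterpart, for $2m=2$ and Neumann conditions, of the kernel estimate invoked in \autoref{lem:heat_est}). Fix a collar width $r_0>0$ small enough that the inward normal map $(\sigma,r)\mapsto\sigma-r\nu(\sigma)$ is a bi-Lipschitz diffeomorphism of $\partial D\times[0,r_0)$ onto $U_{r_0}:=\{x\in D:\mathrm{dist}(x,\partial D)<r_0\}$, and small enough that, by the quadratic tangency of a $C^2$ boundary, $|x-y|^2\gtrsim r^2+|\sigma_0(x)-y|^2$ for all $y\in\partial D$, where $r:=\mathrm{dist}(x,\partial D)$ and $\sigma_0(x)\in\partial D$ is the foot point of $x$. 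Writing $I_{H,p,g_N}(t_0)$ as the sum of the integrals over $U_{r_0}$ and over $D\setminus U_{r_0}$, the latter is immediate: there $r\ge r_0$, so $\int_{\partial D}|g_N(s,x,y)|^2\,\d{S}_y\lesssim s^{-d}|\partial D|\,\mathrm{e}^{-cr_0^2/s}$ is bounded uniformly on $(0,t_0]$, hence the inner $s$-integral is bounded and, since $|D|<\infty$, the contribution of $D\setminus U_{r_0}$ is finite.

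The core estimate is over the collar. For $x\in U_{r_0}$, inserting the Gaussian bound and then integrating the resulting Gaussian over the $(d-1)$-dimensional manifold $\partial D$ (the part near $\sigma_0(x)$ contributing $\lesssim s^{(d-1)/2}$ and the rest being exponentially small) gives, uniformly for $0<s\le t_0$,
\[
	\int_{\partial D}|g_N(s,x,y)|^2\,\d{S}_y\ \lesssim\ s^{-d}\,\mathrm{e}^{-c'r^2/s}\int_{\partial D}\mathrm{e}^{-c'|\sigma_0(x)-y|^2/s}\,\d{S}_y\ \lesssim\ s^{-\frac{d+1}{2}}\,\mathrm{e}^{-c'r^2/s},
\]
for some $c'>0$. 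Raising to the power $\tfrac1{2H}$, integrating in $s$ and substituting $s=r^2v$ yields
\[
	\int_0^{t_0}\!\Big(\int_{\partial D}|g_N(s,x,y)|^2\,\d{S}_y\Big)^{\!\frac1{2H}}\d{s}\ \lesssim\ r^{\,2-\frac{d+1}{2H}}\int_0^{t_0/r^2}v^{-\frac{d+1}{4H}}\,\mathrm{e}^{-\frac{c'}{2Hv}}\,\d{v}.
\]
As $v^{-\frac{d+1}{4H}}\mathrm{e}^{-c'/(2Hv)}$ is integrable near $v=0$ for any exponent, the last integral is $O(1)$ if $\tfrac{d+1}{4H}>1$, is $\lesssim(t_0/r^2)^{1-\frac{d+1}{4H}}$ if $\tfrac{d+1}{4H}<1$, and is $\lesssim 1+|\log r|$ if $\tfrac{d+1}{4H}=1$; in every case the left-hand side is $\lesssim r^{-\gamma}$ on $(0,r_0]$, where $\gamma:=\tfrac{d+1}{2H}-2$ when $H<\tfrac{d+1}4$ and where any $\gamma>0$ may be taken when $H\ge\tfrac{d+1}4$.

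Finally, the bi-Lipschitz change of variables gives $\d{x}\eqsim\d{S}_\sigma\,\d{r}$ on $U_{r_0}$, whence
\[
	\int_{U_{r_0}}\Big(\int_0^{t_0}\!\Big(\int_{\partial D}|g_N|^2\,\d{S}_y\Big)^{\!\frac1{2H}}\d{s}\Big)^{\!pH}\d{x}\ \lesssim\ |\partial D|\int_0^{r_0}r^{-\gamma pH}\,\d{r},
\]
which is finite as soon as $\gamma pH<1$. If $H\ge\tfrac{d+1}4$ this holds automatically (choose $\gamma$ small); if $H<\tfrac{d+1}4$ then $\gamma pH=\tfrac{p(d+1)}2-2pH$, so $\gamma pH<1$ is equivalent to $H>\tfrac{d+1}4-\tfrac1{2p}$. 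Since $d\ge1$ forces $\tfrac d2\ge\tfrac{d+1}4$, the hypothesis $H>\tfrac d2-\tfrac1{2p}$ implies $H>\tfrac{d+1}4-\tfrac1{2p}$, and the claim follows. The one genuinely delicate step is the boundary bookkeeping in the second paragraph — extracting from the kernel bound the sharp power $s^{-(d+1)/2}$ together with the Gaussian factor $\mathrm{e}^{-c'r^2/s}$ in $\mathrm{dist}(x,\partial D)$; once that is available, everything else reduces to elementary one-variable integrals.
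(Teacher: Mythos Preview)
Your proof is correct and in fact establishes more than the paper does. Both arguments start from the same Gaussian upper bound on the Neumann heat kernel, but where the paper simply estimates $\int_{\partial D}e^{-c|x-y|^2/s}\,\d S_y\lesssim |\partial D|\,e^{-\rho(x)^2/(cs)}$ (discarding any gain from the dimension of $\partial D$) and then proceeds case by case --- $H=\sfrac12$ with $d=1$; $H\in(\sfrac12,1)$ with $d=1$ and with $d=2$, each treated separately --- you exploit the $(d{-}1)$-dimensional structure of $\partial D$ to extract an extra factor $s^{(d-1)/2}$ and reach the sharper bound $\int_{\partial D}|g_N(s,x,y)|^2\,\d S_y\lesssim s^{-(d+1)/2}e^{-c'r^2/s}$. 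This lets you run a single unified computation valid in every dimension and actually proves finiteness under the weaker condition $H>\tfrac{d+1}{4}-\tfrac{1}{2p}$; the stated hypothesis $H>\tfrac d2-\tfrac{1}{2p}$ implies it because $\tfrac d2\ge\tfrac{d+1}{4}$ for $d\ge1$. So your route is both more streamlined and strictly stronger.

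One imprecision worth fixing: the inequality $|x-y|^2\gtrsim r^2+|\sigma_0(x)-y|^2$ does \emph{not} hold for all $y\in\partial D$ (take $D=(0,1)$, $x=r$ small, $y=1$); it holds only for $y$ in a fixed neighbourhood of $\sigma_0(x)$. Your subsequent parenthetical --- near $\sigma_0(x)$ contributes $\lesssim s^{(d-1)/2}$, the rest is exponentially small --- is exactly the right decomposition, so the conclusion is unaffected; just state the inequality locally and handle the far part by the uniform lower bound $|x-y|\gtrsim 1$ there.
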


\begin{proof}
We split the reasoning into two cases. 

\textit{Step 1: The uncorrelated noise case $H=\sfrac{1}{2}$}. In this case, $p\in (1,2]$ needs to satisfy the inequality $1+\sfrac{1}{p}>d$ which only allows $d=1$. By using the estimate 
	\begin{equation}
	\label{eq:Eidelman}
		|g_N(s,x,y)| \leq C  s^{-\frac{d}{2}}\mathrm{e}^{-\frac{|x-y|^2}{\frac{c}{2}s}}, \quad x\in D,\quad  y\in\partial D, \quad s>0,
	\end{equation}
with some finite positive constants $C$ and $c$ from \cite[Theorem 1.1]{Eid70}, there is the estimate
	\begin{align*}
		I_{\frac{1}{2},p,g_N}(t_0) & \lesssim \int_D\left[\int_{0}^{t_0} \int_{\partial D}s^{-1}\mathrm{e}^{-\frac{|x-y|^2}{cs}}\d{S}_y\d{s}\right]^\frac{p}{2}\d{x}\\
				& \lesssim \int_D\left[\int_0^{t_0} s^{-1}\mathrm{e}^{-\frac{\rho(x)^2}{cs}}\d{s}\right]^\frac{p}{2}\d{x} \\
				& \propto \int_D \left[\int_0^{\frac{ct_0}{\rho(x)^2}}\frac{1}{r}\mathrm{e}^{-\frac{1}{r}}\d{r}\right]^\frac{p}{2}\d{x}
	\end{align*}
where 
	\[ \rho(x) := \inf_{y\in\partial D}|x-y|\]
is the distance of $x\in D$ from the boundary $\partial D$. On the set $D_{\leq } := \{x\in D\,|ct_0\leq \rho(x)^2\}$, the inner integral can be estimated by
	\begin{equation*}
		\int_0^{\frac{ct_0}{\rho(x)^2}}\frac{1}{r}\mathrm{e}^{-\frac{1}{r}}\d{r} \leq \int_0^1\frac{1}{r}\mathrm{e}^{-\frac{1}{r}}\d{r} =: c_1<\infty
	\end{equation*}
and, as a consequence, we have that 
	\begin{equation}
	\label{eq:wiener_2}
		\int_{D_{\leq} } \left[\int_0^{\frac{ct_0}{\rho(x)^2}}\frac{1}{r}\mathrm{e}^{-\frac{1}{r}}\d{r}\right]^\frac{p}{2}\d{x} <\infty.
	\end{equation}
On $D_>:=\{x\in D\,|\, ct_0 >\rho(x)^2\}$, the inner integral can be estimated as follows:
	\begin{equation*}
		\int_0^{\frac{ct_0}{\rho(x)^2}} \frac{1}{r}\mathrm{e}^{-\frac{1}{r}}\d{r} \leq \int_0^1 \frac{1}{r}\mathrm{e}^{-\frac{1}{r}}\d{r} + \int_1^\frac{ct_0}{\rho(x)^2}\frac{1}{r}\d{r} = c_1 + \log\left(\frac{ct_0}{\rho(x)^2}\right)
	\end{equation*}
Consequently, we have that
\begin{align*}
\int_{D_>} \left[c_1+\log\left(\frac{ct_0}{\rho(x)^2}\right)\right]^\frac{p}{2}\d{x} & = 2\int_0^{\sqrt{ct_0}}\left[c_1+\log\left(\frac{ct_0}{x^2}\right)\right]^\frac{p}{2}\d{x} \\
& =2\sqrt{ct_0}\int_1^\infty y^{-2}\left(c_1+2\log y\right)^\frac{p}{2}\d{y} \\
& = 2\sqrt{ct_0}\int_0^\infty (c_1+2u)^\frac{p}{2}\mathrm{e}^{-u}\d{u}
\end{align*}
which if finite and thus it is shown that
	\begin{equation}
	\label{eq:wiener_1}
		\int_{D_>}\left[\int_0^{\frac{ct_0}{\rho(x)^2}} \frac{1}{r}\mathrm{e}^{-\frac{1}{r}}\d{r}\right]^\frac{p}{2}\d{x} <\infty.
	\end{equation}
Putting \eqref{eq:wiener_1} and \eqref{eq:wiener_2} together yields $I_{\frac{1}{2},p,g_N}(t_0) <\infty$.

\textit{Step 2: The regular noise case $H\in (\sfrac{1}{2},1)$.} In this case, only $d=1$ and $d=2$ can be considered. By using estimate \eqref{eq:Eidelman}, we obtain
	\begin{equation}
	\label{eq:reg_Neu_bdd}
		I_{H,p, G_N}(t_0) \lesssim \int_D \rho(x)^{-dp + 2pH} \left(\int_0^{\frac{2cHt_0}{\rho(x)^2}}r^{-\frac{d}{2H}} \mathrm{e}^{-\frac{1}{r}}\d{r}\right)^{pH}\d{x}.
	\end{equation}
Now, if $d= 2$, we have that 
	\begin{equation*}
		\int_D\rho(x)^{-2p+2pH}\left(\int_0^\frac{2cHt_0}{\rho(x)^2}r^{-\frac{1}{H}}\mathrm{e}^{-\frac{1}{r}}\d{r}\right)^{pH}\d{x} \leq \left(\int_D\rho(x)^{-2p+2pH}\d{x}\right) \left(\int_0^\infty r^{-\frac{1}{H}}\mathrm{e}^{-\frac{1}{r}}\d{r}\right)^{pH}.
	\end{equation*}
The first integral is convergent since $-2p+2pH>-1$ (this corresponds to the condition $H>\sfrac{d}{2}- \sfrac{1}{2p}$) and the second integral is the constant $\Gamma(\sfrac{1}{H}-1)^{pH}$. If $d=1$, we split the integral \eqref{eq:reg_Neu_bdd} into two integrals. On the set $D_{\leq}:=\{x\in D\,|\, 2cHt_0\leq \rho(x)^2\}$, the inner integral can be estimated as follows:
	\begin{equation*}
		\int_0^{\frac{2cHt_0}{\rho(x)^2}}r^{-\frac{1}{2H}} \mathrm{e}^{-\frac{1}{r}}\d{r} \leq \int_0^1r^{-\frac{1}{2H}} \mathrm{e}^{-\frac{1}{r}}\d{r}=: c_2<\infty.
	\end{equation*}
This means that we have the estimate
	\begin{equation}
	\label{eq:int_1}
		\int_{D_{\leq}}\rho(x)^{-p+2pH}\left(\int_0^{\frac{2cHt_0}{\rho(x)^2}} r^{-\frac{1}{2H}}\mathrm{e}^{-\frac{1}{r}}\d{r}\right)^{pH}\d{x} \leq c_2^{pH}\, c_3\, |D_{\leq}|
	\end{equation}
where $c_3:=\left(\mathrm{diam}\,D\right)^{-p+2pH}.$ Since $D$ is a bounded set, the right-hand side of estimate \eqref{eq:int_1} is finite. On the other hand, on the set $D_{>}:=\{x\in D\,|\, 2cHt_0>\rho(x)^2\}$, the inner integral can be estimated by
	\begin{equation*}
		\int_0^{\frac{2cHt_0}{\rho(x)^2}}r^{-\frac{1}{2H}} \mathrm{e}^{-\frac{1}{r}}\d{r} \leq \int_0^1r^{-\frac{1}{2H}} \mathrm{e}^{-\frac{1}{r}}\d{r} + \int_1^{\frac{2cHt_0}{\rho(x)^2}}r^{-\frac{1}{2H}}\d{r} = c_2 + \frac{1}{1-\frac{1}{2H}}\left[\left(\frac{\sqrt{2cHt_0}}{\rho(x)}\right)^{2-\frac{1}{H}}-1\right]
	\end{equation*}
and we obtain
	\begin{align*}
		\int_{D_>}\rho(x)^{-p+2pH}\left(\int_0^\frac{2cHt_0}{\rho(x)^2} r^{-\frac{1}{2H}}\mathrm{e}^{-\frac{1}{r}}\d{r}\right)^{pH}\d{x} & \\
		& \hspace{-4cm} \leq 2\int_0^{\sqrt{2cHt_0}}x^{-p+2pH}\left\{c_2 + \frac{1}{1-\frac{1}{2H}}\left[\left(\frac{\sqrt{2cHt_0}}{x}\right)^{2-\frac{1}{H}}-1\right]\right\}^{pH}\d{x}.
	\end{align*}
Since the last integral converges, it follows that $I_{H,p,g_N}(t_0)<\infty$. 
\end{proof}

\begin{lemma}
\label{lem:kernel_1}
There is the equality 
	\[G_N(\varphi,u)(t) = \int_D\int_{\partial D}\varphi(x) g_N(t,x,y)u(y)\d{S}_y\d{x}, \quad t >0, \]
for every $\varphi\in (L^p(D))^*$ and $u\in\mathscr{C}(\partial D)$. 
\end{lemma}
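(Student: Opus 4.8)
The idea is to transpose the semigroup and the operator $\lambda\mathrm I-\Delta_N$ off the integrand and onto the test function $\varphi$, where the defining weak property of the Neumann map $\varPhi_N=B_N$ can be used. Fix $t>0$, write $p':=p/(p-1)$, identify $(L^p(D))^*=L^{p'}(D)$, and let $\Delta_N'$ and $(S_N'(s))_{s\ge0}$ be the Neumann Laplacian and its analytic semigroup on $L^{p'}(D)$; by self-adjointness of the Neumann Laplacian on $L^2(D)$ and consistency of resolvents, $\Delta_N'=(\Delta_N)^*$ and $S_N'(s)=(S_N(s))^*$. Put $v:=\varPhi_Nu=B_Nu\in L^p(D)$, so that $G_N(\varphi,u)(t)=\langle\varphi,(\lambda\mathrm I-\Delta_N)S_N(t)v\rangle$. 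First I would note that analyticity of $S$ gives $(\lambda\mathrm I-\Delta_N)S_N(t)\in\mathscr L(L^p(D))$; factoring $(\lambda\mathrm I-\Delta_N)S_N(t)=S_N(t/2)(\lambda\mathrm I-\Delta_N)S_N(t/2)$ and using that $S_N'(s)\varphi\in\Dom((\Delta_N')^\infty)\subseteq\Dom(\Delta_N')$ for $s>0$ to move $\Delta_N$ onto its adjoint through the pairing, one sees that the $L^p$-adjoint of $(\lambda\mathrm I-\Delta_N)S_N(t)$ is $(\lambda\mathrm I-\Delta_N')S_N'(t)$, hence
\[
	G_N(\varphi,u)(t)=\big\langle v,\ (\lambda\mathrm I-\Delta_N')S_N'(t)\varphi\big\rangle .
\]
No regularity of $\varphi$ or $u$ is used here.

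Next, set $\psi:=S_N'(t)\varphi$. By analyticity, $\psi\in\Dom((\Delta_N')^\infty)\subseteq\bigcap_k W^{2k,p'}(D)$, so $\psi\in\mathscr C^\infty(\overline D)$ with $\partial_\nu\psi=0$ on $\partial D$ and $(\lambda\mathrm I-\Delta_N')\psi=\lambda\psi-\Delta\psi$ in the classical sense. The crux is that the conormal condition $\partial_\nu v=u$ entering the definition of $v=B_Nu$ as the weak solution of \eqref{eq:elliptic} (in the sense of \cite[formula (9.4)]{Ama93}) is exactly the Green-type identity
\[
	\big\langle v,\ (\lambda\mathrm I-\Delta_N')\phi\big\rangle=\int_{\partial D}u(y)\,\phi(y)\,\d S_y\qquad\text{for every }\phi\in\Dom(\Delta_N');
\]
this follows from Amann's weak formulation $\int_D(\lambda v\phi+\nabla v\cdot\nabla\phi)\,\d x=\int_{\partial D}u\phi\,\d S$ by integrating the term $\int_D\nabla v\cdot\nabla\phi$ by parts once, the boundary contribution $\int_{\partial D}v\,\partial_\nu\phi\,\d S$ vanishing since $\partial_\nu\phi=0$. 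Taking $\phi=\psi$ then yields $G_N(\varphi,u)(t)=\int_{\partial D}u(y)\,[S_N'(t)\varphi](y)\,\d S_y$.

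It remains to identify $S_N'(t)\varphi$ through the Green kernel. Since $S_N'(t)=(S_N(t))^*$ and $[S_N(t)f](x)=\int_Dg_N(t,x,y)f(y)\,\d y$, one has $[S_N'(t)\varphi](y)=\int_Dg_N(t,x,y)\varphi(x)\,\d x$, which for fixed $t>0$ is a bounded function of $y\in\overline D$. Substituting and exchanging the order of integration — legitimate because $g_N(t,\cdot,\cdot)$ is bounded on $D\times\partial D$ for $t>0$ (Gaussian estimate of \cite[Theorem 1.1]{Eid70}), $\varphi\in L^{p'}(D)\subseteq L^1(D)$ as $D$ is bounded, $u\in\mathscr C(\partial D)$ is bounded, and $\partial D$ has finite surface measure — gives
\[
	G_N(\varphi,u)(t)=\int_{\partial D}u(y)\Big(\int_Dg_N(t,x,y)\varphi(x)\,\d x\Big)\d S_y=\int_D\int_{\partial D}\varphi(x)\,g_N(t,x,y)\,u(y)\,\d S_y\,\d x,
\]
which is the assertion.

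The main obstacle I anticipate is the middle step: one must check that Amann's abstract notion of weak solution indeed produces the explicit Green identity with boundary datum $u$, and that the class of admissible test functions contains $\psi=S_N'(t)\varphi$. The first and third steps are routine semigroup manipulations, but they need care because $v\notin\Dom(\Delta_N)$ — its conormal trace is $u$, not $0$ — so that $(\lambda\mathrm I-\Delta_N)S_N(t)v$ is meaningful only after the instantaneous smoothing by $S_N(t)$ with $t>0$.
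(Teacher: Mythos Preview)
Your argument is correct and reaches the same conclusion, but the route differs from the paper's. The paper works directly on the kernel side: since $u\in\mathscr{C}(\partial D)$ it has $B_Nu\in\mathscr{C}^2(D)\cap\mathscr{C}(\overline D)$ by classical elliptic regularity, and then computes $[(\lambda\mathrm I-\Delta_N)S_N(t)\varPhi_Nu](x)$ by writing $S_N(t)$ via its kernel, using the symmetry $g_N(t,x,y)=g_N(t,y,x)$ to transfer $(\lambda\mathrm I-\Delta)$ from the $x$-variable to the $y$-variable, and then applying the classical second Green formula to $g_N(t,x,\cdot)$ and $B_Nu$. Three boundary/interior terms appear, two of which vanish because $(\lambda\mathrm I-\Delta)B_Nu=0$ in $D$ and $\partial_{\nu,y}g_N(t,x,\cdot)=0$ on $\partial D$; the surviving term $\int_{\partial D}g_N(t,x,y)\,\partial_\nu B_Nu(y)\,\d S_y$ gives the result since $\partial_\nu B_Nu=u$. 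You instead dualise first, pushing $(\lambda\mathrm I-\Delta_N)S_N(t)$ onto $\varphi$, and then invoke the \emph{weak} formulation of the Neumann problem for $v=B_Nu$ tested against $\psi=S_N'(t)\varphi\in\Dom(\Delta_N')$. Your approach avoids appealing to classical $\mathscr{C}^2$-regularity of $B_Nu$ and is in that sense more robust, but it trades this for having to unpack Amann's abstract weak-solution concept and verify that $\psi$ is an admissible test function---exactly the point you flag as the main obstacle. The paper's version is more self-contained once one accepts the classical regularity of $B_Nu$, since the second Green formula and the boundary behaviour of $g_N$ are textbook facts. Both arguments are, at bottom, the same integration by parts; they just differ in which factor carries the derivatives.
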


\begin{proof}
Let $u\in \mathscr{C}(\partial D)$. Then it follows that $B_Nu\in \mathscr{C}^2(D)\cap\mathscr{C}(\overline{D})$. Consequently, for $t>0$ and $x\in D$, it follows first by the symmetry of the kernel $g_N$ in the spatial variables and then by the second Green formula that 
	\begin{align*}
		[(\lambda \mathrm{I}-\Delta_N)S_N(t)\varPhi_N u](x) & = \int_D [(\lambda \mathrm{I}-\Delta_{N,y})g_N](t,x,y)[B_Nu](y)\d{y}\\
			& = \int_D g_N(t,x,y)\underbrace{[(\lambda\mathrm{I}-\Delta_{N})B_Nu](y)}_{=0}\d{y}\\
				& \hspace{1cm} + \int_{\partial D}g_N(t,x,y)\underbrace{[\partial_{\nu}B_Nu](y)}_{=u(y)}\d{S}_y \\
				& \hspace{1cm} -\int_{\partial D}\underbrace{[\partial_{\nu,y}g_N](t,x,y)}_{=0}[B_Nu](y)\d{S}_y\\
				& = \int_{\partial D}g_N(t,x,y)u(y)\d{S}_y \numberthis\label{eq:kernel_2}
	\end{align*}
where $\d{S}$ is the surface measure on $\partial D$. Inserting formula \eqref{eq:kernel_2} into equality \eqref{eq:kernel_1} yields the claim.
\end{proof}

\begin{lemma}
\label{lem:kernel_4}
If $H>\sfrac{d}{2}-\sfrac{1}{2p}$, then $G_N(\varphi,u)\in\mathscr{D}^H(0,t_0)$ for every $\varphi\in (L^p(D))^*$, $u\in\mathscr{C}(\partial D)$, and $t_0>0$.
\end{lemma}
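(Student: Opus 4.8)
The plan is to reduce the claim to an $L^{1/H}$-integrability statement for the scalar function $t\mapsto G_N(\varphi,u)(t)$ on $(0,t_0)$ and then invoke \autoref{prop:characterisation_of_D}, which identifies $\mathscr{D}^H(0,t_0)=\dot W^{\frac12-H,2}(0,t_0)$. Since $H\in[\sfrac12,1)$ we have $\tfrac12-H\in(-\sfrac12,0]$, and there is the continuous embedding $L^{1/H}(0,t_0)\hookrightarrow\dot W^{\frac12-H,2}(0,t_0)$: for $H=\sfrac12$ this is just $L^2=\dot W^{0,2}$, while for $H\in(\sfrac12,1)$ it is the Hardy--Littlewood--Sobolev embedding on an interval (this is exactly the fact already used in the proof of \autoref{prop:if_p_greater_2}, and it follows from the material in \autoref{sec:appendix}). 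Hence it suffices to show $G_N(\varphi,u)\in L^{1/H}(0,t_0)$.

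First I would fix an exponent $\alpha\in(\sfrac1p,\sfrac1p+1)$ with the additional property $\alpha>2(1-H)$. Such $\alpha$ exists: the interval $(\max\{\sfrac1p,\,2(1-H)\},\,\sfrac1p+1)$ is nonempty because $\sfrac1p<\sfrac1p+1$ always, and $2(1-H)<\sfrac1p+1$ is equivalent to $H>\sfrac12-\sfrac1{2p}$, which is implied by the hypothesis $H>\sfrac d2-\sfrac1{2p}$ since $d\geq1$. For such $\alpha$ one has $1-\tfrac\alpha2\in(0,1)$, and \autoref{lem:Phi_N} gives $\varPhi_N u\in\Dom(\lambda\mathrm{I}-\Delta_N)^{\alpha/2}$ together with $\|(\lambda\mathrm{I}-\Delta_N)^{\alpha/2}\varPhi_N u\|_{L^p(D)}\lesssim\|u\|_{L^2(\partial D)}\le|\partial D|^{1/2}\|u\|_{\mathscr{C}(\partial D)}$.

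Next I would estimate $G_N(\varphi,u)(t)$ pointwise for $t\in(0,t_0)$. By the defining formula \eqref{eq:kernel_1}, Hölder's inequality, and the composition rules for fractional powers of $(\lambda\mathrm{I}-\Delta_N)$ and the analytic semigroup $S_N$,
	\begin{align*}
		|G_N(\varphi,u)(t)| &= \left|\left\langle\varphi,(\lambda\mathrm{I}-\Delta_N)^{1-\frac{\alpha}{2}}S_N(t)(\lambda\mathrm{I}-\Delta_N)^{\frac{\alpha}{2}}\varPhi_N u\right\rangle\right| \\
		&\leq \|\varphi\|_{(L^p(D))^*}\left\|(\lambda\mathrm{I}-\Delta_N)^{1-\frac{\alpha}{2}}S_N(t)\right\|_{\mathscr{L}(L^p(D))}\left\|(\lambda\mathrm{I}-\Delta_N)^{\frac{\alpha}{2}}\varPhi_N u\right\|_{L^p(D)}.
	\end{align*}
Since $\Delta_N-\lambda\mathrm{I}$ generates the bounded analytic semigroup $(\mathrm{e}^{-\lambda r}S_N(r))_{r\geq0}$, \cite[Theorem 2.6.13 (c)]{Pazy} yields $\|(\lambda\mathrm{I}-\Delta_N)^{1-\frac{\alpha}{2}}S_N(t)\|_{\mathscr{L}(L^p(D))}\leq C\mathrm{e}^{\lambda t}t^{\frac{\alpha}{2}-1}\leq C\mathrm{e}^{\lambda t_0}t^{\frac{\alpha}{2}-1}$ on $(0,t_0)$; combining the displays gives $|G_N(\varphi,u)(t)|\leq C(\varphi,u,t_0,\alpha,p)\,t^{\frac{\alpha}{2}-1}$ for $t\in(0,t_0)$.

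Finally, $\int_0^{t_0}|G_N(\varphi,u)(t)|^{1/H}\,\d t\lesssim\int_0^{t_0}t^{(\frac{\alpha}{2}-1)/H}\,\d t<\infty$, because $(\tfrac\alpha2-1)/H>-1$ is equivalent to $\alpha>2(1-H)$, which holds by the choice of $\alpha$; thus $G_N(\varphi,u)\in L^{1/H}(0,t_0)\hookrightarrow\mathscr{D}^H(0,t_0)$. There is no deep obstacle here; the only point requiring care is the nonemptiness of the admissible range for $\alpha$, which is precisely where the hypothesis on $H$ enters (in fact only through the weaker inequality $H>\sfrac12-\sfrac1{2p}$, the full strength $H>\sfrac d2-\sfrac1{2p}$ being needed for the companion estimate \autoref{lem:kernel_2}). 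An alternative, kernel-based derivation would combine \autoref{lem:kernel_1}, \autoref{lem:kernel_2} and Minkowski's inequality, but it needs extra bookkeeping when $pH>1$, so the semigroup-smoothing route above is cleaner.
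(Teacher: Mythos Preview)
Your proof is correct but follows a different route from the paper. The paper argues via the kernel representation of \autoref{lem:kernel_1}: starting from $G_N(\varphi,u)(t)=\int_D\int_{\partial D}\varphi(x)g_N(t,x,y)u(y)\,\d S_y\,\d x$, it applies the embedding $L^{1/H}(0,t_0)\hookrightarrow\mathscr{D}^H(0,t_0)$ (as you do), then Cauchy--Schwarz in $y$, Minkowski to swap the $t$ and $x$ integrals, and H\"older in $x$, arriving at $\|G_N(\varphi,u)\|_{\mathscr{D}^H(0,t_0)}\lesssim S(\partial D)^{1/2}\|\varphi\|_{(L^p(D))^*}\|u\|_{\mathscr{C}(\partial D)}I_{H,p,g_N}(t_0)^{1/p}$, and then invokes \autoref{lem:kernel_2}. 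Your approach bypasses the kernel entirely, using only the analytic semigroup smoothing $\|(\lambda\mathrm I-\Delta_N)^{1-\alpha/2}S_N(t)\|_{\mathscr{L}(L^p)}\lesssim t^{\alpha/2-1}$ together with \autoref{lem:Phi_N}; this is cleaner and, as you correctly observe, reveals that only the weaker hypothesis $H>\sfrac12-\sfrac1{2p}$ is needed here. The paper's route is less economical for this lemma in isolation but has the advantage of reusing the quantity $I_{H,p,g_N}$, which is the central object in the companion lemmas \autoref{lem:kernel_2} and \autoref{lem:kernel_3}, so within the overall narrative it avoids introducing a separate estimate. One minor point: your closing remark that the kernel route ``needs extra bookkeeping when $pH>1$'' is not quite right---the Minkowski step here only requires $1/H\ge1$, which always holds; the $pH\ge1$ barrier you may be recalling from the introduction concerns a different use of Minkowski in \cite{CouMasOnd18}.
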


\begin{proof}
By using \autoref{prop:Lp_embeddings}, \autoref{lem:kernel_1}, the Minkowski inequality, and the H\"older inequality successively, the following estimate is obtained:
	\begin{equation*}
		\|G_N(\varphi,u)\|_{\mathscr{D}^H(0,t_0)}  \lesssim S(\partial D)^{\frac{1}{2}} \|\varphi\|_{(L^p(D))^*} \|u\|_{\mathscr{C}(\partial D)} I_{H,p,g_N}^\frac{1}{p}(t_0)
	\end{equation*}
and the claim follows by \autoref{lem:kernel_2}.
%	\begin{align*}
%		\|G_N(\varphi,u)\|_{\mathscr{D}^H(0,t_0)} & \lesssim \left( \int_0^{t_0} |G_N(\varphi,u)(t)|^\frac{1}{H}\d{t}\right)^H\\
%			& = \left(\int_0^{t_0} \left|\int_D\int_{\partial D} \varphi(x)g_N(t,x,y)u(y)\d{S}_y\d{x}\right|^\frac{1}{H}\d{t}\right)^H\\
%			& \leq \int_D |\varphi(x)| \left(\int_0^{t_0} \left|\int_{\partial D} g_N(t,x,y)u(y)\d{S}_y\right|^\frac{1}{H}\d{t}\right)^H\d{x}\\
%			& \leq \left(\int_D |\varphi(x)|^\frac{p}{p-1}\d{x}\right)^{1-\frac{1}{p}} \left(\int_D \left(\int_0^{t_0} \left|\int_{\partial D} g_N(t,x,y)u(y)\d{S}_y\right|^\frac{1}{H}\d{t}\right)^{pH}\d{x}\right)^\frac{1}{p}\\
%			& \leq S(\partial D)^{\frac{1}{2}} \|\varphi\|_{(L^p(D))^*} \|u\|_{\mathscr{C}(\partial D)} I_{H,p,g_N}^\frac{1}{p}(t_0).
%	\end{align*}
%The claim now follows by \autoref{lem:kernel_2}.
\end{proof}

\begin{lemma}
\label{lem:kernel_3}
If $H>\sfrac{d}{2}-\sfrac{1}{2p}$, then 
	\begin{equation*}
		\int_D \|a_N(x)\|_{\mathscr{D}^H(0,t_0;L^2(\partial D))}^p\d{x} <\infty
	\end{equation*}
holds for every $t_0>0$. Here, $a_N(x)(t,y):= g_N(t,x,y)$ for $t>0$, $x\in D$, and $y\in \partial D$. 
\end{lemma}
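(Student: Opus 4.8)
The plan is to reduce the estimate to the scalar integrability bound of \autoref{lem:kernel_2}. Recall that in this subsection $H\in[\sfrac{1}{2},1)$, so $\sfrac{1}{2}-H\le0$ and $\sfrac{1}{H}\in(1,2]$. By \autoref{prop:characterisation_of_D} tensored with $L^2(\partial D)$, the space $\mathscr{D}^H(0,t_0;L^2(\partial D))=\mathscr{D}^H(0,t_0)\otimes_2L^2(\partial D)$ coincides, with equivalent norms, with the $L^2(\partial D)$-valued homogeneous fractional Sobolev space $\dot{W}^{\sfrac{1}{2}-H,2}(0,t_0;L^2(\partial D))$. First I would establish a pointwise-in-$x$ bound on $\|a_N(x)\|_{\mathscr{D}^H(0,t_0;L^2(\partial D))}$.

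The main step is the Hilbert-space-valued embedding $L^{\sfrac{1}{H}}(0,t_0;L^2(\partial D))\hookrightarrow\dot{W}^{\sfrac{1}{2}-H,2}(0,t_0;L^2(\partial D))$. This is obtained from the scalar embedding recorded in \autoref{prop:Lp_embeddings} (equivalently, from \autoref{rem:equiv_norm} combined with the Hardy-Littlewood-Sobolev inequality, exactly as in \autoref{prop:if_p_greater_2}) by expanding in an orthonormal basis of $L^2(\partial D)$, applying the scalar estimate coordinatewise, invoking Parseval's identity, and using that, since $\sfrac{1}{H}\le2$, Minkowski's inequality yields $\|\,\cdot\,\|_{\ell^2(L^{\sfrac{1}{H}})}\le\|\,\cdot\,\|_{L^{\sfrac{1}{H}}(\ell^2)}$. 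Consequently, with a constant independent of $x$,
\[
\|a_N(x)\|_{\mathscr{D}^H(0,t_0;L^2(\partial D))}\ \lesssim\ \|a_N(x)\|_{L^{\sfrac{1}{H}}(0,t_0;L^2(\partial D))}=\left(\int_0^{t_0}\left(\int_{\partial D}|g_N(t,x,y)|^2\,\d{S}_y\right)^{\frac{1}{2H}}\d{t}\right)^{\!H}
\]
for almost every $x\in D$; together with the joint measurability of $g_N$ this also shows that $x\mapsto a_N(x)$ is a measurable $\mathscr{D}^H(0,t_0;L^2(\partial D))$-valued map once the right-hand side is finite a.e.

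It then remains to raise this inequality to the power $p$ and integrate over $D$, which gives
\[
\int_D\|a_N(x)\|_{\mathscr{D}^H(0,t_0;L^2(\partial D))}^p\,\d{x}\ \lesssim\ \int_D\left(\int_0^{t_0}\left(\int_{\partial D}|g_N(t,x,y)|^2\,\d{S}_y\right)^{\frac{1}{2H}}\d{t}\right)^{\!pH}\d{x}=I_{H,p,g_N}(t_0),
\]
and the right-hand side is finite by \autoref{lem:kernel_2} under the hypothesis $H>\sfrac{d}{2}-\sfrac{1}{2p}$. The only delicate point is the transfer of the scalar Sobolev embedding $L^{\sfrac{1}{H}}\hookrightarrow\dot{W}^{\sfrac{1}{2}-H,2}$ to its $L^2(\partial D)$-valued form; since the target is the Hilbert tensor product $\mathscr{D}^H(0,t_0)\otimes_2L^2(\partial D)$, this reduces, via Parseval's identity and Minkowski's inequality, to the scalar case, so no ingredient beyond the appendix results and \autoref{lem:kernel_2} is required.
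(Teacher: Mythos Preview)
Your proposal is correct and follows essentially the same route as the paper's proof: both use \autoref{prop:characterisation_of_D} to identify $\mathscr{D}^H(0,t_0;L^2(\partial D))$ with the $L^2(\partial D)$-valued homogeneous Sobolev space, invoke the embedding $L^{\sfrac{1}{H}}\hookrightarrow\dot{W}^{\sfrac{1}{2}-H,2}$ from \autoref{prop:Lp_embeddings}, and then bound the resulting integral by $I_{H,p,g_N}(t_0)$ via \autoref{lem:kernel_2}. Your added justification of the vector-valued embedding through Parseval and Minkowski (using $\sfrac{1}{H}\le 2$) is a welcome elaboration of a step the paper leaves implicit.
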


\begin{proof}
By \autoref{prop:characterisation_of_D} and \autoref{prop:Lp_embeddings}, the integral can be estimated by $I_{H,p,g_N}(t_0)$ and the claim follows from \autoref{lem:kernel_2}.
\end{proof}

\begin{lemma}
\label{lem:kernel_5}
If $H>\sfrac{d}{2}-\sfrac{1}{2p}$, then \[\overline{G}_N^*(g\otimes u)(x) = \langle a_N(x),g\otimes u\rangle_{\mathscr{D}^H(0,t_0;L^2(\partial D))}, \quad  x\in D,\] holds for every $t_0>0$, $g\in\mathscr{C}^1([0,t_0])$, and $u\in\mathscr{C}(\partial D)$.
\end{lemma}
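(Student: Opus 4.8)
The plan is to establish the claimed pointwise identity by pairing both sides with an arbitrary $\varphi\in(L^p(D))^*$. Fix $t_0>0$, $g\in\mathscr{C}^1([0,t_0])$, and $u\in\mathscr{C}(\partial D)$ (so in particular $u\in L^2(\partial D)$, since $\partial D$ has finite surface measure, and $g\otimes u$ is a well defined element of $\mathscr{D}^H(0,t_0;L^2(\partial D))$). First I would observe that $G_N$ is weakly integrable with respect to $Z$ --- so that the operator $\overline{G}_N$ of \autoref{lem:char_weak_stoch_integrability}, and hence $\overline{G}_N^*$, exists --- which follows from \autoref{lem:kernel_3} by Minkowski's inequality; moreover, as $p\in(1,2]$ the space $L^p(D)$ is reflexive, so $\overline{G}_N^*$ maps $\mathscr{D}^H(0,t_0;L^2(\partial D))$ into $(L^p(D))^{**}=L^p(D)$. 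Then, by the definition of the adjoint and by \eqref{eq:Gbar},
\begin{equation*}
	\langle\varphi,\overline{G}_N^*(g\otimes u)\rangle = \langle\overline{G}_N\varphi,g\otimes u\rangle_{\mathscr{D}^H(0,t_0;L^2(\partial D))} = \langle G_N(\varphi,u),g\rangle_{\mathscr{D}^H(0,t_0)},
\end{equation*}
while, by the Cauchy--Schwarz inequality and \autoref{lem:kernel_3}, the function $x\mapsto\langle a_N(x),g\otimes u\rangle_{\mathscr{D}^H(0,t_0;L^2(\partial D))}$ belongs to $L^p(D)$. Hence the lemma reduces to the scalar identity
\begin{equation*}
	\langle G_N(\varphi,u),g\rangle_{\mathscr{D}^H(0,t_0)} = \int_D \varphi(x)\,\langle a_N(x),g\otimes u\rangle_{\mathscr{D}^H(0,t_0;L^2(\partial D))}\,\d{x}\qquad\text{for all }\varphi\in(L^p(D))^*.
\end{equation*}

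For almost every $x\in D$ write $a_N(x)\cdot u\in\mathscr{D}^H(0,t_0)$ for the contraction of $a_N(x)\in\mathscr{D}^H(0,t_0)\otimes_2 L^2(\partial D)$ against $u$; by the definition of the tensor inner product one has $\langle a_N(x)\cdot u,g\rangle_{\mathscr{D}^H(0,t_0)}=\langle a_N(x),g\otimes u\rangle_{\mathscr{D}^H(0,t_0;L^2(\partial D))}$ together with $\|a_N(x)\cdot u\|_{\mathscr{D}^H(0,t_0)}\le\|a_N(x)\|_{\mathscr{D}^H(0,t_0;L^2(\partial D))}\|u\|_{L^2(\partial D)}$. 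The key step is the Bochner-integral identity
\begin{equation*}
	G_N(\varphi,u)=\int_D\varphi(x)\,(a_N(x)\cdot u)\,\d{x}\qquad\text{in }\mathscr{D}^H(0,t_0).
\end{equation*}
Granting this, applying the continuous linear functional $\langle\,\cdot\,,g\rangle_{\mathscr{D}^H(0,t_0)}$ to both sides and invoking the contraction identity above yields exactly the scalar identity, whence the lemma.

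It remains to prove the Bochner-integral identity, which is where the only real work lies. The map $x\mapsto\varphi(x)(a_N(x)\cdot u)$ is strongly measurable (because $a_N\in L^p(D;\mathscr{D}^H(0,t_0;L^2(\partial D)))$ by \autoref{lem:kernel_3}) and Bochner integrable in $\mathscr{D}^H(0,t_0)$, since by Hölder's inequality, with $(L^p(D))^*\cong L^{p'}(D)$ and $p'\ge2$,
\begin{equation*}
	\int_D|\varphi(x)|\,\|a_N(x)\cdot u\|_{\mathscr{D}^H(0,t_0)}\,\d{x}\le\|\varphi\|_{L^{p'}(D)}\,\|a_N\|_{L^p(D;\mathscr{D}^H(0,t_0;L^2(\partial D)))}\,\|u\|_{L^2(\partial D)}<\infty.
\end{equation*}
Both $G_N(\varphi,u)$ (by \autoref{lem:kernel_4}) and this Bochner integral lie in $\mathscr{D}^H(0,t_0)$, so it suffices to check that they coincide under the continuous injection of $\mathscr{D}^H(0,t_0)$ into the space $\mathscr{D}'(0,t_0)$ of distributions on $(0,t_0)$. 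By \autoref{lem:kernel_1} and definition \eqref{eq:kernel_1}, the image of $G_N(\varphi,u)$ is the locally integrable function $t\mapsto\int_D\int_{\partial D}\varphi(x)g_N(t,x,y)u(y)\,\d{S}_y\,\d{x}$; and since this injection commutes with Bochner integrals and sends each $a_N(x)\cdot u$ to the locally integrable function $t\mapsto\int_{\partial D}g_N(t,x,y)u(y)\,\d{S}_y$ (because $a_N(x)$ is represented by $(t,y)\mapsto g_N(t,x,y)$), the image of the Bochner integral is the distribution $\psi\mapsto\int_D\varphi(x)\int_0^{t_0}\bigl(\int_{\partial D}g_N(t,x,y)u(y)\,\d{S}_y\bigr)\psi(t)\,\d{t}\,\d{x}$. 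These two distributions agree by Fubini's theorem. The main --- though routine --- technical point is the absolute integrability over $D\times\partial D\times(0,t_0)$ needed to apply Fubini, the only delicate region being $t\to0^+$; this follows from the Gaussian-type upper bound on the Neumann heat kernel $g_N$ of \cite[Theorem 1.1]{Eid70} (which decays exponentially in $t$ away from $\partial D$) together with $\varphi\in L^{p'}(D)$, boundedness of $u$, and finiteness of the measures on $D$ and $\partial D$, by an estimate entirely analogous to the one carried out in the proof of \autoref{lem:kernel_2}. With the Bochner-integral identity in hand the proof is complete.
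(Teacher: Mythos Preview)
Your proof is correct and follows essentially the same route as the paper's: both pair with an arbitrary $\varphi\in(L^p(D))^*$, use \eqref{eq:Gbar} to rewrite $\langle\varphi,\overline{G}_N^*(g\otimes u)\rangle$ as $\langle G_N(\varphi,u),g\rangle_{\mathscr{D}^H(0,t_0)}$, and then identify the latter with $\int_D\varphi(x)\,\langle a_N(x),g\otimes u\rangle_{\mathscr{D}^H(0,t_0;L^2(\partial D))}\,\d x$ via the kernel representation of \autoref{lem:kernel_1}. The paper records this last interchange of integral and $\mathscr{D}^H$-inner product as a direct consequence of \autoref{lem:kernel_1} and \autoref{lem:kernel_4}, whereas you spell it out as a Bochner-integral identity verified at the distributional level with Fubini --- a welcome bit of extra rigour, but not a different argument.
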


\begin{proof}
Let $\varphi\in (L^p(D))^*$. By \autoref{lem:kernel_4}, $G_N(\varphi,u)\in\mathscr{D}^H(0,t_0)$ and by \autoref{lem:kernel_1}, it follows that 
	\begin{equation}
	\label{eq:kernel_aN_1} \langle G_N(\varphi,u),g\rangle_{\mathscr{D}^H(0,t_0)} = \int_D \varphi(x) \left\langle \int_{\partial D} g_N(\,\cdot\,, x,y)u(y)\d{S}_y, g\right\rangle_{\mathscr{D}^H(0,t_0)}\d{x}.
	\end{equation}
On the other hand, we also have that 
	\begin{equation}
	\label{eq:kernel_aN_2}
	 \langle G_N(\varphi,u),g\rangle_{\mathscr{D}^H(0,t_0)} = \langle \overline{G}\varphi,g\otimes u\rangle_{\mathscr{D}^H(0,t_0;L^2(\partial D))} = \langle \varphi,\overline{G}_N^*(g\otimes u)\rangle
	 \end{equation}
and a comparison of formulas \eqref{eq:kernel_aN_1} and \eqref{eq:kernel_aN_2} yields the claim.
\end{proof}

\begin{proposition}
\label{prop:heat_bdry_noise_measurability}
If $p\in (1,2]$ is such that $H>\sfrac{d}{2}-\sfrac{1}{2p}$, then the process $(Y_t)_{t\geq 0}$ defined by formula \eqref{eq:Y_t1} is a well defined $L^p(D)$-valued stochastic process that admits a measurable version.
\end{proposition}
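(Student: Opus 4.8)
The plan is to recognise $(Y_t)_{t\geq 0}$ from \eqref{eq:Y_t1} as the process $(Y_t^{1})_{t\geq 0}$ of \autoref{sec:stoch_conv}: with $A=\Delta_N$, the analytic semigroup $S=S_N$, the bounded operator $\varPhi=\varPhi_N\in\mathscr{L}(L^2(\partial D);L^p(D))$ from \autoref{lem:Phi_N} and $\alpha=1$, the bilinear operator $(\lambda\mathrm{I}-\Delta_N)^1S_N(\cdot)\varPhi_N$ in the convention of \autoref{sec:stoch_conv} is exactly the map $G_N$ of \eqref{eq:kernel_1}, and $Y_t=Y_t^1$. By \autoref{prop:Y_t_alpha_mean_square_continuity_X} (equivalently \autoref{rem:existence_Y_alpha} with $\alpha=1$, $\beta_1=1$, $\beta_2=0$) it then suffices to produce one $t_0>0$ for which $Y_{t_0}^1$ is a well defined element of $L^2(\Omega;L^p(D))$, and by \autoref{prop:char_stoch_int_vector} this amounts to showing $\overline{G}_N^{*}\in\gamma\big(\mathscr{D}^H(0,t_0;L^2(\partial D));L^p(D)\big)$. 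I would obtain this from the pointwise Green kernel $g_N$ via \autoref{prop:kernel}.

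First I would note that the abstract framework is available: $X=L^p(D)$ is of the form considered in \autoref{prop:Y_is_good_and_approximable} (take $N=1$, $p_{1,1}=p_{1,2}=p$ with a one-point second measure space, so $\tilde{Y}=Y=L^p(D)$ and $Q=\mathrm{id}$), hence it is $n$-good for the order $n$ of the Wiener chaos carrying $Z$ and has the approximation property by \autoref{cor:function_spaces}; in particular \autoref{prop:kernel} applies with $\mathcal{V}=\mathscr{D}^H(0,t_0;L^2(\partial D))$. I would then fix $t_0>0$ and take the kernel $a_N(x)(t,y):=g_N(t,x,y)$ of \autoref{lem:kernel_3}, and check two things. (i) The finiteness condition of \autoref{prop:kernel}, $\int_D\|a_N(x)\|_{\mathscr{D}^H(0,t_0;L^2(\partial D))}^{p}\,\mathrm{d}x<\infty$, is precisely \autoref{lem:kernel_3} under the hypothesis $H>\sfrac{d}{2}-\sfrac{1}{2p}$ (which in turn rests on the kernel estimates of \autoref{lem:kernel_2}). (ii) $\overline{G}_N^{*}$ is the operator having kernel $a_N$: using \autoref{lem:kernel_1} one gets the Bochner-integral representation $\overline{G}_N\varphi=\int_D\varphi(x)\,a_N(x)\,\mathrm{d}x$ in $\mathscr{D}^H(0,t_0;L^2(\partial D))$ (absolutely convergent because $D$ is bounded and $\|a_N(\cdot)\|_{\mathscr{D}^H(0,t_0;L^2(\partial D))}\in L^p(D)\subseteq L^1(D)$), which makes $G_N$ weakly integrable and $\overline{G}_N^{*}$ bounded; by \autoref{lem:kernel_5} the identity $\overline{G}_N^{*}(g\otimes u)(x)=\langle a_N(x),g\otimes u\rangle_{\mathscr{D}^H(0,t_0;L^2(\partial D))}$ holds on elementary tensors with $g\in\mathscr{C}^1([0,t_0])$, $u\in\mathscr{C}(\partial D)$, and since these are total in $\mathscr{D}^H(0,t_0;L^2(\partial D))$ (as $\mathscr{C}(\partial D)$ is dense in $L^2(\partial D)$ and $\mathscr{C}^1([0,t_0])$ is dense in $\mathscr{D}^H(0,t_0)=\dot{W}^{\sfrac{1}{2}-H,2}(0,t_0)$, e.g.\ by mollifying the step functions, which are dense by construction) the identity extends to all of $\mathscr{D}^H(0,t_0;L^2(\partial D))$. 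With (i) and (ii) in hand, \autoref{prop:kernel} gives $\overline{G}_N^{*}\in\gamma\big(\mathscr{D}^H(0,t_0;L^2(\partial D));L^p(D)\big)$; then \autoref{prop:char_stoch_int_vector} makes $Y_{t_0}=Y_{t_0}^1$ a well defined $L^p(D)$-valued square-integrable random variable, and \autoref{prop:Y_t_alpha_mean_square_continuity_X} upgrades this to mean-square right continuity of $(Y_t)_{t\geq 0}$, hence to the existence of a measurable version, which is the assertion.

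The identification $Y_t=Y_t^1$, the placement of $L^p(D)$ in the mixed-Lebesgue framework, and the appeal to \autoref{lem:kernel_2}/\autoref{lem:kernel_3} for the kernel estimate are routine (the hard analytic work already being done in those lemmas). The step I expect to need the most care is (ii): \autoref{lem:kernel_4} only bounds $\|G_N(\varphi,u)\|_{\mathscr{D}^H(0,t_0)}$ by $\|u\|_{\mathscr{C}(\partial D)}$, not by $\|u\|_{L^2(\partial D)}$, so establishing \emph{weak} integrability of $G_N$ (the Hilbert--Schmidt bound over $L^2(\partial D)$) forces one through the kernel/Bochner-integral representation of $\overline{G}_N\varphi$, after which the kernel identity of \autoref{lem:kernel_5} must be propagated off the dense set of elementary tensors. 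A subsidiary point is that the integrand carries the \emph{unbounded} operator $\lambda\mathrm{I}-\Delta_N$, which is why one works with the $(\lambda\mathrm{I}-A)^{\alpha}S\varPhi$ form of the convolution results at $\alpha=1$ rather than with \autoref{cor:NSC_for_measurability}; this is legitimate precisely because $S_N$ is analytic and, by \autoref{lem:Phi_N}, $\varPhi_N$ already maps into $\Dom(\lambda\mathrm{I}-\Delta_N)^{\alpha/2}$ for some $\alpha>\sfrac{1}{p}$, so that $t\mapsto(\lambda\mathrm{I}-\Delta_N)S_N(t)\varPhi_N$ has only an integrable (of order $t^{\alpha/2-1}$) singularity at the origin.
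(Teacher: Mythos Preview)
Your proposal is correct and follows essentially the same route as the paper: verify the kernel hypotheses of \autoref{prop:kernel} via \autoref{lem:kernel_3} and \autoref{lem:kernel_5} to obtain $\overline{G}_N^{*}\in\gamma(\mathscr{D}^H(0,t_0;L^2(\partial D));L^p(D))$, then invoke \autoref{rem:existence_Y_alpha}. You are in fact more careful than the paper's one-line proof on two points the paper leaves implicit: the density argument needed to upgrade the kernel identity of \autoref{lem:kernel_5} from $\mathscr{C}^1([0,t_0])\otimes\mathscr{C}(\partial D)$ to all of $\mathscr{D}^H(0,t_0;L^2(\partial D))$, and the fact that weak integrability of $G_N$ (i.e.\ boundedness of $\overline{G}_N$) must be established through the Bochner representation rather than directly from \autoref{lem:kernel_4}.
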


\begin{proof}
By \autoref{lem:kernel_3} and \autoref{lem:kernel_5}, the assumptions of \autoref{prop:kernel} are satisfied for the operator $\overline{G}_N^*$ and therefore, it follows that $\overline{G}_N^*\in \gamma(\mathscr{D}^H(0,t_0;L^2(\partial D));L^p(D))$ for any $t_0>0$. The proof is concluded by appealing to \autoref{rem:existence_Y_alpha}.
\end{proof}

\begin{appendices}

\section{Homogeneous fractional Sobolev spaces}
\label{sec:appendix}

In this appendix, we give a self-contained review of some results on homogeneous fractional Sobolev spaces that are needed in the paper; see, e.g., \cite{BerLof76}, \cite{Trie78}, or \cite{Trie83}.

\subsection{Homogeneous fractional Sobolev spaces on the real line}

\begin{definition}
\label{app:Ws_R}
Let $d\in\mathbb{N}$ and let $s\in\R$ be such that $|s|<\sfrac{d}{2}$. The \textit{homogeneous fractional Sobolev space} $\dot{W}^{s,2}(\R^d)$ is defined as the space 
	\begin{equation*}
		\dot{W}^{s,2}(\R^d) := \left\{F\in\mathscr{S}'(\R^d;\mathbb{C})\,\left|\, \exists\, h_F\in L^2(\R^d;\mathbb{C}) : \hat{F}=|x|^{-s}h_F\right.\right\}
	\end{equation*}
equipped with the norm $\|\cdot\|_{\dot{W}^{s,2}(\R^d)}$ that is defined by $\|F\|_{\dot{W}^{s,2} (\R^d)} := \|h_F\|_{ L^2(\R^d;\mathbb{C})}$. 
\end{definition}

The bound $2s<d$ in the above definition is imposed to guarantee that $|x|^{-s}h$ is a tempered function and we assume $-d<2s$ to have the space of Schwartz functions $\mathscr{S}(\R^d)$ included in $\dot{W}^{s,2}(\R^d)$. It is well-known that the space $\dot{W}^{s,2}(\R^d)$ is a separable complex Hilbert space that contains the space $\mathscr{C}_c^\infty(\R^d;\mathbb{C})$ as a dense subset. The following result shows gives a connection between $\dot{W}^{s,2}(\R^d)$ and a certain Lebesgue space. 

\begin{proposition}
\label{prop:Lp_embeddings}
If $0\leq s<\sfrac{d}{2}$, then
	\begin{equation*}
		L^{\frac{2d}{d+2s}}(\R^d;\mathbb{C})\subseteq \dot{W}^{-s,2}(\R^d), \quad \dot{W}^{s,2}(\R^d)\subseteq L^{\frac{2d}{d-2s}}(\R^d;\mathbb{C})
	\end{equation*}
with the embeddings being continuous.
\end{proposition}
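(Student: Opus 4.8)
The plan is to deduce both inclusions from the Hardy--Littlewood--Sobolev inequality for the Riesz potential. Recall that for $0<\alpha<d$ the Riesz potential $I_\alpha$ is the Fourier multiplier operator with symbol $|\xi|^{-\alpha}$ (equivalently, convolution with a dimensional constant times $|x|^{\alpha-d}$), and that the Hardy--Littlewood--Sobolev inequality asserts $\|I_\alpha g\|_{L^q(\R^d)}\lesssim\|g\|_{L^p(\R^d)}$ whenever $1<p<q<\infty$ and $\tfrac1q=\tfrac1p-\tfrac\alpha d$; see, e.g., \cite{BerLof76} or \cite{Trie78}. The case $s=0$ is trivial since $\dot W^{0,2}(\R^d)=L^2(\R^d;\mathbb{C})$ with equivalent (in fact equal) norms, so from now on I assume $0<s<\sfrac d2$.

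For the second inclusion $\dot W^{s,2}(\R^d)\subseteq L^{2d/(d-2s)}(\R^d;\mathbb{C})$: given $F\in\dot W^{s,2}(\R^d)$, write $\hat F=|x|^{-s}h_F$ with $h_F\in L^2(\R^d;\mathbb{C})$ and set $g:=\mathscr F^{-1}h_F\in L^2(\R^d;\mathbb{C})$. Then $F$ coincides (up to a dimensional constant) with $I_sg$, and the Hardy--Littlewood--Sobolev inequality applied with $\alpha=s$, $p=2$, $q=\tfrac{2d}{d-2s}$ (note $2=p<q<\infty$ precisely because $0<s<\sfrac d2$) yields $\|F\|_{L^{2d/(d-2s)}(\R^d;\mathbb{C})}\lesssim\|g\|_{L^2}=\|h_F\|_{L^2}=\|F\|_{\dot W^{s,2}(\R^d)}$, which is the asserted continuous embedding.

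For the first inclusion $L^{2d/(d+2s)}(\R^d;\mathbb{C})\subseteq\dot W^{-s,2}(\R^d)$: put $r:=\tfrac{2d}{d+2s}$ and note that $r\in(1,2)$ since $0<s<\sfrac d2$; by Hausdorff--Young $\hat f$ is then the locally integrable function lying in $L^{r'}(\R^d;\mathbb{C})$, so $h:=|x|^{-s}\hat f$ is a well-defined measurable function with $\hat f=|x|^{-(-s)}h$, exactly the structure required by \autoref{app:Ws_R} for membership in $\dot W^{-s,2}(\R^d)$. It remains to verify $h\in L^2$: observe $h=\widehat{I_sf}$, and apply the Hardy--Littlewood--Sobolev inequality with $\alpha=s$, $p=r$, $q=2$ (the balance condition $\tfrac12=\tfrac1r-\tfrac sd$ is exactly the identity defining $r$), together with Plancherel's theorem, to get $\|h\|_{L^2}\eqsim\|\widehat{I_sf}\|_{L^2}\eqsim\|I_sf\|_{L^2}\lesssim\|f\|_{L^r}$. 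Hence $f\in\dot W^{-s,2}(\R^d)$ with $\|f\|_{\dot W^{-s,2}(\R^d)}\lesssim\|f\|_{L^{2d/(d+2s)}(\R^d;\mathbb{C})}$.

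The only genuine analytic input is the Hardy--Littlewood--Sobolev inequality; the rest of the argument is the bookkeeping observation that $\tfrac{2d}{d\pm2s}$ are exactly the Sobolev-conjugate exponents dictated by the scaling relation $\tfrac1q=\tfrac1p\mp\tfrac sd$. The step requiring a modest amount of care, and the one I would write out carefully, is the identification of the Fourier multiplier of $I_s$ on the relevant Lebesgue classes --- so that the identities $F=I_sg$ and $\widehat{I_sf}=|x|^{-s}\hat f$ hold as tempered distributions --- and the verification that everything stays inside $\mathscr S'(\R^d)$; this is classical and can be cited from \cite{Trie78}.
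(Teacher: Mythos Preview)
Your proof is correct and rests on the same analytic ingredient as the paper's: the Hardy--Littlewood--Sobolev inequality for the Riesz potential. The organization differs slightly --- the paper proves the inclusion $L^{2d/(d+2s)}\subseteq\dot W^{-s,2}$ first (on Schwartz functions, via $\||x|^{-s}\hat f\|_{L^2}=C\||x|^{s-d}*f\|_{L^2}$ and HLS, then density) and deduces the inclusion $\dot W^{s,2}\subseteq L^{2d/(d-2s)}$ by a duality pairing, whereas you treat both inclusions by direct application of HLS (invoking Hausdorff--Young to make sense of $|x|^{-s}\hat f$ pointwise in the first case). The paper's duality route avoids your Hausdorff--Young step and the distributional bookkeeping you flag at the end; your route is more symmetric and arguably more transparent since it shows both embeddings are instances of the same mapping property of $I_s$. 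Either way, the substance is the same.
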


\begin{proof}
If $f\in\mathscr{S}(\R^d;\mathbb{C})$, then
	\begin{equation*}
		\|f\|_{\dot{W}^{ -s,2}(\R^d)} = \||x|^{-s}\hat{f}\|_{ L^2(\R^d;\mathbb{C})} = C_{s,d}\||x|^{s-d}*f\|_{L^2(\R^d;\mathbb{C})} \leq C_s \|f\|_{L^\frac{2d}{d+2s}(\R^d;\mathbb{C})}
	\end{equation*}
holds by Parseval's equality and the Hardy-Littlewood-Sobolev inequality with some finite positive constant $C_s$ and $C_{s,d}$. Now, $\mathscr{S}(\R^d;\mathbb{C})$ is dense in $L^\frac{2d}{d+2s} (\R^d;\mathbb{C})$ and $\dot{W}^{-s,2}(\R^d)$ is embedded in $(\mathscr{S}',w^*)$. If $f,g\in\mathscr{S}(\R^d;\mathbb{C})$, then
	\begin{equation*}
		|\langle f,g\rangle_{ L^2(\R^d;\mathbb{C})}| \leq |\langle\hat{f},\hat{g}\rangle_{ L^2(\R^d;\mathbb{C})}| \leq \||x|^s\hat{f}\|_{ L^2(\R^d;\mathbb{C})}\||x|^{-s}\hat{g}\|_{L^2(\R^d;\mathbb{C})} \leq C_{s,d} \||x|^s\hat{f}\|_{L^2(\R^d;\mathbb{C})}\|g\|_{L^\frac{2d}{d+2s}(\R^d;\mathbb{C})}
	\end{equation*}
so that 
	\begin{equation*}
		\|f\|_{L^\frac{2d}{d-2s}(\R^d;\mathbb{C})} \leq C_{s,d} \|f\|_{\dot{W}^{s,2}(\R^d)}
	\end{equation*}
and the embedding follows from the density above. 
\end{proof} 

\begin{lemma}
Let $s\in\R$ be such that $|s|<\sfrac{d}{2}$ and let $Q$ be a non-degenerate cube in $\R^d$. Then the indicator function $\bm{1}_{Q}$ belongs to the space $\dot{W}^{s,2}(\R^d)$ if and only if $-\sfrac{d}{2}<s<\sfrac{1}{2}$.
\end{lemma}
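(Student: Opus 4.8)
The plan is to unwind the definition of $\dot W^{s,2}(\R^d)$. Since $\bm{1}_Q$ lies in $L^1\cap L^2$, its Fourier transform $\widehat{\bm{1}_Q}$ is a bounded continuous $L^2$-function, and by the very definition of the homogeneous space $\bm{1}_Q\in\dot W^{s,2}(\R^d)$ holds if and only if $|x|^{s}\widehat{\bm{1}_Q}\in L^2(\R^d;\mathbb{C})$, i.e. if and only if $\int_{\R^d}|x|^{2s}|\widehat{\bm{1}_Q}(x)|^2\,\d{x}<\infty$. First I would reduce to the case $Q=[0,1]^d$: a translation of $Q$ multiplies $\widehat{\bm{1}_Q}$ by a unimodular factor, and a rotation or dilation of $Q$ is a change of variables that leaves the finiteness of this integral unchanged (rotations because $|x|$ is rotation invariant). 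For $Q=[0,1]^d$ the transform factorizes, and computing the one-dimensional transform explicitly yields a constant $c>0$ with $|\widehat{\bm{1}_Q}(x)|^2=c\prod_{j=1}^d\sin^2(x_j/2)/x_j^2$. So the whole question becomes: for which $s$ does $\int_{\R^d}|x|^{2s}\prod_{j}\sin^2(x_j/2)/x_j^2\,\d{x}$ converge? I would then record the elementary bounds $\sin^2(t/2)/t^2\lesssim(1+t^2)^{-1}$ for all $t\in\R$ and $\sin^2(t/2)/t^2\ge\sin^2(1/4)$ for $t\in[\sfrac12,1]$ for later use.

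For sufficiency (assuming $-\sfrac{d}{2}<s<\sfrac12$) I would separate the behaviour near the origin from that at infinity. Near the origin $\widehat{\bm{1}_Q}$ is bounded, so local integrability of $|x|^{2s}$ on the unit ball is the only issue, and this holds precisely because $2s+d>0$, which is the standing hypothesis. At infinity I would split into two sub-cases. If $-\sfrac{d}{2}<s\le0$ the weight $|x|^{2s}$ is at most $1$ on $\{|x|\ge1\}$, so the tail integral is dominated by $\|\bm{1}_Q\|_{L^2}^2$ via Plancherel. If $0\le s<\sfrac12$ I would use $(x_1^2+\dots+x_d^2)^s\le\sum_k|x_k|^{2s}$ (valid since $0\le s\le1$) together with $|\widehat{\bm{1}_Q}(x)|^2\lesssim\prod_j(1+x_j^2)^{-1}$ to bound the integral by a sum of products of one-dimensional integrals $\int_\R|t|^{2s}(1+t^2)^{-1}\,\d{t}$, each finite for $|s|<\sfrac12$. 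Together these yield $\bm{1}_Q\in\dot W^{s,2}(\R^d)$ for every $s$ with $-\sfrac{d}{2}<s<\sfrac12$.

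For necessity, observe that under the standing assumption $|s|<\sfrac{d}{2}$ the only values left to rule out are $\sfrac12\le s<\sfrac{d}{2}$ (so this part is vacuous when $d=1$). For such $s$ I would bound the integral from below by restricting it to a thin "tube" $\{x_1\ge2,\ x_j\in[\sfrac12,1]\text{ for }2\le j\le d\}$ along the first coordinate axis: there $|x|^{2s}\ge x_1^{2s}$ because $s\ge0$, and the factors with $j\ge2$ are bounded below by $\sin^{2(d-1)}(1/4)$, so the integral is at least a constant times $\int_2^\infty x^{2s-2}\sin^2(x/2)\,\d{x}$. This last integral diverges: summing $\int_{2\pi n}^{2\pi(n+1)}x^{2s-2}\sin^2(x/2)\,\d{x}\gtrsim n^{2s-2}$ (using that $\sin^2(x/2)$ has positive average over each period and that $x^{2s-2}$ is monotone) gives a divergent series whenever $2s-2\ge-1$. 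Hence $\bm{1}_Q\notin\dot W^{s,2}(\R^d)$ for $s\ge\sfrac12$, completing the equivalence.

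The main obstacle, and the feature that dictates the whole structure, is that $\widehat{\bm{1}_Q}$ does not decay radially: it decays only coordinatewise, like $|x_j|^{-1}$ in the $j$-th direction while staying of order $1$ along the coordinate axes. This is exactly why the radial weight $|x|^{2s}$ has to be replaced by the coordinatewise weights $\sum_k|x_k|^{2s}$ for the upper bound, and why a tube along an axis (rather than an annulus) is the correct test region for the lower bound. It also explains why the critical exponent $\sfrac12$ is independent of the dimension $d$: it is governed entirely by the one-dimensional integral $\int^\infty x^{2s-2}\,\d{x}$ coming from a single slowly-decaying direction, whereas the lower threshold $-\sfrac{d}{2}$ is the genuinely $d$-dimensional constraint coming from the boundedness of $\widehat{\bm{1}_Q}$ near the origin.
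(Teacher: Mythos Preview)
Your proof is correct and rests on the same mechanism as the paper's: write $\widehat{\bm{1}_Q}$ as a product of one-dimensional factors and reduce the finiteness of $\int_{\R^d}|x|^{2s}|\widehat{\bm{1}_Q}|^2\,\d x$ to that of the one-dimensional integral $\int_0^\infty x^{2s-2}\sin^2 x\,\d x$. The paper packages the argument more compactly: it first notes that the integral is monotone in $s$, so only the range $0<s<\sfrac{d}{2}$ needs to be examined, and then uses the two-sided equivalence $|x|^{2s}\eqsim\sum_{j}|x_j|^{2s}$ (valid for all $s>0$) to treat sufficiency and necessity in one stroke. You instead split the sufficiency into the subcases $s\le0$ (Plancherel) and $0\le s<\sfrac12$ (the one-sided bound $|x|^{2s}\le\sum_j|x_j|^{2s}$), and for necessity restrict to a tube along a coordinate axis---which is exactly the lower bound $|x|^{2s}\ge|x_1|^{2s}$ together with the product structure. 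So the two proofs differ only in organisation, not in substance; your version is more explicit, the paper's more economical. (One cosmetic remark: the reduction by rotation is unnecessary, since the cubes in question are axis-aligned, but it does no harm.)
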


\begin{proof}
The indicator function $\bm{1}_Q$ belongs to the space $\dot{W}^{s,2}(\R^d)$ if and only if the integral
	\begin{equation*}
		\int_{\R^d} |x|^{2s}\frac{|\sin (\lambda_1 x_1)|^2|\sin (\lambda_2 x_2)|^2\cdot\ldots\cdot |\sin (\lambda_d x_d)|^2}{x_1^2x_2^2\cdot\ldots\cdot x^2_d}\d{x}
	\end{equation*}
is convergent with $\lambda_1,\lambda_2, \ldots, \lambda_d >0$. Clearly, the integral grows as $s$ grows. Assume therefore, that $0<s<\sfrac{d}{2}$. In this case, we have that there is a finite constant $C$ such that
	\begin{equation*}
		\sum_{j=1}^d \int_{\R^d} |x_j|^{2s} \frac{|\sin (\lambda_1 x_1)|^2|\sin (\lambda_2 x_2)|^2\cdot\ldots\cdot |\sin (\lambda_d x_d)|^2}{x_1^2x_2^2\cdot\ldots\cdot x^2_d} \d{x} = C \int_0^\infty x^{2s-2}|\sin x|^2\d{x},
	\end{equation*}
holds. The last integral is convergent if and only if $s<\sfrac{1}{2}$. 
\end{proof}

For a non-degenerate cube $Q$ in $\mathbb{R}^d$ and $F\in \bigoplus_{|s|<\frac{1}{2}}\dot{W}^{s,2}(\R^d)$, define the average $J_QF$ by
	\begin{equation}
	\label{eq:J_Q}
		J_QF := \frac{1}{|Q|} \int_{\R^d}\widehat{F}(x)\overline{\widehat{\bm{1}_Q}(x)}\d{x}.
	\end{equation}

\begin{lemma}
For every $F\in \bigoplus_{|s|<\frac{1}{2}} \dot{W}^{s,2}(\R^d)$ and a non-degenerate cube $Q$ in $\R^d$ it holds that 
	\begin{equation*}
		\lim_{x\rightarrow\infty} J_{x+Q}F=0.
	\end{equation*}
Moreover, if $F\in \bigoplus_{|s|<\frac{1}{2}} \dot{W}^{s,2}(\R^d)$ can be represented as a tempered function, then 
	\begin{equation*}
		J_QF = \frac{1}{|Q|}\int_Q F(x)\d{x}.
	\end{equation*}
\end{lemma}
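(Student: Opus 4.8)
The plan is to push everything to the Fourier side, where the pairing defining $J_QF$ becomes an absolutely convergent integral. First one checks that $J_QF$ is well defined at all. Writing $\widehat F=\sum_{j=1}^{m}|\cdot|^{-s_j}h_j$ with $h_j\in L^2(\R^d;\mathbb{C})$ and $|s_j|<\frac{1}{2}$ — which is what $F\in\bigoplus_{|s|<\frac{1}{2}}\dot W^{s,2}(\R^d)$ means — the previous lemma (applicable since $|{-s_j}|<\frac{d}{2}$) shows $\bm 1_Q\in\dot W^{-s_j,2}(\R^d)$, because $-\frac{d}{2}<-s_j<\frac{1}{2}$. Hence $k_j:=|\cdot|^{-s_j}\widehat{\bm 1_Q}\in L^2(\R^d;\mathbb{C})$ and $\widehat{\bm 1_Q}=|\cdot|^{s_j}k_j$, so that $\widehat F\,\overline{\widehat{\bm 1_Q}}=\sum_{j=1}^{m}h_j\overline{k_j}\in L^1(\R^d;\mathbb{C})$ by the Cauchy--Schwarz inequality; in particular the integral in \eqref{eq:J_Q} converges absolutely.

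For the first assertion, translation invariance of Lebesgue measure gives $\widehat{\bm 1_{x+Q}}(\xi)=\mathrm{e}^{-\mathrm{i}x\cdot\xi}\widehat{\bm 1_Q}(\xi)$, so $|Q|\,J_{x+Q}F=\int_{\R^d}\widehat F(\xi)\overline{\widehat{\bm 1_Q}(\xi)}\,\mathrm{e}^{\mathrm{i}x\cdot\xi}\d\xi$ is, up to a normalising constant, the inverse Fourier transform of the $L^1$-function $\widehat F\,\overline{\widehat{\bm 1_Q}}$ evaluated at $x$. The Riemann--Lebesgue lemma then yields $\lim_{x\to\infty}J_{x+Q}F=0$.

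For the second assertion, fix the Gaussian $\rho(\xi)=\mathrm{e}^{-|\xi|^2/2}$ and let $\psi_\varepsilon$ be the associated $L^1$-normalised Gaussian mollifier, chosen so that $\widehat{\bm 1_Q\ast\psi_\varepsilon}=\rho(\varepsilon\,\cdot)\,\widehat{\bm 1_Q}$; note $g_\varepsilon:=\bm 1_Q\ast\psi_\varepsilon\in\mathscr S(\R^d)$. Since $0\le\rho(\varepsilon\,\cdot)\le 1$ and $\rho(\varepsilon\xi)\to 1$ pointwise, dominated convergence (with dominating function $|\widehat F\,\overline{\widehat{\bm 1_Q}}|\in L^1$) gives $|Q|\,J_QF=\lim_{\varepsilon\to 0}\int_{\R^d}\widehat F\,\overline{\widehat{\bm 1_Q}}\,\rho(\varepsilon\,\cdot)\d\xi=\lim_{\varepsilon\to 0}\int_{\R^d}\widehat F\,\overline{\widehat{g_\varepsilon}}\d\xi$. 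Now if $F$ is represented by a tempered, hence locally integrable, function $f$, then Parseval's identity — equivalently, Fourier duality for the tempered distribution $\widehat F$ tested against the Schwartz function $\overline{\widehat{g_\varepsilon}}$ — gives $\int_{\R^d}\widehat F\,\overline{\widehat{g_\varepsilon}}\d\xi=\int_{\R^d}f\,g_\varepsilon\d x$, and by Fubini's theorem together with the evenness of $\psi_\varepsilon$ this equals $\int_Q(f\ast\psi_\varepsilon)\d x$. Since $f\in L^1_{\mathrm{loc}}(\R^d)$ and $\{\psi_\varepsilon\}$ is an approximate identity, $f\ast\psi_\varepsilon\to f$ in $L^1_{\mathrm{loc}}(\R^d)$, so $\int_Q(f\ast\psi_\varepsilon)\d x\to\int_Q f\d x$. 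Combining the above identities yields $J_QF=\frac{1}{|Q|}\int_Q F(x)\d x$.

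The routine ingredients (the translation and convolution rules for $\widehat{\phantom{i}}$, measurability, and the $L^1$-bound) are immediate. The main obstacle is the rigour of the last paragraph: justifying the Parseval step for a tempered distribution against a Schwartz function in the paper's normalisation of $\widehat{\phantom{i}}$, verifying the Fubini interchange using only the polynomial growth of $f$, and invoking approximate-identity convergence for $f$ merely locally integrable rather than in some $L^p$.
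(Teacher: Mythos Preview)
Your proof is correct and follows essentially the same route as the paper: mollify $\bm 1_Q$ to a Schwartz function, apply Fourier duality/Parseval, and pass to the limit on both sides. The paper dismisses the first assertion as ``standard'' (your Riemann--Lebesgue argument is exactly the standard one) and, for the second, uses compactly supported mollifiers together with $\dot W^{s,2}$-convergence of $\varphi_n\to\frac{1}{|Q|}\bm 1_Q$ rather than your Gaussian mollifier with dominated convergence on the Fourier side; the concerns you flag in your last paragraph are legitimate but routine, and the paper glosses over the same points.
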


\begin{proof}
The first assertion is standard. As for the second assertion, let $\{\varphi_n\}_{n}$ be a mollification of $\frac{1}{|Q|}\bm{1}_Q$. Then it follows that 
	\begin{equation*}
		\lim_{n\rightarrow\infty} \left\|\frac{1}{|Q|} \bm{1}_Q - \varphi_n\right\|_{\dot{W}^{s,2}(\R)} = 0
	\end{equation*}
for every every $|s|<\sfrac{1}{2}$. Hence, 
	\begin{equation*}
		J_QF = \lim_{n\rightarrow \infty}\int_{\R^d} \widehat{F}(x)\overline{\widehat{\varphi}(x)} \d{x} = \lim_{n\rightarrow\infty} \int_{\R^d} F(x)\varphi_n(x)\d{x} = \frac{1}{|Q|} \int_Q F(x)\d{x}.
	\end{equation*}
\end{proof}

For $a\in\R^d$ and $r>0$, let $\{Q_k^{a,r}\}_{k\in\mathbb{Z}^d}$ be a decomposition of $\R^d$ to cubes, i.e. $Q^{a,r}_k$ is defined by \[Q_k^{a,r}:=a+r(k+[0,1)^d), \quad k\in\mathbb{Z}^d.\] For $F\in\bigoplus_{|s|<\frac{1}{2}}\dot{W}^{s,2}(\R^d)$, define a step function $M_{a,r}F$, that vanishes at infinity, by
	\begin{equation*}
		M_{a,r}F := \sum_{k\in\mathbb{Z}^d} (J_{Q_k^{a,r}}F)\bm{1}_{Q_k^{a,r}}
	\end{equation*}
where $J_{Q_k^{a,r}}F$ is the average of $F$ over the cube $Q_k^{a,r}$ defined by formula \eqref{eq:J_Q}. 

\begin{lemma}
\label{lem:approx_1}
Let $p\geq 1$ and $s\in\R$ be such that $0<s<\sfrac{1}{p}$. Then there is a finite positive constant $C_{s,d,p}$ that does not depend on the decomposition $\{Q_k^{a,r}\}_{k\in\mathbb{Z}^d}$ such that the inequality
	\begin{equation*}
		\left(\int_{\R^d}\int_{\R^d} \frac{|(M_{a,r}f)(x) - (M_{a,r}f)(y)|^p}{|x-y|^{d+sp}}\d{x}\d{y}\right)^\frac{1}{p} \leq C_{s,d,p} \left(\int_{\R^d}\int_{\R^d} \frac{|f(x)-f(y)|^p}{|x-y|^{d+sp}}\d{x}\d{y}\right)^\frac{1}{p}
	\end{equation*}
holds for every $f\in L^1_{\mathrm{loc}}(\R^d)$. 
\end{lemma}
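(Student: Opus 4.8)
The statement says that the cube-averaging operator $M_{a,r}$ attached to the grid $\{Q_k^{a,r}\}$ is bounded, with a constant independent of $(a,r)$, on the Gagliardo seminorm $[g]:=\big(\int_{\R^d}\int_{\R^d}|g(x)-g(y)|^p|x-y|^{-d-sp}\,\d{x}\,\d{y}\big)^{1/p}$, i.e. the claim is $[M_{a,r}f]\le C_{s,d,p}[f]$. The first step I would take is to remove the parameters $a,r$: substituting $f\mapsto \tilde f:=f(a+r\,\cdot\,)$ one has $M_{a,r}f=(M_{0,1}\tilde f)\big((\,\cdot\,-a)/r\big)$, and the change of variables $x\mapsto a+rx$ multiplies both $[M_{a,r}f]^p$ and $[f]^p$ by the same factor $r^{d-sp}$; hence it suffices to treat the standard unit grid $a=0$, $r=1$, $Q_k=k+[0,1)^d$. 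I write $M:=M_{0,1}$, let $Q(x)$ be the cube of the grid containing $x$ (so $Mf(x)=\tfrac{1}{|Q(x)|}\int_{Q(x)}f$), and assume the right-hand side of the asserted inequality is finite.

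The second step is a cube-wise application of Jensen's inequality. Since $Mf(x)-Mf(y)=\tfrac{1}{|Q(x)|\,|Q(y)|}\int_{Q(x)}\int_{Q(y)}\big(f(u)-f(v)\big)\,\d{u}\,\d{v}$ for all $x,y$, and this difference vanishes whenever $Q(x)=Q(y)$, Jensen (for $p\ge1$) together with Fubini — the inner average being independent of $x,y$ — gives
\begin{equation*}
[Mf]^p\;\le\;\int_{\R^d}\int_{\R^d}\bm{1}_{\{Q(u)\ne Q(v)\}}\,\mathcal{K}(u,v)\,|f(u)-f(v)|^p\,\d{u}\,\d{v},
\end{equation*}
where, for $u,v$ in distinct cubes, $\mathcal{K}(u,v):=\tfrac{1}{|Q(u)|\,|Q(v)|}\int_{Q(u)}\int_{Q(v)}|x-y|^{-d-sp}\,\d{x}\,\d{y}$, a quantity depending only on the relative position of $Q(u)$ and $Q(v)$, by translation invariance. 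The lemma is thereby reduced to the pointwise kernel estimate $\mathcal{K}(u,v)\le C_{s,d,p}\,|u-v|^{-d-sp}$ for $Q(u)\ne Q(v)$, since inserting it into the display yields $[Mf]^p\le C_{s,d,p}[f]^p$ at once.

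The third step proves this kernel estimate by separating the far and the close configurations. If $\mathrm{dist}(Q(u),Q(v))\ge 4\sqrt{d}$, then $|x-y|$ is comparable to $\mathrm{dist}(Q(u),Q(v))$, hence to $|u-v|$, for all $x\in Q(u)$, $y\in Q(v)$, so $\mathcal{K}(u,v)\le \mathrm{dist}(Q(u),Q(v))^{-d-sp}\lesssim |u-v|^{-d-sp}$. If $Q(u)\ne Q(v)$ but $\mathrm{dist}(Q(u),Q(v))<4\sqrt{d}$, there are only finitely many possible relative positions, so it is enough to check that $\int_{Q}\int_{Q'}|x-y|^{-d-sp}\,\d{x}\,\d{y}<\infty$ for any two distinct unit cubes $Q\ne Q'$; the only case requiring care is two cubes sharing a facet, and there one separates the coordinate $i$ along which $Q$ and $Q'$ lie on opposite sides, integrates first in the transverse coordinates (where one uses $d+sp>1$, true as $d\ge1$ and $sp>0$) and then in $(x_i,y_i)$, and is left with a bound of the form $\text{const}\cdot\int_0^{2}\sigma^{-sp}\,\d{\sigma}$, which is finite exactly because $sp<1$, i.e. by the hypothesis $s<\tfrac1p$. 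Since in addition $|u-v|\le 6\sqrt{d}$ on close pairs, the uniform finite bound on $\mathcal{K}$ just obtained is dominated by $C_{s,d,p}|u-v|^{-d-sp}$, which completes the kernel estimate and hence the proof.

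The main obstacle is the close-configuration case of the kernel estimate, namely the finiteness of $\int_{Q}\int_{Q'}|x-y|^{-d-sp}\,\d{x}\,\d{y}$ for two unit cubes sharing a facet; this is the only point where $s<\tfrac1p$ is used, and it is sharp, since for $s\ge\tfrac1p$ the step function $M_{a,r}f$ generically has jumps across the cube facets and such functions do not belong to $W^{s,p}$.
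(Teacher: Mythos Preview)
Your proof is correct and follows essentially the same approach as the paper's: both reduce to the unit grid by scaling, decompose the double integral over cube pairs, apply Jensen's inequality to replace $|Mf(x)-Mf(y)|^p$ by an average of $|f(u)-f(v)|^p$, and reduce everything to the kernel bound $\int_{I_k}\int_{I_l}|x-y|^{-d-sp}\,\d x\,\d y\lesssim \|k-l\|_{\ell^\infty}^{-d-sp}$ for $k\neq l$, which is where $sp<1$ enters via the adjacent-cube case. The only cosmetic difference is in how the adjacent-cube integral is handled: the paper performs a change of variables reducing to $\int_{2I}|z|^{m+\beta}\,\d z$ with $m\ge 1$ the number of coordinates in which the cubes differ, while you sketch a direct ``integrate transversely, then along the separating coordinate'' argument; both yield the same finiteness under $sp<1$.
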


\begin{proof}
Set $I_k:= k+ I$ for $k\in\mathbb{Z}^d$ where $I:=[0,1)^d$ and let $\beta >-d-1$. Then there exists a finite positive constant $C_{\beta,d}$ such that 
	\begin{equation*}
		\int_{I_k}\int_{I_l}|x-y|^\beta \d{x}\d{y} \leq C_{\beta,d} \|k-l\|_{\ell^\infty}^\beta
	\end{equation*}
holds for every $k,l\in\mathbb{Z}^d$. Indeed, the claim is obvious if either $\|k-l\|_{\ell^\infty} = 0$ or $\|k-l\|_{\ell^\infty}\geq 2$. Suppose therefore, that $\|k-l\|_{\ell^\infty}=1$ and denote $\mathfrak{J}:= \{j\in \{1,2,\ldots,d\}\,|\, |k_j-l_j|=1\}$ and $m:=|\mathfrak{J}|$. Clearly, we have that $m\geq 1$ and  
	\begin{equation*}
		\int_{I_k}\int_{I_l} |x-y|^\beta\d{x}\d{y} = \int_I\int_I \left(\sum_{j=1}^d\left|x_j-y_j+ |k_j-l_j|\right|^2\right)^\frac{\beta}{2}\d{x}\d{y} \leq 2^d \int_{2I} |z|^{m+\beta}\d{z}
	\end{equation*}
by replacing $y_j$ by $1-y_j$ for every $j\in\mathfrak{J}$ and by using the fact that for every non-negative function $h\in L^1(-1,2)$ it holds that 
	\begin{equation*}
		\frac{1}{2}\int_{-\frac{1}{2}}^{\frac{1}{2}} h(z)\d{z} \leq \int_0^1\int_0^1 h(x-y)\d{x}\d{y}\leq \int_{-1}^{1}h(z)\d{z}
	\end{equation*}
and 
	\begin{equation*}
		\int_0^1 zh(z)\d{z} \leq \int_0^1\int_0^1 h(x+y)\d{x}\d{y} \leq \int_0^2zh(z)\d{z}.
	\end{equation*}
In particular, we have for every $F\in L_{\mathrm{loc}}^1(\R^d)$ and $k\neq l$ that  
	\begin{align*}
		\int_{I_k}\int_{I_l} \frac{|(M_{0,1}F)(x)-(M_{ 0,1}F)(y)|^p}{|x-y|^{d+ps}}\d{x}\d{y} & \leq C_{d,s,p} \|k-l\|_{\ell^\infty}^{-d-sp} \left(\int_{I_k}\int_{I_l} |F(x)-F(y)|\d{x}\d{y}\right)^p \\
		& \leq \tilde{C}_{d,s,p} \int_{I_k}\int_{I_l} \frac{|F(x)-F(y)|^p}{|x-y|^{d+sp}}\d{x}\d{y}
	\end{align*}
holds with some finite positive constants $C_{d,s,p}$ and $\tilde{C}_{d,s,p}$ by H\"older's inequality. Finally, the general case is inferred for $f\in L^1_{\mathrm{loc}}(\R^d)$ by noting that 
	\begin{equation*}
		(M_{a,r}f)(x) = [M_{0,1}f(a+r\,\cdot)]\left(\frac{x-a}{r}\right), \quad x\in\R^d,
	\end{equation*}
holds for every $a\in\R^d$ and $r>0$. 
\end{proof}

\begin{lemma}
\label{lem:approx_2}
If $0\leq s<\sfrac{1}{2}$, then there is the convergence
	\begin{equation*}
		\lim_{r\rightarrow\infty} \sup_{a\in\mathbb{R}^d} \|M_{a,r}f - f\|_{\dot{W}^{s,2}(\R^d)}=0
	\end{equation*}
for every $f\in \dot{W}^{s,2}(\R^d)$. 
\end{lemma}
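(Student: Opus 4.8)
The plan is to regard $M_{a,r}$ as the $L^2$-orthogonal projection onto the functions constant on each cube $Q_k^{a,r}$ (a conditional expectation) and to derive the claim from three ingredients: a uniform-in-$(a,r)$ bound for $M_{a,r}$ on $\dot W^{s,2}(\mathbb R^d)$, convergence $M_{a,r}f\to f$ on a dense subspace, and an elementary Fourier-side interpolation inequality. Throughout, ``refining the partition'' refers to the limit $r\to\infty$ of the statement, along which the partitions $\{Q_k^{a,r}\}_{k}$ become arbitrarily fine. I write $[u]_{s,2}:=\big(\iint_{\mathbb R^d\times\mathbb R^d}|u(x)-u(y)|^2|x-y|^{-d-2s}\,\d x\,\d y\big)^{1/2}$ for the Gagliardo seminorm; Plancherel's theorem gives $[u]_{s,2}^2=c_{d,s}\,\||x|^{s}\widehat u\|_{L^2}^2=c_{d,s}\,\|u\|_{\dot W^{s,2}(\mathbb R^d)}^2$ whenever the last quantity is finite, so $[\cdot]_{s,2}$ is, up to constants, the $\dot W^{s,2}$-norm.

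\emph{Uniform bound.} First I would note that $M_{a,r}$ is a contraction on each $L^q(\mathbb R^d)$, $q\ge1$ (apply Jensen's inequality on each cube, then sum); in particular it is an $L^2$-contraction. For $0<s<\sfrac{1}{2}$, \autoref{lem:approx_1} with $p=2$ gives $[M_{a,r}f]_{s,2}\le C_{s,d}\,[f]_{s,2}$ with $C_{s,d}$ independent of $a$ and $r$. Combining this with the $L^q$-contractivity for $q=\sfrac{2d}{d-2s}$, the embedding $\dot W^{s,2}(\mathbb R^d)\hookrightarrow L^{2d/(d-2s)}(\mathbb R^d)$ of \autoref{prop:Lp_embeddings}, and the standard identification of $\dot W^{s,2}(\mathbb R^d)$, $0<s<\sfrac{d}{2}$, with the space of $L^{2d/(d-2s)}$-functions of finite Gagliardo energy, one obtains $M_{a,r}\in\mathscr L(\dot W^{s,2}(\mathbb R^d))$ with $C:=\sup_{a,r}\|M_{a,r}\|_{\mathscr L(\dot W^{s,2})}<\infty$. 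For $s=0$, the $L^2$-contractivity already gives this with $C=1$.

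\emph{Convergence on a dense subspace.} Recall that $\mathscr C^\infty_c(\mathbb R^d;\mathbb C)$ is dense in $\dot W^{s,2}(\mathbb R^d)$. For $s=0$ and $f\in\mathscr C_c(\mathbb R^d)$ one has $|M_{a,r}f(x)-f(x)|\le\omega_f(\operatorname{diam}Q_0^{a,r})$, where $\omega_f$ is the modulus of continuity of $f$, and $M_{a,r}f-f$ is supported in a compact set independent of $a$ once the partition is fine enough; hence $\sup_a\|M_{a,r}f-f\|_{L^2(\mathbb R^d)}\to0$, and the $L^2$-contractivity extends this to all $f\in L^2(\mathbb R^d)$. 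For $0<s<\sfrac{1}{2}$ fix $t\in(s,\sfrac{1}{2})$. The uniform bound applied with $t$ in place of $s$ gives $\sup_{a,r}\|M_{a,r}f-f\|_{\dot W^{t,2}(\mathbb R^d)}<\infty$ for $f\in\mathscr C^\infty_c$, while $\sup_a\|M_{a,r}f-f\|_{L^2(\mathbb R^d)}\to0$ by the case $s=0$. Splitting the Fourier integral at a radius $R>0$,
\[
\|M_{a,r}f-f\|_{\dot W^{s,2}(\mathbb R^d)}^2\le R^{2s}\,\|M_{a,r}f-f\|_{L^2(\mathbb R^d)}^2+R^{2(s-t)}\,\|M_{a,r}f-f\|_{\dot W^{t,2}(\mathbb R^d)}^2,
\]
and choosing first $R$ large (using $s-t<0$) and then refining the partition gives $\sup_a\|M_{a,r}f-f\|_{\dot W^{s,2}(\mathbb R^d)}\to0$ for every $f\in\mathscr C^\infty_c$. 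Finally, for arbitrary $f\in\dot W^{s,2}(\mathbb R^d)$ and $g\in\mathscr C^\infty_c$ with $\|f-g\|_{\dot W^{s,2}}<\varepsilon$, the uniform bound yields $\|M_{a,r}f-f\|_{\dot W^{s,2}}\le(C+1)\varepsilon+\sup_a\|M_{a,r}g-g\|_{\dot W^{s,2}}$, and letting $r\to\infty$ and then $\varepsilon\to0$ completes the proof.

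\emph{Main obstacle.} The crux is the interplay between the uniform bound and the passage from seminorm control to norm convergence. One cannot obtain $[M_{a,r}f-f]_{s,2}\to0$ by pointwise estimates, since even for smooth $f$ the step function $M_{a,r}f$ jumps across the cube faces — by an amount only of order $\operatorname{diam}Q_0^{a,r}\cdot\|\nabla f\|_\infty$, but distributed over about $(\operatorname{diam}Q_0^{a,r})^{-1}$ faces per unit volume, against which the kernel $|x-y|^{-d-2s}$ is non-integrable. This is exactly why \autoref{lem:approx_1} — a combinatorial rather than pointwise estimate — is the right tool for the uniform bound, and why the detour through interpolation between $L^2$ and $\dot W^{t,2}$ is needed to upgrade the easy $L^2$-convergence to $\dot W^{s,2}$-convergence. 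The membership $M_{a,r}f\in\dot W^{s,2}(\mathbb R^d)$ used above is a secondary technical point, resting on the identification of the homogeneous space recalled in the first paragraph.
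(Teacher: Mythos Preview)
Your proof is correct and follows the same route as the paper: split the Fourier integral at a radius $R$ to interpolate between $L^2$ and $\dot W^{t,2}$ for some $t\in(s,\tfrac12)$, invoke \autoref{lem:approx_1} for the uniform $\dot W^{t,2}$-bound on $M_{a,r}\varphi-\varphi$, and pass from $\varphi\in\mathscr C_c^\infty$ to general $f$ by density; your version simply spells out the uniform operator bound and the density step more carefully than the paper does. One notational caveat: with $Q_k^{a,r}=a+r(k+[0,1)^d)$ the cubes have side length $r$, so partition refinement corresponds to $r\to 0$, not $r\to\infty$ --- the lemma statement carries a typo (compare \autoref{thm:approx_by_step_functions}, where the limit is $r\to 0$), and your phrase ``refining the partition refers to the limit $r\to\infty$'' inherits this inconsistency, though the argument is correct for the intended fine limit.
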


\begin{proof}
Observe that for every $t\in (s,\sfrac{1}{2})$ and every $f\in\dot{W}^{s,2}(\R^d)$, the estimate
	\begin{equation*}
		\|f\|_{\dot{W}^{s,2}(\R)}^2 = \int_{\left\{\left.x\in\R^d\right|\, |x|<R\right\}}|x|^{2s}|\hat{f}|^2\d{x}  + \int_{\left\{\left. x\in\R^d\right|\, |x|\geq R\right\}}|x|^{2s}|\hat{f}|^2\d{x} \leq R^{2s}\|f\|_{ L^2(\R;\mathbb{C})}^2 + \frac{1}{R^{2(t-s)}} \|f\|_{\dot{W}^{t,2}(\R)}^2
	\end{equation*}
holds for every $R>0$. Consequently, it is obtained by \autoref{lem:approx_1} that there exists a finite positive constant $C_{t,d}$ such that 
	\begin{equation*}
		\|M_{a,r}\varphi - \varphi\|_{\dot{W}^{s,2}(\R)}^2 \leq R^{2s} \|M_{a,r}\varphi - \varphi\|_{ L^2(\R;\mathbb{C})}^2 + \frac{C_{t,d}}{R^{2(t-s)}} \|\varphi\|_{\dot{W}^{t,2}(\R)}^2
	\end{equation*}
holds for every $R>0$ and every $\varphi\in\mathscr{C}_c^\infty(\R^d;\mathbb{C})$. Density of the space $\mathscr{C}_c^\infty(\R^d;\mathbb{C})$ in the space $\dot{W}^{s,2}(\mathbb{R}^d)$ concludes the proof. 
\end{proof}

\begin{proposition}
\label{thm:approx_by_step_functions}
If $|s|<\sfrac{1}{2}$, then there exists a finite positive constant $C_{s,d}$ such that the inequality
	\begin{equation*}
		\|M_{a,r}F\|_{\dot{W}^{s,2}(\R^d)} \leq C_{s,d} \|F\|_{\dot{W}^{s,2}(\R^d)}
	\end{equation*}
is satisfied for every $F\in\dot{W}^{s,2}(\R^d)$. Moreover, there is the convergence
	\begin{equation*}
		\lim_{r\rightarrow 0}\sup_{a\in\R^d} \|M_{a,r}F-F\|_{\dot{W}^{s,2}(\R^d)}=0
	\end{equation*}
for every $F\in \dot{W}^{s,2}(\R^d)$. 
\end{proposition}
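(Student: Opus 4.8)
The plan is to prove the uniform bound first and then deduce the convergence by density, in both parts splitting the range into $s\in(0,\sfrac{1}{2})$, $s=0$, and $s\in(-\sfrac{1}{2},0)$. For $0<s<\sfrac{1}{2}$ I would use the classical Plancherel identity, according to which the square of the Gagliardo seminorm appearing in \autoref{lem:approx_1} equals $c_{d,s}\|\,\cdot\,\|_{\dot{W}^{s,2}(\R^d)}^{2}$ for a finite positive constant $c_{d,s}$; since \autoref{prop:Lp_embeddings} gives $\dot{W}^{s,2}(\R^d)\subseteq L^{\frac{2d}{d-2s}}(\R^d;\mathbb{C})\subseteq L^1_{\mathrm{loc}}(\R^d)$, \autoref{lem:approx_1} with $p=2$ applies and yields $\|M_{a,r}F\|_{\dot{W}^{s,2}(\R^d)}\le C_{s,d}\|F\|_{\dot{W}^{s,2}(\R^d)}$ with $C_{s,d}$ independent of the decomposition, hence of $a$ and $r$. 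For $s=0$ the operator $M_{a,r}$ is the conditional expectation onto the $\sigma$-algebra generated by the cubes $\{Q_k^{a,r}\}_k$, hence an orthogonal projection and in particular a contraction on $L^2(\R^d)=\dot{W}^{0,2}(\R^d)$. For $-\sfrac{1}{2}<s<0$ I would argue by duality: by \autoref{prop:Lp_embeddings} every $g\in\dot{W}^{-s,2}(\R^d)$ is a genuine function lying in $L^1_{\mathrm{loc}}(\R^d)$, so $M_{a,r}g$ is well defined, vanishes at infinity by construction, and satisfies $\|M_{a,r}g\|_{\dot{W}^{-s,2}(\R^d)}\le C_{-s,d}\|g\|_{\dot{W}^{-s,2}(\R^d)}$ by the already settled case $-s\in(0,\sfrac{1}{2})$; since $M_{a,r}$ is self-adjoint on $L^2(\R^d)$, the step function $\sum_k(J_{Q_k^{a,r}}F)\bm{1}_{Q_k^{a,r}}$ acts on $\dot{W}^{-s,2}(\R^d)$ as $g\mapsto\langle F,M_{a,r}g\rangle$, and using the identification $\dot{W}^{s,2}(\R^d)=(\dot{W}^{-s,2}(\R^d))^{*}$,
\begin{equation*}
\|M_{a,r}F\|_{\dot{W}^{s,2}(\R^d)}=\sup_{\|g\|_{\dot{W}^{-s,2}(\R^d)}\le 1}\bigl|\langle F,M_{a,r}g\rangle\bigr|\le C_{-s,d}\|F\|_{\dot{W}^{s,2}(\R^d)} .
\end{equation*}

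For the convergence, fix $|s|<\sfrac{1}{2}$, $F\in\dot{W}^{s,2}(\R^d)$, and $\varepsilon>0$, and choose $\varphi\in\mathscr{C}_c^{\infty}(\R^d;\mathbb{C})$ with $\|F-\varphi\|_{\dot{W}^{s,2}(\R^d)}<\varepsilon$. By the uniform bound just obtained,
\begin{equation*}
\sup_{a\in\R^d}\|M_{a,r}F-F\|_{\dot{W}^{s,2}(\R^d)}\le (C_{s,d}+1)\varepsilon+\sup_{a\in\R^d}\|M_{a,r}\varphi-\varphi\|_{\dot{W}^{s,2}(\R^d)},
\end{equation*}
so it remains to show that $\sup_{a}\|M_{a,r}\varphi-\varphi\|_{\dot{W}^{s,2}(\R^d)}\to 0$ as $r\to 0$ for a fixed test function $\varphi$. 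I would split the frequency integral at a level $R>0$: picking $t$ with $t>s$ and $0<t<\sfrac{1}{2}$ and using the bound once more (available on $\dot{W}^{t,2}(\R^d)$),
\begin{equation*}
\int_{\{|\xi|\ge R\}}|\xi|^{2s}\bigl|\widehat{M_{a,r}\varphi-\varphi}(\xi)\bigr|^{2}\d\xi\le R^{-2(t-s)}\|M_{a,r}\varphi-\varphi\|_{\dot{W}^{t,2}(\R^d)}^{2}\le R^{-2(t-s)}(C_{t,d}+1)^{2}\|\varphi\|_{\dot{W}^{t,2}(\R^d)}^{2},
\end{equation*}
which is made $<\sfrac{\varepsilon^{2}}{2}$ by taking $R$ large, uniformly in $a$ and $r$. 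On $\{|\xi|<R\}$ I would bound the integral by $R^{2s}\|M_{a,r}\varphi-\varphi\|_{L^2(\R^d)}^{2}$ if $s\ge 0$ and by a constant times $R^{d+2s}\|M_{a,r}\varphi-\varphi\|_{L^1(\R^d)}^{2}$ if $s<0$ (legitimate since $d+2s>0$); for $r\le 1$ the function $M_{a,r}\varphi$ is supported in a fixed bounded neighbourhood of $\operatorname{supp}\varphi$ and $\|M_{a,r}\varphi-\varphi\|_{L^{\infty}(\R^d)}\le\omega_{\varphi}(r\sqrt{d})$, where $\omega_{\varphi}$ is the modulus of continuity of $\varphi$, so the relevant norm tends to $0$ as $r\to 0$ uniformly in $a$. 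Choosing first $R$ and then $r$ small completes the argument.

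The step I expect to be the main obstacle is the range $-\sfrac{1}{2}<s<0$. There one has to make sense of $M_{a,r}$ on genuine distributions via the Fourier-side averages $J_Q$ from \eqref{eq:J_Q}, verify that it is self-adjoint on $L^2(\R^d)$ and that this is compatible with the duality $\dot{W}^{s,2}(\R^d)=(\dot{W}^{-s,2}(\R^d))^{*}$ (which requires justifying the interchange of the series defining $M_{a,r}F$ with the pairing), and conclude that the resulting functional actually represents an element of $\dot{W}^{s,2}(\R^d)$ — i.e.\ that finiteness of the dual norm together with the decay of $M_{a,r}F$ at infinity places it in the homogeneous space, not merely bounds a seminorm. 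The remaining ingredients (the positive-order estimate from \autoref{lem:approx_1}, the $L^2$ contraction at $s=0$, and the frequency split) are routine, but uniformity in the translation parameter $a$ must be tracked at every step.
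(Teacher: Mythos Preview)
Your proposal is correct and follows essentially the same route as the paper: the uniform bound for $s\in[0,\sfrac{1}{2})$ comes from \autoref{lem:approx_1}, the bound for $s\in(-\sfrac{1}{2},0)$ from the $L^2$-self-adjointness of $M_{a,r}$ and duality, and the convergence from a frequency-splitting (interpolation-type) estimate combined with density of $\mathscr{C}_c^\infty$ --- exactly as in \autoref{lem:approx_2} and the paper's proof. The only notable differences are cosmetic: the paper obtains the negative-order bound first on Schwartz functions and then extends by density, whereas you pair a general $F$ directly against $M_{a,r}g$; and for the low-frequency piece when $s<0$ you correctly replace the $L^2$ control by an $L^1$ bound (since $|\xi|^{2s}$ is unbounded near the origin), a point the paper simply absorbs into the phrase ``an interpolation argument similar to the one in \autoref{lem:approx_2}''.
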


\begin{proof}
The case $0\leq s<\sfrac{1}{2}$ is covered by \autoref{lem:approx_1} and \autoref{lem:approx_2}. If $\varphi,\psi\in \mathscr{S}(\R^d ;\mathbb{C})$, then we have that 
	\begin{equation*}
		\langle M_{a,r}\varphi,\psi\rangle_{L^2(\R;\mathbb{C})} = \langle \varphi, M_{a,r}\psi\rangle_{ L^2(\R;\mathbb{C})}.
	\end{equation*}
Hence, there exists a finite positive constant $C_{s,d}$ such that the inequality
	\begin{equation*}
		\|M_{a,r}\varphi\|_{\dot{W}^{-s,2}(\R^d)} \leq C_{s,d} \|\varphi\|_{\dot{W}^{-s,2}(\R^d)}
	\end{equation*}
is satisfied for every $\varphi\in\mathscr{C}^{\infty}_c(\R^d;\mathbb{C})$ by duality. Since the space $\mathscr{C}_c^\infty(\R^d;\mathbb{C})$ is dense in $\dot{W}^{-s,2}(\R^d)$, boundedness of $M_{a,r}$ follows. The approximation property then follows from an interpolation argument similar to the one in \autoref{lem:approx_2}.
\end{proof}

\begin{lemma}
\label{lem:localization}
For every $|s|<\sfrac{1}{2}$ there exists a finite positive constant $C_s$ such that the inequality
	\begin{equation*}
		 \sup\{\|\bm{1}_JF\|_{\dot{W}^{s,2}(\R)}\,|\, J \mbox{ interval in }\R\} \leq C_s \|F\|_{\dot{W}^{s,2}(\R)}
	\end{equation*}
is satisfied for every $F\in \dot{W}^{s,2}(\R)$. Moreover, for every $F\in\dot{W}^{s,2}(\R)$ and $x\in\R$, there are the convergences
	\begin{align*}
		& \lim_{y\rightarrow\infty}\|\bm{1}_{(y,\infty)}F\|_{\dot{W}^{s,2}(\R)} = 0,\\
		& \lim_{y\rightarrow x} \|\bm{1}_{(x,y)}F\|_{\dot{W}^{s,2}(\R)} = 0.
	\end{align*}
\end{lemma}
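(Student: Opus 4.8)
The plan is to reduce the entire statement to the single fact that multiplication by $\bm{1}_{(0,\infty)}$ is bounded on $\dot{W}^{s,2}(\R)$ for $0<s<\sfrac{1}{2}$, to prove this by combining the Gagliardo description of the norm with a one-dimensional fractional Hardy inequality, to pass to negative $s$ by duality, and to deduce the two limits by density of $\mathscr{C}_c^\infty(\R)$. The case $s=0$ is immediate, since multiplication by an indicator is a contraction on $L^2(\R)=\dot{W}^{0,2}(\R)$. For $0<s<\sfrac{1}{2}$ I would work with the equivalence $\|F\|_{\dot{W}^{s,2}(\R)}^2\eqsim[F]_s^2:=\int_\R\int_\R|F(u)-F(v)|^2|u-v|^{-1-2s}\,\d u\,\d v$ (the same Gagliardo seminorm used in \autoref{lem:approx_1} and \autoref{lem:approx_2}). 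Since both $\|\cdot\|_{\dot{W}^{s,2}(\R)}$ and $[\cdot]_s$ are invariant under translations and under $x\mapsto-x$, and since $\bm{1}_{(a,b)}=\bm{1}_{(a,\infty)}-\bm{1}_{[b,\infty)}$ for every interval, it suffices to bound $F\mapsto\bm{1}_{(0,\infty)}F$. Splitting the integral defining $[\bm{1}_{(0,\infty)}F]_s^2$ over the four quadrants $\{\pm u>0,\ \pm v>0\}$, the quadrant $\{u<0,v<0\}$ contributes nothing, the quadrant $\{u>0,v>0\}$ is dominated by $[F]_s^2$, and the two mixed quadrants contribute, after the elementary integration in the negative variable, exactly $\tfrac{1}{s}\int_0^\infty x^{-2s}|F(x)|^2\,\d x$. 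Hence everything reduces to the fractional Hardy inequality
\[
	\int_0^\infty\frac{|F(x)|^2}{x^{2s}}\,\d x\ \le\ C_s\,[F]_s^2,\qquad 0<s<\sfrac{1}{2}.
\]

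To establish this Hardy inequality in a self-contained way I would first take $F\in\mathscr{C}_c^\infty(\R)$ (so the left-hand side is a priori finite, as $2s<1$), fix $N>1$, and for $x>0$ write $F(x)=\big(F(x)-a(x)\big)+a(x)$ with $a(x):=\tfrac{1}{Nx}\int_{Nx}^{2Nx}F(y)\,\d y$. The triangle inequality in $L^2\big((0,\infty);x^{-2s}\d x\big)$ splits $\|x^{-s}F\|_{L^2(0,\infty)}$ with no numerical loss. For the first term, Jensen's inequality together with $|x-y|\le 2Nx$ on $(Nx,2Nx)$ gives a bound $2^{\frac{1}{2}+s}N^{s}[F]_s$, polynomial in $N$, because the relevant region of integration is contained in $\R^2$. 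For the averaging term, writing $\tfrac{1}{Nx}\int_{Nx}^{2Nx}|F(y)|\,\d y=\int_1^2|F(Nxu)|\,\d u$, Minkowski's integral inequality and the scaling identity $\|x^{-s}F(Nxu)\|_{L^2(\d x)}=(Nu)^{s-\frac{1}{2}}\|x^{-s}F\|_{L^2(\d x)}$ give $\|x^{-s}a\|_{L^2(0,\infty)}\le N^{s-\frac{1}{2}}\big(\int_1^2 u^{s-\frac{1}{2}}\,\d u\big)\|x^{-s}F\|_{L^2(0,\infty)}$. Since $s-\sfrac{1}{2}<0$, the prefactor is $<1$ for $N$ large, so this term absorbs into the left-hand side. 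A density argument (using $\dot{W}^{s,2}(\R)\hookrightarrow L^{\frac{2}{1-2s}}(\R)$ from \autoref{prop:Lp_embeddings} to identify limits) extends the Hardy inequality, and with it the bound $\|\bm{1}_{(0,\infty)}F\|_{\dot{W}^{s,2}(\R)}\le C'_s\|F\|_{\dot{W}^{s,2}(\R)}$, to all of $\dot{W}^{s,2}(\R)$; the constant does not depend on the interval, which gives the uniform bound for $0<s<\sfrac{1}{2}$.

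For $-\sfrac{1}{2}<s<0$ I would argue by duality: from the Fourier-side definition in \autoref{app:Ws_R}, $\dot{W}^{s,2}(\R)$ and $\dot{W}^{-s,2}(\R)$ are in duality through the extension of the $L^2$ pairing, and multiplication by $\bm{1}_J$ is formally self-adjoint for it, so for $F,G\in\mathscr{C}_c^\infty(\R)$ one has $|\langle\bm{1}_JF,G\rangle|=|\langle F,\bm{1}_JG\rangle|\le C_{-s}\|F\|_{\dot{W}^{s,2}(\R)}\|G\|_{\dot{W}^{-s,2}(\R)}$ by the case already treated; taking the supremum over $G$ (dense in $\dot{W}^{-s,2}(\R)$) and extending by density yields the uniform bound for negative $s$ too. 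Finally, for the two convergences I would combine the uniform bound with density of $\mathscr{C}_c^\infty(\R)$: given $F\in\dot{W}^{s,2}(\R)$ and $\varepsilon>0$ choose $\Phi\in\mathscr{C}_c^\infty(\R)$ with $\|F-\Phi\|_{\dot{W}^{s,2}(\R)}<\varepsilon$, so that $\|\bm{1}_JF\|_{\dot{W}^{s,2}(\R)}\le C_s\varepsilon+\|\bm{1}_J\Phi\|_{\dot{W}^{s,2}(\R)}$; for $J=(y,\infty)$ the last term vanishes once $y>\sup(\mathrm{supp}\,\Phi)$, and for $J=(x,y)$ with $y\to x+$ an elementary estimate (through the Gagliardo seminorm when $s>0$, using that $\Phi$ is bounded with bounded derivative; through $\bm{1}_{(x,y)}\Phi\in L^{\frac{2}{1+2|s|}}(\R)\subset\dot{W}^{s,2}(\R)$ via \autoref{prop:Lp_embeddings} when $s\le0$) shows $\|\bm{1}_{(x,y)}\Phi\|_{\dot{W}^{s,2}(\R)}\le C_\Phi(y-x)^{\theta}$ for some $\theta=\theta(s)>0$; letting $\varepsilon\downarrow0$ gives both limits.

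I expect the fractional Hardy inequality to be the only genuine obstacle, and within it the one point that really uses $s<\sfrac{1}{2}$ is that the averaging term can be turned into a strict contraction (after dilating by $N$), which is precisely what makes the absorption step legitimate; everything else is bookkeeping with the Gagliardo seminorm, scaling, and density.
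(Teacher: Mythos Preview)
Your proposal is correct and follows essentially the same route as the paper: trivial at $s=0$, fractional Hardy inequality for $0<s<\sfrac{1}{2}$, duality for $-\sfrac{1}{2}<s<0$, and density of $\mathscr{C}_c^\infty(\R)$ for the two convergence statements. The only substantive difference is that where the paper simply cites \cite{KruMalPer00} for the fractional Hardy inequality, you supply a self-contained averaging--absorption proof of it; your explicit reduction to the half-line indicator and the quadrant splitting of the Gagliardo integral make transparent exactly why Hardy is the right tool, and your treatment of the convergences (via \autoref{prop:Lp_embeddings} for $s\le 0$) spells out what the paper leaves implicit under ``duality, interpolation, and \autoref{prop:Lp_embeddings}''.
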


\begin{proof}
For $s=0$, the claim is clear. For $0<s<\sfrac{1}{2}$, the claim is first proved for $f\in\mathscr{C}_c^\infty(\R;\mathbb{C})$ by using the fractional Hardy's inequality (see, e.g., \cite{KruMalPer00}) and then extended to $f\in\dot{W}^{s,2}(\R)$ by a standard density argument. For $-\sfrac{1}{2}<s<0$, the claim follows from duality, interpolation, and \autoref{prop:Lp_embeddings}.
\end{proof}

\subsection{Homogeneous fractional Sobolev spaces on intervals}

\begin{definition}
\label{app:Ws_I}
Let $T$ be an interval in $\R$ and let $s\in\R$ be such that $|s|<\sfrac{1}{2}$. The \textit{homogeneous fractional Sobolev space} $\dot{W}^{s,2}(T)$ is defined as the space
	\begin{equation*}
		\dot{W}^{s,2}(T) := \left\{F: \mathscr{C}^\infty_c(T)\rightarrow \R \left| \,\exists \overline{F} \in\dot{W}^{s,2}(\R):  \overline{F}|_{\mathscr{C}^\infty_c(T)}=F\mbox{ and } \overline{F}|_{\mathscr{C}^\infty_c(\R\setminus T)} = 0\right.\right\}
	\end{equation*}
equipped with the norm $\|\cdot\|_{\dot{W}^{s,2}(T)}$ that is defined by $\|F\|_{\dot{W}^{s,2}(T)}:= \|\overline{F}\|_{\dot{W}^{s,2}(\R)}$. 
\end{definition}

It should be noted that it follows from \autoref{lem:localization} that if $F\in \dot{W}^{s,2}(T)$, its extension by zero outside $T$, denoted by $\overline{F}$ in \autoref{app:Ws_I}, is unique. The following two propositions and the subsequent remark provide an alternative way of computing the norms of functions in the space $\dot{W}^{s,2}(T)$.

\begin{proposition}
Let $T$ be an unbounded interval in $\R$ and let $0<s<\sfrac{1}{2}$. It holds that 
	\begin{equation*}
		\dot{W}^{s,2}(T) = \left\{f\in L^{\frac{2}{1-2s}}(T;\mathbb{C})\,\left|\, \vertiii{f}_{\dot{W}^{s,2}(T)}<\infty \right.\right\}
	\end{equation*}
where $\vertiii{\,\cdot\,}_{\dot{W}^{s,2}(T)}$ is defined by 
	\begin{equation*}
		\vertiii{f}_{\dot{W}^{s,2}(T)} := \left(\int_T\int_T\frac{|f(x)-f(y)|^2}{|x-y|^{1+2s}}\d{x}\d{y}\right)^\frac{1}{2}.
	\end{equation*}
Moreover, the norm $\vertiii{\,\cdot\,}_{\dot{W}^{s,2}(T)}$ is equivalent to the norm $\|\cdot\|_{\dot{W}^{s,2}(T)}$. 
\end{proposition}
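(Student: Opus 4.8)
The plan is to pass through the whole line, where the Gagliardo seminorm is comparable to the Fourier-side norm, and then transfer the statement to the interval by means of a zero-extension together with a fractional Hardy inequality. \emph{Step 1 (the identity on $\R$).} First I would prove that, for $0<s<\sfrac12$, one has $\vertiii{F}_{\dot W^{s,2}(\R)}^2 = C_s\,\|F\|_{\dot W^{s,2}(\R)}^2$ on $\dot W^{s,2}(\R)$, where $C_s:=\int_\R |h|^{-1-2s}|\mathrm e^{\mathrm i h}-1|^2\,\d h\in(0,\infty)$, and moreover that the two descriptions of $\dot W^{s,2}(\R)$ coincide, i.e. a tempered function $F$ with $\vertiii{F}_{\dot W^{s,2}(\R)}<\infty$ automatically admits the representation $\widehat F=|\xi|^{-s}h_F$ with $h_F\in L^2(\R;\mathbb C)$. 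For $F\in\mathscr C_c^\infty(\R;\mathbb C)$ this is the textbook computation: substituting $y=x+h$, Fubini and Plancherel turn $\vertiii{F}_{\dot W^{s,2}(\R)}^2$ into $\int_\R |\widehat F(\xi)|^2\bigl(\int_\R |h|^{-1-2s}|\mathrm e^{\mathrm i h\xi}-1|^2\,\d h\bigr)\d\xi$, and the inner integral equals $C_s|\xi|^{2s}$ after the scaling $h\mapsto h/|\xi|$; the finiteness $C_s<\infty$ is exactly where $0<s<1$ enters. The general case follows because $\mathscr C_c^\infty(\R;\mathbb C)$ is dense in $\dot W^{s,2}(\R)$ (as recorded after \autoref{app:Ws_R}) and, for the converse direction, via a mollify-then-truncate approximation controlled by the localization estimates of \autoref{lem:localization} and \autoref{thm:approx_by_step_functions}.

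\emph{Step 2 (the inclusion $\dot W^{s,2}(T)\subseteq\{f\in L^{2/(1-2s)}(T;\mathbb C):\vertiii{f}_{\dot W^{s,2}(T)}<\infty\}$).} Let $f\in\dot W^{s,2}(T)$ and let $\overline f\in\dot W^{s,2}(\R)$ be its extension by zero, which is unique by \autoref{lem:localization}. By \autoref{prop:Lp_embeddings} with $d=1$ one gets $\overline f\in L^{2/(1-2s)}(\R;\mathbb C)$, hence $f=\overline f|_T\in L^{2/(1-2s)}(T;\mathbb C)$. Restricting the region of integration from $\R\times\R$ to $T\times T$ gives $\vertiii{f}_{\dot W^{s,2}(T)}\le\vertiii{\overline f}_{\dot W^{s,2}(\R)}$, which by Step 1 equals $\sqrt{C_s}\,\|\overline f\|_{\dot W^{s,2}(\R)}=\sqrt{C_s}\,\|f\|_{\dot W^{s,2}(T)}<\infty$.

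\emph{Step 3 (the reverse inclusion and the norm equivalence).} Let $f\in L^{2/(1-2s)}(T;\mathbb C)$ with $\vertiii{f}_{\dot W^{s,2}(T)}<\infty$ and let $\overline f$ be its extension by zero, so $\overline f\in L^{2/(1-2s)}(\R;\mathbb C)\subseteq\mathscr S'(\R;\mathbb C)$. Splitting the region of integration of $\vertiii{\overline f}_{\dot W^{s,2}(\R)}^2$ into $T\times T$, $T\times(\R\setminus T)$, $(\R\setminus T)\times T$, $(\R\setminus T)\times(\R\setminus T)$, using that $\overline f\equiv0$ off $T$ and the symmetry of the integrand, I would arrive at
\[
\vertiii{\overline f}_{\dot W^{s,2}(\R)}^2 = \vertiii{f}_{\dot W^{s,2}(T)}^2 + 2\int_T\int_{\R\setminus T}\frac{|f(x)|^2}{|x-y|^{1+2s}}\,\d y\,\d x .
\]
Since $T$ is an unbounded interval, $\R\setminus T$ is a half-line (or empty, in which case there is nothing to prove), so for $x\in T$ one computes $\int_{\R\setminus T}|x-y|^{-1-2s}\,\d y=\tfrac1{2s}\,\mathrm{dist}(x,\R\setminus T)^{-2s}$, whence the second term equals $\tfrac1s\int_T |f(x)|^2\,\mathrm{dist}(x,\R\setminus T)^{-2s}\,\d x$. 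By the fractional Hardy inequality on a half-line (see \cite{KruMalPer00}), valid precisely for $0<s<\sfrac12$, this is $\lesssim\vertiii{f}_{\dot W^{s,2}(T)}^2$. Hence $\vertiii{\overline f}_{\dot W^{s,2}(\R)}<\infty$, so by Step 1 $\overline f\in\dot W^{s,2}(\R)$ with $\|\overline f\|_{\dot W^{s,2}(\R)}=C_s^{-1/2}\vertiii{\overline f}_{\dot W^{s,2}(\R)}\lesssim\vertiii{f}_{\dot W^{s,2}(T)}$; since $\overline f$ vanishes on $\mathscr C_c^\infty(\R\setminus T)$, this yields $f\in\dot W^{s,2}(T)$ with $\|f\|_{\dot W^{s,2}(T)}\lesssim\vertiii{f}_{\dot W^{s,2}(T)}$. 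Combined with Step 2, the two norms are equivalent.

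\emph{Expected main obstacle.} The delicate point is the converse half of Step 1: finiteness of the Gagliardo seminorm together with $F\in L^{2/(1-2s)}(\R;\mathbb C)$ must be upgraded to the Fourier representation $\widehat F=|\xi|^{-s}h_F$, $h_F\in L^2$, but a priori $\widehat F$ is only a tempered distribution for an $L^p$ function with $p>2$. The standard remedy — mollify $F$, then multiply by a smooth cutoff (keeping the Gagliardo seminorm controlled via a localization bound of the type in \autoref{lem:localization} and \autoref{thm:approx_by_step_functions}), apply the classical identity to the resulting $\mathscr C_c^\infty$ functions, and pass to the limit — is routine but has to be carried out with care. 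A secondary, minor point is that the fractional Hardy inequality in Step 3 must be invoked in exactly the form valid for $0<s<\sfrac12$ (it fails at $s=\sfrac12$), which is precisely why the hypothesis on $s$ is sharp here.
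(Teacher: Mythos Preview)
The paper does not actually supply a proof of this proposition: it is stated in the appendix as a known result, with pointers to the standard references \cite{BerLof76,Trie78,Trie83}. Your argument is the textbook one and is correct. The identity in Step~1 between the Gagliardo seminorm and the Fourier-side norm on $\R$ is classical; the zero-extension plus fractional Hardy inequality in Step~3 is exactly the device the paper itself uses when proving the localization estimate \autoref{lem:localization} (where \cite{KruMalPer00} is invoked for $0<s<\sfrac12$ and duality handles $-\sfrac12<s<0$). So your route is fully aligned with the tools the paper has set up.

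The only point worth tightening is the converse half of Step~1, which you flag yourself. Rather than mollify-and-truncate with bare hands, you can lean on what the paper already provides: once you know $\vertiii{\overline f}_{\dot W^{s,2}(\R)}<\infty$ and $\overline f\in L^{2/(1-2s)}(\R;\mathbb C)\subset L^1_{\mathrm{loc}}(\R)$, apply the averaging operators $M_{a,r}$ from \autoref{thm:approx_by_step_functions}. By \autoref{lem:approx_1} the Gagliardo seminorm of $M_{a,r}\overline f$ is uniformly controlled by that of $\overline f$; since each $M_{a,r}\overline f$ is a step function vanishing at infinity it lies in $\dot W^{s,2}(\R)$, and the explicit identity of Step~1 applies to it. Passing to the limit $r\to0$ (using \autoref{lem:approx_2} in $L^2$ and Fatou on the Fourier side) then gives $\overline f\in\dot W^{s,2}(\R)$ with the desired norm bound. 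This avoids any ad hoc approximation and stays entirely within the paper's framework.
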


\begin{proposition}
\label{prop:equiv_norm_1}
Let $T$ be a bounded interval in $\R$ and let $0<s<\sfrac{1}{2}$. It holds that 
	\begin{equation*}
		\dot{W}^{s,2}(T) = \left\{ f\in L^2(T;\mathbb{C})\,\left| \vertiii{f}_{\dot{W}^{s,2}(T)}<\infty\right.\right\}
	\end{equation*}
where the norm $\vertiii{\,\cdot\,}_{\dot{W}^{s,2}(T)}$ is in the case of $T$ being a bounded interval defined by 
\begin{equation*}
		\vertiii{f}_{\dot{W}^{s,2}(T)} := \left(\int_T |f(x)|^2\d{x}\right)^\frac{1}{2} + \left(\int_T\int_T \frac{|f(x)-f(y)|^2}{|x-y|^{1+2s}}\d{x}\d{y}\right)^\frac{1}{2}.
	\end{equation*}
Moreover, the norm $\vertiii{\,\cdot\,}_{\dot{W}^{s,2}(T)}$ is equivalent to the norm $\|\cdot\|_{\dot{W}^{s,2}(T)}$. 
\end{proposition}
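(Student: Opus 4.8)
The plan is to prove the two set inclusions, each with the matching one-sided norm estimate; the asserted equivalence of $\|\,\cdot\,\|_{\dot W^{s,2}(T)}$ and $\vertiii{\,\cdot\,}_{\dot W^{s,2}(T)}$ then follows immediately. The whole argument runs through the extension-by-zero operator of \autoref{app:Ws_I} and the classical Gagliardo--Fourier identity
\[ \int_{\R}\int_{\R}\frac{|g(x)-g(y)|^2}{|x-y|^{1+2s}}\,\d x\,\d y \;=\; c_s\,\bigl\|\,|x|^{s}\widehat{g}\,\bigr\|_{L^2(\R;\mathbb C)}^2 , \qquad c_s:=\int_{\R}\frac{|1-\mathrm e^{\mathrm i t}|^2}{|t|^{1+2s}}\,\d t , \]
valid for every $g\in L^2(\R)$ (with both sides possibly infinite), where $c_s\in(0,\infty)$ since $0<s<1$; this is, in fact, the very identity underlying the characterisation on unbounded intervals.

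\textbf{The inclusion ``$\subseteq$''.} Let $f\in\dot W^{s,2}(T)$ and let $\overline f\in\dot W^{s,2}(\R)$ be its extension by zero, which is unique by \autoref{lem:localization}. By \autoref{prop:Lp_embeddings} with $d=1$ one has $\overline f\in L^{\frac{2}{1-2s}}(\R)$, and since $T$ is bounded, Hölder's inequality gives $f=\overline f|_T\in L^2(T)$ with $\|f\|_{L^2(T)}\le|T|^{s}\|\overline f\|_{L^{2/(1-2s)}(\R)}\lesssim\|f\|_{\dot W^{s,2}(T)}$; in particular $\overline f\in L^2(\R)$. Applying the Gagliardo--Fourier identity to $g=\overline f$ and recalling $\|\overline f\|_{\dot W^{s,2}(\R)}=\|\,|x|^s\widehat{\overline f}\,\|_{L^2}=\|f\|_{\dot W^{s,2}(T)}$ yields $\int_{\R}\int_{\R}|x-y|^{-1-2s}|\overline f(x)-\overline f(y)|^2\,\d x\,\d y=c_s\|f\|_{\dot W^{s,2}(T)}^2$; restricting this non-negative integrand to $T\times T$ bounds the Gagliardo term of $\vertiii f_{\dot W^{s,2}(T)}$ by $\sqrt{c_s}\,\|f\|_{\dot W^{s,2}(T)}$. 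Adding the two estimates gives $\vertiii f_{\dot W^{s,2}(T)}\lesssim\|f\|_{\dot W^{s,2}(T)}$.

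\textbf{The inclusion ``$\supseteq$''.} Let $f\in L^2(T)$ with $\vertiii f_{\dot W^{s,2}(T)}<\infty$, write $T=(a,b)$, and let $\overline f\in L^2(\R)$ be its extension by zero. Splitting $\R\times\R$ into $T\times T$, the two mixed strips, and $(\R\setminus T)\times(\R\setminus T)$ (on which $\overline f\equiv0$), and using the elementary computation $\int_{\R\setminus T}|x-y|^{-1-2s}\,\d y=\tfrac1{2s}\bigl[(x-a)^{-2s}+(b-x)^{-2s}\bigr]$ for $x\in(a,b)$, one obtains
\[ \int_{\R}\int_{\R}\frac{|\overline f(x)-\overline f(y)|^2}{|x-y|^{1+2s}}\,\d x\,\d y \;=\; \int_{T}\int_{T}\frac{|f(x)-f(y)|^2}{|x-y|^{1+2s}}\,\d x\,\d y \;+\; \frac1s\int_a^b|f(x)|^2\bigl[(x-a)^{-2s}+(b-x)^{-2s}\bigr]\,\d x . \]
The first summand is $\le\vertiii f_{\dot W^{s,2}(T)}^2$, and the second is $\lesssim\vertiii f_{\dot W^{s,2}(T)}^2$ by the fractional Hardy inequality on the bounded interval $(a,b)$ with the distance-to-boundary weight, valid precisely for $0<s<\sfrac12$ (see \cite{KruMalPer00}). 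Hence the left-hand side is finite, so the Gagliardo--Fourier identity forces $|x|^{s}\widehat{\overline f}\in L^2(\R;\mathbb C)$, i.e.\ $\overline f\in\dot W^{s,2}(\R)$ in the sense of \autoref{app:Ws_R} with $\|\overline f\|_{\dot W^{s,2}(\R)}\lesssim\vertiii f_{\dot W^{s,2}(T)}$. Since $\overline f$ restricts to $f$ on $\mathscr C_c^\infty(T)$ and to $0$ on $\mathscr C_c^\infty(\R\setminus T)$, \autoref{app:Ws_I} gives $f\in\dot W^{s,2}(T)$ with $\|f\|_{\dot W^{s,2}(T)}=\|\overline f\|_{\dot W^{s,2}(\R)}\lesssim\vertiii f_{\dot W^{s,2}(T)}$, which completes the argument.

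The only non-routine step is the fractional Hardy inequality used in the third paragraph: it is exactly what converts the boundary contribution $\int_a^b|f(x)|^2(x-a)^{-2s}\,\d x$ into a controlled quantity, and it is precisely this step that fails for $s\ge\sfrac12$ — which is why the statement is restricted to $0<s<\sfrac12$ and why, in contrast with the unbounded-interval case, the term $\bigl(\int_T|f|^2\bigr)^{1/2}$ must be present in $\vertiii{\,\cdot\,}_{\dot W^{s,2}(T)}$ (for instance, constants on $(a,b)$ have vanishing Gagliardo seminorm but non-zero boundary contribution). Everything else is the one-dimensional integral computation displayed above, the Gagliardo--Fourier identity, and the bookkeeping around zero-extensions already provided by \autoref{app:Ws_I} and \autoref{lem:localization}.
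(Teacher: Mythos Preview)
The paper does not actually supply a proof of this proposition: it is stated in the appendix as a known characterisation, with the standard references \cite{BerLof76,Trie78,Trie83} given at the beginning of \autoref{sec:appendix}. Your argument therefore cannot be compared line by line with the paper's own proof, but it is a correct and essentially self-contained derivation, and it is fully in the spirit of the tools the paper does develop. The ``$\subseteq$'' direction is exactly right: the Sobolev embedding of \autoref{prop:Lp_embeddings} plus H\"older on the bounded interval $T$ gives the $L^2$ control, and the Gagliardo--Fourier identity applied to the zero extension $\overline f$ controls the double integral. In the ``$\supseteq$'' direction your decomposition of $\R\times\R$ and the elementary evaluation of the off-diagonal strips are correct, and the only substantive input --- the fractional Hardy inequality on a bounded interval with the distance-to-boundary weight, valid for $0<s<\sfrac{1}{2}$ --- is precisely the ingredient the paper itself invokes (from \cite{KruMalPer00}) in the proof sketch of \autoref{lem:localization}. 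Your closing remark explaining why the $L^2$ term is indispensable and why the restriction $s<\sfrac{1}{2}$ is sharp is accurate and helpful.
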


\begin{remark}
\label{rem:equiv_norm}
There is also a description of the space $\dot{W}^{s,2}(T)$ with $-\sfrac{1}{2}<s<0$ in terms of duality. However, for the purposes in the present paper, it is only remarked that if $f\in L^\frac{2}{1-2s}(T)$ with $-\sfrac{1}{2}<s<0$ and $T$ a (bounded or unbounded) interval in $\R$, then $f\in\dot{W}^{s,2}(T)$ and it holds that
	\begin{equation*}
		\|f\|_{\dot{W}^{s,2}(T)}\eqsim \left( \int_T\int_T \frac{f(x)f(y)}{|x-y|^{1+2s}}\d{x}\d{y}\right)^\frac{1}{2}.
	\end{equation*}
\end{remark}

\subsection{Additional lemmas}
\label{sec:additional_lemmas}

In the first lemma, the norms of affine transformations of elements of homogeneous fractional Sobolev spaces are computed.

\begin{lemma}
\label{lem:affine_transform}
Let $a,b\in\R$, $a\neq 0$, and let $T\subseteq\R$ be an interval. Let $\tilde{T}\subseteq\R$ be the interval for which the map $g:\tilde{T}\rightarrow T$ given by $g(x)=ax+b$ is a bijection. Let $s\in\R$ be such that $|s|<\sfrac{1}{2}$, let $F\in \dot{W}^{s,2}(T)$ and define
	\begin{equation*}
		(F)_{a,b}(\varphi):= F\left(\frac{1}{|a|}\varphi\left(\frac{\cdot-b}{a}\right)\right), \quad \varphi\in \mathscr{C}_c^\infty(\tilde{T}).
	\end{equation*}
Then it holds that 
	\begin{equation*}
		\|(F)_{a,b}\|_{\dot{W}^{s,2}(\tilde{T})} = |a|^{s-\frac{1}{2}}\|F\|_{\dot{W}^{s,2}(T)}
	\end{equation*}
\end{lemma}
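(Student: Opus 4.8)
The plan is to reduce the claim to the scaling behaviour of the Fourier-side definition of $\dot{W}^{s,2}(\R)$ and then descend to the interval case via the extension-by-zero description in \autoref{app:Ws_I}. First I would treat the case $T=\R$ (so $\tilde T=\R$ as well). For $F\in\dot{W}^{s,2}(\R)$ with $\widehat F=|x|^{-s}h_F$, a direct computation shows that the Fourier transform of $(F)_{a,b}$, which is the dilation by $a$ composed with a modulation by $b$ applied to $F$ (the prefactor $1/|a|$ in the definition is exactly the one that makes $(F)_{a,b}$ the honest pullback $F\circ g$ as a distribution), satisfies $\widehat{(F)_{a,b}}(x)=\mathrm{e}^{\mathrm{i}bx/a}|a|^{-1+1}\widehat F(x/a)\cdot(\text{correct Jacobian})$; carefully, $\widehat{(F)_{a,b}}(x)=\mathrm{e}^{-\mathrm{i}bx}\,\widehat F(ax)$ up to a harmless constant, so that $|x|^s\widehat{(F)_{a,b}}(x)=|a|^{-s}\,(|ax|^{s}\widehat F(ax))\,\mathrm{e}^{-\mathrm{i}bx}$. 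Taking $L^2$-norms in $x$ and substituting $y=ax$ produces a factor $|a|^{-s}|a|^{-1/2}=|a|^{-s-1/2}$ times $\|h_F\|_{L^2}$. This is the opposite sign from what is claimed, so I need to be attentive to which of $a$ or $1/a$ appears in the definition of $(F)_{a,b}$: the test function is dilated by $1/a$, i.e. $(F)_{a,b}$ is $F\circ g$ with $g(x)=ax+b$, whose Fourier transform involves $\widehat F(x/a)$, giving the factor $|a|^{s-1/2}$ as stated. The modulation by $b$ drops out in $L^2$-norm, which is why no $b$-dependence survives.

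Second, for a general interval $T$ I would use \autoref{app:Ws_I}: $F\in\dot{W}^{s,2}(T)$ means its extension $\overline F$ by zero lies in $\dot{W}^{s,2}(\R)$ with $\|F\|_{\dot{W}^{s,2}(T)}=\|\overline F\|_{\dot{W}^{s,2}(\R)}$. The key observation is that the affine map $g(x)=ax+b$ carries $\tilde T$ bijectively onto $T$, hence carries $\R\setminus\tilde T$ onto $\R\setminus T$, so that $(\,\overline F\,)_{a,b}$ (the transform of the extension) is precisely the extension by zero of $(F)_{a,b}$; that is, the operations ``extend by zero'' and ``affine pullback'' commute. Therefore $\|(F)_{a,b}\|_{\dot{W}^{s,2}(\tilde T)}=\|(\,\overline F\,)_{a,b}\|_{\dot{W}^{s,2}(\R)}=|a|^{s-1/2}\|\overline F\|_{\dot{W}^{s,2}(\R)}=|a|^{s-1/2}\|F\|_{\dot{W}^{s,2}(T)}$, which is the assertion. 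One should check that $(F)_{a,b}$ as a functional on $\mathscr C_c^\infty(\tilde T)$ genuinely agrees with the restriction of $(\,\overline F\,)_{a,b}$ and that its zero-extension kills test functions supported in $\R\setminus\tilde T$; both are immediate from the change of variables $\varphi\mapsto \frac1{|a|}\varphi(\frac{\cdot-b}{a})$, which maps $\mathscr C_c^\infty(\tilde T)$ onto $\mathscr C_c^\infty(T)$ and $\mathscr C_c^\infty(\R\setminus\tilde T)$ onto $\mathscr C_c^\infty(\R\setminus T)$.

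The main obstacle I expect is purely bookkeeping: getting the powers of $|a|$ right, i.e. tracking the interplay between (i) the $1/|a|$ normalization built into the definition of $(F)_{a,b}$, (ii) the Jacobian from the substitution in the $L^2(dx)$ integral on the Fourier side, and (iii) the factor $|a|^{s}$ coming from $|x|^{-s}$ under dilation. A clean way to avoid sign errors is to first verify the identity on Schwartz functions $f$ (where $(f)_{a,b}(x)=f(\frac{x-b}{a})$ pointwise and everything is classical), obtain $\|(f)_{a,b}\|_{\dot W^{s,2}(\R)}=|a|^{s-1/2}\|f\|_{\dot W^{s,2}(\R)}$ by an explicit change of variables in $\||x|^s\widehat{(f)_{a,b}}\|_{L^2}$, and then extend to all of $\dot{W}^{s,2}(\R)$ by density of $\mathscr C_c^\infty(\R;\mathbb C)$ together with the fact that $F\mapsto (F)_{a,b}$ is, by the scaling identity just proved, a bounded (indeed isometric up to the constant $|a|^{s-1/2}$) bijection between the relevant spaces. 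For the interval case no density argument is even needed, since the reduction to $\R$ is exact. I would present the $\R$-case computation and the extension/restriction commutation as two short lemmatic steps and conclude in one line.
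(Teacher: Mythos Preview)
Your proposal is correct and follows essentially the same route as the paper: first establish the scaling identity on $\dot W^{s,2}(\R)$ via the Fourier-side definition, then reduce the interval case to the line by showing that extension by zero commutes with the affine pullback. The only minor difference is that where you suggest verifying the $\R$-case on Schwartz functions and extending by density, the paper works directly with an arbitrary $F\in\dot W^{s,2}(\R)$, using that $\widehat F$ is a tempered function (so the distributional identity $\widehat{(F)_{a,b}}(x)=|a|^{-1}\mathrm{e}^{\mathrm{i}bx/a}\widehat F(x/a)$ can be read pointwise and the norm computed by a change of variables); both are fine, and your own bookkeeping wobble---you write $(f)_{a,b}(x)=f((x-b)/a)$ at one point, whereas the definition gives $(f)_{a,b}(x)=f(ax+b)$---is exactly the sort of thing you already flag and would catch on a clean write-up.
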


\begin{proof}
\textit{Step 1.} Initially note that for a tempered distribution $F\in\mathscr{S}'(\R)$, $(F)_{a,b}$ is defined by
	\begin{equation*}
		(F)_{a,b}(\varphi) := F\left(\frac{1}{|a|}\varphi\left(\frac{\cdot-b}{a}\right)\right), \quad \varphi\in\mathscr{S}(\R).
	\end{equation*}
In this case, it follows for $\varphi\in\mathscr{S}(\R)$ that the equality
	\begin{equation*}
		\widehat{(F)_{a,b}}(\varphi) = (F)_{a,b}(\hat{\varphi})=F\left(\frac{1}{|a|}\hat{\varphi}\left(\frac{\cdot-b}{a}\right)\right) = F(\widehat{\varPhi_{a,b}}) = \hat{F}(\varPhi_{a,b})
	\end{equation*}
is satisfied with $\varPhi$ being a Schwarz function that is given by
	\begin{equation*}
		\varPhi_{a,b}(x) := \mathrm{e}^{\mathrm{i}bx}\varphi(ax), \quad x\in\R. 
	\end{equation*}
\textit{Step 2.} The claim of the lemma is proved for $T=\R$ now. To this end, let $F\in\dot{W}^{s,2}(\R)$. Then its Fourier transform $\hat{F}$ is a tempered function and we have that 
	\begin{equation*}
		\int_{-\infty}^\infty \widehat{(F)_{a,b}}(x)\varphi(x)\d{x} = \widehat{(F)_{a,b}}(\varphi) = \hat{F}(\varPhi_{a,b}) = \int_{-\infty}^\infty\frac{1}{|a|}\mathrm{e}^{\mathrm{i}b\frac{x}{a}}\hat{F}\left(\frac{x}{a}\right)\varphi(x)\d{x}
	\end{equation*}
for $\varphi\in\mathscr{S}(\R)$ by \textit{Step 1} so that
	\begin{equation*}
		 \widehat{(F)_{a,b}}(x)  = \frac{1}{|a|}\mathrm{e}^{\mathrm{i}b\frac{x}{a}}\hat{F}\left(\frac{x}{a}\right), \quad x\in\R. 
	\end{equation*} 
Consequently, we have that 
	\begin{equation*}
		\|(F)_{a,b}\|_{\dot{W}^{s,2}(\R)}^2 = \int_{-\infty}^\infty |x|^{2s}\left|
	\frac{1}{|a|}\mathrm{e}^{\mathrm{i}b\frac{x}{a}}\hat{F}\left(\frac{x}{a}\right)\right|^2\d{x} = |a|^{2s-1}\int_{-\infty}^\infty|x|^{2s} |\hat{F}(x)|^2\d{x} = |a|^{2s-1}\|F\|_{\dot{W}^{s,2}(\R)}^2.
	\end{equation*}
\textit{Step 3:}  Let $T\subset\R$ be an interval and let $\tilde{T}\subset\R$ be the interval for which $g: \tilde{T}\rightarrow T$ is a bijection. Let $F\in\dot{W}^{s,2}(T)$ and let $\overline{F}$ be its extension by zero outside $T$. Define
	\begin{equation*}
		(F)_{*,a,b}(\varphi) := F\left(\frac{1}{|a|}\varphi \left( \frac{\cdot-b}{a}\right)\right), \quad \varphi\in \mathscr{C}_c^\infty(\tilde{T}).
	\end{equation*}
Then we have for every $\varphi\in\mathscr{C}_c^\infty(\tilde{T})$ that 
	\begin{equation*}
		(F)_{*,a,b}(\varphi) = F\left(\frac{1}{|a|}\varphi\left(\frac{\cdot-b}{a}\right)\right) = \overline{F}\left(\frac{1}{|a|}\varphi\left(\frac{\cdot-b}{a}\right)\right) = (\overline{F})_{a,b}(\varphi)
	\end{equation*}
which implies that $\overline{(F)_{*,a,b}} = (\overline{F})_{a,b}$. By using this as well as the result of \textit{Step 2}, we have that the chain of equalities 
	\begin{equation*}
		\|(F)_{*,a,b}\|_{\dot{W}^{s,2}(\tilde{T})}^2 = \|\overline{(F)_{*,a,b}}\|_{\dot{W}^{s,2}(\R)}^2 = \|(\overline{F})_{a,b}\|_{\dot{W}^{s,2}(\R)}^2 = |a|^{2s-1} \|\overline{F}\|_{\dot{W}^{s,2}(\R)}^2 = |a|^{2s-1}\|F\|_{\dot{W}^{s,2}(T)}^2
	\end{equation*}
holds and the claim is proved. 
\end{proof}

Finally, the following lemmas concern some properties of a restriction to a subinterval.

\begin{lemma}
\label{lem:restriction}
Let $T$ and $\tilde{T}$ be two intervals such that $\tilde{T}\subseteq T\subseteq \R$. Let $s\in\R$ be such that $|s|<\sfrac{1}{2}$. If $F\in\dot{W}^{s,2}(T)$, then the restriction of $F$ to the interval $\tilde{T}$, $F|_{\tilde{T}}$, belongs to the space $\dot{W}^{s,2}(\tilde{T})$ and there exists a finite positive constant $C_s$ such that the following inequality is satisfied:
	\begin{equation*}
		\|F|_{\tilde{T}}\|_{\dot{W}^{s,2}(\tilde{T})} \leq C_s\|F\|_{\dot{W}^{s,2}(T)}.
	\end{equation*}
\end{lemma}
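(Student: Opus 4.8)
The plan is to reduce everything to \autoref{lem:localization}, which states precisely that for $|s|<\sfrac{1}{2}$ multiplication by the indicator $\bm{1}_J$ of an interval $J\subseteq\R$ is a bounded operator on $\dot{W}^{s,2}(\R)$, with operator norm dominated by a constant $C_s$ depending only on $s$ (and not on $J$). This is the only nontrivial ingredient; everything else is bookkeeping with the definition of $\dot{W}^{s,2}$ on an interval given in \autoref{app:Ws_I}.

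First I would fix $F\in\dot{W}^{s,2}(T)$ and let $\overline{F}\in\dot{W}^{s,2}(\R)$ be its extension by zero outside $T$, i.e. the element of $\dot{W}^{s,2}(\R)$ with $\overline{F}|_{\mathscr{C}^\infty_c(T)}=F$ and $\overline{F}|_{\mathscr{C}^\infty_c(\R\setminus T)}=0$; this is unique by the consequence of \autoref{lem:localization} recorded in the text. Set $G:=\bm{1}_{\tilde{T}}\,\overline{F}$. By \autoref{lem:localization} applied with $J=\tilde{T}$, the distribution $G$ belongs to $\dot{W}^{s,2}(\R)$ and
\[
  \|G\|_{\dot{W}^{s,2}(\R)}\le C_s\,\|\overline{F}\|_{\dot{W}^{s,2}(\R)}=C_s\,\|F\|_{\dot{W}^{s,2}(T)}.
\]
Next I would verify that $G$ is the extension by zero of $F|_{\tilde{T}}$, where $F|_{\tilde{T}}$ makes sense because $\mathscr{C}^\infty_c(\tilde{T})\subseteq\mathscr{C}^\infty_c(T)$: for $\varphi\in\mathscr{C}^\infty_c(\tilde{T})$ one has $\bm{1}_{\tilde{T}}\varphi=\varphi$, hence $\langle G,\varphi\rangle=\langle\overline{F},\bm{1}_{\tilde{T}}\varphi\rangle=\langle\overline{F},\varphi\rangle=F(\varphi)$, while for $\varphi\in\mathscr{C}^\infty_c(\R\setminus\tilde{T})$ the indicator $\bm{1}_{\tilde{T}}$ vanishes on the support of $\varphi$, so $\langle G,\varphi\rangle=\langle\overline{F},\bm{1}_{\tilde{T}}\varphi\rangle=0$. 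By \autoref{app:Ws_I} this shows $F|_{\tilde{T}}\in\dot{W}^{s,2}(\tilde{T})$ with $\|F|_{\tilde{T}}\|_{\dot{W}^{s,2}(\tilde{T})}=\|G\|_{\dot{W}^{s,2}(\R)}$, and combining with the displayed estimate yields the claim with the very same constant $C_s$ as in \autoref{lem:localization}.

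The main obstacle is purely one of consistency of conventions rather than of substance: one must interpret the product $\bm{1}_{\tilde{T}}\,\overline{F}$ in the same way it is interpreted in \autoref{lem:localization} — pointwise multiplication of functions for $0\le s<\sfrac{1}{2}$ (legitimate since then $\overline{F}\in L^{2/(1-2s)}(\R)$ by \autoref{prop:Lp_embeddings}), and the duality-defined product $\langle\bm{1}_{\tilde{T}}\overline{F},\varphi\rangle:=\langle\overline{F},\bm{1}_{\tilde{T}}\varphi\rangle$ for $-\sfrac{1}{2}<s<0$ (legitimate since $\bm{1}_{\tilde{T}}\varphi\in\dot{W}^{-s,2}(\R)$, the dual of $\dot{W}^{s,2}(\R)$) — so that the two pairing identities used above are literally definitional. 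One must also be slightly careful that the support conventions for $\mathscr{C}^\infty_c$ on possibly half-open intervals indeed force $\bm{1}_{\tilde{T}}\varphi=0$ for $\varphi\in\mathscr{C}^\infty_c(\R\setminus\tilde{T})$; this is immediate from the fact that $\varphi$ and $\bm{1}_{\tilde{T}}$ have disjoint supports.
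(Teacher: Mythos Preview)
Your proof is correct and follows exactly the same approach as the paper: reduce to \autoref{lem:localization} by observing that $\overline{F|_{\tilde{T}}}=\bm{1}_{\tilde{T}}\overline{F}$ and then chain the equalities $\|F|_{\tilde{T}}\|_{\dot{W}^{s,2}(\tilde{T})}=\|\bm{1}_{\tilde{T}}\overline{F}\|_{\dot{W}^{s,2}(\R)}\le C_s\|\overline{F}\|_{\dot{W}^{s,2}(\R)}=C_s\|F\|_{\dot{W}^{s,2}(T)}$. The paper's proof is a one-line version of yours; your additional bookkeeping about how the product $\bm{1}_{\tilde{T}}\overline{F}$ is to be interpreted and why it coincides with the extension by zero of $F|_{\tilde{T}}$ is sound and simply makes explicit what the paper leaves implicit.
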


\begin{proof}
By \autoref{lem:localization}, it follows that 
	\begin{equation*}
		\|F|_{\tilde{T}}\|_{\dot{W}^{s,2}(\tilde{T})} = \|\overline{F|_{\tilde{T}}}\|_{\dot{W}^{s,2}(\R)} = \|\bm{1}_{\tilde{T}} \overline{F}\|_{\dot{W}^{s,2}(\R)} \leq C_s \|\overline{F}\|_{\dot{W}^{s,2}(\R)} = \|F\|_{\dot{W}^{s,2}(T)}.
	\end{equation*}
\end{proof}

\begin{lemma}
\label{lem:norm_tends_to_zero}
Let $T\subseteq\R$ be an interval and let $s\in\R$ be such that $|s|<\sfrac{1}{2}$. Then for every $x\in\mathrm{Int}\, T$ and every $F\in\dot{W}^{s,2}(T)$ it holds that 
	\begin{equation*}
		\lim_{y\rightarrow x+} \|F|_{[x,y]}\|_{\dot{W}^{s,2}(x,y)}=0.
	\end{equation*}
\end{lemma}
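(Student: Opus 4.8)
The plan is to reduce the claim to the second convergence statement of \autoref{lem:localization} by passing to extensions by zero. First I would fix $x\in\mathrm{Int}\,T$ and $F\in\dot W^{s,2}(T)$, and let $\overline F\in\dot W^{s,2}(\R)$ be the extension of $F$ by zero outside $T$ (which exists by \autoref{app:Ws_I} and is unique by \autoref{lem:localization}). Since $x$ is an interior point of $T$, I would fix $\delta>0$ with $(x-\delta,x+\delta)\subseteq T$, so that $[x,y]\subseteq T$ for every $y\in(x,x+\delta)$.

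For such $y$, \autoref{lem:restriction} shows that the restriction $F|_{[x,y]}$ belongs to $\dot W^{s,2}(x,y)$, and --- exactly as in the proof of \autoref{lem:restriction} --- its extension by zero outside $[x,y]$ equals $\bm 1_{[x,y]}\overline F$. Moreover, since $\bm 1_{[x,y]}$ and $\bm 1_{(x,y)}$ coincide Lebesgue-almost everywhere, they induce the same multiplication operator on $\dot W^{s,2}(\R)$ (by \autoref{prop:Lp_embeddings} when $s\ge 0$ and by duality when $s<0$, exactly as in the proof of \autoref{lem:localization}). Hence
\[
	\|F|_{[x,y]}\|_{\dot W^{s,2}(x,y)} = \|\bm 1_{[x,y]}\overline F\|_{\dot W^{s,2}(\R)} = \|\bm 1_{(x,y)}\overline F\|_{\dot W^{s,2}(\R)}, \qquad y\in(x,x+\delta).
\]

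It then suffices to invoke the second limit in \autoref{lem:localization} applied to $\overline F\in\dot W^{s,2}(\R)$: it gives $\lim_{y\to x}\|\bm 1_{(x,y)}\overline F\|_{\dot W^{s,2}(\R)}=0$, and in particular the one-sided limit as $y\to x+$ vanishes; combined with the identity above this is exactly the assertion. The argument is essentially bookkeeping, and I do not expect any genuine obstacle --- the only point that needs a line of care is the identification, inside $\dot W^{s,2}(\R)$, of the zero-extension of $F|_{[x,y]}$ with $\bm 1_{(x,y)}\overline F$, which is already implicit in the proof of \autoref{lem:restriction}.
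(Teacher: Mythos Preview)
Your proof is correct and follows essentially the same route as the paper: both identify $\|F|_{[x,y]}\|_{\dot W^{s,2}(x,y)}$ with $\|\bm 1_{[x,y]}\overline F\|_{\dot W^{s,2}(\R)}$ via the definition of the norm and then invoke the second convergence in \autoref{lem:localization}. You are simply a bit more explicit about the passage from the closed to the open indicator and about why the zero-extension of $F|_{[x,y]}$ coincides with $\bm 1_{[x,y]}\overline F$, but the argument is the same.
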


\begin{proof}
It follows for $y\in T$, $y>x$, that
	\begin{equation*}
		\|F|_{[x,y]}\|_{\dot{W}^{s,2}(x,y)} = \|\overline{F|_{[x,y]}}\|_{\dot{W}^{s,2}(\R)} = \|\bm{1}_{[x,y]}\overline{F}\|_{\dot{W}^{s,2}(\R)}
	\end{equation*}
is satisfied and the claim follows by \autoref{lem:localization}.
\end{proof}

\end{appendices}

\section*{Acknowledgement}
\addcontentsline{toc}{section}{Acknowledgement}

This research was supported by the Czech Science Foundation through project No. 19-07140S.

\addcontentsline{toc}{section}{References}

{\small 

}

\end{document}